\DeclareSymbolFont{symbolsC}{U}{txsyc}{m}{n}
\DeclareMathSymbol{\multimapboth}{\mathrel}{symbolsC}{"13}
\numberwithin{equation}{section}
\newtheorem{theorem}{Theorem}[section]
\newtheorem{lemma}[theorem]{Lemma}
\newtheorem{proposition}[theorem]{Proposition}
\newtheorem{corollary}[theorem]{Corollary}
\theoremstyle{definition}
\newtheorem{definition}[theorem]{Definition}
\newtheorem{construction}[theorem]{Construction}
\newtheorem{convention}[theorem]{Convention}
\theoremstyle{remark}
\newtheorem{remark}[theorem]{Remark}
\definecolor{niceblue}{rgb}{0.0,0.4,0.9}
\newcommand{\closedtraj}{OliveGreen!50}
\newcommand{\saddletraj}{black}
\newcommand{\generictraj}{gray!50}
\newcommand{\septraj}{black}
\newcommand{\conn}{niceblue}
\newcommand{\otherconn}{red}
\newcommand{\arrcol}{black}
\newcommand{\edgecol}{gray}
\newcommand{\surf}[1]{\mathbf{#1}}
\newcommand{\bs}{\surf{S}}
\newcommand{\sfm}{\surf{M}}
\newcommand{\bp}{\surf{P}}
\newcommand{\bo}{\surf{O}}
\newcommand{\mbso}{(\bs, \sfm, \bo)}
\newcommand{\triangnum}[2]{\Delta^{\circlearrowleft}(#1, #2)} %
\newcommand{\ctb}{\omega_{X}}
\newcommand{\ctbs}{\ctb^{\otimes 2}}
\DeclareMathOperator{\crit}{Crit}
\newcommand{\infcrit}{\crit_{\infty}}
\newcommand{\nocrit}[1]{{#1}^{\circ}}
\DeclareMathOperator{\im}{Im}
\newcommand{\uhp}{\mathbb{H}}
\newcommand{\hh}[1]{\hat{H}(#1)} %
\newcommand{\hhspan}[1]{\hat{H}^{\mathrm{s}}(#1)}
\newcommand{\hhc}[1]{\bm{#1}} %
\newcommand{\spc}[1]{\overline{\Sigma}} %
\newcommand{\spct}[1]{\widetilde{\Sigma}} %
\newcommand{\infpreim}{D_{\infty}}
\newcommand{\st}{:} %
\DeclareMathOperator{\Ext}{Ext}
\DeclareMathOperator{\End}{End}
\DeclareMathOperator{\Hom}{Hom}
\DeclareMathOperator{\additive}{add}
\DeclareMathOperator{\modules}{mod}
\DeclareMathOperator{\Sim}{Sim}
\newcommand{\lorth}[1]{\prescript{\perp}{}{#1}}
\newcommand{\rorth}[1]{{#1}^{\perp}}
\newcommand{\simp}[1]{\Sim #1}
\newcommand{\simpinf}[1]{\Sim(#1)}
\newcommand{\extclos}[1]{\langle #1 \rangle}
\DeclareMathOperator{\dimu}{\underline{\dim}}
\newcommand{\pot}[1]{\mathrm{Pot}(#1)} %
\newcommand{\jac}[1]{\widehat{J}(#1)} %
\newcommand{\jacu}[1]{J(#1)} %
\newcommand{\wmin}{W_{\!m}}
\newcommand{\akd}[2]{\mathcal{A}(#1, #2)} %
\newcommand{\akddb}{\mathcal{A}(\qwdb)} %
\newcommand{\akdqw}{\akd{Q}{W}}
\newcommand{\vm}{S} %
\newcommand{\am}{E} %
\newcommand{\cyc}{\mathrm{cyc}} %
\newcommand{\glnc}{\mathrm{GL}_n(\mathbb{C})}
\newcommand{\gld}{\mathrm{GL}_{\ud}(\mathbb{C})}
\newcommand{\gla}[1]{\mathrm{GL}_{#1}(\mathbb{C})}
\newcommand{\jord}[2]{\mathrm{Jord}(#1, #2)} %
\newcommand{\obj}[1]{\mathrm{obj}(#1)} %
\newcommand{\tw}[1]{\mathsf{tw}(#1)} %
\newcommand{\twz}[1]{\mathsf{tw}_{0}(#1)} %
\newcommand{\two}{\mathrm{tw}} %
\newcommand{\id}{\mathrm{id}} %
\newcommand{\cy}[2]{( #1, #2 )^{\mathrm{CY}}} %
\newcommand{\ef}[2]{\langle #1, #2 \rangle_{Q}} %
\newcommand{\naef}[2]{\chi(#1, #2)_{Q}} %
\newcommand{\intp}[2]{\langle #1, #2 \rangle_{\cap}} %
\newcommand{\lef}[2]{\langle #1, #2 \rangle_{\mathrm{Lef}}} %
\newcommand{\ideal}[1]{\langle #1 \rangle} %
\DeclarePairedDelimiter{\ceil}{\lceil}{\rceil}
\newcommand{\db}{{\circ\!-\!\circ}}
\newcommand{\kb}{{\circ\!\circ}}
\newcommand{\qwdb}{Q^{\db}, W^{\db}}
\newcommand{\qwkb}{Q^{\kb}, W^{\kb}}
\newcommand{\dinf}[1]{\prescript{\infty}{}{#1}^{\infty}} %
\newcommand{\kbcat}{\mathcal{C}_1} %
\newcommand{\olcat}{\mathcal{C}_2} %
\DeclareMathOperator{\Stab}{Stab}
\newcommand{\cl}{\mathrm{cl}} %
\newcommand{\fgg}{\Gamma} %
\newcommand{\zss}{\mathrm{ss}}
\newcommand{\ssc}[2]{\mathcal{P}^{#1}(#2)} %
\newcommand{\ssce}[2]{\mathcal{P}_{\infty}^{#1}(#2)} %
\DeclareMathOperator{\phase}{ph}
\newcommand{\scph}[1]{\phase(#1)} %
\newcommand{\qdph}[1]{\phase(#1)} %
\newcommand{\rou}{\hat{\mu}} %
\newcommand{\roua}[1]{\mu_{#1}} %
\newcommand{\kzstaffa}[1]{\mathscr{K}_{0,#1}}
\newcommand{\kzstaffm}{\mathscr{K}_{0,\mathfrak{M}}}
\newcommand{\kzgstaffa}[1]{\mathscr{K}_{0,#1}^{G}}
\newcommand{\kzgstaffm}{\mathscr{K}_{0,\mathfrak{M}}^{G}}
\newcommand{\kgstaffa}[1]{\mathscr{K}^{G}_{#1}}
\newcommand{\kgstaffm}{\mathscr{K}^{G}_\mathfrak{M}}
\newcommand{\kstaffc}{\mathscr{K}_{\mathrm{pt}}}
\newcommand{\kgvarm}{\mathsf{K}^{G}_\mathfrak{M}}
\newcommand{\kastaffm}[1]{\mathscr{K}^{#1}_\mathfrak{M}}
\newcommand{\kmstaffm}{\mathscr{K}^{\rou}_\mathfrak{M}}
\newcommand{\kmstaffc}{\mathscr{K}^{\rou}_{\mathrm{pt}}}
\newcommand{\kmstaffa}[1]{\mathscr{K}^{\rou}_{#1}}
\newcommand{\kmvarc}{\mathsf{K}^{\rou}_{\mathrm{pt}}}
\newcommand{\kvarc}{\mathsf{K}_{\mathrm{pt}}}
\newcommand{\kmvara}[1]{\mathsf{K}^{\rou}_{#1}}
\newcommand{\kstaffqw}{\mathscr{K}_{\qwst}}
\newcommand{\kstaffqwn}{\mathscr{K}_{\qwstn}}
\newcommand{\kstaffq}{\mathscr{K}_{\qst}}
\newcommand{\kstaffqn}{\mathscr{K}_{\qstn}}
\newcommand{\kvarxdn}{\mathsf{K}_{\qwdstn}}
\newcommand{\kvarxd}{\mathsf{K}_{\qwdst}}
\newcommand{\kstaffqwdb}{\mathscr{K}_{\dbst}}
\newcommand{\kbgstaffm}{\overline{\mathscr{K}}^{G}_\mathfrak{M}}
\newcommand{\kbmvara}[1]{\overline{\mathsf{K}}^{\rou}_#1}
\newcommand{\kbmstaffc}{\overline{\mathscr{K}}^{\rou}_{\mathrm{pt}}}
\newcommand{\tsp}{\omega} %
\newcommand{\qring}{{\mathscr{R}}_{Q}} %
\newcommand{\qringdb}{{\mathscr{R}}_{Q^{\db}}} %
\newcommand{\vcf}{\phi_f} %
\newcommand{\vca}[1]{\phi_{#1}}
\newcommand{\vcfrel}{\phi_{f}^{\mathrm{rel}}}
\newcommand{\vcrela}[1]{\phi_{#1}^{\mathrm{rel}}}
\newcommand{\crloc}{\mathsf{CL}}
\newcommand{\exprod}{\astrosun}
\newcommand{\intks}{\Phi^{\mathrm{KS}}}
\newcommand{\intbbs}{\Phi^{\mathrm{BBS}}}
\newcommand{\sdet}[1]{\mathsf{sDet}(#1)}
\newcommand{\cop}{\mathsf{P}} %
\newcommand{\odchoice}{\delta_{Q, W}} %
\newcommand{\odcha}[1]{\delta_{#1}}
\newcommand{\odqf}{F} %
\newcommand{\ag}[1]{\mathsf{#1}} %
\newcommand{\qrlet}{M}
\newcommand{\qdsch}{\ag{\qrlet}_{Q, \ud}}
\newcommand{\qdschn}{\ag{\qrlet}_{Q, \ud}^{\mathrm{nil}}}
\newcommand{\qwdsch}{\ag{\qrlet}_{Q, W, \ud}}
\newcommand{\qwdschn}{\ag{\qrlet}_{Q, W, \ud}^{\mathrm{nil}}}
\newcommand{\qaasch}[2]{\ag{\qrlet}_{#1, #2}}
\newcommand{\qaaschn}[2]{\ag{\qrlet}_{#1, #2}^{\mathrm{nil}}}
\newcommand{\qdst}{\mathfrak{\qrlet}_{Q, \ud}}
\newcommand{\qdstn}{\mathfrak{\qrlet}_{Q, \ud}^{\mathrm{nil}}}
\newcommand{\qwdst}{\mathfrak{\qrlet}_{Q, W, \ud}}
\newcommand{\qwdstn}{\mathfrak{\qrlet}_{Q, W, \ud}^{\mathrm{nil}}}
\newcommand{\qaast}[2]{\mathfrak{\qrlet}_{#1, #2}}
\newcommand{\qaastn}[2]{\mathfrak{\qrlet}_{#1, #2}^{\mathrm{nil}}}
\newcommand{\qast}[1]{\mathfrak{\qrlet}_{#1}}
\newcommand{\qastn}[1]{\mathfrak{\qrlet}_{#1}^{\mathrm{nil}}}
\newcommand{\qst}{\mathfrak{\qrlet}_{Q}}
\newcommand{\qstn}{\mathfrak{\qrlet}_{Q}^{\mathrm{nil}}}
\newcommand{\qwst}{\mathfrak{\qrlet}_{Q, W}}
\newcommand{\qwstn}{\mathfrak{\qrlet}_{Q, W}^{\mathrm{nil}}}
\newcommand{\qwastn}[1]{\mathfrak{\qrlet}_{Q, W, #1}^{\mathrm{nil}}}
\newcommand{\sqdsch}{\ag{\qrlet}_{Q, \ud}^{\zss}}
\newcommand{\sqdschn}{\ag{\qrlet}_{Q, \ud}^{\mathrm{nil}, \zss}}
\newcommand{\sqwdsch}{\ag{\qrlet}_{Q, W, \ud}^{\zss}}
\newcommand{\sqwdschn}{\ag{\qrlet}_{Q, W, \ud}^{\mathrm{nil}, \zss}}
\newcommand{\sqdst}{\mathfrak{\qrlet}_{Q, \ud}^{\zss}}
\newcommand{\sqdstn}{\mathfrak{\qrlet}_{Q, \ud}^{\mathrm{nil}, \zss}}
\newcommand{\sqwdst}{\mathfrak{\qrlet}_{Q, W, \ud}^{\zss}}
\newcommand{\sqwdstn}{\mathfrak{\qrlet}_{Q, W, \ud}^{\mathrm{nil}, \zss}}
\newcommand{\sqpst}{\mathfrak{\qrlet}_{Q, \vartheta}^{\zss}}
\newcommand{\sqpstn}{\mathfrak{\qrlet}_{Q, \vartheta}^{\mathrm{nil}, \zss}}
\newcommand{\sqwpst}{\mathfrak{\qrlet}_{Q, W, \vartheta}^{\zss}}
\newcommand{\sqwpstn}{\mathfrak{\qrlet}_{Q, W, \vartheta}^{\mathrm{nil}, \zss}}
\newcommand{\sdbpst}{\mathfrak{\qrlet}_{\qwdb, \vartheta}^{\zss}}
\newcommand{\dbst}{\mathfrak{\qrlet}_{\qwdb}}
\newcommand{\dbdst}{\mathfrak{\qrlet}_{Q^{\db}, (2d, 2d)}}
\newcommand{\dbstn}{\mathfrak{\qrlet}_{\qwdb}^{\mathrm{nil}}}
\newcommand{\dbcatast}{\mathfrak{\qrlet}_{\qwdb}^{\kbcat}}
\newcommand{\dbcatbst}{\mathfrak{\qrlet}_{\qwdb}^{\olcat}}
\newcommand{\dbcatadsch}{\mathsf{\qrlet}_{\qwdb, (d, d)}^{\kbcat}}
\newcommand{\dbcatbdsch}{\mathsf{\qrlet}_{\qwdb, (2d, 2d)}^{\olcat}}
\newcommand{\dbcatadst}{\mathfrak{\qrlet}_{\qwdb, (d, d)}^{\kbcat}}
\newcommand{\dbcatbdst}{\mathfrak{\qrlet}_{\qwdb, (2d, 2d)}^{\olcat}}
\newcommand{\tr}[1]{\mathrm{Tr}(#1)} %
\newcommand{\trst}[1]{\mathfrak{Tr}(#1)} %
\newcommand{\spec}[1]{\mathrm{Spec}(#1)}
\newcommand{\dt}[3]{\Omega_{#1}^{#2}(#3)} %
\newcommand{\qdl}[1]{\mathbf{E}(#1)} %
\newcommand{\dtnum}[1]{\Omega_{\mathrm{num}}(#1)}
\newcommand{\dtref}[1]{\Omega(#1)}
\newcommand{\genser}[1]{\mathrm{G}(#1)}
\newcommand{\genserp}[2]{\mathrm{G}_{#1}(#2)}
\newcommand{\gensern}[1]{\mathrm{G}^{\mathrm{nil}}(#1)}
\newcommand{\gensernp}[2]{\mathrm{G}_{#1}^{\mathrm{nil}}(#2)}
\newcommand{\ud}{\underline{d}}
\def\@tocline#1#2#3#4#5#6#7{\relax
  \ifnum #1>\c@tocdepth %
  \else
    \par \addpenalty\@secpenalty\addvspace{#2}%
    \begingroup \hyphenpenalty\@M
    \@ifempty{#4}{%
      \@tempdima\csname r@tocindent\number#1\endcsname\relax
    }{%
      \@tempdima#4\relax
    }%
    \parindent\z@ \leftskip#3\relax \advance\leftskip\@tempdima\relax
    \rightskip\@pnumwidth plus4em \parfillskip-\@pnumwidth
    #5\leavevmode\hskip-\@tempdima
      \ifcase #1
       \or\or \hskip 1em \or \hskip 2em \else \hskip 3em \fi%
      #6\nobreak\relax
    \dotfill\hbox to\@pnumwidth{\@tocpagenum{#7}}\par
    \nobreak
    \endgroup
  \fi}
\title[DT invariants for the Bridgeland--Smith correspondence]{Donaldson--Thomas invariants for \\ the Bridgeland--Smith correspondence}
\author{Omar Kidwai}
\address{School of Mathematics, Watson Building, University of Birmingham, Edgbaston, Birmingham, B15 2TT, United Kingdom}
\address{Theoretical Sciences Visiting Program, Okinawa Institute of Science and Technology Graduate University, Onna, 904-0495, Japan}
\urladdr{http://omarkidwai.ca/}
\email{o.kidwai@bham.ac.uk}
\author{Nicholas J. Williams}
\address{Department of Mathematics and Statistics, Fylde College, Lancaster University, Lancaster, LA1 4YF, United Kingdom}
\address{Theoretical Sciences Visiting Program, Okinawa Institute of Science and Technology Graduate University, Onna, 904-0495, Japan}
\urladdr{https://nchlswllms.github.io/}
\email{nicholas.williams@lancaster.ac.uk}
\subjclass[2020]{14D20, 14N35, 18E30, 57M50, 81T20}
\keywords{Quadratic differentials, stability conditions, Donaldson--Thomas invariants}
\begin{document}

\begin{abstract}
Famous work of Bridgeland and Smith shows that certain moduli spaces of quadratic differentials are isomorphic to spaces of stability conditions on particular 3-Calabi--Yau triangulated categories.
This result has subsequently been generalised and extended by several authors.
One facet of this correspondence is that finite-length trajectories of the quadratic differential are related to categories of semistable objects of the corresponding stability condition, which have associated Donaldson--Thomas invariants.
On the other hand, computations in the physics literature suggest certain values of these invariants according to the type of trajectory.
In this paper, we show that the category recently constructed by Christ, Haiden, and Qiu gives Donaldson--Thomas invariants which agree with the predictions from physics; in particular, degenerate ring domains of the quadratic differential give rise to non-zero Donaldson--Thomas invariants.
In calculating all of the invariants, we obtain a novel application of string and band techniques from representation theory.
\end{abstract}

\maketitle

\tableofcontents

\section{Introduction}
Inspired on the one hand by the work of Gaiotto, Moore, and Neitzke describing BPS states in certain $d = 4$, $\mathcal{N} = 2$ gauge theories \cite{gmn_cmp,gmn_adv}, and on the other hand by the observation of Kontsevich and Seidel on the similarity between spaces of quadratic differentials and spaces of stability conditions \cite{bs}, celebrated work of Bridgeland and Smith shows how certain moduli spaces of quadratic differentials on compact Riemann surfaces are equivalent to spaces of stability conditions on particular $3$-Calabi--Yau categories \cite{bs}.
This work has subsequently been generalised to wider classes of differentials and different categories \cite{bmqs,bqs,chq,haiden,hkk,ikeda,iq,kq2,lqw,wu}.

An important part of the work of Bridgeland and Smith is that finite-length trajectories of the quadratic differential correspond to stable objects of the stability condition.
Indeed, such finite-length trajectories correspond to BPS states in the work of Gaiotto, Moore, and Neitzke \cite{gmn_adv}.
Pioneering work of Kontsevich and Soibelman \cite{ks_stability} gives a formalism for counting semistable objects in $3$-Calabi--Yau triangulated $A_\infty$-categories, in which the so-called Donaldson--Thomas (DT) invariants may be interpreted as counting these BPS states.
Their framework generalises the original DT invariants of \cite{thomas} counting stable sheaves on CY 3-folds, whilst another generalisation was given by Joyce and Song \cite{joyce_ci,joyce_cii,joyce_ciii,joyce_civ,js}.

DT theory gives rise to certain infinite-dimensional Riemann--Hilbert problems, in which the DT invariants appear as exponents defining automorphisms of a certain torus \cite{bbs_rh,barbieri_stoppa,bridgeland_rh,fgs}. Related Riemann--Hilbert problems are used in the work of Gaiotto, Moore, and Neitzke \cite{gmn_cmp,gmn_adv} to construct hyperk\"ahler metrics on moduli spaces of Higgs bundles, which appear as target spaces in sigma models which are dimensional reductions of the field theories they consider; see also \cite{bs_hk}.
In cases arising via quadratic differentials, these Riemann--Hilbert problems are known to be related to exact WKB analysis \cite{allegretti2021stability,iwaki2023topological,ini}.
Work of Bridgeland views the DT invariants of a 3-Calabi--Yau triangulated category as encoded by a recently introduced structure on its manifold of stability conditions known as a `Joyce structure' \cite{bridgeland_dt_geometry}, see also \cite{joyce_gt}.
Recovering the Joyce structure on the stability manifold from the DT theory corresponds to solving the Riemann--Hilbert problem.

In apparently unrelated work, Iwaki, Koike, and Takei study the topological recursion for spectral curves given by a class of simple quadratic differentials \cite{ikt1,ikt2}.
Topological recursion is a construction introduced by Eynard and Orantin \cite{eo} abstracting and generalising solutions to the loop equations appearing in the theory of matrix models.
Given various input data, it can produce a remarkable array of different outputs, including Gromov--Witten invariants, Hurwitz numbers, and Mirzakhani's Weil--Petersson volume \cite{bems,dmnps,eynard,ems,eo_gw,eo_wp}.
The result of \cite{ikt1,ikt2} is to find an explicit formula for the topological recursion `free energy', as well as WKB-theoretic invariants called {Voros coefficients}, corresponding to initial data specified by certain simple quadratic differentials.

Work of the first author and Iwaki \cite{ik} then gives the formulas of \cite{ikt1,ikt2} an interpretation in terms of so-called BPS structures associated to the quadratic differentials, and \cite{iwaki2023topological} solves the corresponding Riemann-Hilbert problem. The BPS structure associated to a quadratic differential roughly consists of the set of finite-length trajectories, along with associated numbers according to different types of finite-length trajectories, known as BPS invariants.
A BPS structure describes the output of DT theory given a fixed stability condition.

It was initially unclear whether the BPS structures found in \cite{ik} did indeed give the output of DT theory applied to some particular 3-Calabi--Yau triangulated category.
Work of Haiden obtains some, but not all, of the invariants in a modified setting, in which the quadratic differentials do not have poles of order higher than one \cite{haiden}.
For certain types of finite-length trajectories --- type~I saddle trajectories and non-degenerate ring domains --- it follows from the results of Bridgeland and Smith that the BPS invariant associated to the trajectories coincides with the (numerical) DT invariant associated to corresponding category of semistable objects \cite[Theorem~11.6]{bs}.
However, these do not encompass all types of finite-length trajectories; moreover, using the 3-Calabi--Yau triangulated category from \cite{bs}, the DT invariant associated to families of finite-length trajectories which form so-called `degenerate ring domains' is~$0$, which does \emph{not} agree with the formula from~\cite{ik}.
Another physical computation from the A-brane perspective \cite{blr} also suggests this value should not vanish, but instead give the value $-1$ as in~\cite{ik}.

Building on earlier work in \cite{christ_ps}, in \cite{chq} a new 3-Calabi--Yau triangulated category is associated to triangulated surfaces coming from quadratic differentials by Christ, Haiden, and Qiu.
Using an approach that unifies the results of \cite{bs,hkk}, the authors of \cite{chq} show that stability conditions on this category are also equivalent to certain quadratic differentials.
In their correspondence, stability conditions correspond to quadratic differentials with poles of fixed orders, whereas in \cite{bs} double poles may degenerate to simple poles after collision with a zero.
As observed in \cite{chq}, this is a consequence of the fact that the category from \cite{bs} has no semistable objects corresponding to degenerate ring domains, which is what produces the associated DT invariant of~$0$.

The goal of this paper is to compute the (refined) DT invariants of the 3-Calabi--Yau case of the category from \cite{chq}, thus obtaining full agreement with the invariants suggested in~\cite{ik}.
In particular, rather than using the marked bordered surfaces of \cite{bs}, we consider 3-Calabi--Yau triangulated categories associated to marked bordered surfaces with orbifold points of order three, where the latter represent simple poles of the quadratic differential.

\textbf{Results.}
The main result of this paper is that the 3-Calabi--Yau $A_\infty$-category of \cite{chq} produces the DT invariants predicted for different types of finite-length trajectories by~\cite{ik}.
Indeed, we compute the refined DT invariants in the following theorem.
Here, we have that $\mathcal{D}$ is the 3-Calabi--Yau triangulated category with $\mathcal{D}_{\infty}$ its $A_\infty$-enhancement (Section~\ref{sect:3cy_stab:bs}), $\Gamma$ is its Grothendieck group, which is isomorphic to a certain homology group of a double cover of the Riemann surface $X$ on which the quadratic differential $\varphi$ lives (Section~\ref{sect:qd:hh}).
See Definition~\ref{def:gmn} for the notion of an infinite GMN differential, and Section~\ref{sect:prelim} for the precise definition of generic.

\begin{theorem}[{Section~\ref{sect:main}}]\label{thm:intro}
Let $\varphi$ be a saddle-free generic infinite GMN differential.
Let $\sigma_{\varphi}$ be the corresponding stability condition on~$\mathcal{D}$.
Given a class $\hhc{\gamma} \in \Gamma \setminus \{0\}$, under $\sigma_{\varphi}$ the refined DT invariant of class $\hhc{\gamma}$ is \[
\dtref{\hhc{\gamma}} = N_{\mathrm{I}}(\hhc{\gamma}) + 2N_{\mathrm{II}}(\hhc{\gamma}) + 4N_{\mathrm{III}}(\hhc{\gamma}) + q^{-1/2}N_{\mathrm{DRD}}(\hhc{\gamma}) + (q^{1/2} + q^{-1/2})N_{\mathrm{NRD}}(\hhc{\gamma}).
\]
\end{theorem}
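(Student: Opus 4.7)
The plan is to decompose $\dtref{\hhc{\gamma}}$ as a sum of contributions arising from the finite-length trajectories of $\varphi$ whose hat-homology class equals $\hhc{\gamma}$. Genericity ensures that any two classes in $\Gamma$ sharing the same phase under $\sigma_\varphi$ are rational multiples of one another, so the phase-$1$ semistable subcategory for the class $\hhc{\gamma}$ consists precisely of the objects associated to those finite-length trajectories of $\varphi$ having this class, and the refined Donaldson--Thomas invariant can then be computed by summing the contributions from each such trajectory.

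The first substantive step is to use the correspondence of \cite{chq} to identify, for each type of finite-length trajectory, a small local quiver with potential whose Jacobian algebra controls the relevant moduli of semistable objects of class $\hhc{\gamma}$ and phase $1$. For saddle trajectories of type $\mathrm{I}$, $\mathrm{II}$, and $\mathrm{III}$, the local model is a single rigid indecomposable whose automorphism group has order $1$, $2$, and $4$ respectively, accounting for the numerical coefficients in the formula. For a non-degenerate ring domain the local moduli is a $\mathbb{P}^1$-family of semistables; for a degenerate ring domain---the case where the Bridgeland--Smith category admits no semistable object at all---the enlarged category $\mathcal{D}$ of \cite{chq} supplies a one-parameter family of objects whose explicit description uses the extra generators introduced by the orbifold-point construction. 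With these local models in hand, the second step is to evaluate the Kontsevich--Soibelman integration map $\intks$ on the motivic vanishing cycle sheaf of each semistable stack: point-like contributions from saddle trajectories recover the coefficients $1$, $2$, $4$; the vanishing cycle on the $\mathbb{P}^1$-family yields $q^{1/2} + q^{-1/2}$ for non-degenerate ring domains; and an explicit motivic vanishing cycle computation on the local quiver with potential produces the refined invariant $q^{-1/2}$ in the degenerate ring domain case.

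The principal obstacle is precisely the degenerate ring domain case, since this is the one that genuinely departs from the Bridgeland--Smith framework. Here both the explicit identification of the semistable objects in the category $\mathcal{D}$ of \cite{chq} and the computation of the associated motivic vanishing cycle are delicate, and it is at this step that the representation-theoretic techniques advertised in the introduction become essential; in particular, an Ext-quiver computation on the local model should reduce the vanishing cycle calculation to a concrete count of semistable representations. Once the invariants have been established for a single generic stability condition, the quantum dilogarithm identity alluded to as a corollary should then follow from comparing the resulting DT generating series across walls in the stability manifold.
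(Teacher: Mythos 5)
Your overall framing (identify the semistable category attached to each trajectory configuration via the category of \cite{chq}, then evaluate the Kontsevich--Soibelman integration map) is the right flavour, but two of your key steps do not hold as stated. First, you assume from the outset that $\dtref{\hhc{\gamma}}$ \emph{decomposes as a sum of independent local contributions}, one per finite-length trajectory of class $\hhc{\gamma}$. That additivity is the conclusion, not an available hypothesis: nothing in the KS formalism lets you compute the invariant trajectory-by-trajectory unless you know that the semistable objects attached to different trajectories do not interact. The paper's actual route is a classification statement (the analogue of Proposition~\ref{prop:fin_len_traj}): for a generic infinite GMN differential the union of \emph{all} finite-length trajectories is exactly one of eight configurations, and in the configurations containing several trajectories (toral ring domains, and a non-degenerate ring domain with a type~III saddle in its boundary) the category of phase-$1$ semistables must be computed globally --- via string/band modules over (locally) gentle algebras in Section~\ref{sect:spiral}, and via a Hall-algebra factorisation of the barbell quiver into two extension-orthogonal subcategories in Section~\ref{sect:db}. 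Without the classification and these global computations your ``sum over trajectories'' has no justification, and you also lose the information that, e.g., a type~III saddle always comes with a companion invariant in class $2\hhc{\gamma}$ from the surrounding ring domain.

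Second, your proposed mechanisms for the individual coefficients are wrong. The coefficients $1,2,4$ do not come from ``a single rigid indecomposable whose automorphism group has order $1$, $2$, $4$'' (the automorphism group of a stable object is $\mathbb{C}^{\ast}$ in every case); they come from the loops with cubic potential terms $a^3$ (resp.\ $a^3+b^3$) created by the monogons around orbifold points, whose motivic vanishing cycles give the known series $\qdl{t}$, $\qdl{t}^2$ and $\qdl{t}^4\qdl{-q^{-1/2}t^2}^{-1}$ (Proposition~\ref{prop:haiden}), i.e.\ the multiplicities are Milnor-number/vanishing-cycle contributions of $x^3$ and $x^3+y^3$, not automorphism counts. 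Likewise, the degenerate-ring-domain contribution $q^{-1/2}$ has nothing to do with ``extra generators from the orbifold-point construction'': it arises because cycles around punctures are omitted from the potential, so after a small rotation the class is supported at the self-folded-edge vertex carrying an unconstrained loop, whose stack of nilpotent representations gives $\qdl{-q^{-1/2}t}^{-1}$; your ``one-parameter family'' picture corresponds to the uncompleted algebra and would give $q^{+1/2}$, not the stated $q^{-1/2}$. Finally, the genuinely hard cases are not the degenerate ring domain with closed-saddle boundary (which is easy after rotation) but the barbell quiver with potential $a^3+b^3$ and nontrivial stability, and the toral ring domains --- these are exactly where the representation-theoretic input is needed, and your proposal does not supply an argument for them.
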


Here $N_{\mathrm{I}}(\hhc{\gamma})$ (resp., $N_{\mathrm{II}}(\hhc{\gamma})$ and $N_{\mathrm{III}}(\hhc{\gamma})$) is the number of type~I saddle connections of $\varphi$ of class~$\pm \hhc{\gamma}$ (resp., type~II and type~III saddle connections), whilst $N_{\mathrm{DRD}}(\hhc{\gamma})$ (resp., $N_{\mathrm{NRD}}(\hhc{\gamma})$) is the number of degenerate ring domains of class $\pm \hhc{\gamma}$ (resp., non-degenerate ring domains) appearing in rotations of~$\varphi$.
See Sections~\ref{sect:qd:traj_type} and~\ref{sect:qd:domains} for the different types of finite-length connections, and Section~\ref{sect:qd:ssc} for the relation between finite-length connections and homology classes.
Provided that the polar type of $\varphi$ is not $\{-2\}$, our work also shows that Theorem~\ref{thm:intro} holds in the case where $\varphi$ is not saddle-free.
By setting $q^{1/2} = -1$, we obtain the numerical DT invariant \[
\dtnum{\hhc{\gamma}} = N_{\mathrm{I}}(\hhc{\gamma}) + 2N_{\mathrm{II}}(\hhc{\gamma}) + 4N_{\mathrm{III}}(\hhc{\gamma}) - N_{\mathrm{DRD}}(\hhc{\gamma}) - 2N_{\mathrm{NRD}}(\hhc{\gamma}),
\]
exactly as predicted by \cite{ik}.
The invariants of Theorem~\ref{thm:intro} also agree with those coming from the string theory literature \cite{blr}.
These invariants are consistent with those computed by Haiden in \cite{haiden} for a different class of quadratic differentials whose only critical points are simple zeros and simple poles.
The quadratic differentials considered in \cite{bs} usually have poles of higher order.
Since degenerate ring domains contain a double pole at their centre, they do not appear for the differentials from \cite{haiden}, and so the associated invariant of $-1$ does not appear there.

We prove our main result by first classifying the different configurations of finite-length trajectories that can occur for generic quadratic differentials in the class we consider, infinite GMN differentials.
We then describe the associated category of semistable objects in each of the different types in terms of quivers with potential.
The problem then reduces to computing the DT invariants for these quivers with potential.
We present Table~\ref{table} as a summary of the paper, with the details of the concepts and notation used in this table explained later.

\afterpage{%
    \clearpage%
    \thispagestyle{empty}%
    \newgeometry{margin=3cm,top=2cm,bottom=2cm}
    \begin{landscape}%
        \centering %
        \renewcommand{\arraystretch}{1.4}%
        \begin{tabular}{|c|c|c|c|c|c|c|}
        \hline
		Trajectory type & Illustration & Quiver & Potential & Stability & DT generating series \\
		\hline
		Type~I saddle & \begin{tabular}{c}
                \begin{tikzpicture}
		    \draw[\saddletraj] (-1,0) -- (1,0);
                \node at (-1,0) {$\bm{\times}$};
                \node at (1,0) {$\bm{\times}$};
		      \end{tikzpicture}
                \end{tabular} & $\bullet$ & 0 & N/A & $\qdl{t}$ \\
		\hline
		Type~II saddle & \begin{tabular}{c}
                \begin{tikzpicture}
		    \draw[\saddletraj] (-1,0) -- (1,0);
                \node at (-1,0) {$\bm{\times}$};
                \node at (1,0) {$\bm{\otimes}$};
		    \end{tikzpicture}
                \end{tabular}   & $\begin{tikzcd}\bullet \ar[loop left,"a"]\end{tikzcd}$ & $a^3$ & N/A & $\qdl{t}^2$ \\
            \hline
             \parbox{2.6cm}{\centering Standard degenerate ring domain} & \begin{tabular}{c}
                \begin{tikzpicture}
		    \node at (0,0) {$\bullet$};
                \draw[\closedtraj] (0,0) circle (4pt);
                \draw[\closedtraj] (0,0) circle (7pt);
                \draw[\closedtraj] (0,0) circle (10pt);
                \draw[\closedtraj] (0,0) circle (13pt);
                \draw[\saddletraj] (0,0) circle (16pt);
                \node at (0,0.56) {$\bm{\times}$};
		    \end{tikzpicture}
                \end{tabular}
                 & $\begin{tikzcd}\bullet \ar[loop left]\end{tikzcd}$ & 0 & N/A & $\qdl{-q^{-1/2}t}^{-1}$ \\
		\hline
		\parbox{2.9cm}{\centering Toral degenerate ring domain} & \begin{tabular}{c}
                \begin{tikzpicture}
                \begin{scope}[yscale=cos(62),scale=1.1]
                \node at (0,1.75) {};
                \draw[double distance=7mm] (0:0.9) arc (0:181:0.9);
                \draw[double distance=7mm] (180:0.9) arc (180:361:0.9);    
                \end{scope}

                \begin{scope}[scale=0.6,shift={(-1.65,0)},font=\tiny]
                \filldraw[white] (0,0) circle (16pt);
                \node at (0,0) {$\bullet$};
                \draw[\closedtraj] (0,0) circle (4pt);
                \draw[\closedtraj] (0,0) circle (8pt);
                \draw[\closedtraj] (0,0) circle (12pt);
                \draw[\saddletraj] (0,0) circle (16pt);
                \node at (0,0.56) {$\bm{\times}$};
                \node at (0,-0.56) {$\bm{\times}$};
                \end{scope}

                \begin{scope}[scale=0.6,shift={(1.65,0)},font=\tiny]
                \filldraw[white] (0,0) circle (16pt);
                \end{scope}
		    \end{tikzpicture}
                \end{tabular} & $\begin{tikzcd}[ampersand replacement=\&] \overset{1}{\bullet} \ar[r,shift left] \& \overset{2}{\bullet} \ar[l,shift left] \end{tikzcd}$ & 0 & $\vartheta = \scph{S_1} = \scph{S_2}$ & $\qdl{t}^{2}\qdl{-q^{-1/2}t^{2}}^{-1}$ \\
            \hline
		\parbox{2.6cm}{\centering Type III degenerate ring domain} &
            \begin{tabular}{c}
                 \begin{tikzpicture}[scale=1.2]
                    \node at (0,0.6) {};
                    \coordinate(sp1) at (130:0.4);
                    \node at (sp1) {\scriptsize $\bm{\otimes}$};
                    \coordinate(sp2) at (50:0.4);
                    \node at (sp2) {\scriptsize $\bm{\otimes}$};
                    \coordinate(dp) at (270:0.435);
                    \node at (dp) {$\bullet$};
                    \draw[\saddletraj] (sp1) -- (sp2);
                    \draw[\closedtraj] (120:0.5) to [out=10,in=170] (60:0.5);
                    \draw[\closedtraj] (150:0.5) to [out=-20,in=200] (30:0.5);
                    \draw[\closedtraj] (160:0.5) to [out=-10,in=190] (20:0.5);
                    \draw[\closedtraj] (170:0.5) to [out=-5,in=185] (10:0.5);
                    \draw[\closedtraj] (180:0.5) -- (0:0.5);
                    \draw[\closedtraj] (190:0.5) to [out=5,in=175] (-10:0.5);
                    \draw[\closedtraj] (200:0.5) to [out=10,in=170] (-20:0.5);
                    \draw[\closedtraj] (210:0.5) to [out=15,in=165] (-30:0.5);
                    \draw[\closedtraj] (225:0.5) to [out=35,in=145] (-45:0.5);
                    \draw[\closedtraj] (240:0.5) to [out=90,in=90] (-60:0.5);
                    \draw[\closedtraj] ($(-90:0.445)+(-20:0.125)$) arc (-20:200:0.125);
                    \draw (0,0) circle (0.5cm);
                 \end{tikzpicture}
            \end{tabular}& $\begin{tikzcd} \bullet \ar[loop left,"a"] \ar[loop right,"b"]\end{tikzcd}$ & $a^3 + b^3$ & N/A & $\qdl{t}^4\qdl{-q^{-1/2}t^2}^{-1}$ \\
		\hline
		\parbox{2.8cm}{\centering Standard non-degenerate ring domain} & \begin{tabular}{c}
                \begin{tikzpicture}
                \draw[\saddletraj] (0,0) circle (7pt);
                \draw[\closedtraj] (0,0) circle (10pt);
                \draw[\closedtraj] (0,0) circle (13pt);
                \draw[\saddletraj] (0,0) circle (16pt);
                \node at (0,0.56) {$\bm{\times}$};
                \node at (0,-0.236) {$\bm{\times}$};
		    \end{tikzpicture}
                \end{tabular} & $\begin{tikzcd}[ampersand replacement=\&] \overset{1}{\bullet} \ar[r,shift left] \ar[r,shift right] \& \overset{2}{\bullet} \end{tikzcd}$ & 0 & \begin{tabular}{c}$\vartheta = \scph{S_1 \oplus S_2}$\\ $\scph{S_2} < \scph{S_1}$\end{tabular} & $\qdl{-q^{1/2}t}^{-1}\qdl{-q^{-1/2}t}^{-1}$ \\
            \hline
		\parbox{2.6cm}{\centering Toral non-degenerate ring domain} & 
            \begin{tabular}{c}
                \begin{tikzpicture}

                \begin{scope}[yscale=cos(63),scale=1.1]
                \node at (0,1.75) {};
                \draw[double distance=7mm] (0:0.9) arc (0:181:0.9);
                \draw[double distance=7mm] (180:0.9) arc (180:361:0.9);    
                \end{scope}

                \begin{scope}[scale=0.6,shift={(-1.65,0)},font=\tiny]
                \filldraw[white] (0,0) circle (16pt);
                \draw[\saddletraj] (0,0) circle (7pt);
                \draw[\closedtraj] (0,0) circle (10pt);
                \draw[\closedtraj] (0,0) circle (13pt);
                \draw[\saddletraj] (0,0) circle (16pt);
                \node at (0,0.56) {$\bm{\times}$};
                \node at (0,-0.56) {$\bm{\times}$};
                \node at (-0.236,0) {$\bm{\times}$};
                \end{scope}

                \begin{scope}[scale=0.6,shift={(1.65,0)},font=\tiny]
                \filldraw[white] (0,0) circle (16pt);
                \end{scope}
                
		    \end{tikzpicture}
                \end{tabular} & $\begin{tikzcd}[ampersand replacement=\&] \overset{1}{\bullet} \ar[rr] \ar[dr] \&\& \overset{2}{\bullet} \\ \& \overset{3}{\bullet} \ar[ur] \& \end{tikzcd}$ & 0 & \begin{tabular}{c}$\vartheta = \scph{S_1 \oplus S_2} = \scph{S_3}$,\\ $\scph{S_2} < \scph{S_1}$\end{tabular} & $\qdl{t}^2\qdl{-q^{1/2}t^2}^{-1}\qdl{-q^{-1/2}t^2}^{-1}$ \\
            \hline
		\parbox{2.7cm}{\centering Type III non-degenerate ring domain} &
            \begin{tabular}{c}
            \begin{tikzpicture}
                \draw[\saddletraj] (-0.35,0) -- (0.35,0);
                \node at (-0.35,0) {\scriptsize $\bm{\otimes}$};
                \node at (0.35,0) {\scriptsize $\bm{\otimes}$};
                \draw[\closedtraj] (0,0) ellipse (0.65cm and 0.15cm);
                \draw[\closedtraj] (0,0) ellipse (0.75cm and 0.25cm);
                \draw[\closedtraj] (0,0) ellipse (0.85cm and 0.35cm);
                \draw[\closedtraj] (0,0) ellipse (0.95cm and 0.45cm);
                \draw[\saddletraj] (0,0) ellipse (1.05cm and 0.55cm);
                \node at (0,-0.55) {$\bm{\times}$};
                \node at (0,0.75) {};
            \end{tikzpicture}
            \end{tabular}
            & $\begin{tikzcd}[ampersand replacement=\&] \overset{1}{\bullet} \ar[loop left,"a"] \ar[r] \& \overset{2}{\bullet} \ar[loop right,"b"] \end{tikzcd}$ & $a^3 + b^3$ & \begin{tabular}{c}$\vartheta = \scph{S_1 \oplus S_2}$ \\ $\scph{S_2} < \scph{S_1}$\end{tabular} & $\qdl{t}^4\qdl{-q^{1/2}t^{2}}^{-1}\qdl{-q^{-1/2}t^{2}}^{-1}$ \\
		\hline
		\end{tabular}
		\bigskip
		\captionsetup[table]{hypcap=false}
        \captionof{table}{The different configurations of finite-length trajectories and their associated DT generating series.}\label{table}%
        \renewcommand{\arraystretch}{1}%
    \end{landscape}
    \restoregeometry
    \clearpage%
}

The DT invariants for all but one of the quivers with potential that arise have already been computed elsewhere in the literature \cite{dm_loop,haiden,mr}.
The one that has not is the quiver \[ 
\begin{tikzcd}
    \overset{1}{\bullet} \ar[loop left,"a"] \ar[r] & \overset{2}{\bullet} \ar[loop right,"b"]
\end{tikzcd}
\] with potential $a^3 + b^3$, under a stability condition such that the simple at $1$ has a greater phase than the simple at~$2$.
We refer to this as the barbell quiver with potential.
This case has both a non-trivial potential and a non-trivial stability condition, whereas for the other cases at least one of either the potential or stability condition is trivial.
We are not aware of other computations in the literature of DT invariants for a quiver with both non-trivial potential and non-trivial stability for a non-symmetric quiver, such as the one above.
The fact that the quiver is asymmetric also means that we obtain a quantum dilogarithm identity.
We tackle this difficult case using techniques from the representation theory of gentle algebras to understand the semistable representations.
It does not seem that such techniques have previously been used for calculating DT invariants.

We also prove some technical results the 3-Calabi--Yau triangulated category~$\mathcal{D}$.
We prove that, under simple tilting of hearts, the $A_\infty$-enhancement $\mathcal{D}_\infty$ of this category is replaced by a quasi-equivalent $A_\infty$-enhancement.
We also prove that simple tilting of hearts preserves `orientation data', part of the input data for DT theory in the approach of Kontsevich and Soibelman \cite{ks_stability}.
In his thesis \cite{davison_thesis}, Davison constructs a canonical choice of orientation data for quivers with potential and shows that it is invariant under simple tilting of the heart in the case where the simple objects form a cluster collection.
Unfortunately, our collections of simple objects do not always form cluster collections, but we show in Section~\ref{sect:inv} that the orientation data from \cite{davison_thesis} is nevertheless invariant under simple tilting.

\textbf{Organisation.}
This paper is structured as follows.
We attempt to be relatively self-contained by providing substantial background.
We cover quadratic differentials in Section~\ref{sect:qd}, followed by 3-Calabi--Yau categories and stability conditions in Section~\ref{sect:3cy_stab}, with background on DT theory in Section~\ref{sect:dt}.
In Section~\ref{sect:inv} we prove invariance of the $A_\infty$-enhancement and orientation data under simple tilting.
In Section~\ref{sect:prelim}, we describe the different possible configurations of finite-length trajectories.
The main section of the paper is Section~\ref{sect:main}, where we describe the categories of semistable objects and DT invariants corresponding to the finite-length trajectories.
Two of these cases require more extensive calculations, which are postponed to Section~\ref{sect:spiral}.
Finally, Section~\ref{sect:db} is concerned with computing the DT invariants for the barbell quiver with potential.

{\bf Acknowledgements.}
We thank Matt Booth, Tom Bridgeland, Merlin Christ, Ben Davison, Fabian Haiden, Kohei Iwaki, Kento Osuga, David Pauksztello, Yu Qiu, and Markus Reineke for helpful exchanges.
OK was supported by the Leverhulme Research Project Grant `Extended Riemann--Hilbert Correspondence, Quantum Curves, and Mirror Symmetry'.
NJW was supported by EPSRC grant EP/V050524/1.
Part of this research was conducted while the authors were visiting the Okinawa Institute of Science and Technology (OIST) through the Theoretical Sciences Visiting Program (TSVP).
Finally, we thank the University of Tokyo and JSPS, where this project began whilst the authors were JSPS fellows under the respective mentorship of Kohei Iwaki and Osamu Iyama, whom we also thank.

\section{Quadratic differentials}\label{sect:qd}

A (\emph{meromorphic}) \emph{quadratic differential} $\varphi$ on a Riemann surface $X$ is a meromorphic section of $\ctbs$, the square of the canonical bundle~$\ctb$.
Quadratic differentials appear in a wide range of applications, from Teichm\"uller theory \cite{gardiner} and flat surfaces \cite{mz,zorich} to WKB analysis \cite{ini,inii} and orthogonal polynomials \cite{ammt,mmo}.
The standard reference for the geometry of quadratic differentials is \cite{strebel}.

\subsection{Metric, foliation, and domain decomposition}

A quadratic differential $\varphi$ induces a number of structures on the Riemann surface $X$, as we now examine.

\subsubsection{Critical points}\label{sect:qd:crit_points}

\emph{Critical points} of $\varphi$ consist of zeros and poles. The set of critical points of $\varphi$ is denoted $\crit(\varphi)$.
The \emph{infinite critical points} of $\varphi$ consist of the poles of order $\geqslant 2$ and are denoted $\infcrit(\varphi)$.
The \emph{finite critical points} $\crit(\varphi) \setminus \infcrit(\varphi)$ consist of the zeros and simple poles.
Points in $X\setminus \crit(\varphi)$ are called \emph{regular points}.
One can also refer to zeros of order~$k$ as \emph{critical points of order~$k$} and poles of order~$k$ as \emph{critical points of order~$-k$}.
It is sometimes convenient to think of regular points as criticial points \emph{of order~$0$}, though of course they are not critical points at all.

\begin{convention}
In figures, we illustrate simple zeros of a quadratic differential by `$\times$', simple poles by `$\otimes$', and poles of order $\geqslant 2$ by `$\bullet$'.
\end{convention}

We will be concerned with the following class of quadratic differentials in this paper.

\begin{definition}\label{def:gmn}
An \emph{infinite GMN differential} is a quadratic differential $\varphi$ on a compact connected Riemann surface $X$ such that all zeros of $\varphi$ are simple, $\varphi$ has at least one infinite critical point, and $\varphi$ has at least one finite critical point.
\end{definition}

We restrict to this class of quadratic differentials because they are the ones which induce suitable triangulations of the Riemann surface $X$, producing categories which are 3-Calabi--Yau, see \cite[Proposition~7.6]{chq}.

\begin{remark}
Infinite GMN differentials are a special case of the GMN differentials from \cite[Definition~2.1]{bs}, the difference being that GMN differentials are only required to have at least one (possibly simple) pole, rather than at least one infinite critical point.
\end{remark}

The Riemann--Roch theorem provides restrictions on the poles and zeros a quadratic differential $\varphi$ may have, according to the genus of $X$.
Indeed, suppose that $\varphi$ is an infinite GMN differential on a Riemann surface $X$ of genus $g$ with $d$ poles of respective order $m_1, m_2, \dots, m_d$ and $l$ simple zeros.
We refer to the multiset $m = \{m_1, m_2, \dots, m_d\}$ as the \emph{polar type} of $\varphi$.
By the Riemann--Roch theorem, we must have (see~\cite[Section~5.2]{bs})
\begin{equation}\label{eq:rr}
    l = 4g - 4 + \sum_{i = 1}^d m_i.
\end{equation}

\subsubsection{Metric and foliation}\label{sect:qd:foliation+metric}

The quadratic differential $\varphi$ determines two structures on $X \setminus \crit(\varphi)$, namely a flat metric known as the $\varphi$-metric and a foliation known as the horizontal foliation.
At a point $x_0 \in X \setminus \crit(\varphi)$, there is a distinguished local coordinate~$w$ \cite[Section~5.1]{strebel} defined (up to sign) by 
\begin{equation} \label{eq:distinguished-coordinate}
w(x) := \int^{x}_{x_0} \sqrt{\varphi}.
\end{equation}
In the coordinate~$w$, $\varphi$ is expressed locally as $\varphi(w) = dw \otimes dw$.
The \emph{$\varphi$-metric} is defined locally on $X \setminus \crit(\varphi)$ by pulling back the Euclidean metric on $\mathbb{C}$ using the distinguished local coordinate~$w$~\eqref{eq:distinguished-coordinate}.
By taking the metric completion, the $\varphi$-metric extends to $X \setminus \infcrit(\varphi)$, see \cite[Section~2.3]{chq}.

We define the \emph{horizontal foliation} on $X \setminus \crit(\varphi)$ given by the lines $\im w = \lambda$ for $\lambda \in \mathbb{R}$ any constant \cite[Section~2.1]{bs}.
An \emph{arc} in $X$ is a smooth path $\gamma \colon I \to X \setminus \crit(\varphi)$ defined on an interval $I \subset \mathbb{R}$.
A \emph{straight arc of phase $\vartheta$} is an arc which makes a constant angle $\pi\vartheta$ with the horizontal foliation \cite[Section~3.1]{bs}.
That is, straight arcs are locally pullbacks of straight lines in the $w$-plane by \eqref{eq:distinguished-coordinate}.
Note that the phase of a straight arc is a well-defined element of $\mathbb{R}/\mathbb{Z}$; our convention will be to choose representatives lying in~$(0,1]$. 
We denote the phase of a straight arc $\gamma$ by $\qdph{\gamma} \in (0, 1]$.
A \emph{horizontal} straight arc is one which has phase~$1$ with respect to the foliation.
A straight arc is \emph{maximal} if it is not the restriction of a straight arc on a larger interval.
A \emph{connection} is a maximal straight arc; a \emph{trajectory} is a horizontal connection.
Given $0 \leqslant \vartheta < 1$, one can define the \emph{rotation} $\varphi^{(\vartheta)}:=e^{-2i\pi\vartheta}\varphi$ of $\varphi$.
Connections of phase $\vartheta$ for $\varphi$ are trajectories for $\varphi^{(\vartheta)}$.
This gives two possible different perspectives: either one can consider connections of phase $\vartheta$ in~$\varphi$, or one can consider trajectories in the rotation $\varphi^{(\vartheta)}$.
We will find both perspectives useful in this paper.

\subsubsection{Different types of trajectories}\label{sect:qd:traj_type}

Every trajectory of $\varphi$ falls into exactly one of the following types \cite{strebel}, \cite[Section~3.4]{bs}:
\begin{enumerate}
    \item \emph{saddle trajectories}, which approach finite critical points at both ends;
    \item \emph{separating trajectories}, which approach a finite critical point at one end and an infinite critical point at the other;
    \item \emph{generic trajectories}, which approach infinite critical points at both ends;
    \item \emph{closed trajectories}, which are simple closed curves in $X \setminus \crit(\varphi)$;
    \item \emph{recurrent trajectories}, which are \emph{recurrent}, meaning that the set of limit points along the trajectory in at least one direction is equal to the closure of the trajectory in $X$ \cite[Section~10.1]{strebel}.
\end{enumerate}

We are primarily interested in trajectories with at least one endpoint on a finite critical point; we call these \emph{critical trajectories}.
These consist of the saddle trajectories and separating trajectories. Since only finitely many horizontal arcs emerge from each finite critical point, the number of critical trajectories is finite.
The set of critical trajectories (with a certain orientation) is often called the \emph{spectral network $\mathcal{W}_0({\varphi})$ of $\varphi$}.
In the literature on WKB analysis, the critical trajectories are called {\em Stokes curves}.
Amongst the different types of trajectories, the ones which have finite length in the $\varphi$-metric are precisely the saddle trajectories and closed trajectories, which we therefore refer to as \emph{finite-length trajectories}.

\begin{definition}\label{def:traj_types}
In this paper, it will be important to distinguish the following diferent types of saddle connections of an infinite GMN differential~$\varphi$.
\begin{itemize}
\item A \emph{type~I} saddle connection connects two distinct zeros of $\varphi$. 
\item A \emph{type~II} saddle connection connects a zero and a simple pole of $\varphi$.
\item A \emph{type~III} saddle connection connects two simple poles of $\varphi$, which are then necessarily distinct.
\item A \emph{closed} saddle connection begins and ends at the same zero of $\varphi$.
\end{itemize}
We call $\varphi$ \emph{saddle-free} if it contains no saddle trajectories.
\end{definition}

It is well-known that there is a simple ``normal form'' \cite[Theorems~6.1, 6.2, 6.3, and~6.4]{strebel} for $\varphi$ around the different types of critical points and around regular points, from which one can deduce the local behaviour of trajectories.
We refer the reader to \cite{bs,ik,strebel} for numerous illustrations and additional details of the foliations and spectral networks.

\subsubsection{Domains}\label{sect:qd:domains}

It is well known that each connected component of the complement
$X \setminus {\mathcal W}_{0}(\varphi)$
is one of the following \cite[Section~3.4]{bs}.
Here, quadratic differentials $\varphi_{1}$ and $\varphi_{2}$ on respective Riemann surfaces $X_{1}$ and $X_{2}$ are \emph{equivalent} if there is a biholomorphism $f \colon X_{1} \to X_{2}$ such that $f^{\ast}(\varphi_{2}) = \varphi_{1}$.
\begin{enumerate}
\item 
A {\em half plane} is equivalent to a domain
$\{ w \in \mathbb{C} \mid \im w > \lambda \}$
for some $\lambda \in {\mathbb R}$, equipped with the quadratic differential $dw^2$,  via the local coordinate \eqref{eq:distinguished-coordinate}.
Around a pole of $\varphi$ of order $\geqslant 3$ there are always $k - 2$ half planes, and this is the only situation in which half planes appear.

\item
A {\em horizontal strip} is equivalent to a domain $\{ w \in \mathbb{C} \mid \lambda_{1} < \im w < \lambda_{2} \}$ for some $\lambda_{1}, \lambda_{2} \in {\mathbb R}$  with $\lambda_{1} < \lambda_{2}$, equipped with the quadratic differential $dw^2$, 
via the local coordinate~\eqref{eq:distinguished-coordinate}. 

\item 
A {\em ring domain} is a connected domain consisting of points 
$x \in X \setminus \crit(\varphi)$ such that the trajectory 
passing through $x$ is a closed trajectory. 
It is equivalent to $\{ z \in \mathbb{C} \mid \lambda_{1} < |z| < \lambda_{2} \}$ for some $\lambda_{1}, \lambda_{2} \in {\mathbb R}$ with $\lambda_{1} < \lambda_{2}$, equipped with the quadratic differential $\nu dz^2 / z^2$ 
for some $\nu \in \mathbb{C}^\ast$. 
We call a ring domain {\em degenerate} if $\lambda_{1} = 0$, and 
{\em non-degenerate} otherwise.

\item 
A {\em spiral domain} is defined to be the interior of 
the closure of a recurrent trajectory.
\end{enumerate}
The canonical example of a spiral domain is a torus with an irrational foliation.
By certain surgeries, this example can be attached to other surfaces; see \cite[end of Section~6.4]{bs} for more details.
Spiral domains will be important in Section~\ref{sect:spiral}.

The boundaries of half planes and horizontal strips consist of unions of saddle trajectories and separating trajectories, whereas the boundaries of non-degenerate ring domains and spiral domains only consist of saddle trajectories.
In the case of a degenerate ring domain, one boundary is a union of saddle trajectories, and the other boundary is a double pole.

\subsubsection{Geodesics}\label{sect:qd:geodesics}

Recall from Section~\ref{sect:qd:foliation+metric} that $\varphi$ induces a metric on $\nocrit{X} := X \setminus \infcrit(\varphi)$.
A \emph{$\varphi$-geodesic} is a locally rectifiable path $\gamma \colon [0, 1] \to \nocrit{X}$ which is locally length-minimising \cite[Definition~5.5.1]{strebel}.
It is not necessarily assumed that $\gamma$ is the shortest path between its endpoints.

It is immediate that a straight arc is a geodesic, and that away from $\crit(\varphi)$, geodesics are straight arcs.
In particular, which arcs are geodesics is invariant under rotation of the differential, and saddle connections are geodesics.

The following lemma describes the behaviour of geodesics around a simple zero. It is essentially the statement of \cite[Theorem~8.1]{strebel}, but we briefly indicate the proof since our formulation is different.

\begin{lemma}[{\cite[Theorem~8.1]{strebel}, \cite[Section~3.7]{bs}}]\label{lem:geodesics}
Let $x_1$ and $x_2$ be points which lie in or on the boundary of two respective domains $D_1$ and $D_2$ incident to a zero $z$ of~$\varphi$, such that $D_2$ is immediately anti-clockwise around the zero $z$ from~$D_1$.
Denote by $\gamma_{1}$ and $\gamma_{2}$ the straight arcs from $x_1$ and $x_2$ respectively to~$z$.
There is then a unique geodesic from $x_1$ to $x_2$, which is described as follows.
\begin{enumerate}[label=\textup{(}\arabic*\textup{)}]
    \item If $\qdph{\gamma_{1}} > \qdph{\gamma_{2}}$, the geodesic from $x_1$ to $x_2$ is composed of a single straight arc.
    \item If $\qdph{\gamma_{1}} < \qdph{\gamma_{2}}$, the geodesic from $x_1$ to $x_2$ is composed of the two straight arcs $\gamma_{1}$ and $\gamma_{2}$.
    \item If $\qdph{\gamma_{1}} = \qdph{\gamma_{2}}$, the geodesic from $x_1$ to $x_2$ is the single straight arc which is the union of $\gamma_{1}$ and $\gamma_{2}$.
\end{enumerate}
\end{lemma}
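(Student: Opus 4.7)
The plan is to work locally near the zero $z$ and exploit the fact that the $\varphi$-metric on a punctured neighbourhood of $z$ is a flat cone metric with cone angle $3\pi$. By Proposition~\ref{prop:normal_form}, we may take $\varphi = z \, dz^{2}$ near $z$; the distinguished coordinate $w = \tfrac{2}{3} z^{3/2}$ identifies the $\varphi$-metric with $|dw|^{2}$, and since a full turn in $z$ corresponds to a rotation by $3\pi$ in $w$, the point $z$ becomes a cone point of total angle $3\pi$. In cone coordinates, the three horizontal trajectories emerging from $z$ are the rays at cone angles $0, \pi, 2\pi$, and they divide a small punctured disc around $z$ into three sectors $D_{1}, D_{2}, D_{3}$ (anti-clockwise), each isometric to a Euclidean half-disc.

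Next, I would set up the dictionary between phases and cone angles. Parametrising each $x_{j}$ by its cone angle $\phi_{j}$ and radial distance from $z$, after relabelling we may assume $\phi_{1} \in (0, \pi]$ with $x_{1} \in \overline{D_{1}}$ and $\phi_{2} \in (\pi, 2\pi]$ with $x_{2} \in \overline{D_{2}}$, so that each $\gamma_{j}$ lies along the ray at cone angle $\phi_{j}$. Since the horizontal foliation is tangent precisely to the directions at cone angles $0, \pi, 2\pi$, the phases are
\[
\qdph{\gamma_{1}} = \tfrac{\phi_{1}}{\pi} \in (0,1], \qquad \qdph{\gamma_{2}} = \tfrac{\phi_{2} - \pi}{\pi} \in (0,1].
\]
A direct comparison yields $\qdph{\gamma_{1}} > \qdph{\gamma_{2}} \iff \phi_{2} - \phi_{1} < \pi$, with analogous equivalences for equality and the reverse inequality. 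The three cases of the lemma thus translate into whether the cone angle $\alpha := \phi_{2} - \phi_{1} \in (0, 2\pi)$ subtended at the apex on the $D_{1}$--$D_{2}$ side is less than, equal to, or greater than $\pi$.

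The core of the argument is then the classification of locally length-minimising paths on a flat cone of total angle $3\pi$, restricted to the side on which the subtended angle is $\alpha$. There are two candidates: (a) a straight Euclidean segment disjoint from the apex, which exists as an embedded arc precisely when $\alpha < \pi$ and is automatically a geodesic when it does; and (b) the broken path $\gamma_{1} \cup \gamma_{2}$ through the apex, separating an angular region of $\alpha$ from one of $3\pi - \alpha$ at $z$. For (b) to be locally length-minimising at $z$, both of these angles must be at least $\pi$; the condition $3\pi - \alpha \geq \pi$ is automatic since $\alpha < 2\pi$, so (b) is a geodesic if and only if $\alpha \geq \pi$. Thus $\alpha < \pi$ forces the unique geodesic to be (a), giving case~(1); $\alpha > \pi$ forces it to be (b), giving case~(2); and at $\alpha = \pi$ the two candidates coincide as a single straight arc through $z$, giving case~(3). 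The main obstacle is to justify the cone geodesic criterion rigorously, namely that a path through a cone point is locally length-minimising precisely when both angles it separates are at least $\pi$; this follows from triangle-inequality comparisons obtained by cutting off the corner on whichever side has angle $< \pi$, but requires some care to phrase cleanly.
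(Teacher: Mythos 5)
Your argument is correct, but it takes a different route from the paper. The paper's proof rotates the differential $e^{2i\pi\vartheta}\varphi$ until the smaller of $1-\qdph{\gamma_1}$, $1-\qdph{\gamma_2}$ is used up, so that $x_1$ or $x_2$ lands on a separating trajectory, and then simply reads off which case of Strebel's Theorem~8.1 applies from the resulting picture (this rotation viewpoint is also the one reused later, e.g.\ in the non-degenerate ring domain arguments, where phases are compared by rotating connections towards horizontality). You instead prove the statement directly from the flat geometry at the simple zero: the normal form $z\,dz^2$ gives a cone of angle $3\pi$, the phase comparison $\qdph{\gamma_1}\gtrless\qdph{\gamma_2}$ translates into the subtended cone angle $\alpha=\phi_2-\phi_1$ being $\lessgtr\pi$, and the trichotomy follows from the standard facts that a chord avoiding the apex exists exactly when $\alpha<\pi$ and that a broken path through a cone point is locally minimising exactly when both angles it separates are at least $\pi$ (here $3\pi-\alpha\geqslant\pi$ is automatic). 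Your phase-to-angle dictionary and the resulting case analysis are accurate, and uniqueness can even be upgraded by noting that a cone of angle $\geqslant 2\pi$ is locally CAT(0). What your approach buys is self-containedness: it is essentially a proof of the relevant case of Strebel's theorem rather than a citation of it. What it costs is a small amount of bookkeeping that you should make explicit: the points $x_1,x_2$ need not lie in a small coordinate disc around $z$, so the development argument should be carried out in the union of the two incident domains together with the separating trajectories between them --- this region is flat with $z$ as its only cone point, since the interiors of horizontal strips and half-planes contain no critical points --- and the uniqueness assertion is to be understood for geodesics staying in this region (the same implicit convention as in the paper's and Strebel's formulation). With that remark added, your cone-point criterion argument is a complete and valid substitute for the rotation argument.
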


\begin{proof}
Suppose that we gradually rotate $\varphi$, so that the separating trajectories emanating from $z$ gradually move clockwise and the phases of $\gamma_1$ and $\gamma_2$ gradually increase, until one of these phases becomes~$1$.
We illustrate the different cases in Figure~\ref{fig:stability_via_geodesics}, where the top row illustrates $\varphi$ before rotation and the bottom row illustrates $\varphi^{(\vartheta)}$, where $\vartheta$ is the lesser of $1 - \qdph{\gamma_1}$ and $1 - \qdph{\gamma_2}$.

If $\qdph{\gamma_{1}} > \qdph{\gamma_{2}}$, then, as one rotates, the phase of $\gamma_1$ will hit~$1$ before the phase of $\gamma_2$ does.
When the phase of $\gamma_1$ hits~$1$, $x_1$ will lie on a separating trajectory, so that $x_1$ and $x_2$ then lie in the closure of the same domain, as in the left-hand case of Figure~\ref{fig:stability_via_geodesics}.
There is then a straight arc directly between $x_1$ and $x_2$ lying in this domain.
On the other hand, if $\qdph{\gamma_{1}} < \qdph{\gamma_{2}}$, then $x_2$ will lie on a separating trajectory before $x_1$ does, as one rotates, as in the right-hand case of Figure~\ref{fig:stability_via_geodesics}.
We are then in the second case of \cite[Theorem~8.1]{strebel}, and the geodesic is given by the union of $\gamma_{1}$ and~$\gamma_{2}$.
In the final case, $x_1$ and $x_2$ will lie on separating trajectories simultaneously as one rotates, and $\gamma_{1} \cup \gamma_{2}$ forms a straight arc along the boundary of a domain, as in the middle case of Figure~\ref{fig:stability_via_geodesics}.
\end{proof}

\begin{figure}
    \[
    \begin{tikzpicture}[scale=0.9]

        \begin{scope}[shift={(4.5,0)}]
        \draw (0:0) -- (210:2);
        \draw (0:0) -- (90:2);
        \draw (0:0) -- (330:2);

        \draw[\conn, line width=0.55pt] (0:0) -- (20:1.5);
        \draw[\conn, line width=0.55pt] (0:0) -- (150:1.5);

        \draw[\generictraj] (87.5:2) to [out=270,in=150] (332.5:2);
        \draw[\generictraj] (82.5:1.9) to [out=280,in=140] (337.5:1.9);
        \draw[\generictraj] (77.5:1.8) to [out=290,in=130] (342.5:1.8);
        \draw[\generictraj] (70:1.7) to [out=300,in=120] (350:1.7);

        \draw[\generictraj] (207.5:2) to [out=30,in=270] (92.5:2);
        \draw[\generictraj] (202.5:1.9) to [out=40,in=260] (97.5:1.9);
        \draw[\generictraj] (197.5:1.8) to [out=50,in=250] (102.5:1.8);
        \draw[\generictraj] (190:1.7) to [out=60,in=240] (110:1.7);

        \draw[\generictraj] (327.5:2) to [out=150,in=30] (212.5:2);
        \draw[\generictraj] (322.5:1.9) to [out=160,in=20] (217.5:1.9);
        \draw[\generictraj] (317.5:1.8) to [out=170,in=10] (222.5:1.8);
        \draw[\generictraj] (310:1.7) to [out=180,in=0] (230:1.7);

        \node at (20:1.5) {$\bm{\circ}$};
        \node at (150:1.5) {$\bm{\circ}$};
        \node at (0,0) {$\bm{\times}$};

        \node at (5:1) {$\gamma_1$};
        \node at (165:1) {$\gamma_2$};

        \node at (20:1.5) [above] {$x_1$};
        \node at (150:1.5) [above] {$x_2$};
        
        \end{scope}

        \begin{scope}[shift={(4.5,-3.5)}]
        \draw (0:0) -- (150:2);
        \draw (0:0) -- (30:2);
        \draw (0:0) -- (270:2);

        \draw[\generictraj] (-87.5:2) to [out=-270,in=-150] (-332.5:2);
        \draw[\generictraj] (-82.5:1.9) to [out=-280,in=-140] (-337.5:1.9);
        \draw[\generictraj] (-77.5:1.8) to [out=-290,in=-130] (-342.5:1.8);
        \draw[\generictraj] (-70:1.7) to [out=-300,in=-120] (-350:1.7);

        \draw[\generictraj] (-207.5:2) to [out=-30,in=-270] (-92.5:2);
        \draw[\generictraj] (-202.5:1.9) to [out=-40,in=-260] (-97.5:1.9);
        \draw[\generictraj] (-197.5:1.8) to [out=-50,in=-250] (-102.5:1.8);
        \draw[\generictraj] (-190:1.7) to [out=-60,in=-240] (-110:1.7);

        \draw[\generictraj] (-327.5:2) to [out=-150,in=-30] (-212.5:2);
        \draw[\generictraj] (-322.5:1.9) to [out=-160,in=-20] (-217.5:1.9);
        \draw[\generictraj] (-317.5:1.8) to [out=-170,in=-10] (-222.5:1.8);
        \draw[\generictraj] (-310:1.7) to [out=-180,in=0] (-230:1.7);

        \draw[\conn, line width=0.55pt ] (0:0) -- (20:1.5);
        \draw[\conn, line width=0.55pt] (0:0) -- (150:1.5);

        \node at (20:1.5) {$\bm{\circ}$};
        \node at (150:1.5) {$\bm{\circ}$};
        \node at (0,0) {$\bm{\times}$};

        \node at (5:1) {$\gamma_1$};
        \node at (165:1) {$\gamma_2$};

        \node at (20:1.5) [right] {$x_1$};
        \node at (150:1.5) [above] {$x_2$};
        \end{scope}

        \node at (4.5,2.75) {$\qdph{\gamma_1} < \qdph{\gamma_2}$};
        \draw[dotted] (-2.25,-5.75) -- (-2.25,3.25);

        \begin{scope}[shift={(0,0)}]
        \draw (0:0) -- (210:2);
        \draw (0:0) -- (90:2);
        \draw (0:0) -- (330:2);

        \draw[\generictraj] (87.5:2) to [out=270,in=150] (332.5:2);
        \draw[\generictraj] (82.5:1.9) to [out=280,in=140] (337.5:1.9);
        \draw[\generictraj] (77.5:1.8) to [out=290,in=130] (342.5:1.8);
        \draw[\generictraj] (70:1.7) to [out=300,in=120] (350:1.7);

        \draw[\generictraj] (207.5:2) to [out=30,in=270] (92.5:2);
        \draw[\generictraj] (202.5:1.9) to [out=40,in=260] (97.5:1.9);
        \draw[\generictraj] (197.5:1.8) to [out=50,in=250] (102.5:1.8);
        \draw[\generictraj] (190:1.7) to [out=60,in=240] (110:1.7);

        \draw[\generictraj] (327.5:2) to [out=150,in=30] (212.5:2);
        \draw[\generictraj] (322.5:1.9) to [out=160,in=20] (217.5:1.9);
        \draw[\generictraj] (317.5:1.8) to [out=170,in=10] (222.5:1.8);
        \draw[\generictraj] (310:1.7) to [out=180,in=0] (230:1.7);

        \draw[\conn, line width=0.55pt] (0:0) -- (30:1.5);
        \draw[\conn, line width=0.55pt] (0:0) -- (150:1.5);

        \node at (30:1.5) {$\bm{\circ}$};
        \node at (150:1.5) {$\bm{\circ}$};
        \node at (0,0) {$\bm{\times}$};

        \node at (15:1) {$\gamma_1$};
        \node at (165:1) {$\gamma_2$};

        \node at (30:1.5) [above] {$x_1$};
        \node at (150:1.5) [above] {$x_2$};
        \end{scope}
        
        \begin{scope}[shift={(0,-3.5)}]
        \draw (0:0) -- (270:2);
        \draw (0:0) -- (150:2);
        \draw (0:0) -- (30:2);

        \draw[\generictraj] (-87.5:2) to [out=-270,in=-150] (-332.5:2);
        \draw[\generictraj] (-82.5:1.9) to [out=-280,in=-140] (-337.5:1.9);
        \draw[\generictraj] (-77.5:1.8) to [out=-290,in=-130] (-342.5:1.8);
        \draw[\generictraj] (-70:1.7) to [out=-300,in=-120] (-350:1.7);

        \draw[\generictraj] (-207.5:2) to [out=-30,in=-270] (-92.5:2);
        \draw[\generictraj] (-202.5:1.9) to [out=-40,in=-260] (-97.5:1.9);
        \draw[\generictraj] (-197.5:1.8) to [out=-50,in=-250] (-102.5:1.8);
        \draw[\generictraj] (-190:1.7) to [out=-60,in=-240] (-110:1.7);

        \draw[\generictraj] (-327.5:2) to [out=-150,in=-30] (-212.5:2);
        \draw[\generictraj] (-322.5:1.9) to [out=-160,in=-20] (-217.5:1.9);
        \draw[\generictraj] (-317.5:1.8) to [out=-170,in=-10] (-222.5:1.8);
        \draw[\generictraj] (-310:1.7) to [out=-180,in=0] (-230:1.7);

        \draw[\conn, line width=0.55pt] (0:0) -- (30:1.5);
        \draw[\conn, line width=0.55pt] (0:0) -- (150:1.5);

        \node at (30:1.5) {$\bm{\circ}$};
        \node at (150:1.5) {$\bm{\circ}$};
        \node at (0,0) {$\bm{\times}$};

        \node at (15:1) {$\gamma_1$};
        \node at (165:1) {$\gamma_2$};

        \node at (30:1.5) [above] {$x_1$};
        \node at (150:1.5) [above] {$x_2$};
        \end{scope}

        \node at (0,2.75) {$\qdph{\gamma_1} = \qdph{\gamma_2}$};
        \draw[dotted] (2.25,-5.75) -- (2.25,3.25);

        \begin{scope}[shift={(-4.5,0)}]
        \draw (0:0) -- (210:2);
        \draw (0:0) -- (90:2);
        \draw (0:0) -- (330:2);

        \draw[\generictraj] (87.5:2) to [out=270,in=150] (332.5:2);
        \draw[\generictraj] (82.5:1.9) to [out=280,in=140] (337.5:1.9);
        \draw[\generictraj] (77.5:1.8) to [out=290,in=130] (342.5:1.8);
        \draw[\generictraj] (70:1.7) to [out=300,in=120] (350:1.7);

        \draw[\generictraj] (207.5:2) to [out=30,in=270] (92.5:2);
        \draw[\generictraj] (202.5:1.9) to [out=40,in=260] (97.5:1.9);
        \draw[\generictraj] (197.5:1.8) to [out=50,in=250] (102.5:1.8);
        \draw[\generictraj] (190:1.7) to [out=60,in=240] (110:1.7);

        \draw[\generictraj] (327.5:2) to [out=150,in=30] (212.5:2);
        \draw[\generictraj] (322.5:1.9) to [out=160,in=20] (217.5:1.9);
        \draw[\generictraj] (317.5:1.8) to [out=170,in=10] (222.5:1.8);
        \draw[\generictraj] (310:1.7) to [out=180,in=0] (230:1.7);

        \draw[\conn, line width=0.55pt] (140:1.5) to [out=-20,in=200] (30:1.5);

        \node at (30:1.5) {$\bm{\circ}$};
        \node at (140:1.5) {$\bm{\circ}$};
        \node at (0,0) {$\bm{\times}$};

        \node at (30:1.5) [above] {$x_1$};
        \node at (140:1.5) [above] {$x_2$};
        \end{scope}

        \begin{scope}[shift={(-4.5,-3.5)}]
        \draw (0:0) -- (270:2);
        \draw (0:0) -- (150:2);
        \draw (0:0) -- (30:2);

        \draw[\generictraj] (-87.5:2) to [out=-270,in=-150] (-332.5:2);
        \draw[\generictraj] (-82.5:1.9) to [out=-280,in=-140] (-337.5:1.9);
        \draw[\generictraj] (-77.5:1.8) to [out=-290,in=-130] (-342.5:1.8);
        \draw[\generictraj] (-70:1.7) to [out=-300,in=-120] (-350:1.7);

        \draw[\generictraj] (-207.5:2) to [out=-30,in=-270] (-92.5:2);
        \draw[\generictraj] (-202.5:1.9) to [out=-40,in=-260] (-97.5:1.9);
        \draw[\generictraj] (-197.5:1.8) to [out=-50,in=-250] (-102.5:1.8);
        \draw[\generictraj] (-190:1.7) to [out=-60,in=-240] (-110:1.7);

        \draw[\generictraj] (-327.5:2) to [out=-150,in=-30] (-212.5:2);
        \draw[\generictraj] (-322.5:1.9) to [out=-160,in=-20] (-217.5:1.9);
        \draw[\generictraj] (-317.5:1.8) to [out=-170,in=-10] (-222.5:1.8);
        \draw[\generictraj] (-310:1.7) to [out=-180,in=0] (-230:1.7);

        \draw[\conn, line width=0.55pt] (140:1.5) to [out=-20,in=200] (30:1.5);

        \node at (30:1.5) {$\bm{\circ}$};
        \node at (140:1.5) {$\bm{\circ}$};
        \node at (0,0) {$\bm{\times}$}; 

        \node at (30:1.5) [above] {$x_1$};
        \node at (140:1.5) [above] {$x_2$};
        \end{scope}

        \node at (-4.5,2.75) {$\qdph{\gamma_1} > \qdph{\gamma_2}$};

        \node at (-7.5,0.25) {\parbox{2cm}{\centering Before \\ rotation}};
        \node at (-7.5,-3.75) {\parbox{2cm}{\centering After \\ rotation}};
    
    \end{tikzpicture}
    \]
        \caption{Geodesics in the neighbourhood of a simple zero}    \label{fig:stability_via_geodesics}
\end{figure}
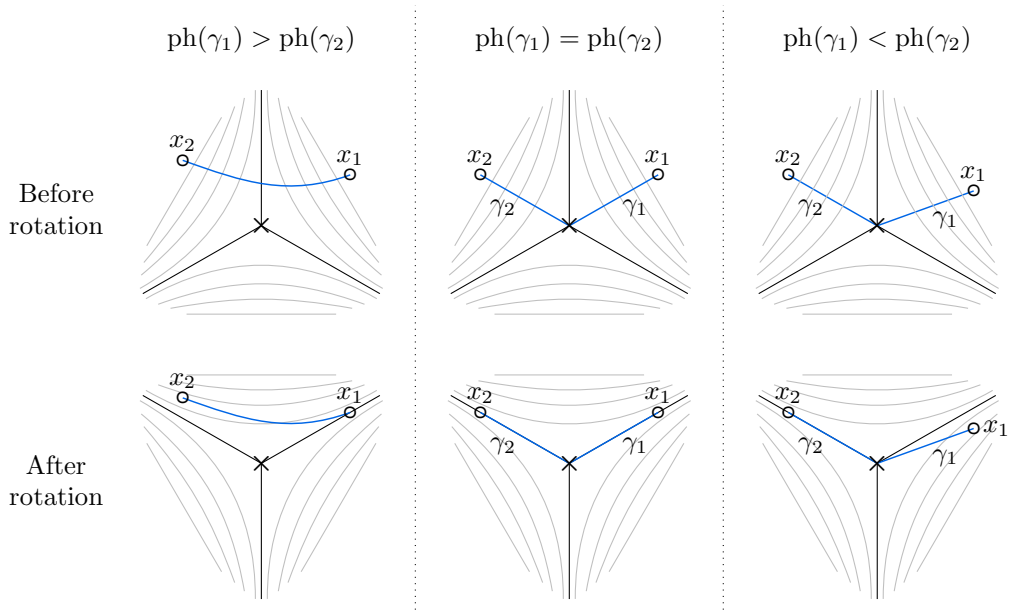

\subsection{Spectral cover and hat-homology}\label{sect:qd:hh}

Finite-length trajectories of $\varphi$ give homology classes for a certain curve known as the spectral cover of $X$.
We follow the exposition from \cite[Section~2.3]{bs}.
Suppose that $\varphi$ is an infinite GMN differential on a compact Riemann surface $X$ with poles of order $m_i$ at points $p_i \in X$.
Letting $E$ denote the divisor $E = \sum_i \ceil*{\frac{m_i}{2}}\cdot p_i$, one can view $\varphi$ as a holomorphic section $\overline{\varphi} \in H^0(X, \ctb(E)^{\otimes 2})$, which has simple zeros at both the zeros and odd order poles of $\varphi$.
Here $\ctb(E)$ is the tensor product of $\ctb$ with the line bundle associated to the divisor~$E$.
The \emph{spectral cover} of $X$ defined by $\varphi$ is the compact Riemann surface
\[\spc{X} := \{(x, l(x)) \st x \in X,\, l(x) \in L_x \text{ such that } l(x) \otimes l(x) = \overline{\varphi}(x)\} \subset L,\]
where $L$ is the total space of the line bundle $\ctb(E)$ and $L_x$ is its fibre over~$x$. 
The fact that the zeros of $\overline{\varphi}$ are simple means that the spectral cover is non-singular.
The spectral cover is the Riemann surface for the square root of~$\varphi$.

The projection $\pi \colon \spc{X} \to X$, $(x, l(x)) \mapsto x$ is a double cover which is branched precisely over the zeros and odd order poles of $\varphi$. We therefore have that $\spc{X}$ is connected, since finite critical points must be branch points, and infinite GMN differentials must have at least one finite critical point.

There is also an involution $\sigma\colon \spc{X} \to \spc{X}$, $(x, l(x)) \mapsto (x, -l(x))$, which gives an induced action on homology.
Letting $\spct{X} := \pi^{-1}(\nocrit{X})$, the \emph{hat-homology group} of $\varphi$ is then \[\hh{\varphi} := \{\gamma \in H_1(\spct{X}; \mathbb{Z}) \st \sigma_\ast (\gamma) = - \gamma\}.\]

\subsubsection{Standard saddle classes}\label{sect:qd:ssc}

Finite-length connections give us particular hat-homology classes, which will be of interest to us.
Given an infinite GMN differential $\varphi$ with a finite-length connection~$\gamma$, we can associate a homology class $\hhc{\gamma} \in \hh{\varphi}$ represented by the closed cycle on $\spct{X}$ obtained as the preimage of the connection $\gamma$ under $\pi \colon \spct{X} \to X$.

This homology class $\hhc{\gamma}$ is then oriented as follows.
As in \cite[Section~2.3]{bs}, there is a certain meromorphic $1$-form $\psi$ on $\spc{X}$ such that the inverse image of the horizontal foliation under $\pi$ is given by the lines $\im \widetilde{w} = \lambda$ for constant $\lambda \in \mathbb{R}$, where $d\widetilde{w} = \psi$.
We orient $\hhc{\gamma}$ such that the evaluation of $\psi$ on the tangent vector to $\hhc{\gamma}$ should have a positive imaginary part if $\gamma$ is not horizontal, and a negative real part if $\gamma$ is horizontal.
In the case where $\gamma$ is a closed trajectory in a ring domain $R$, then we say that the ring domain $R$ has \emph{class~$\hhc{\gamma}$}.

Suppose that $\varphi$ is saddle-free, so that the boundary of every horizontal strip in the foliation consists of a union of separating trajectories.
Then every horizontal strip contains exactly one finite critical point in each of the connected components of its boundary.
There is a saddle connection $\gamma$ which crosses the strip from the finite critical point on one side to the finite critical point on the other; we call this a \emph{standard saddle connection} and call $\hhc{\gamma}$ the associated \emph{standard saddle class}.
We write $\hhspan{\varphi}$ for the $\mathbb{Z}$-span of the standard saddle classes in $\hh{\varphi}$.
It follows from \cite[Lemma~3.2]{bs} that the standard saddle classes are linearly independent over $\mathbb{Z}$, so this is a free abelian group of rank~$n$.
It is expected that $\hhspan{\varphi} \cong \hh{\varphi}$.

The \emph{central charge} of $\varphi$ is the group homomorphism $Z_\varphi \colon \hhspan{\varphi} \to \mathbb{C}$ given by \[Z_\varphi(\hhc{\gamma}) := \oint_{\hhc{\gamma}} \sqrt{\varphi}\] for $\hhc{\gamma} \in \hhspan{\varphi}$.
This is also called the \emph{period integral}, with $Z_{\varphi}(\hhc{\gamma})$ called \emph{periods}.
A consequence of the orientation of the standard saddle classes is that $\im Z_{\varphi}(\hhc{\gamma}) > 0$ for all standard saddle classes~$\hhc{\gamma}$.
For any finite-length connection $\gamma$, one can see that $\frac{1}{\pi}\arg Z_{\varphi}(\hhc{\gamma}) = \qdph{\gamma}$, since $\gamma$ has a constant angle of $\pi\qdph{\gamma}$.
In particular, $Z_\varphi(\hhc{\gamma}) \in \mathbb{R}$ if and only if $\gamma$ is horizontal.

\subsubsection{Pairings}\label{sect:qd:pairings}

There are two important pairings that we will need to consider to prove some results, following \cite[Section~2.4]{bs}.
To this end, we write \[\infpreim = \pi^{-1}(\infcrit(\varphi)),\] so that $\spct{X} = \spc{X} \setminus \infpreim$ recalling that $\pi \colon \spc{X} \to X$ is the projection from the spectral cover.
We have canonical maps of homology groups \[
H_{1}(\spct{X}; \mathbb{Z}) = H_{1}(\spc{X} \setminus \infpreim; \mathbb{Z}) \to H_{1}(\spc{X}; \mathbb{Z}) \to H_{1}(\spc{X}, \infpreim ; \mathbb{Z}).
\]
The intersection pairing on $H_1(\spc{X}; \mathbb{Z})$ is a non-degenerate, skew-symmetric pairing, and it induces a degenerate skew-symmetric pairing \[
\intp{-}{-}\colon H_1(\spct{X}; \mathbb{Z}) \times H_1(\spct{X}; \mathbb{Z}) \to \mathbb{Z},
\] which we also call the \emph{intersection pairing}.
The second relevant pairing for us is the non-degenerate pairing \[
\lef{-}{-} \colon H_1(\spc{X} \setminus \infpreim ; \mathbb{Z}) \times H_1(\spc{X}, \infpreim ; \mathbb{Z}) \to \mathbb{Z},
\] given by Lefschetz duality, which we refer to as the \emph{Lefschetz pairing}.

\subsection{Triangulations from foliations}\label{sect:qd:triangs}

Infinite GMN differentials give triangulations of marked bordered surfaces with orbifold points of order three, as we shall now explain.
This triangulation will then lead to the construction of a 3-Calabi--Yau triangulated category from the infinite GMN differential $\varphi$.

A \emph{marked bordered surface with orbifold points of order three} is a triple $\mbso$ where $\bs$ is a compact connected oriented surface with boundary, $\sfm \subset \bs$ is a finite set of \emph{marked points} of $\bs$ such that every boundary component contains at least one marked point, and $\bo$ is a set of points in the interior of $\bs$ which we call the set of \emph{orbifold points of order three}, or simply \emph{orbifold points}.
Marked points in the interior of $\bs$ are called \emph{punctures} and are denoted $\bp \subseteq \sfm$.

\begin{remark}
In \cite[Definition~2.3]{chq}, the notion of a \emph{weighted marked surface} is used instead.
Marked bordered surfaces with orbifold points of order three can be obtained from weighted marked surfaces in the case where all weights are $1$ or $-1$ by forgetting the singular points of weight $1$ and the grading structure.
The singular points of weight $-1$ then become the orbifold points.
\end{remark}

An \emph{arc} in $\mbso$ is a smooth curve connecting marked points in $\sfm$ whose interior lies in $\bs \setminus (\partial\bs \cup \sfm \cup \bo)$, which has no self-intersections, and which is not homotopic to an arc in $\partial \bs$ containing no marked points in its interior.
Two arcs in $\mbso$ are considered to be equivalent if they are related by a homotopy through a set of arcs.

A \emph{triangulation} $T$ of a marked bordered surface with orbifold points of order three $\mbso$ is a finite collection of arcs in $\bs \setminus \bo$ such that arcs only intersect in endpoints and the arcs of $T$ cut $\bs$ into triangles and monogons such that monogons contain exactly one point from~$\bo$.
Two distinct triangulations $T$ and $T'$ are related by a \emph{flip} if there are arcs $\alpha \in T$ and $\alpha' \in T'$ such that $T \setminus \{\alpha\} = T' \setminus \{\alpha'\}$.
For examples of flips, see Figures~\ref{fig:quad_mut}, ~\ref{fig:simp_pole_mut}, and~\ref{fig:2cycle}.

We refer to the closures of the connected components of the complement of the arcs of a triangulation as \emph{triangles}.
Triangles come in three different types, which are shown in Figure~\ref{fig:three_triangles}.
Note that we also consider monogons to be a type of triangle.
The edge of the self-folded triangle with two distinct endpoints is called the \emph{self-folded edge}, whilst the edge with only one distinct endpoint is called the \emph{encircling edge}.

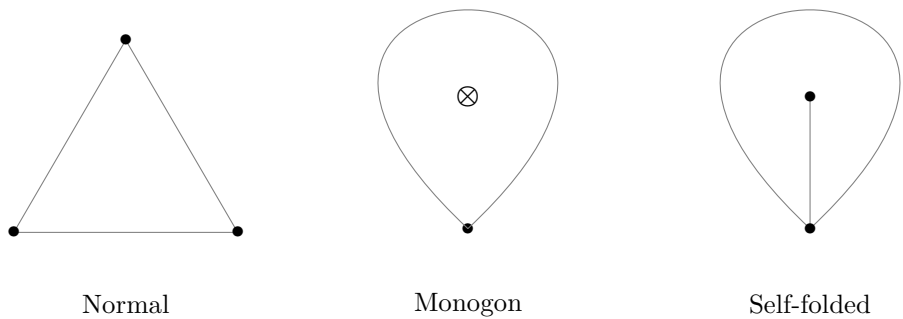
\begin{figure}
    \[
    \begin{tikzpicture}

        \begin{scope}[shift={(-4.5,-0.2)},scale=0.85]
        
       \path[use as bounding box] (-1.5,2.5) rectangle (1.5,-2.5);

        \draw[\edgecol] (90:2) -- (210:2) -- (330:2) -- (90:2);

        \node at (90:2) {$\bullet$};
        \node at (210:2) {$\bullet$};
        \node at (330:2) {$\bullet$};
        
        \end{scope}

        \begin{scope}[shift={(0,0)}]
        
       \path[use as bounding box] (-1.5,2.5) rectangle (1.5,-2.5);

        \node at (270:1) {$\bullet$};
        \node at (0,0.75) {$\bm{\otimes}$};

        \draw[\edgecol] (270:1) .. controls (35:5) and (145:5) .. (270:1);

        \node(bl) at (180:0.8) {};
        \node(br) at (360:0.8) {};
            
        \end{scope}

        \begin{scope}[shift={(4.5,0)}]
        
       	\path[use as bounding box] (-1.5,2.5) rectangle (1.5,-2.5);

        \draw[\edgecol] (270:1) .. controls (35:5) and (145:5) .. (270:1);
        \draw[\edgecol] (270:1) -- (0,0.75);

        \node(bl) at (180:0.8) {};
        \node(br) at (360:0.8) {};

        \node at (270:1) {$\bullet$};
        \node at (0,0.75) {$\bullet$};

        \end{scope}

    \node at (-4.5,-2) {Normal};
    \node at (0,-2) {Monogon};
    \node at (4.5,-2) {Self-folded};
        
    \end{tikzpicture}
    \]
    \caption{The three different types of triangle}
    \label{fig:three_triangles}
\end{figure}

A saddle-free infinite GMN differential $\varphi$ on $X$ determines a triangulation $T_\varphi$ of a marked bordered surface with orbifold points of order three $\mbso$ via the following procedure, which is a special case of \cite[Section~2.3]{chq} and which extends \cite[Section~6.1]{bs}.
The surface $\bs$ is given by taking the underlying smooth surface of $X$ and performing oriented real blow-ups on the poles of order $m \geqslant 3$.
Taking the oriented real blow-up of a pole of order $m \geqslant 3$ gives a new boundary component containing $m - 2$ marked points on it, corresponding to the $m - 2$ asymptotic directions of $\varphi$ around the pole; see for instance \cite[Figure~10]{bs}.
The other marked points in $\sfm$ are given by the poles of $\varphi$ of order $2$, which give the punctures~$\bp$.
The orbifold points $\bo$ are given by the simple poles of~$\varphi$.
The triangulation $T_\varphi$ of $\mbso$ from $\varphi$ is the collection of arcs given by taking one generic trajectory from each horizontal strip of $\varphi$.

\section{Stability conditions on 3-Calabi--Yau categories}\label{sect:3cy_stab}

Having outlined the background on quadratic differentials in Section~\ref{sect:qd}, we now give background on the other side of the Bridgeland--Smith correspondence, namely stability conditions on 3-Calabi--Yau categories constructed from quivers with potential.

\subsection{Quivers with potential from triangulations}

In this section, we explain how to obtain a quiver with potential from the triangulation $T_\varphi$ given by the quadratic differential~$\varphi$.
Our construction here deviates from the original work of Bridgeland and Smith in \cite{bs}, which follows the construction of a quiver with potential from a triangulated surface from \cite{lf}, and follows instead the category considered in \cite{chq}.

\subsubsection{Quivers with potential and Jacobian algebras}\label{sect:3cy_stab:qwp}

Quivers with potential were introduced in \cite{dwz}.
Unlike in much of the literature, the quivers with potential we consider in this paper will generally have loops and two-cycles.

A \emph{quiver} $Q = (Q_0, Q_1)$ is a directed graph with $Q_0$ the set of vertices and $Q_1$ the set of arrows.
Given an arrow $a \in Q_1$, we write $s(a) \in Q_0$ for the source of $a$ and $t(a) \in Q_0$ for the target of $a$.
If $s(a) = t(a)$, then we call $a$ a \emph{loop}.
We write $\vm = \mathbb{C}^{Q_0}$ and $\am = \mathbb{C}^{Q_1}$ for the vector spaces of $\mathbb{C}$-valued functions on $Q_0$ and $Q_1$ respectively.
We have that $\vm$ is a commutative $\mathbb{C}$-algebra under pointwise multiplication of functions, and that $\am$ is an $\vm$-bimodule with the action of $\vm$ defined such that if $e \in \vm$ and $f \in \am$, then for all $a \in Q_1$, $(e \cdot f)(a) = e(s(a))f(a)$ and $(f \cdot e)(a) = f(a)e(t(a))$.
We denote $\am^0 := \vm$ and otherwise $\am^{\ell + 1} := \am \otimes_{\vm} \am^{\ell}$.
The \emph{uncompleted path algebra} $\mathbb{C}Q$ is then defined to be the tensor algebra $\mathbb{C}Q := \bigoplus_{\ell = 0}^{\infty} E^{\ell}$, whilst the \emph{completed path algebra} $\widehat{\mathbb{C}Q}$ is defined to be the tensor algebra $\widehat{\mathbb{C}Q} := \prod_{\ell = 0}^{\infty} E^{\ell}$.
One can also view $\widehat{\mathbb{C}Q}$ as the completion of $\mathbb{C}Q$ with respect to the $I$-adic topology, where $I$ is the ideal of $\mathbb{C}Q$ generated by~$\am$.

Given $i \in Q_0$, we write $e_i$ for the function in $\vm$ such that $e_i(j) = \delta_{ij}$ for $j \in Q_0$, where $\delta_{ij}$ is the Kronecker delta.
Similarly, for $a \in Q_1$, we also write $a$ for the function in $\am$ such that $a(b) = \delta_{ab}$ for $b \in Q_1$.
Given a right $\mathbb{C}Q$-module $M$, we write $\dimu M$ for the \emph{dimension vector} $\ud = (d_i)_{i \in Q_0} \in \mathbb{N}^{Q_0}$, where $d_i = \dim_{\mathbb{C}} Me_{i}$ and $\mathbb{N} := \mathbb{Z}_{\geqslant 0}$.
Viewing $S$ as an $S$-bimodule in the usual way, we write $S_i$ for the $\vm$-sub-bimodule of $\vm$ generated by~$e_i$.
We then have that $S_i$ becomes a simple (right) module over $\mathbb{C}Q$ and $\widehat{\mathbb{C}Q}$ under the zero action of~$E$.
We refer to $S_i$ as the \emph{simple module at vertex~$i$}. 

We call the products $a_1 a_2 \dots a_{\ell} \in \mathbb{C}Q$ of arrows $a_{i} \in Q_{1}$ such that $t(a_i) = s(a_{i + 1})$ for $1 \leqslant i < \ell$ \emph{paths} of length $\ell$; these form a basis of $\am^\ell$.
We define \emph{paths of length zero} to consist of the elements $e_{i} \in \vm = E^0$.
We call such a path a \emph{cycle} if $t(a_{\ell}) = s(a_1)$.
We write $\am^{\ell}_{\cyc}$ for the $\vm$-sub-bimodule of $\am$ generated by cycles of length~$\ell$.

Let $\pot{\mathbb{C}Q} := \bigoplus_{\ell = 1}^\infty \am^{\ell}_\cyc$ and $\pot{\widehat{\mathbb{C}Q}} := \prod_{\ell = 1}^\infty \am^{\ell}_\cyc$. 
A \emph{potential} is an element of $\pot{\widehat{\mathbb{C}Q}}$, and a potential is \emph{polynomial} if it lies in the subspace $\pot{\mathbb{C}Q}$.
We say that two potentials $W$ and $W'$ are \emph{cyclically equivalent} if $W - W'$ lies in the closure of the vector subspace of $\widehat{\mathbb{C}Q}$ generated by differences $a_1 a_2 \dots a_{\ell} - a_2 \dots a_{\ell} a_1$ where $a_1 a_2 \dots a_{\ell}$ is a cycle.
For a path $p$ in $Q$, we define the map \[\partial_{p}\colon \pot{\widehat{\mathbb{C}Q}} \to \widehat{\mathbb{C}Q}\] as the linear map which takes a cycle $c$ to the sum \[\sum_{c = upv} vu\] taken over all decompositions of the cycle $c$ of the form $c = upv$, with $u$ and $v$ allowed to have length zero.
If $p = a \in Q_1$, then we call $\partial_a$ the \emph{cyclic derivative} with respect to~$a$.

Let $W \in \pot{\widehat{\mathbb{C}Q}}$ be a potential such that $W \in \bigoplus_{\ell = 2}^{\infty} \am^{\ell}$ and such that no two cyclically equivalent cycles appear in the expression of $W$ as a sum of cycles.
Then the pair $(Q, W)$ is called a \emph{quiver with potential}.
The \emph{completed Jacobian ideal} $\widehat{J}(W)$ is the closure of the two-sided ideal of $\widehat{\mathbb{C}Q}$ generated by $\{\partial_{a}W \st a \in Q_1\}$.
The \emph{completed Jacobian algebra} $\jac{Q, W}$ of a quiver with potential is the quotient of the path algebra $\widehat{\mathbb{C}Q}$ by $\widehat{J}(W)$.
If we have that, additionally, $W \in \pot{\mathbb{C}Q} \subset \pot{\widehat{\mathbb{C}Q}}$, then the \emph{uncompleted Jacobian ideal} $J(W)$ is the two-sided ideal of $\mathbb{C}Q$ generated by $\{\partial_{a}W \st a \in Q_1\}$, and the \emph{uncompleted Jacobian algebra} $\jacu{Q, W}$ is the quotient of $\mathbb{C}Q$ by $J(W)$.
We let $\modules \jac{Q, W}$ and $\modules \jacu{Q, W}$ be the respective categories of right modules which are finite-dimensional over~$\mathbb{C}$.

In this paper, the algebras that we will associate to quadratic differentials will always be completed.
But it will be necessary also to consider uncompleted algebras in order to compute some DT invariants.

\subsubsection{Quiver representations}\label{sect:3cy_stab:qwp:quiver_reps}

Quiver representations provide a concrete way of understanding modules over $\jac{Q, W}$ and $\jacu{Q, W}$.
Moreover, it will be useful to view these modules in terms of quiver representations, in order to construct stacks of modules over $\jac{Q, W}$ and $\jacu{Q, W}$ in Section~\ref{sect:dt:quiv_rep_stacks}.

Here, a \emph{representation} of $Q$ consists of a finite dimensional $\mathbb{C}$-vector space $V_{i}$ for every $i \in Q_{0}$ and a linear map $f_{a}\colon V_{i} \to V_{j}$ for every arrow $a \colon i \to j$ in $Q_{1}$.
Given a path $p = a_{1} \dots a_{\ell}$ in $Q$, we write $f_{p} = f_{a_{\ell}} \dots f_{a_{1}}$.
Similarly, given a linear combination of paths $\rho = \sum_{i = 1}^{r} \lambda_{i}p_{i}$ where $\lambda_{i} \in \mathbb{C}$, we write $f_{\rho} = \sum_{i = 1}^{r}\lambda_{i}f_{p_{i}}$.
We say that a representation $(V_{i}, f_{a})_{i \in Q_{0}, a \in Q_{1}}$ is \emph{nilpotent} if for any cycle $c$ of $Q$ there is some $r$ such that $f_c^r = 0$.
A \emph{quiver with relations} is a pair $(Q, I)$, where $I$ is a two-sided ideal of $\mathbb{C}Q$, known as the ideal of \emph{relations}.
A representation $(V_{i}, f_{a})_{i \in Q_{0}, a \in Q_{1}}$ is a \emph{representation of the quiver with relations} $(Q, I)$ if $f_{\rho} = 0$ for all $\rho \in I$.

There is a natural notion of a homomorphism of representations of a quiver with relations $(Q, I)$ \cite[Section~III.1]{ass}.
It is well-known that the category of $\mathbb{C}Q/I$-modules is equivalent to the category of representations of the quiver with relations $(Q, I)$ \cite[Chapter~III, Theorem~1.6]{ass}.
It is also well-known that the category of nilpotent representations of the quiver with relations $(Q, I)$ is equivalent to the category of $\widehat{\mathbb{C}Q}/\widehat{I}$-modules, where $\widehat{I}$ denotes the closure of the ideal $I$ inside~$\widehat{\mathbb{C}Q}$.
We thus use the terms `module' and `representation' interchangeably.

\subsubsection{The quiver with potential of a triangulation}\label{sect:3cy_stab:qwp_from_triang}

Given a triangulation $T$ of a marked bordered surface with orbifold points of order three $\mbso$, the associated quiver with potential is defined as follows.
This quiver with potential appears implicitly in \cite{chq}.
It also appears in \cite{christ_ps,ppp} in the absence of orbifold points and in \cite{lfm_i} in the absence of punctures.

For distinct edges $\alpha, \beta \in T$, we define $\triangnum{\alpha}{\beta}$ to be the number of triangles in $T$ in which $\alpha$ and $\beta$ appear as adjacent edges in an anti-clockwise order.
In particular, the edge of a monogon is considered to be adjacent to itself inside the monogon, as is the self-folded edge inside a self-folded triangle.
The quiver $Q(T)$ is then defined to have vertices given by the edges of~$T$, with $\triangnum{\alpha}{\beta}$ arrows from vertex $\alpha$ to vertex~$\beta$.

The potential associated $W(T)$ associated to the quiver $Q(T)$ is defined as follows.
We have that \[W(T) = \sum_{F}C(F),\] where $C(F)$ is the anti-clockwise three-cycle contained in each triangle $F$; if $F$ is a monogon, then $C(F) = a^3$, where $a$ is the loop at the vertex given by the arc which cuts out~$F$.
The three different types of triangles, the associated subquivers, and the terms of the potential are shown in Figure~\ref{fig:three_triangles}.
Here `$\bullet$' denotes a marked point and `$\otimes$' denotes an orbifold point.

\begin{figure}
    \[
    \begin{tikzpicture}

        \begin{scope}[shift={(-4.5,-0.2)},scale=0.85]
        
       	\path[use as bounding box] (-1.5,2.5) rectangle (1.5,-3.5);

        \draw[\edgecol] (90:2) -- (210:2) -- (330:2) -- (90:2);

        \node (lt) at ($(90:2)!0.45!(210:2)$) {};
        \node (lb) at ($(90:2)!0.55!(210:2)$) {};
        \node (br) at ($(210:2)!0.55!(330:2)$) {};
        \node (bl) at ($(210:2)!0.45!(330:2)$) {};
        \node (rt) at ($(90:2)!0.45!(330:2)$) {};
        \node (rb) at ($(90:2)!0.55!(330:2)$) {};

        \draw[\arrcol,->] (lb) -- (bl);
        \draw[\arrcol,->] (br) -- (rb);
        \draw[\arrcol,->] (rt) -- (lt);

        \node at (210:0.4) {$a$};
        \node at (330:0.4) {$b$};
        \node at (90:0.4) {$c$};

        \node at (90:2) {$\bullet$};
        \node at (210:2) {$\bullet$};
        \node at (330:2) {$\bullet$};
        
        \end{scope}

        \begin{scope}[shift={(0,0)}]
        
       	\path[use as bounding box] (-1.5,2.5) rectangle (1.5,-3.5);

        \node at (270:1) {$\bullet$};
        \node at (0,0.75) {$\bm{\otimes}$};

        \draw[\edgecol] (270:1) .. controls (35:5) and (145:5) .. (270:1);

        \node(bl) at (180:0.8) {};
        \node(br) at (360:0.8) {};

        \draw[->,\arrcol] (bl) to (br);

        \node at (0,0.25) {$a$};
            
        \end{scope}

        \begin{scope}[shift={(4.5,0)}]
        
       	\path[use as bounding box] (-1.5,2.5) rectangle (1.5,-3.5);

        \draw[\edgecol] (270:1) .. controls (35:5) and (145:5) .. (270:1);
        \draw[\edgecol] (270:1) -- (0,0.75);

        \node(bl) at (180:0.8) {};
        \node(br) at (360:0.8) {};

        \draw[->,\arrcol] (0.1,-0.5) -- (1,0.75);
        \draw[->,\arrcol] (-1,0.75) -- (-0.1,-0.5);
        \draw[<-,\arrcol] ($(280:0.25)+(0,0.75)$) arc (-80:260:0.25);

        \node at (270:1) {$\bullet$};
        \node at (0,0.75) {$\bullet$};

        \node at (-0.3,0.25) {$a$};
        \node at (0.3,0.25) {$c$};
        \node at (0,1.25) {$b$};

        \end{scope}

    \node at (-4.5,-1.7) {$abc$};
    \node at (-4.5,-2.5) {Normal};

    \node at (0,-1.7) {$a^3$};
    \node at (0,-2.5) {Monogon};

    \node at (4.5,-1.7) {$abc$};
    \node at (4.5,-2.5) {Self-folded};
        
    \end{tikzpicture}
    \]
    \caption{Subquivers and terms of potential for the different types of triangles}
    \label{fig:three_triangles_arrows}
\end{figure}
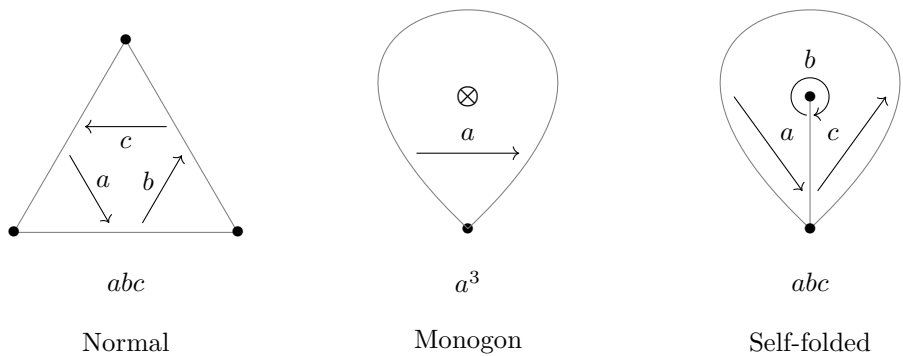

\begin{remark}
The difference between this potential and the usual one from \cite{bs,lf} is that cycles around punctures are not included.
This means that the resulting Jacobian algebras are generally infinite-dimensional.
The philosophical reason why cycles around punctures ought not to be included in the potential is as follows.
If a quadratic differential has a double pole, then there is always some rotation where a degenerate ring domain appears.
There ought to be non-trivial semistable representations corresponding to the finite-length trajectories in the degenerate ring domain.
If the cycles are included in the potential, then such representations do not exist.
As observed in \cite[Section~1.2.1]{chq}, a side effect of the non-existence of such representations is that the moduli spaces of quadratic differentials corresponding to stability conditions in \cite{bs} must contain quadratic differentials where zeros have collided with double poles to form simple poles.
These would otherwise be excluded by the support property of stability conditions \cite[Definition~1]{ks_stability}.
\end{remark}

\begin{remark}
The constructions in \cite{chq} are very general, so we briefly indicate how they specialise to the above.
The relevant construction is \cite[Construction~6.18]{chq} with the reduction from \cite[Remark~6.19]{chq} applied.
In terms of the notation in \textit{op.\ cit.}, we have $n = 3$, since our only finite critical points are simple zeros and simple poles.
At each finite critical point $r$, the quiver $Q_{r}^3$ from \cite[Construction~6.14]{chq} following \cite[Definition~6.6, Definition~6.8]{chq} has degree-zero part as described above, depending upon whether $r$ is a zero or a simple pole.
The parts in non-zero degree define the associated Ginzburg differential graded algebra.
Note that, as per \cite[Definition~6.8]{chq}, the quiver for monogons is a quotient of the three-cycle coming from zeros by the action of the cyclic group of order three.
One can also consider the Koszul dual construction from \cite[Definition~7.2]{chq}.
\end{remark}

\subsection{3-Calabi--Yau categories}

First we must recall some of the general theory on 3-Calabi--Yau $A_\infty$-categories.

\subsubsection{$A_\infty$-categories}\label{sect:3cy_stab:a_inf}

For reasons of space, we do not give full background on $A_\infty$-categories, which can be found in \cite{kajiura_jppa,keller_a_inf,keller_a_inf_intro,klh_thesis}.
Instead, recall that a (small) $A_\infty$-category $\mathcal{A}$ over $\mathbb{C}$ is given by
\begin{itemize}
    \item a set of objects $\obj{\mathcal{A}}$,
    \item for all $A, B \in \obj{\mathcal{A}}$, a $\mathbb{Z}$-graded $\mathbb{C}$-vector space $\Hom_{\mathcal{A}}^{\bullet}(A, B)$.
    \item for all $n \geqslant 1$ and all $A_0, A_1, \dots, A_n \in \obj{\mathcal{A}}$, a graded $\mathbb{C}$-linear map \[
    m_n \colon \Hom_{\mathcal{A}}^{\bullet}(A_{n - 1}, A_n) \otimes \Hom_{\mathcal{A}}^{\bullet}(A_{n - 2}, A_{n - 1}) \otimes \dots \otimes \Hom_{\mathcal{A}}^{\bullet}(A_{0}, A_1) \to \Hom_{\mathcal{A}}^{\bullet}(A_{0}, A_n)
    \] of degree~$2-n$; these are required to satisfy some compatibility conditions, see \cite[Section~7.2]{keller_a_inf_intro}.
\end{itemize}
A morphism $\mathrm{id}_A \in \Hom_{\mathcal{A}}^{0}(A, A)$ is a \emph{strict identity} for $A \in \obj{\mathcal{A}}$ if $m_2(\mathrm{id}_A, f) = f$ and $m_2(g, \mathrm{id}_{A}) = g$ whenever these make sense, and $m_n(\dots, \mathrm{id}_{A}, \dots) = 0$ for all $n \neq 2$.
We say that $\mathcal{A}$ is \emph{strictly unital} if every object has a strict identity.

Here, $m_1$ is akin to a differential on $\Hom_{\mathcal{A}}^{\bullet}(A, B)$ and $m_2$ is akin to composition of maps.
Let us then write $\Ext_{\mathcal{A}}^{n}(A, B) := H^n(\Hom_{\mathcal{A}}^{\bullet}(A, B))$, where cohomology is taken with respect to~$m_1$.
For $n \geqslant 3$, $m_n$ are higher operations which resolve the failure of $m_2$ to be associative.
An $A_{\infty}$-category $\mathcal{A}$ is thus not a category in the usual sense, due to this failure.
Define the \emph{homotopy category} $H^{0}(\mathcal{A})$ to have the same objects as $\mathcal{A}$, and with morphism spaces $\Hom_{H^{0}(\mathcal{A})}(A, B) := \Ext^{0}_{\mathcal{A}}(A, B)$. The homotopy category $H^{0}(\mathcal{A})$ is then a category if $\mathcal{A}$ is strictly unital \cite[Remark~5.1.2.3]{klh_thesis}.
Given a category $\mathcal{C}$ which is equivalent to $H^{0}(\mathcal{A})$ for an $A_\infty$-category $\mathcal{A}$, we say that $\mathcal{A}$ is a \emph{$A_\infty$-enhancement of~$\mathcal{C}$}.

Given two $A_\infty$-categories $\mathcal{A}$ and $\mathcal{B}$, an \emph{$A_\infty$-functor} $F \colon \mathcal{A} \to \mathcal{B}$ consists of a map of sets $F \colon \obj{\mathcal{A}} \to \obj{\mathcal{B}}$ along with, for all $A_0, A_1, \dots, A_n \in \obj{\mathcal{A}}$ a graded $\mathbb{C}$-linear map \[
    F_n \colon \Hom_{\mathcal{A}}^{\bullet}(A_{n - 1}, A_n) \otimes \Hom_{\mathcal{A}}^{\bullet}(A_{n - 2}, A_{n - 1}) \otimes \dots \otimes \Hom_{\mathcal{A}}^{\bullet}(A_{0}, A_1) \to \Hom_{\mathcal{B}}^{\bullet}(F(A_{0}), F(A_n))
    \] of degree~$1 - n$; these are also required to satisfy some compatibility conditions \cite[Section~7.3]{keller_a_inf_intro}.
If $\mathcal{A}$ and $\mathcal{B}$ are additionally strictly unital, then a \emph{unital} functor $F$ must also preserve identities, see \cite[Definition~3.3.4]{davison_thesis}.
We will always require our functors to be strictly unital.
An $A_\infty$-functor $F \colon \mathcal{A} \to \mathcal{B}$ is called a \emph{quasi-equivalence} if it induces an equivalence of categories $H^{0}(F) \colon H^{0}(\mathcal{A}) \xrightarrow{\sim} H^{0}(\mathcal{B})$ and if, for all $A, B \in \obj{\mathcal{A}}$, the map $F_1 \colon \Hom_{\mathcal{A}}^{\bullet}(A, B) \to \Hom_{\mathcal{B}}^{\bullet}(F(A), F(B))$ is a quasi-isomorphism of chain complexes.
See also \cite[Th\'eor\`eme~9.2.0.4]{klh_thesis}.
An $A_\infty$-category is \emph{minimal} if $m_1 = 0$.
A \emph{minimal model} of an $A_\infty$-category $\mathcal{A}$ is a minimal $A_\infty$-category $\mathcal{B}$ which is quasi-equivalent to~$\mathcal{A}$.
Minimal models are unique up to isomorphism; see for instance  \cite{kadeishvili}, \cite[Corollary~2.27]{kajiura_jppa}, \cite[Theorem~3.3]{keller_a_inf_intro}.

\subsubsection{3-Calabi--Yau $A_\infty$-categories}\label{sect:3cy_stab:3cy_a_inf}

A $\mathbb{C}$-linear $A_{\infty}$-category $\mathcal{A}$ is \emph{$3$-Calabi--Yau} if there is a functorial non-degenerate pairing \[
\cy{-}{-} \colon \Hom_{\mathcal{A}}^{\bullet}(A, B) \otimes_\mathbb{C} \Hom_{\mathcal{A}}^{\bullet}(B, A) \to \mathbb{C}[-3]
\]
for all $A, B \in \mathcal{A}$ such that $\frac{1}{n}\cy{m_{n - 1}(-, \dots, -)}{-}$ is invariant under cyclic permutation of its arguments; see, for example, \cite[Definition~4.1.6]{davison_thesis}.
These are also known as \emph{cyclic $A_\infty$-categories}.

A \emph{functor between $3$-Calabi--Yau $A_\infty$-categories} $\mathcal{A}$ and $\mathcal{B}$ is given by an $A_\infty$-functor $F \colon \mathcal{A} \to \mathcal{B}$ such that for any $A, B \in \obj{\mathcal{A}}$ and $f \in \Hom_{\mathcal{A}}^{\bullet}(A, B)$, $g \in \Hom_{\mathcal{A}}^{\bullet}(B, A)$, we have that \[\cy{F_1(f)}{F_1(g)}_{\mathcal{B}} = \cy{f}{g}_{\mathcal{A}},\] and such that $\sum_{k + l = n} \cy{-}{-}_{\mathcal{B}} \circ (F_k \otimes F_l) = 0$ for all $n \geqslant 3$, see \cite[Definition~4.1.12]{davison_thesis} and \cite[Definition~2.13]{kajiura}.
We have that $F$ is moreover a \emph{quasi-equivalence} if it is a quasi-equivalence of $A_\infty$-categories.
A \emph{cyclic minimal model} of a 3-Calabi--Yau $A_\infty$-category $\mathcal{A}$ is a 3-Calabi--Yau minimal $A_\infty$-category $\mathcal{B}$ which is quasi-equivalent to~$\mathcal{A}$ as a 3-Calabi--Yau $A_\infty$-category; see Proposition~\ref{prop:min_model}.

Later we will need the \emph{potential} $W_{\mathcal{A}}$ on $\Hom_{\mathcal{A}}^{1}(A, A)$ for an object $A \in \mathcal{A}$, which is defined as the formal function
\begin{equation}\label{eq:cat_potential}
    W_{\mathcal{A}}(x) = \sum_{n \geqslant 2} \frac{1}{n} \cy{m_{n - 1}(x, x, \dots, x)}{x},
\end{equation}
where $x \in \Hom_{\mathcal{A}}^{1}(A, A)$.
Here a \emph{formal function} is a function on the scheme-theoretic formal neighbourhood of $0 \in \Hom_{\mathcal{A}}^{1}(A, A)$.
To put it more intuitively, this is \textit{a priori} a power series since infinitely many of the terms may be non-zero.

\subsubsection{Twisted complexes}

We briefly recall the notion of twisted complexes over an $A_\infty$-category~$\mathcal{A}$ \cite[Section~7.6]{keller_a_inf_intro}.
Let $\mathbb{Z}\mathcal{A}$ be the category of objects $(A, n)$ with $A \in \mathcal{A}$ and $n \in \mathbb{Z}$, with morphisms from $(A, n)$ to $(A', n')$ given by the graded vector space $\Hom_{\mathcal{A}}^{\bullet + n' - n}(A, A')$.
The $A_\infty$-operations of $\mathbb{Z}\mathcal{A}$ are given by shifting those of $\mathcal{A}$.
We write $[n]$ for the functor on $\mathbb{Z}\mathcal{A}$ defined $(A, n')[n] := (A, n + n')$.
The category $\tw{\mathcal{A}}$ of (\emph{one-sided}) \emph{twisted complexes} over $\mathcal{A}$ has as objects pairs $(B, a)$, where $B = (A_1, A_2, \dots, A_d)$ is a sequence of objects of $\mathbb{Z}\mathcal{A}$ and $a = (a_{ij})$ a matrix of degree~$1$ morphisms \[
a_{ij} \in \Hom_{\mathcal{A}}^{1}(A_i, A_j)
\] with $a_{ij} = 0$ for $i \leqslant j$, which satisfies the compatibility condition
\begin{equation}\label{eq:mc}
    \sum_{n = 1}^{\infty} (-1)^{\frac{n(n - 1)}{2}} m_n(a, a, \dots, a) = 0
\end{equation}
with respect to the $A_\infty$-operations, known as the \emph{Maurer--Cartan equation}.
Here we have extended the operations $m_n$ to matrices of morphisms in the natural way.
Note that the strict lower triangularity of $a$ ensures that this sum is finite.

The morphisms of $\tw{\mathcal{A}}$ are defined by $\Hom^{\bullet}_{\tw{\mathcal{A}}}((B, a), (B', a')) := \bigoplus_{i, j}\Hom^{\bullet}_{\mathbb{Z}\mathcal{A}}(A_i, A'_j)$, for $B = (A_1, A_2, \dots, A_d)$ and $B' = (A'_1, A'_2, \dots, A'_{d'})$.
The $A_\infty$-operations of $\tw{\mathcal{A}}$ are then defined as follows.
Let $C_0 = (B_0, a_0)$, $C_1 = (B_1, a_1)$, and $C_n = (B_n, a_n)$ be objects of $\tw{\mathcal{A}}$.
The operation \[
    m_n^{\two} \colon \Hom_{\tw{\mathcal{A}}}^{\bullet}(C_{n - 1}, C_n) \otimes \Hom_{\tw{\mathcal{A}}}^{\bullet}(C_{n - 2}, C_{n - 1}) \otimes \dots \otimes \Hom_{\tw{\mathcal{A}}}^{\bullet}(C_0, C_1) \to \Hom_{\tw{\mathcal{A}}}^{\bullet}(C_0, C_n)
\]
is defined
\begin{equation}\label{eq:tw_op}
m_n^{\two} := \sum_{r = 0}^{\infty} \sum_{\substack{i_1 + \dots + i_k = n \\
j_1 + \dots + j_k = r}} \pm m_{n + r} \circ (\id^{\otimes i_1} \otimes a^{\otimes j_1} \otimes \id^{\otimes i_2} \otimes a^{\otimes j_2} \otimes \dots \otimes \id^{\otimes i_k} \otimes a^{\otimes j_k}),    
\end{equation}
where we have written $a$ for all of the $a_i$.
The sign is given by the identity \[
(sx)^{i_1}(sy)^{j_1} \dots (sx)^{i_r}(sy)^{j_r} = \pm s^{n + 1}x^{i_1}y^{j_1}\dots x^{i_r}y^{j_r}
\]
in the algebra $\mathbb{Z}\langle x, y, s\rangle/\ideal{sx - xs, sy + ys}$.
The functor $[n]$ on $\mathbb{Z}\mathcal{A}$ also gives a functor on $\tw{\mathcal{A}}$.
If $\mathcal{A}$ is a 3-Calabi--Yau $A_\infty$-category, then $\tw{\mathcal{A}}$ is also 3-Calabi--Yau \cite[Theorem~4.1.9]{davison_thesis}.

We have that $H^{0}(\tw{\mathcal{A}})$ is a triangulated category if $\mathcal{A}$ is strictly unital \cite[Th\'eor\`eme~7.1.0.4]{klh_thesis}.
A $\mathbb{C}$-linear triangulated category $\mathcal{D}$ is \emph{$3$-Calabi--Yau} if for all $i \in \mathbb{Z}$ there is a functorial non-degenerate pairing $\Hom_{\mathcal{D}}(M, N[i]) \otimes_{\mathbb{C}} \Hom_{\mathcal{D}}(N, M[3 - i]) \to \mathbb{C}$ for all $M, N \in \mathcal{D}$.
In particular, triangulated categories with 3-Calabi--Yau $A_\infty$-enhancements will be 3-Calabi--Yau.

Given twisted complexes $C = (B, a)$ and $C' = (B', a')$ with $B = (A_1, A_2, \dots, A_n)$ and $B' = (A'_1, A'_2, \dots, A'_{n'})$ with a degree 0 morphism $f \colon C \to C'$ such that $m_1^{\two}(f) = 0$, the \emph{mapping cone} $\mathrm{Cone}(f)$ is defined to be the twisted complex (see \cite[Proposition~3.8.6]{davison_thesis}) \[
\mathrm{Cone}(f) := \left((A_1[1], A_2[1], \dots, A_n[1], A'_1, A'_2, \dots, A'_{n'}), \left(\begin{smallmatrix}a & 0 \\ f & a' \end{smallmatrix} \right) \right).
\]

We write $\twz{\mathcal{A}}$ for the full subcategory of $\tw{\mathcal{A}}$ consisting of twisted complexes $(B, a)$ for $B = (A_1, A_2, \dots, A_d)$ a sequence of objects of~$\mathcal{A}$, rather than~$\mathbb{Z}\mathcal{A}$.
We refer to these as \emph{degree~$0$ twisted complexes}.

\subsubsection{3-Calabi--Yau $A_\infty$-categories from quivers with potential}

One can define the following 3-Calabi--Yau $A_\infty$-category from a quiver with potential, see \cite[p.91]{davison_thesis} and \cite[pp.1261--2]{dm_curves}. 

\begin{definition}\label{def:akd}
Recalling the notation $\am$ and $e_i$ from Section~\ref{sect:3cy_stab:qwp}, we define the 3-Calabi--Yau $A_\infty$-category $\akdqw$ as follows.
The set of objects of $\akdqw$ is $\{S_i \st i \in Q_0\}$ and the morphisms are as follows.
\begin{enumerate}
    \item The degree 0 morphisms of $\akdqw$ consist of the one-dimensional $\mathbb{C}$-vector spaces spanned by the identity morphisms $\mathrm{id}_i$ at $S_i$.
    \item The degree 1 morphisms from $S_i$ to $S_j$ are given by the vector space $e_i \am e_j$.
    \item The degree 2 morphisms from $S_i$ to $S_j$ are given by the dual vector space $(e_i \am e_j)^{\ast}$.
    \item For each $S_i$, there is a one-dimensional $\mathbb{C}$-vector space of degree~$3$ endomorphisms, spanned by a morphism $\mathrm{id}^{\ast}_i$.
\end{enumerate}
These comprise \emph{all} the morphisms of $\akdqw$.

The $A_\infty$-operations $m_n$ are then given as follows.
\begin{enumerate}
    \item The degree 0 morphisms $\mathrm{id}_i$ are strict identities for all $i \in Q_0$.
    \item If $a \in e_i \am e_j$ is a degree 1 morphism from $S_i$ to $S_j$ and $b^{\ast} \in (e_i \am e_j)^{\ast}$ is a degree 2 morphism from $S_i$ to $S_j$, then we have that $m_2(a, b^\ast) = b^\ast(a)\mathrm{id}^{\ast}_j$ and $m_2(b^\ast, a) = b^\ast(a)\mathrm{id}^{\ast}_i$.
    \item Let $a_1, a_2, \dots, a_n$ be a composable sequence of degree 1 morphisms in $\akdqw$ beginning at $S_i$ and ending at~$S_j$.
    Furthermore, let $W_{n + 1} = \sum_{k = 1}^r \lambda_k b_{1,k}b_{2,k} \dots b_{n + 1, k}$ be the sum of the terms in the potential $W$ containing cyclic words of length $n + 1$, where $\lambda_k \in \mathbb{C}$.
    Then
    \begin{align*}
        m_n(a_n, a_{n - 1}, \dots, a_1) &:= \sum_{k = 1}^r \frac{\lambda_k}{n + 1}\bigl(b_{1,k}^\ast(a_1) b_{2,k}^\ast(a_2) \dots b_{n, k}^\ast(a_n)b^{\ast}_{n + 1, k}  \\
        &\qquad\qquad\qquad + b_{2,k}^\ast(a_1) b_{3,k}^\ast(a_2) \dots b_{n + 1, k}^\ast(a_n)b^{\ast}_{1, k} + \dots \\
        &\qquad\qquad\qquad + b_{n + 1,k}^\ast(a_n) b_{1,k}^\ast(a_2) \dots b_{n - 1, k}^\ast(a_1)b^{\ast}_{n, k}\bigr).
    \end{align*}
    To explain this more informally, $m_n(a_n, a_{n - 1}, \dots, a_1)$ will be zero unless $a_1 a_2 \dots a_n a_{n + 1}$ is a summand of the potential (up to cyclic rotation) for some $a_{n + 1}$, in which case the result will be the degree 2 morphism given by $a_{n + 1}^{\ast}$, up to scalar.
\end{enumerate}
All other $A_\infty$-operations give zero.
The pairing $\cy{-}{-}$ which makes this into a 3-Calabi--Yau $A_\infty$-category is given by the vector-space duality $\cy{a^{\ast}}{b} = \cy{b}{a^\ast} = a^\ast(b)$ for $a^\ast \in \Hom^1_{\akd{Q}{W}}(S_i, S_j)$ and $b \in \Hom^2_{\akd{Q}{W}}(S_j, S_i)$, and by $\cy{\mathrm{id}_i}{\mathrm{id}^{\ast}_i} = \cy{\mathrm{id}^{\ast}_i}{\mathrm{id}_i} = 1$ for degree~$0$ and degree~$3$ morphisms.
\end{definition}

It is a good exercise to check that the potential $W_{\akdqw}$ from Section~\ref{sect:3cy_stab:3cy_a_inf} indeed coincides with~$W$ on $\End^{1}_{\akdqw}(\bigoplus_{i \in Q_0} S_i)$.
The $A_\infty$-category $\akdqw$ is the Koszul dual of the Ginzburg differential graded algebra of $(Q, W)$, see \cite[Proposition~4.2.2]{davison_thesis} or \cite[Section~4A]{dm_curves}, \cite{ginzburg}, and \cite[Section~4 and~5]{lpwz}.
The relationship between $\jac{Q, W}$ and $\akdqw$ is as follows.
We abbreviate $\mathcal{H}_{\infty}(Q, W) := \twz{\akdqw}$

\begin{theorem}[{\cite[Section~4A]{dm_curves}, \cite[Section~7.7]{keller_a_inf_intro}}]\label{thm:koszul_duality_heart}
We have that $H^{0}(\mathcal{H}_{\infty}(Q, W)) \simeq \modules \jac{Q, W}$, so that $\mathcal{H}_{\infty}(Q, W)$ is a 3-Calabi--Yau $A_\infty$-enhancement of $\modules \jac{Q, W}$.
\end{theorem}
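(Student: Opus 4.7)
The proof strategy is to construct an explicit correspondence between degree-$0$ twisted complexes over $\akdqw$ and finite-dimensional modules over $\jac{Q, W}$, following the Koszul duality framework of \cite[Section~4A]{dm_curves} and \cite[Section~7.7]{keller_a_inf_intro}. Note that the augmentation ideal of $\widehat{\mathbb{C}Q}$ is topologically nilpotent, so every finite-dimensional $\jac{Q, W}$-module is automatically nilpotent; it therefore suffices to produce an equivalence between $\twz{\akdqw}$ and the category of nilpotent finite-dimensional representations of the quiver with relations $(Q, \jacu{W})$.

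On objects: given $(B, a) \in \twz{\akdqw}$ with $B = (S_{k_1}, \dots, S_{k_d})$, set $V_k := \bigoplus_{i \colon k_i = k} \mathbb{C}$. The entry $a_{ij} \in \Hom^1_{\akdqw}(S_{k_i}, S_{k_j}) = e_{k_i}\am e_{k_j}$ decomposes as $\sum_b \lambda^b_{ij} b$ over arrows $b \colon k_i \to k_j$, and these coefficients define linear maps $f_b \colon V_{s(b)} \to V_{t(b)}$ on basis vectors by $f_b(e_i) := \sum_{j \colon k_j = t(b)} \lambda^b_{ij} e_j$. Strict lower triangularity of $a$ amounts to the simultaneous strict lower triangularity of all $f_b$ in the ordering induced by the sequence, which is equivalent to nilpotency of the representation; conversely, any nilpotent representation admits such an ordering by refining a composition series. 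The Maurer--Cartan equation~\eqref{eq:mc} then translates into the Jacobian relations: unpacking Definition~\ref{def:akd}, the coefficient of $b^{\ast}$ in $m_n(a, \dots, a)$ reads off the weight-$(n+1)$ component of $\partial_b W$ applied to the representation data, so~\eqref{eq:mc} asserts $f_{\partial_b W} = 0$ for every arrow~$b$.

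For morphisms: a degree-$0$ morphism $\phi$ in $\twz{\akdqw}$ between $(B, a)$ and $(B', a')$ is a matrix whose entries lie in $\Hom^0_{\akdqw}(S_{k_i}, S_{k'_j}) = \mathbb{C} \cdot \mathrm{id}_{k_i}$ when $k_i = k'_j$ and vanish otherwise, equivalent to a family of linear maps $\phi_k \colon V_k \to V'_k$. Using~\eqref{eq:tw_op} together with $m_1 = 0$ in $\akdqw$ and the strict identity axiom, the differential $m_1^{\two}(\phi)$ in degree $0$ reduces to the commutator $m_2(a', \phi) \pm m_2(\phi, a)$, whose vanishing is exactly the intertwining condition for the two representations; there are no boundaries in degree $0$ since $\Hom^{-1}_{\akdqw} = 0$. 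Thus $H^0(\twz{\akdqw})(C, C') = \Hom_{\jac{Q,W}}(M, M')$, giving the claimed equivalence. The main technical obstacle is the careful bookkeeping inside the Maurer--Cartan computation: the $\frac{1}{n+1}$ prefactor in Definition~\ref{def:akd}, the $(n+1)$-fold cyclic symmetrisation, and the signs $(-1)^{n(n-1)/2}$ in~\eqref{eq:mc} must collectively reproduce $\partial_b W = 0$ with no spurious combinatorial factors, the requisite cancellations mirroring those in standard Koszul-duality calculations.
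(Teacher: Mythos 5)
Your proposal is correct and follows essentially the same route as the paper, which establishes this statement only by citing the Koszul-duality results of Davison--Meinhardt and Keller: your sketch is precisely the standard argument those references carry out (Maurer--Cartan equation $\leftrightarrow$ Jacobian relations $\partial_b W = 0$, strict lower triangularity $\leftrightarrow$ nilpotency via composition series, degree-zero cocycles $\leftrightarrow$ module intertwiners with no boundaries to quotient by). The only parts left implicit --- the sign/prefactor bookkeeping you flag, and the inheritance of the cyclic pairing by the full subcategory $\twz{\akdqw}$ needed for the ``$3$-Calabi--Yau enhancement'' clause --- are exactly what the cited sources supply.
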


\begin{remark}
In the case where the terms of the potential $W$ are all cubic, as they will be for $W(T)$, where $T$ is a triangulation, we have that $\End_{\akdqw}^{\bullet}(\bigoplus_{i \in Q_0} S_i)$ is a graded algebra.
Moreover, it coincides with the graded algebra from \cite[Definition~7.2]{chq}. 
\end{remark}

\subsection{Stability conditions and Bridgeland--Smith correspondence}

Having obtained the relevant category, we now give the definition of Bridgeland stability conditions, and explain how they are related to the quadratic differentials from Section~\ref{sect:qd}.

\subsubsection{Hearts and t-structures}\label{sect:app:heart_tstr}

Given a triangulated category $\mathcal{D}$ with full subcategories $\mathcal{T}$ and $\mathcal{F}$, we write \[
\mathcal{T} \ast \mathcal{F} = \{D \in \mathcal{D} \st \text{there is a triangle } T \to D \to F \to T[1], \text{ with } T \in \mathcal{T}, F \in \mathcal{F}\}.
\]
We say that $(\mathcal{T}, \mathcal{F})$ is a \emph{torsion pair} of $\mathcal{D}$ if $\Hom_{\mathcal{D}}(\mathcal{T},\mathcal{F}) = 0$ and $\mathcal{T} \ast \mathcal{F} = \mathcal{D}$.
A torsion pair $(\mathcal{T}, \mathcal{F})$ is a \emph{t-structure} if $\mathcal{T}[1] \subset \mathcal{T}$, where $[1]$ denotes the shift functor in $\mathcal{D}$.
A t-structure $(\mathcal{T}, \mathcal{F})$ is \emph{bounded} if for any $D \in \mathcal{D}$, we have that $D[\gg 0] \in \mathcal{T}$ and $D[\ll 0] \in \mathcal{F}$.
The \emph{heart} of the t-structure is $\mathcal{H} = \mathcal{T}[-1] \cap \mathcal{F}$, and is always an abelian category.
The heart $\mathcal{H}$ is \emph{length} if it is a \emph{length category}, meaning that every object has a finite-length composition series.

\subsubsection{Stability conditions}\label{sect:3cy_stab:stab}

We use standard modifications to the notion of Bridgeland stability condition introduced in \cite{ks_stability}; see also \cite{hkk}.
Fix a triangulated category $\mathcal{D}$ and a group homomorphism $\cl \colon K_0(\mathcal{D}) \to \fgg$, where $K_0(\mathcal{D})$ is the Grothendieck group of $\mathcal{D}$ and $\fgg$ is a finitely generated abelian group.

\begin{definition}[{\cite[Definition~1.1]{bridgeland}}]\label{def:stab_cond}
A \emph{stability condition} $\sigma = (Z, \mathcal{P}_{\sigma})$ on $\mathcal{D}$ consists of a group homomorphism $Z \colon \fgg \to \mathbb{C}$ called the \emph{central charge}, and a \emph{slicing} given by full additive subcategories $\ssc{\sigma}{\vartheta} \subset \mathcal{D}$ for each $\vartheta \in \mathbb{R}$, which together satisfy the following axioms.
\begin{enumerate}
    \item If $M \in \ssc{\sigma}{\vartheta}$ then $Z(\cl(M)) \in \mathbb{R}_{>0} \cdot e^{i \pi \vartheta} \subset \mathbb{C}$.
    \item For all $\vartheta \in \mathbb{R}$, $\ssc{\sigma}{\vartheta + 1} = \ssc{\sigma}{\vartheta}[1]$.
    \item If $\vartheta_1 > \vartheta_2$ and $M_i \in \ssc{\sigma}{\vartheta_i}$, then $\Hom_{\mathcal{D}}(M_1, M_2) = 0$.
    \item The Harder--Narasimhan property holds \cite[Definition~1.1(d)]{bridgeland}.
    \item The support property holds \cite[Definition~1]{ks_stability}.
\end{enumerate}
\end{definition}

Just as one can rotate quadratic differentials, one can also rotate stability conditions.
Namely, given a stability condition $\sigma = (Z, \mathcal{P}_{\sigma})$ and $\varepsilon > 0$, one can rotate to $e^{\pm i\pi\varepsilon}\sigma := (e^{\pm i\pi\varepsilon}Z, \mathcal{P}_{e^{\pm i\pi\varepsilon}\sigma})$ where $\ssc{e^{\pm i\pi\varepsilon}\sigma}{\vartheta} := \ssc{\sigma}{\vartheta \mp \varepsilon}$.

It follows from Definition~\ref{def:stab_cond} that the categories $\ssc{\sigma}{\vartheta}$ are abelian \cite[Lemma~5.2]{bridgeland}.
The non-zero objects of $\ssc{\sigma}{\vartheta}$ are said to be \emph{semistable of phase $\vartheta$}, whilst the simple objects of $\ssc{\sigma}{\vartheta}$ are said to be \emph{stable of phase~$\vartheta$}.

A stability condition $\sigma$ determines a bounded t-structure whose heart $\mathcal{H}$ is the extension closure of $\bigcup_{\vartheta \in (0,1]} \ssc{\sigma}{\vartheta}$.
Conversely, if we have a heart $\mathcal{H}$ of a bounded t-structure on $\mathcal{D}$, then there is an isomorphism $K_0(\mathcal{H}) \cong K_0(\mathcal{D})$, so that we have a map $\cl \colon K_0(\mathcal{H}) \to \Gamma$.
Defining a stability condition on $\mathcal{D}$ is equivalent to giving a central charge $Z \colon \fgg \to \mathbb{C}$ which satisfies the Harder--Narasimhan and support properties and is such that for all $0 \neq M \in \mathcal{H}$, we have that $Z(\cl(M)) \in \uhp := \{r e^{i\pi\vartheta} \st r > 0,\, 0 < \vartheta \leqslant 1\} \subset \mathbb{C}$ \cite[Proposition~5.3]{bridgeland}.
Since the heart is abelian, in this way it also makes sense to talk of stability conditions on abelian categories.

Given a non-zero object $M \in \mathcal{H}$, we write $\scph{M} := \scph{\cl(M)} = \frac{1}{\pi}\arg Z(\cl(M)) \in (0, 1]$ for the \emph{phase} of~$M$. %
For $\vartheta \in (0, 1]$ and an object $M \in \mathcal{H}$, we have that $M$ is semistable if and only if for all subobjects $L \subset M$ in $\mathcal{H}$, we have that $\scph{L} \leqslant \scph{M}$.
Similarly, $M$ is stable if and only if for all subobjects $L \subset M$, we have that $\scph{L} < \scph{M}$.

One then obtains a stability condition, in the sense of Definition~\ref{def:stab_cond}, from a heart with a central charge as follows.
Suppose that we have a heart $\mathcal{H}$ of a bounded t-structure on $\mathcal{D}$ and a central charge $Z \colon \Gamma \to \mathbb{C}$ such that for all $0 \neq M \in \mathcal{H}$ we have $Z(\cl(M)) \in \uhp$.
One obtains the slicing $\mathcal{P}_{\sigma}$ by defining $\ssc{\sigma}{\vartheta}$ to be the category of semistable objects of phase $\vartheta$ in $\mathcal{H}$ for $\vartheta \in (0, 1]$, using the criterion for semistability of the previous paragraph.
One then extends this by $\ssc{\sigma}{\vartheta \pm n} := \ssc{\sigma}{\vartheta}[\pm n]$.

We write $\Stab(\mathcal{D})$ for the set of stability conditions on $\mathcal{D}$.
By a theorem of Bridgeland \cite[Theorem~1.2]{bridgeland}, we have that $\Stab(\mathcal{D})$ has the structure of a complex manifold, known as the \emph{stability manifold}, obtained by taking $Z(\hhc{\gamma})$, $\hhc{\gamma} \in \fgg$ as holomorphic coordinates.

\subsubsection{Bridgeland--Smith correspondence}\label{sect:3cy_stab:bs}

We now explain the construction from quadratic differentials to stability conditions.

Given a triangulation $T$ of a marked bordered surface with orbifold points of order three $\mbso$, we associate a 3-Calabi--Yau $A_\infty$-category $\mathcal{D}_\infty(T) := \tw{\akd{Q(T)}{W(T)}}$ whose homotopy category is a 3-Calabi--Yau triangulated category $\mathcal{D}(T) := H^{0}(\mathcal{D}_\infty(T))$.
The category $\mathcal{D}(T)$ is equivalent to the homotopy $1$-category of the $\infty$-category considered in \cite[Theorem~6.20]{chq} by Koszul duality; see \cite[Section~7]{chq}, or \cite[Section~4A]{dm_curves}, \cite[Proposition~4.2.2]{davison_thesis}, and \cite[Section 4 and 5]{lpwz}.
It will be a consequence of Theorem~\ref{thm:enhance_equiv} that if $T'$ is a flip of $T$ then there is a quasi-equivalence $\mathcal{D}_\infty(T) \simeq \mathcal{D}_\infty(T')$.
See \cite[Corollary~4.21]{chq} for more general results using $\infty$-categories.
All triangulations of a marked bordered surface with orbifold points of order three $\mbso$ are connected by flips by \cite{cs,fst_orb}, see \cite[Theorem~3.8]{lfm_i}.
We therefore have a $3$-Calabi--Yau category $\mathcal{D}=\mathcal{D}\mbso$ with $A_\infty$-enhancement $\mathcal{D}_\infty=\mathcal{D}_\infty\mbso$ which only depends upon the surface $\mbso$, up to quasi-equivalence, and not upon a particular choice of triangulation, which we drop from the notation.
We also commonly write $\mathcal{H}_\infty = \mathcal{H}_\infty(T):= \mathcal{H}_{\infty}(Q(T),W(T))$ and $\mathcal{H} =\mathcal{H}(T) := H^{0}(\mathcal{H}_\infty(T))$, which is the heart of a t-structure on $\mathcal{D}$.

\begin{construction}\label{const:bs}
Let $\varphi$ be a saddle-free infinite GMN differential on a compact connected Riemann surface~$X$.
We construct a stability condition $\sigma_{\varphi}$ on $\mathcal{D} = H^{0}({\mathcal{D}_
\infty{(Q(T_\varphi)},{W(T_\varphi))}})$ by giving a heart with an appropriate central charge, recalling the triangulation $T_{\varphi}$ from Section~\ref{sect:qd:triangs}.

We choose the heart $\mathcal{H} = \mathcal{H}(T_{\varphi}) \simeq \modules \jac{T_\varphi}$, where $\jac{T_\varphi} := \jac{Q(T_\varphi), W(T_\varphi)}$.
Since we are working over completed path algebras, we have that $K_0(\mathcal{D}) \cong K_0(\mathcal{H})$ is a finitely generated abelian group which can be identified with $\mathbb{Z}^{Q_0(T_\varphi)}$, with the class of a $\jac{T_\varphi}$-module $M$ being given by $\dimu M$.
Hence, we set $\fgg = \mathbb{Z}^{Q_0(T_{\varphi})}$ and have $\cl \colon K_0(\mathcal{D}) \to \fgg$ as the identity.
We thus suppress the $\cl$ map.

Define $\hhspan{\varphi} \xrightarrow{\sim} K_0(\mathcal{\mathcal{H}})$ by sending the class $\hhc{\gamma}$ of a standard saddle connection $\gamma$ to $\dimu S_i$, where $S_i$ is the simple module at the vertex of the quiver $i$ given by the horizontal strip containing $\gamma$.
This is an isomorphism since the standard saddle classes form a $\mathbb{Z}$-basis of $\hhspan{\varphi}$, whilst the classes of $S_i$ are a $\mathbb{Z}$-basis of the Grothendieck group $K_0(\modules \jac{T_\varphi})$.
The central charge for the stability condition is then given by the central charge $Z_\varphi \colon K_0(\mathcal{H}) \cong \hhspan{\varphi} \to \mathbb{C}$ from Section~\ref{sect:qd:ssc}.
\end{construction}

Note that Construction~\ref{const:bs} only applies to saddle-free quadratic differentials, and that it involves a choice of heart of $\mathcal{D}$, since one could also choose a shift of $\mathcal{H}$.
In the case where the polar type of the quadratic differential is not $m = \{-2\}$, Christ, Haiden, and Qiu use Construction~\ref{const:bs} to construct a map of complex manifolds from a suitably-defined moduli space of quadratic differentials of a fixed polar type to the stability manifold $\Stab(\mathcal{D})$.
Said map is an isomorphism onto its image, which is a union of connected components of $\Stab(\mathcal{D})$ \cite[Theorem~5.4, Theorem~6.20, Lemma~6.23]{chq}.
This map works by fixing a basepoint saddle-free infinite GMN differential $\varphi_{0}$ on $X$.
Given another saddle-free infinite GMN differential $\varphi$ on~$X$, one obtains a stability condition $\sigma_{\varphi}$ on $\mathcal{D}(T_{\varphi_0})$ by applying a certain equivalence between $\mathcal{D}(T_{\varphi_0})$ and $\mathcal{D}(T_{\varphi})$ to the stability condition $\sigma_{\varphi}$ on $\mathcal{D}(T_{\varphi})$.
One can then extend this map to infinite GMN differentials which have a single saddle trajectory, and then inductively extend the map to differentials with increasingly more saddle trajectories and recurrent trajectories.
For more detail, consult \cite[Section~11.2]{bs}, \cite[Sections~8.1 and ~8.2]{bmqs}, or \cite[Section~5.2]{chq}.
This map is equivariant with respect to rotation of the quadratic differential and stability condition.
Extending the map to infinite GMN differentials with more than one saddle trajectory uses results that do not cover the case where the polar type is $m = \{-2\}$.
Note that by~\eqref{eq:rr}, the polar type is $\{-2\}$ if and only if $\varphi$ is a quadratic differential on the torus with a single double pole as its only infinite critical point.

From now on, for an infinite GMN differential $\varphi$, so long as it is not of polar type $\{-2\}$ with more than one saddle trajectory, we write $\sigma_{\varphi}$ for the corresponding stability condition on the associated 3-Calabi--Yau triangulated category~$\mathcal{D}$.

\begin{remark}
Note that the phase $\qdph{\gamma}$ of a finite-length connection $\gamma$, in the sense of Section~\ref{sect:qd:foliation+metric}, coincides with the phase $\scph{M}$ of objects $M \in \mathcal{H}$ with $\dimu M = \hhc{\gamma}$ under the isomorphism $\hhspan{\varphi} \cong K_0(\mathcal{H})$, in the sense of Section~\ref{sect:3cy_stab:stab}.
\end{remark}

\section{Donaldson--Thomas theory}\label{sect:dt}

DT invariants count objects in 3-Calabi--Yau categories.
Originally introduced by Thomas \cite{thomas} for coherent sheaves on CY $3$-folds, they were later described by Behrend in terms of certain weighted Euler characteristics \cite{behrend}.
This led to two different generalisations of DT invariants by Joyce and Song \cite{joyce_ci,joyce_cii,joyce_ciii,joyce_civ,js}, and Kontsevich and Soibelman \cite{ks_coha,ks_stability,ks_sum}.

Interest in counting objects in 3-Calabi--Yau categories comes from string theory, where the relevant category is the derived category of coherent sheaves on a CY 3-fold.
A tractable source of 3-Calabi--Yau categories is provided by the derived categories of Ginzburg DG algebras of quivers with potential.
Moreover, it has been shown that categories of coherent sheaves on 3-Calabi--Yau varieties locally look like representations of quivers with potential \cite{toda}, so understanding the case of quivers with potential is key to understanding the CY 3-fold case.

We now explain these ideas in more detail, although for reasons of space we are not able to go into full technicalities.
The principal reference we follow is \cite{dm_curves}, which is in turn based on \cite{bbs,ks_stability}.
However, we only consider refined DT invariants, rather than the more general motivic DT invariants.
This allows us to simplify our presentation in some parts.

\subsection{Grothendieck rings of na\"ive  motives}

We recall some details on Grothendieck rings of na\"ive motives from \cite[Section~3A]{dm_curves}.
This is the conceptual apparatus we need to count objects in our categories.

\subsubsection{Fundamental definitions}\label{sect:dt:motives:fund}

Given an Artin stack $\mathfrak{M}$ which is locally of finite type over $\mathbb{C}$, define $\kzstaffm$ to be the abelian group generated by isomorphism classes of morphisms \[
\mathfrak{X} \xrightarrow{f} \mathfrak{M}
\] of finite type with $\mathfrak{X}$ a separated reduced stack over $\mathbb{C}$, each of whose $\mathbb{C}$-points has an affine stabiliser, subject to the relation that \[
[\mathfrak{X} \xrightarrow{f} \mathfrak{M}] = [\mathfrak{Y} \xrightarrow{f|_{\mathfrak{Y}}} \mathfrak{M}] + [\mathfrak{X} \setminus \mathfrak{Y} \xrightarrow{f|_{\mathfrak{X} \setminus \mathfrak{Y}}} \mathfrak{M}],
\]
for closed substacks $\mathfrak{Y} \subset \mathfrak{X}$.
Elements of $\kzstaffm$ are called (\emph{na\"ive}) \emph{motives}.
In the case $\mathfrak{M} \simeq \spec{\mathbb{C}}=:{\mathrm{pt}}$, they are called \emph{absolute motives}, and are otherwise called \emph{relative motives}.
In the absolute motive case, we commonly abbreviate $[\mathfrak{X}] := [\mathfrak{X} \to \spec{\mathbb{C}}]$.

We get an algebra structure if $(\mathfrak{M}, \epsilon, 0)$ is a commutative monoid in the category of Artin stacks over~$\mathbb{C}$, where \[
\epsilon \colon \mathfrak{M} \times \mathfrak{M} \to \mathfrak{M} \quad \text{ and } \quad 0 \colon \spec{\mathbb{C}} \to \mathfrak{M}
\]
are the respective multiplication and unit maps, with $\epsilon$ of finite type.
Here we have abbreviated the fibre product $\times_{\spec{\mathbb{C}}}$ to $\times$, and we will continue to do this throughout.
This gives us that $\kzstaffm$ is a commutative $\kzstaffa{\mathrm{pt}}$-algebra with product
\begin{equation}\label{eq:motive_product}
[\mathfrak{X}_1 \xrightarrow{f_1} \mathfrak{M}] \cdot [\mathfrak{X}_2 \xrightarrow{f_2} \mathfrak{M}] = [\mathfrak{X}_1 \times \mathfrak{X}_2 \xrightarrow{f_1 \times f_2} \mathfrak{M} \times \mathfrak{M} \xrightarrow{\epsilon} \mathfrak{M}],
\end{equation}
and $\kzstaffa{\mathrm{pt}}$-action given by the inclusion \[
[\mathfrak{X} \xrightarrow{f} \spec{\mathbb{C}}] \mapsto [\mathfrak{X} \xrightarrow{0 \circ f} \mathfrak{M}].
\]

Given an algebraic group $G$, there are $G$-equivariant versions $\kzgstaffm$ of the above group and algebra for $G$-equivariant stacks $\mathfrak{M}$, whose elements are (relative or absolute) \emph{equivariant motives}.
In this case, the morphisms $\mathfrak{X} \xrightarrow{f} \mathfrak{M}$ must be $G$-equivariant, and we require that every point in $\mathfrak{X}$ must lie in a $G$-equivariant affine neighbourhood.
We again have that $\kzgstaffm$ is a $\kzstaffa{\mathrm{pt}}$-algebra if $\mathfrak{M}$ is a monoid in the category of locally finite-type $G$-equivariant Artin stacks with finite-type monoid map.
From now on we will assume that $\mathfrak{M}$ is a commutative monoid in this way.
Additionally, we define $\kgstaffm$ to be the result of making the following modifications to $\kzgstaffm$ from \cite[Section~3A]{dm_curves}.
\begin{enumerate}
    \item If $\mathfrak{X}' \xrightarrow{\pi'} \mathfrak{X}$ is a $G$-equivariant vector bundle of rank $r$ then we impose on $\kgstaffm$ the extra relation that \[
    [\mathfrak{X}' \xrightarrow{f \circ \pi} \mathfrak{M}] = \mathbb{L}^{r} \cdot [\mathfrak{X} \xrightarrow{f} \mathfrak{M}]
    \] where $\mathbb{L} = [\mathbb{A}^{1}_{\mathbb{C}}]$ is the class of the affine line $\mathbb{A}^{1}_{\mathbb{C}}$ in $\kzstaffa{\mathrm{pt}}$, recalling that our assumption that $\mathfrak{M}$ is a monoid gives that $\kgstaffm$ is a $\kzstaffa{\mathrm{pt}}$-algebra.
    \item We complete with respect to the topology having as closed neighbourhoods of $0 \in \kzgstaffm$ the subgroups of $\kzgstaffm$ \[
    \{[\mathfrak{X} \to \mathfrak{M}] \in \kzgstaffm \st [\mathfrak{X} \times_{\mathfrak{M}} \mathfrak{N} \to \mathfrak{N}] = 0 \text{ in } \kzgstaffa{\mathfrak{N}}\}
    \]
    for open substacks $\mathfrak{N} \subset \mathfrak{M}$.
    This condition is required when $\mathfrak{M}$ is not of finite type.
\end{enumerate}

One can pull back and push forward motives along certain morphisms of stacks.
Indeed, if $h \colon \mathfrak{M} \to \mathfrak{N}$ is a morphism of locally finite-type Artin stacks over $\mathbb{C}$ one defines the \emph{pull-back}
\begin{align*}
h^{\ast} \colon \kgstaffa{\mathfrak{N}} &\to \kgstaffm \\
[\mathfrak{X} \to \mathfrak{N}] &\mapsto [\mathfrak{X} \times_{\mathfrak{N}} \mathfrak{M} \to \mathfrak{M}].
\end{align*}
When $h$ is of finite type, one also defines the \emph{push-forward}
\begin{align*}
h_{\ast} \colon \kgstaffm &\to \kgstaffa{\mathfrak{N}} \\
[\mathfrak{X} \xrightarrow{f} \mathfrak{M}] &\mapsto [\mathfrak{X} \xrightarrow{h \circ f} \mathfrak{N}].
\end{align*}
Given a morphism $g \colon \mathfrak{N} \rightarrow \mathfrak{M}$ of finite type, we also write $\int_{g} := h_{\ast} \circ g^{\ast}$, where $h \colon \mathfrak{N} \to \spec{\mathbb{C}}$ is the structure morphism rendering $\mathfrak{N}$ a stack over~$\mathbb{C}$.

We let $\kgvarm$ be the subgroup of $\kgstaffm$ spanned by classes $[\ag{X} \to \mathfrak{M}]$ where $\ag{X}$ is a $G$-equivariant variety over~$\mathbb{C}$.
By a \emph{variety} over $\mathbb{C}$, we mean a reduced separated scheme of finite type.
We have by \cite[Theorem~1.2]{ekedahl} that the natural map
\begin{equation}\label{eq:stack_ring_desc}
    \kgvarm[[\glnc]^{-1} \st n \in \mathbb{Z}_{> 0}] \to \kgstaffm
\end{equation}
is an isomorphism; see also \cite{bd}, \cite[Lemma~3.8, Lemma~3.9]{bridgeland_intro}, \cite[Theorem~4.10]{joyce_mot}, and \cite[Theorem~3.10]{toen}.
Here $[\glnc]^{-1}$ is sent to the motive of the stack-theoretic quotient of a point by $\glnc$.
This is contained in $\kgstaffm$ via the monoid unit of $\mathfrak{M}$.
The motive $[\glnc] \in \kvarc$ can be calculated in terms of $\mathbb{L}$ as \[
[\glnc] = \prod_{i = 0}^{n - 1} (\mathbb{L}^n - \mathbb{L}^{i});
\] 
for instance, see \cite[Lemma~2.6]{bridgeland_intro}.

\subsubsection{Equivariant motives with respect to roots of unity}

We will be particularly interested in the case where the group $G$ is equal to $\rou$, the group of roots of unity.
The group $\rou$ is defined by taking the limit of the groups $\roua{n}$ of $n$-th roots of unity under the surjections $\roua{mn} \twoheadrightarrow \roua{n}$, $z \mapsto z^{m}$.
These surjections induce injections $\kastaffm{\roua{n}} \hookrightarrow \kastaffm{\roua{mn}}$, and we define $\kmstaffm$ to be the colimit of this system of injections.
We then define $\kmstaffm[\mathbb{L}^{1/2}]$ to be the result of adding a formal square root of~$\mathbb{L}$.
Note that we then also have an inverse square root $\mathbb{L}^{-1/2}$, since $\mathbb{L}$ is invertible in $\kstaffc$; see \cite[Lemma~3.8]{bridgeland_intro}.

The group $\kmvarc[\mathbb{L}^{1/2}]$ carries a product which differs from the usual product $\cdot$ defined by \eqref{eq:motive_product}.
This is Looijenga's `exotic' convolution product $\exprod$ defined in \cite{looijenga}, and generalised in \cite{glm}.
The exotic product $\exprod$ nevertheless coincides with the direct product $\cdot$ from \eqref{eq:motive_product} on the subring $\kvarc \subset \kmvarc$ where the action of $\rou$ is trivial.

In this paper we will need equivariant motives in order to access the machinery producing DT invariants, but we will simplify the output of this machinery so that the invariants are Laurent polynomials in $q^{1/2}$, rather than equivariant motives.
These are known as `refined' DT invariants, rather than `motivic' DT invariants.
In order to do this, we use the following fact.

\begin{proposition}[{\cite[p.69, p.101]{ks_stability} or \cite[p.1244]{dm_curves}}]
There is a ring homomorphism \[
\tsp \colon \kmstaffc[\mathbb{L}^{1/2}] \to  \mathbb{Z}[q^{\pm 1/2}, (1 - q^n)^{-1} \st n \in \mathbb{Z}_{>0}] =:\mathscr{R}.
\]
where the subring $\kmvarc[\mathbb{L}^{1/2}]$ carries the exotic product $\exprod$, which is extended to $\kmstaffc[\mathbb{L}^{1/2}]$ using the isomorphism~\eqref{eq:stack_ring_desc}.
\end{proposition}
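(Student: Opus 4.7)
The plan is to construct $\tsp$ in two stages: first on the variety subring $\kmvarc[\mathbb{L}^{1/2}]$, then extend to $\kmstaffc[\mathbb{L}^{1/2}]$ via~\eqref{eq:stack_ring_desc}. The natural candidate on a $\roua{n}$-equivariant variety $X$ is a $\rou$-equivariant weight polynomial,
\[
\tsp([X]) := \sum_{k, w} (-1)^{k} \operatorname{tr}\bigl(\zeta_n \bigm| \operatorname{Gr}^{W}_{w} H^{k}_{c}(X, \mathbb{Q})\bigr)\, q^{w/2},
\]
for a fixed primitive $n$-th root of unity $\zeta_n$ and Deligne's weight filtration $W_{\bullet}$. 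Additivity of $H^{\bullet}_{c}$ on open-closed decompositions (together with strict compatibility with $W_{\bullet}$) yields well-definedness on the scissor relations; compatibility with restriction along $\roua{mn} \twoheadrightarrow \roua{n}$ ensures consistency across the colimit defining $\kmvarc$; and $H^{\bullet}_{c}(\mathbb{A}^{1}, \mathbb{Q})$ forces $\mathbb{L} \mapsto q$, which extends formally to $\mathbb{L}^{1/2} \mapsto q^{1/2}$.

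Next, one would check multiplicativity with respect to $\exprod$. On the subring $\kvarc[\mathbb{L}^{1/2}]$ of trivial-action classes $\exprod$ is the ordinary Cartesian product, and the claim is the Künneth formula. In general, the Looijenga-style convolution $\exprod$ defined in \cite{looijenga} and \cite{glm} must be matched with the ordinary character product; this is expected to follow from a Thom--Sebastiani-type identification of $\rou$-equivariant weight structures on the cohomology of a product together with a direct character calculation, as indicated in \cite{ks_stability}.

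To pass from varieties to stacks via~\eqref{eq:stack_ring_desc}, it suffices to show that $\tsp([\glnc])$ is a unit in $\mathbb{T}$ for every $n$. Using the identity
\[
[\glnc] = \mathbb{L}^{\binom{n}{2}} \prod_{j = 1}^{n} (\mathbb{L}^{j} - 1)
\]
recalled earlier in the excerpt, $\tsp([\glnc]) = q^{\binom{n}{2}} \prod_{j = 1}^{n} (q^{j} - 1)$, which is manifestly invertible in $\mathbb{T}$ since $1 - q^{j}$ is invertible by the very definition of $\mathbb{T}$ and $q$ is invertible via $q^{\pm 1/2}$. Compatibility with the completion of $\kmstaffc$ described in Section~\ref{sect:dt:motives:fund} is then handled stratum-by-stratum using that each open substack is of finite type.

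The main obstacle is the second step. Multiplicativity on the trivial-action part is formal from Künneth, but extending it to the full monodromic ring is where the construction ceases to be purely formal: one must identify $\exprod$ with an honest convolution on $\rou$-equivariant weight-graded cohomology, which is where the delicate character-theoretic arguments of the Kontsevich--Soibelman framework enter, and in my plan this is the most technically demanding ingredient.
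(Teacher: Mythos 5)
The paper does not actually prove this proposition: it is quoted from \cite{ks_stability} and \cite{dm_curves}, with the explicit construction referred to \cite[Section~2.2]{bbs}, so the relevant comparison is with that construction. Measured against it, your proposal has a genuine error at exactly the step you defer. Your candidate map --- the alternating trace of the monodromy automorphism on the weight-graded compactly supported cohomology --- is multiplicative for the \emph{ordinary} product (K\"unneth plus strictness of the weight filtration), but it is provably not multiplicative for the exotic convolution $\exprod$, and this is not a technical verification that ``is expected to follow from a Thom--Sebastiani-type identification''; it fails on the nose. Concretely, take $[\roua{2}]$ with its translation action: the generator acts on $H^{0}_{c}(\roua{2};\mathbb{Q})\cong\mathbb{Q}^{2}$ by the swap, whose trace is $0$, so your formula gives $\tsp(1-[\roua{2}])=1$. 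But $(1-[\roua{2}])\exprod(1-[\roua{2}])=\mathbb{L}$ (this identity is recalled in Remark~\ref{rmk:od_fine}), which any ring homomorphism must send to $q\neq 1^{2}$. Hence no argument can make your specific formula into the homomorphism claimed; the definition itself must be changed on the non-trivially monodromic part.

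The construction the paper relies on does precisely this: for a variety with $\roua{n}$-action one splits $H^{\bullet}_{c}$ into the $\rou$-invariant isotypic part and its complement, takes the usual weight polynomial on the invariant part, and assigns the non-invariant part weights shifted by one (so it contributes genuinely half-integer powers of $q$), finally applying the twist $q^{1/2}\mapsto -q^{1/2}$; see \cite[Section~2.2]{bbs} and \cite[Proof of Proposition~5.8]{haiden}. With that definition $\tsp(1-[\roua{2}])=q^{1/2}$, consistent with $(1-[\roua{2}])^{\exprod 2}=\mathbb{L}\mapsto q$, and multiplicativity for $\exprod$ is then a finite character-by-character computation rather than an open Thom--Sebastiani problem (the sign twist is also needed for agreement with the paper's conventions on the trivial-action subring, where odd powers of $q^{1/2}$ do occur, e.g.\ for curves). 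The remaining parts of your plan are fine and standard: compatibility with the colimit over $\roua{n}\hookrightarrow\roua{mn}$, the computation $\tsp([\glnc])=q^{\binom{n}{2}}\prod_{j=1}^{n}(q^{j}-1)$, which is a unit in $\mathbb{T}$, so the map extends over the isomorphism~\eqref{eq:stack_ring_desc}, and the observation that no completion issue arises for the absolute ring since the base is $\spec{\mathbb{C}}$.
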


This maps $\mathbb{L}^{1/2} \mapsto q^{1/2}$ and maps the motive of a variety $\ag{X}$ with trivial $\rou$-action to its Serre polynomial, and then applies the map $q^{1/2} \mapsto -q^{1/2}$; see \cite[Section~2.2]{bbs} for an explicit construction.
In \cite{haiden,ks_stability}, this is referred to as the \emph{twisted} Serre polynomial.
Note that the Serre polynomial is also known as the `weight polynomial' or `virtual Poincar\'e polynomial' and is obtained by specialising the Hodge polynomial.
When $\ag{X}$ is smooth projective, it coincides with the Poincar\'e polynomial of~$\ag{X}$.

The framework of Kontsevich and Soibelman \cite[Section~4.5]{ks_stability} requires us to impose an extra relation on our Grothendieck rings of motives.
To this end, we define $\kbgstaffm$ to be the quotient of $\kgstaffm$ described in \cite[p.1268]{dm_curves}.
It is known that the kernel of the quotient map $\kmstaffc \to \kbmstaffc$ also lies in the kernel of the map $\tsp \colon \kmstaffc \to \mathscr{R}$ \cite[p.1268]{dm_curves}, \cite[Section~4.5]{ks_stability}.
Hence, we may also extend the map $\tsp$ to $\tsp \colon \kbmstaffc[\mathbb{L}^{1/2}] \to \mathscr{R}$.

\subsection{Motivic Hall algebras}

We now review the definition of the motivic Hall algebras for stacks of finite-dimensional modules over Jacobian algebras.

\subsubsection{Stacks of quiver representations}\label{sect:dt:quiv_rep_stacks}

Given $\ud \in \mathbb{N}^{Q_0}$, we define the variety of representations of $Q$ of dimension vector $\ud$ as \[
\qdsch := \prod_{i \to j \in Q_1} \Hom_{\mathbb{C}}(\mathbb{C}^{d_i}, \mathbb{C}^{d_j}).
\]
We then define $\qdschn$ and $\qwdsch$ to be the closed subvarieties consisting respectively of nilpotent representations and representations satisfying the relations of the Jacobian ideal $J(W)$.
We furthermore define $\qwdschn$ to be the intersection of these two closed subvarieties, giving the closed subvariety of representations of $\jac{Q, W}$.
This is the variety we are principally interested in.

We let $\gla{\ud}$ be the group $\gla{\ud} := \prod_{i \in Q_0} \gla{d_i}$, which acts on the variety $\qdsch$ and its above subvarieties by change of basis at each $\mathbb{C}^{d_i}$.
Namely, given $g = (g_i)_{i \in Q_0} \in \gla{\ud}$ and $f = (f_a)_{a \colon i \to j}$ where $f_a \in \Hom_{\mathbb{C}}(\mathbb{C}^{d_i}, \mathbb{C}^{d_j})$, we have \[
g \cdot f := (g_j \circ f_a \circ g_i^{-1})_{a \colon i \to j}.
\]
We then define the moduli stack \[
\qwdstn := \qwdschn/\gla{\ud}
\]
using the stack-theoretic quotient.
We finally take the disjoint union \[
\qwstn := \coprod_{\ud \in \mathbb{N}^{Q_0}} \qwdstn.
\]
This stack is a commutative monoid in the category of Artin stacks over $\mathbb{C}$ with the operation given by $\oplus$ and the identity given by the zero representation; see \cite[Lemma~2.1]{mr}.
We similarly define $\qdst$, $\qdstn$, $\qwdst$, $\qst$, $\qstn$, and $\qwst$.

\begin{remark}
Note that by \cite[Lemme~9.1.0.4]{klh_thesis} and the fact that $\mathcal{H}_\infty(Q,W)$ is strictly unital, we have that objects in $\mathcal{H}_\infty(Q,W)$ are `$A_{\infty}$-isomorphic' if and only if they are isomorphic in $\mathcal{H}(Q,W)=H^{0}(\mathcal{H}_\infty(Q,W)) \simeq \modules \jac{Q, W}$.
Hence, following Theorem~\ref{thm:koszul_duality_heart}, we can also view $\qwstn$ as the moduli stack of objects in $\mathcal{H}_\infty(Q,W)$.
\end{remark}

For each of the module categories $\modules \jacu{Q, 0}$, $\modules \jac{Q, 0}$, $\modules \jacu{Q, W}$, and $\modules \jac{Q, W}$ there is a natural map $\cl = \dimu$ from the respective Grothendieck groups to $\fgg = \mathbb{Z}^{Q_0}$.
This is an isomorphism for the completed algebras, but may not be an isomorphism for the uncompleted algebras.
Given a stability condition $Z \colon \fgg \to \mathbb{C}$, we have an open subvariety $\sqdsch$ of $\qdsch$ consisting of semistable representations of dimension vector $\ud$, and analogous open subvarieties $\sqdschn$, $\sqwdsch$, and $\sqwdschn$; see \cite[Section~3.2]{meinhardt_intro}.
As before, we quotient by $\gla{\ud}$ to obtain the stacks $\sqdst$, $\sqdstn$, $\sqwdst$, and $\sqwdstn$ of semistable representations of dimension vector $\ud$.
We then take the disjoint union over dimension vectors of a constant phase $\vartheta$, namely \[
\sqwpstn := \coprod_{\substack{\scph{\ud} = \vartheta \\ \text{or } \ud = 0}} \sqwdstn
\]
to obtain the stack of semistable $\jac{Q, W}$-modules of phase $\vartheta$, and likewise define $\sqpst$, $\sqpstn$, and $\sqwpst$.

\begin{remark}
In this paper, $\modules \jac{Q, W}$ will be a particular heart of $\mathcal{D}$ given by a stability condition~$\sigma_{\varphi}$.
In some cases, namely the non-degenerate ring domain cases of Section~\ref{sect:main:nrd}, the heart given by the stability condition $\sigma_{\varphi}$ will not be given by a quiver with potential, so the theory here will not strictly apply \textit{per~se}.
In such cases, we will really be applying the theory in the nearby heart we obtain by rotation which does come from a quiver with potential.
Since the categories of semistable objects do not change under rotation, but only change their phase, this does not affect the values of the invariants.
\end{remark}

\subsubsection{Hall product}

One can define products on the abelian groups $\kstaffq$, $\kstaffqn$, $\kstaffqw$ and $\kstaffqwn$ giving the \emph{motivic Hall algebras} due to Joyce \cite{joyce_cii}, as we presently explain.
These abelian groups are moreover modules over $\kstaffc$ since the base stacks are monoids, recalling Section~\ref{sect:dt:motives:fund}.
Elements of these groups can roughly be thought of as continuous families of representations.
An introduction to motivic Hall algebras can be found in \cite{bridgeland_intro}, with \cite{bridge_scat} also being a good reference in the case of quivers with potential.

We give the definition of the motivic Hall algebra for $\kstaffqwn$; the other cases are defined similarly.
The product $\star$ on the Hall algebra will be linear, so it suffices to define it for classes $[\mathfrak{X}_{1} \xrightarrow{f_1} \qwstn]$ and $[\mathfrak{X}_2 \xrightarrow{f_2} \qwstn]$ which factor respectively as $[\mathfrak{X}_1 \to \qwastn{\ud^{1}} \hookrightarrow \qwstn]$ and $[\mathfrak{X}_2 \to \qwastn{\ud^{2}} \hookrightarrow \qwstn]$.
We write $\qwastn{\ud^{1}, \ud^{2}}$ for the stack of short exact sequences
\[0 \to L \to M \to N \to 0\]
in $\modules \jac{Q, W}$ such that $\dimu L = \ud^{1}$ and $\dimu N = \ud^{2}$; see \cite[Section~3B]{dm_curves} for a precise construction.
There are natural morphisms of stacks
\begin{align*}
    &\pi_1 \colon \qwastn{\ud^{1}, \ud^{2}} \to \qwastn{\ud^{1}}, \\
    &\pi_2 \colon \qwastn{\ud^{1}, \ud^{2}} \to \qwastn{\ud^{1} + \ud^{2}}, \\
    &\pi_3 \colon \qwastn{\ud^{1}, \ud^{2}} \to \qwastn{\ud^{2}},
\end{align*}
given by taking the first, second, and third terms of the short exact sequence, respectively.
We then define the product
\[[\mathfrak{X}_1 \xrightarrow{f_{1}} \qwstn] \star [\mathfrak{X}_2 \xrightarrow{f_{2}} \qwstn]\]
in the motivic Hall algebra $\kstaffqwn$ to be the composition given by the top row of the commutative diagram \[
\begin{tikzcd}
    \mathfrak{X}_{1,2} \ar[r] \ar[d] \arrow[dr, phantom, "\lrcorner", very near start] & \qwastn{\ud^{1}, \ud^{2}} \ar[r,"\pi_{2}"] \ar[d,"{\pi_{1} \times \pi_{3}}"] & \qwastn{\ud^{1} + \ud^{2}} \ar[r,hookrightarrow] & \qwstn \\
    \mathfrak{X}_1 \times \mathfrak{X}_2 \ar[r,"{f_{1} \times f_{2}}"] & \qwastn{\ud^{1}} \times \qwastn{\ud^{2}}, 
\end{tikzcd}
\]
where the square is Cartesian, as indicated.
We obtain that the motivic Hall algebra is a $\kstaffc$-algebra.
The intuition for this definition is that $\mathfrak{X}_{1,2}$ is the family of representations which are extensions of representations in $\mathfrak{X}_1$ by representations in $\mathfrak{X}_2$, so that multiplication in the motivic Hall algebra corresponds to taking extensions.

\subsection{Integration maps}\label{sect:dt:meat}

Integration maps are algebra homomorphisms from the motivic Hall algebras which count representations.
As in \cite[Section~4B]{dm_curves}, we consider two different integration maps, one due to \cite{ks_stability} and the other based on \cite{bbs}.
These will give the same output, modulo technicalities.

\subsubsection{Motivic vanishing cycles}\label{sect:dt:mvc}

In order to count representations of quivers with non-trivial potential correctly when we apply the integration map, we need the technology of motivic vanishing cycles.
The role of the motivic vanishing cycle in DT theory is to enable one to count representations with multiplicity, as required by the Behrend function \cite{behrend}, accounting for the fact that the partial derivatives of the potential may vanish to order higher than one.
Motivic vanishing cycles were introduced by Denef and Loeser in \cite{dl}, with \cite{looijenga} also being a standard reference.
We leave the details of their definition to \cite[Section~3C]{dm_curves}, or to either of these papers.

Let $f \colon \ag{M} \to \mathbb{A}_{\mathbb{C}}^{1}$ be a regular map on a smooth scheme $\ag{M}$ and denote $\ag{M}_0 := f^{-1}(0)$.
The \emph{motivic vanishing cycle} is then a certain $\rou$-equivariant motive $\vcf \in \kmvara{\ag{M}_0}[\mathbb{L}^{-1/2}]$.
We let $\crloc(f) \subset \ag{M}$ be the critical locus of $f$, the locus where the partial derivatives of $f$ all vanish.
We assume that $\crloc(f) \subseteq \ag{M}_{0}$, as will be the case in the examples we consider.
We have that $\vcf$ is \emph{supported} on $\crloc(f)$, meaning that it is in the image of the map $\iota_{\ast} \colon \kmvara{\crloc(f)} \to \kmvara{\ag{M}_0}$ where 
$\iota \colon \crloc(f) \hookrightarrow \ag{M}_0$ is the inclusion \cite[Corollary~5.27]{meinhardt_intro}, \cite[Proposition~2.10]{dm_loop}.

One extends the definition of motivic vanishing cycles to quotient stacks in the following way.
Suppose that we have an Artin stack $\mathfrak{M}$ that is a quotient $\ag{M}/\glnc$ of a smooth connected scheme~$\ag{M}$, with $\pi \colon \ag{M} \to \mathfrak{M}$ the quotient map.
If $f \colon \mathfrak{M} \to \mathbb{A}_{\mathbb{C}}^{1}$ is a morphism, then we define \[
\vcf := \mathbb{L}^{n^2/2} \cdot [\glnc]^{-1} \cdot (\pi \circ \zeta)_{\ast} \vca{f \circ \pi} \in \kmstaffm[\mathbb{L}^{1/2}],\] where $\zeta \colon \ag{M}_0 := (f \circ \pi)^{-1}(0) \hookrightarrow \ag{M}$ is the inclusion.

We will also need a relative version of the motivic vanishing cycle.
To explain this, let $\ag{M}$ be a finite type scheme with a \emph{constructible vector bundle} $\pi \colon \ag{V} \to \ag{M}$, meaning that there is a finite decomposition of $\ag{M}$ into locally closed subschemes $\ag{M} = \coprod \ag{M}_i$ with a vector bundle $\ag{V}_i$ on each of the $\ag{M}_i$, not necessarily of the same rank.
A function on the total space $\ag{V}$ of the constructible vector bundle is given by a function on each of the $\ag{V}_i$, possibly after further decomposition.
We then let $f \colon \ag{V} \to \mathbb{A}_{\mathbb{C}}^{1}$ be a function vanishing on the zero section.
There is then a \emph{relative motivic vanishing cycle} $\vcfrel \in \kbmvara{\ag{M}}[\mathbb{L}^{-1/2}]$, for whose definition see \cite[Section~3C]{dm_curves}.
Note, however, that we define this to be an element of $\kbmvara{\ag{M}}[\mathbb{L}^{-1/2}]$, rather than $\kmvara{\ag{M}}[\mathbb{L}^{-1/2}]$.
One can extend the notion of constructible vector bundles to stacks similarly to how one may extend the notion of a vector bundle to a stack, for instance \cite[Definition~2.50]{gomez}.
One can then extend relative motivic vanishing cycles to stacks similarly to the non-relative case.

\subsubsection{The minimal potential}

One of the ingredients of the integration map of Kontsevich and Soibelman \cite{ks_stability} is a potential function constructed from the $A_\infty$-structure on $\mathcal{H}_\infty(Q,W)$, related to that from~\eqref{eq:cat_potential}.
To explain this, we need the following notions and results, where we are again still following \cite{dm_curves}.

\begin{definition}[{\cite[Definition~4.3]{dm_curves}}]
A \emph{constructible $3$-Calabi--Yau vector bundle} on a scheme $\ag{X}$ is a constructible $\mathbb{Z}$-graded vector bundle $\ag{V}^{\bullet}$ along with morphisms $m_{n} \colon (\ag{V}^{\bullet})^{\otimes n} \to \ag{V}^{\bullet}[2 - n]$ and a morphism $\cy{-}{-} \colon \ag{V}^{\bullet} \otimes \ag{V}^{\bullet} \to \mathbbm{1}_{\ag{X}}[-3]$ satisfying the same conditions as a 3-Calabi--Yau $A_{\infty}$-category.
Here $\mathbbm{1}_{\ag{X}}$ is the trivial line bundle on~$\ag{X}$.

A \emph{morphism $f \colon \ag{V}^{\bullet} \to {\ag{V}'}^{\bullet}$ of constructible $3$-Calabi--Yau vector bundles} is a countable collection of morphisms of constructible vector bundles $f_n \colon (\ag{V}^{\bullet})^{\otimes n} \to {\ag{V}'}^{\bullet}$ satisfying the natural compatibility conditions with respect to the $A_\infty$-operations $m_{n}$ and form $\cy{-}{-}$, along with the additional condition that $\sum_{k + l = n} \cy{-}{-}_{{V'}^{\bullet}} \circ f_{k} \otimes f_{l} = 0$ for all $n \geqslant 3$, just as in Section~\ref{sect:3cy_stab:3cy_a_inf}.

This notion can then be extended to constructible 3-Calabi--Yau vector bundles on stacks.
\end{definition}

The following proposition splits the bundle of $\End^{\bullet}$ algebras over $\mathcal{H}_\infty(Q,W)$ into a minimal part, which has no differential, and a remainder term.

\begin{proposition}[{\cite[Theorem~4.1.13]{davison_thesis} based on \cite[Theorem~5.15]{kajiura}, see also \cite[Proposition~4.4]{dm_curves} and \cite[Remark~6.3.3]{davison_thesis}}]\label{prop:min_model}
There is a locally constructible formal isomorphism of constructible 3-Calabi--Yau vector bundles \[
(\End^{\bullet}_{\mathcal{D}_\infty(Q,W)}, m^{\two}) \cong (\Ext_{\mathcal{D}_\infty(Q,W)}^{\bullet} \oplus \mathfrak{V}^{\bullet}, m')
\]
on the stack $\qwstn$ such that $m'_{1}$ factors via a map $\mathfrak{V}^{\bullet} \to \mathfrak{V}^{\bullet}$, with $\mathfrak{V}^{\bullet}$ an acyclic complex, and such that, for $i \geqslant 2$, $m'_i$ factors via a map \[
(\Ext_{\mathcal{D}_\infty(Q,W)}^{\bullet})^{\otimes i} \to \Ext^{\bullet}_{\mathcal{D}_\infty(Q,W)}.
\]
This splitting is unique up to isomorphisms of constructible 3-Calabi--Yau vector bundles.
We thus have that $(\Ext_{\mathcal{D}_\infty(Q,W)}^{\bullet}, m')$ gives a cyclic minimal model for $(\End^{\bullet}_{\mathcal{D}_\infty(Q,W)}, m^{\two})$.
\end{proposition}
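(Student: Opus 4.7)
The plan is to reduce to a pointwise statement via Kajiura's cyclic minimal model theorem and then upgrade to families by a stratification argument. Pointwise, for each nilpotent module $M \in \qwstn$, the cyclic $A_\infty$-algebra $(\End^{\bullet}_{\tw{\akdqw}}(M), m^{\two})$ admits a minimal model $(\Ext^{\bullet}_{\tw{\akdqw}}(M), m')$ together with a quasi-isomorphism whose kernel is acyclic, producing the desired decomposition on a point. To extend this to a family, stratify $\qwstn$ by the dimension vectors of the $\Ext^{i}$-spaces, which are upper semi-continuous in $M$. On each locally closed stratum, $\Ext^{\bullet}_{\tw{\akdqw}}$ is a genuine graded vector bundle, and one may choose a locally constructible splitting of the projection $\mathrm{ker}(m^{\two}_1) \twoheadrightarrow \Ext^{\bullet}_{\tw{\akdqw}}$ along with a locally constructible contracting homotopy $h$ on the complementary acyclic bundle $\ag{V}^{\bullet}$.

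With this data in hand, the transferred operations are defined via the homological perturbation lemma: $m'_n$ is a finite sum, indexed by planar binary trees with $n$ leaves, of compositions of the operations $m^{\two}_{k}$, the homotopy $h$ on internal edges, and the projection onto $\Ext^{\bullet}_{\tw{\akdqw}}$ at the root. Because every building block of this formula is constructible on the stratum, the resulting minimal operations are locally constructible, and the formulas ensure that $m'_1$ factors through $\ag{V}^{\bullet}$ while for $i \geqslant 2$ the higher $m'_i$ factor through $\Ext^{\bullet}_{\tw{\akdqw}}$ as required.

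The principal obstacle will be preserving the $3$-Calabi--Yau structure, since the generic Kadeishvili transfer does not respect the pairing $\cy{-}{-}$. Following Kajiura, one resolves this by arranging that the homotopy $h$ is graded self-adjoint with respect to the restriction of $\cy{-}{-}$ to $\ag{V}^{\bullet}$, which is non-degenerate because $\cy{-}{-}$ is non-degenerate and the decomposition is orthogonal in cohomology. Such an adjusted homotopy can be obtained by an explicit algebraic symmetrisation of any initial choice of $h$, and this symmetrisation is constructible on each stratum, so the transferred structure remains locally constructible. The perturbation-lemma formulas with self-adjoint $h$ then automatically satisfy the cyclic invariance condition of Section~\ref{sect:3cy_stab:3cy_a_inf}, upgrading the transfer to an isomorphism of constructible $3$-Calabi--Yau vector bundles.

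Uniqueness up to isomorphism of constructible $3$-Calabi--Yau vector bundles follows from the pointwise fact that any two cyclic minimal models are related by a cyclic quasi-isomorphism, which between minimal models is an isomorphism; one then transfers the cocycle data of this isomorphism by the same constructible procedure, refining the stratification if necessary to make the comparison map vary algebraically in families.
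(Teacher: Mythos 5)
Your proposal is essentially the argument behind the result as the paper uses it: the paper gives no independent proof but cites Kajiura's cyclic decomposition/minimal-model theorem and its constructible, family version in Davison's thesis, whose proof is exactly your combination of a pointwise cyclic transfer (with splittings and homotopy chosen compatibly with the pairing $\cy{-}{-}$) and a stratification of $\qwstn$ by the upper semi-continuous $\Ext$-dimensions to make all choices locally constructible, with uniqueness coming from uniqueness of cyclic minimal models. So the proposal is correct and follows the same route as the cited proof; the only step stated more casually than in the sources is the claim that the tree-formula transfer with a self-adjoint homotopy is automatically cyclic, which is precisely what Kajiura's decomposition theorem is invoked to guarantee.
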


The \emph{minimal potential} $\wmin$ is then given as in the formula \eqref{eq:cat_potential} using the cyclic minimal model of Proposition~\ref{prop:min_model}, see \cite[p.1267]{dm_curves} for more detail.
It is shown in \cite[Proposition~4.5]{dm_curves} that $\vcrela{\wmin}$ is well-defined and does not depend upon the choice of cyclic minimal model.

\subsubsection{Orientation data}\label{sect:dt:orient_data}

Another ingredient of the integration map of \cite{ks_stability} is `orientation data', which provides a correction factor ensuring that the map is in fact an algebra homomorphism.
Orientation data gives a family of pairs $\{(\mathfrak{L}_{\ud}, \tau_{\ud})\}_{\ud \in \mathbb{N}^{Q_0}}$ consisting of a constructible super line bundle $\mathfrak{L}_{\ud}$ on $\qwdstn$, where \emph{super} means $\mathbb{Z}_2$-graded, and a trivialisation of its square \[
\tau_{\ud} \colon \mathfrak{L}_{\ud}^{\otimes 2} \xrightarrow{\sim} \mathbbm{1}_{\qwdstn},
\]
where $\mathbbm{1}_{\qwdstn}$ is the trivial super line bundle on $\qwdstn$, which has even parity everywhere. 
This family is required to satisfy a certain cocycle condition \cite[Section~5.2]{ks_stability}, \cite[Condition~6.3.1]{davison_thesis}.
An isomorphism of orientation data is an isomorphism of constructible super line bundles commuting with the chosen trivialisations of the squares.
In \cite[Section~3.1]{ks_stability}, the framework of ``ind-constructible'' vector bundles is used instead of families of vector bundles.

There is a canonical choice of orientation data for a quiver with potential given in \cite[Section~7.1]{davison_thesis}.
Indeed, given a twisted complex $(M, a)$ of $\mathcal{H}_\infty(Q,W)$, there is a non-degenerate quadratic form $\cy{m_{1}^{\two}(-)}{-}$ on $\End_{\mathcal{D}_\infty(Q,W)}^{1}((M, a))$.
Fixing $M$ and varying over all $a$ that satisfy the Maurer--Cartan equation \eqref{eq:mc}, one obtains a constructible super vector bundle in odd degree
\begin{equation}\label{eq:orient_data:vec_bund}
\End_{\mathcal{D}_\infty(Q,W)}^{1}(M)/\ker(m_{1}^{\two})
\end{equation}
on $\qaaschn{Q, W}{\ud}$ with a non-degenerate quadratic form $\odqf$ given by $\cy{m_{1}^{\two}(-)}{-}$, where $\ud$ is the dimension vector of the $\jac{Q, W}$-module corresponding to~$(M, a)$.
This then descends to the stack $\qaastn{Q, W}{\ud}$.
Note that \eqref{eq:orient_data:vec_bund} depends upon~$a$, since the $A_\infty$-operation $m_{1}^{\two}$ depends upon~$a$.
Consequently, the dimension of \eqref{eq:orient_data:vec_bund} may jump, which is why it is only a \textit{constructible} super vector bundle.

We now explain how to turn a constructible super vector bundle with quadratic form into a constructible super line bundle with a trivialisation of its square, so that it provides us with orientation data.
Given a constructible super vector bundle $\mathfrak{V}$ on a stack $\mathfrak{M}$, its \emph{superdeterminant} is the super line bundle defined by \[
\sdet{\mathfrak{V}} := \cop^{\dim(\mathfrak{V})} \biggl( \bigwedge^{\mathrm{top}} \mathfrak{V}_{\mathrm{even}} \otimes \bigwedge^{\mathrm{top}} \mathfrak{V}_{\mathrm{odd}}^{\ast} \biggr),
\]
where $\cop$ is the change-of-parity functor on super line bundles and $\bigwedge^{\mathrm{top}}$ denotes the top exterior power of $\mathfrak{V}_{\mathrm{even}}$ and $\mathfrak{V}_{\mathrm{odd}}$, which may differ between the different pieces of $\mathfrak{M}$ in the constructible decomposition.
If $\mathfrak{V}$ has a non-degenerate quadratic form $\odqf_{\mathfrak{V}}$, then this gives us an isomorphism $\mathfrak{V} \cong \mathfrak{V}^{\ast}$.
This then gives an isomorphism $\sdet{\mathfrak{V}} \cong \sdet{\mathfrak{V}^\ast} \cong \sdet{\mathfrak{V}}^{\ast}$, and so a trivialisation $\sdet{\mathfrak{V}}^{\otimes 2} \xrightarrow{\sim}  \mathbbm{1}_{\mathfrak{M}}$.
Different families of constructible super vector bundles with quadratic forms giving rise to isomorphic orientation data will give the same integration map, see \cite[p.1265]{dm_curves}.

The constructible super line bundle on $\qwdstn$ is then given by the superdeterminant of the constructible vector bundle \eqref{eq:orient_data:vec_bund} and the trivialisation of its square provided by the non-degenerate inner product $\cy{m_{1}^{\two}(-)}{-}$.
We denote this choice of orientation data by $\odchoice$ and refer to this as the \emph{canonical choice of orientation data} for the quiver with potential $(Q, W)$. We will need to make calculations concerning orientation data later, so we collect the following useful facts.

\begin{proposition}\label{prop:od_ug}
The following statements hold.
\begin{enumerate}[label=\textup{(}\arabic*\textup{)}]
    \item Orientation data $\{(\mathfrak{L}_{\ud}, \tau_{\ud})\}_{\ud \in \mathbb{N}^{Q_0}}$ is determined by the parity of $\mathfrak{L}_{\dimu S_i}$ for $i \in Q_0$.\label{op:od_ug:simples_determine}
    \item For the canonical choice of orientation data $\odchoice$, $\mathfrak{L}_{\dimu S_i}$ has even parity for all simple modules $S_i$ over $\jac{Q(T), W(T)}$, where $T$ is a triangulation of a marked bordered surface with orbifold points of order three.\label{op:od_ug:simples->even}
    \item Given a twisted complex $M \in \mathcal{H}_\infty(Q,W)$, the parity of the dimension of \[
    \End^{1}_{\mathcal{D}_\infty(Q,W)}(M)/\ker(m_{1}^{\two}),
    \]
    the vector space which gives the canonical choice of orientation data $\odchoice$, is equal to the parity of \[
    {\rm dim}\Ext_{\mathcal{D}_\infty(Q,W)}^{\leqslant 1}(M, M) + {\rm dim}\Hom_{\mathcal{D}_\infty(Q,W)}^{\leqslant 1}(M, M).
    \]\label{op:od_ug:calc}
\end{enumerate}
\end{proposition}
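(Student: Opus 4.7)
For \ref{op:od_ug:simples_determine}, the plan is to combine the cocycle condition satisfied by orientation data with the connectedness of each $\qwastn{\ud}$. Because $\qwastn{\ud}$ is connected, the parity of any constructible super line bundle restricted to it is a well-defined element of $\mathbb{Z}/2$. The cocycle condition on the stack of short exact sequences $0 \to L \to M \to N \to 0$ relates the restriction of $(\ag{L},\tau)$ to $\qwastn{\ud_1 + \ud_2}$ (where $\ud_1 = \dimu L$, $\ud_2 = \dimu N$) to its restrictions to $\qwastn{\ud_1}$ and $\qwastn{\ud_2}$, up to a universal correction term that depends only on the pair $(\ud_1, \ud_2)$ and not on the chosen orientation data. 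Since every $\ud \in \mathbb{N}^{Q_{0}}$ is a non-negative integer combination of the basis elements $\dimu S_i$, iterating the cocycle condition along a Jordan--H\"older filtration of a representation with simple factors shows that the parity on every $\qwastn{\ud}$ is determined by the parities on the simple components. Coupled with the fact that two orientation data with isomorphic underlying line bundles (with compatible trivialisations) yield the same integration map, as recorded at the end of Section~\ref{sect:dt:orient_data}, this completes the argument.

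For \ref{op:od_ug:simples->even}, the plan is a direct computation. The stack $\qwastn{\dimu S_i}$ parametrises only the simple $S_i$, and the corresponding one-term twisted complex in $\twz{\akdqw}$ is $(S_i, 0)$ with Maurer--Cartan element $a = 0$. Substituting $a = 0$ into the formula \eqref{eq:tw_op}, every term involving $a$ vanishes, so $m_1^{\two} = m_1$ on $\End_{\tw{\akdqw}}^{1}(S_i)$; but by Definition~\ref{def:akd} the $A_\infty$-operation $m_1$ is identically zero on $\akdqw$. Hence $\ker m_1^{\two} = \End_{\tw{\akdqw}}^{1}(S_i)$, the fibre $\End_{\tw{\akdqw}}^{1}(S_i)/\ker m_1^{\two}$ is the zero vector space, and its superdeterminant is the trivial super line bundle of even parity.

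For \ref{op:od_ug:calc}, the plan is to exploit the cochain structure of $\End^{\bullet}_{\tw{\akdqw}}(M)$ with differential $m_1^{\two}$ and cohomology $\Ext^{\bullet}_{\tw{\akdqw}}(M,M)$. Applying rank-nullity at the degree-$1$ piece and then again at the degree-$0$ piece gives
\begin{align*}
\dim \bigl(\End^{1}_{\tw{\akdqw}}(M)/\ker m_1^{\two}\bigr) &= \dim \End^{1}_{\tw{\akdqw}}(M) - \dim \Ext^{1}_{\tw{\akdqw}}(M,M) \\
&\quad - \dim \End^{0}_{\tw{\akdqw}}(M) + \dim \Ext^{0}_{\tw{\akdqw}}(M,M).
\end{align*}
Reading $\Hom^{\leqslant 1}$ as the underlying graded morphism space $\End^{0} \oplus \End^{1}$ and $\Ext^{\leqslant 1}$ as $\Ext^{0} \oplus \Ext^{1}$, one then adds and simplifies to obtain
\[
\dim \Ext^{\leqslant 1} + \dim \Hom^{\leqslant 1} + \dim \bigl(\End^{1}/\ker m_1^{\two}\bigr) = 2 \dim \End^{1} + 2 \dim \Ext^{0},
\]
which is manifestly even, proving that the two parities coincide.

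The main obstacle is \ref{op:od_ug:simples_determine}: unpacking the form of the cocycle condition from \cite[Section~5.2]{ks_stability}, verifying that its correction term is indeed a universal function of $(\ud_1,\ud_2)$ (so that comparison of two orientation data is purely a question of parity on simples), and justifying that on the connected stacks $\qwastn{\ud}$ the parity determines the super line bundle up to the isomorphism relevant for defining the integration map. Parts \ref{op:od_ug:simples->even} and \ref{op:od_ug:calc} are then straightforward once the definitions are written out.
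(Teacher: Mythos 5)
Parts \ref{op:od_ug:simples->even} and \ref{op:od_ug:calc} of your proposal are correct and essentially the paper's own argument: \ref{op:od_ug:calc} is the same rank--nullity bookkeeping (note $\End^{1}_{\tw{\akdqw}}(M)/\ker(m_{1}^{\two})$ is just the image of $m_{1}^{\two}$ on $\End^{1}$, so your identity amounts to computing the parity of that rank), and for \ref{op:od_ug:simples->even} the point is indeed that on the one-term twisted complex $(S_i,0)$ one has $m_{1}^{\two}=m_{1}$ and the quotient vanishes. One correction: $m_{1}$ is \emph{not} identically zero ``by Definition~\ref{def:akd}''; that definition produces a nonzero $m_{1}$ from any quadratic (length-two) terms of the potential, which the general definition of a quiver with potential allows. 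What kills $m_{1}$ here is that every term of $W(T)$ is cubic --- exactly the point the paper's proof emphasises --- so your justification needs that qualifier.

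Part \ref{op:od_ug:simples_determine} is where the genuine gap lies. The paper does not reprove this statement: it is precisely \cite[Theorem~8.1]{davison_thesis}. Your replacement sketch rests on the claim that, since $\qwastn{\ud}$ is connected, the parity of any constructible super line bundle on it is a single well-defined element of $\mathbb{Z}/2$. That is unjustified: a constructible super line bundle is only a line bundle stratum by stratum, and nothing forces its parity to be constant across strata --- indeed the constructible bundle $\End^{1}_{\tw{\akdqw}}(M)/\ker(m_{1}^{\two})$ whose superdeterminant defines $\odchoice$ has jumping rank, and by your own part \ref{op:od_ug:calc} its parity at $M$ is governed by $\dim\Ext^{\leqslant 1}(M,M)+\dim\Hom^{\leqslant 1}(M,M)$, whose Ext term varies over $\qwastn{\ud}$; constancy of the parity is therefore something to be proved, not a consequence of connectedness, so the induction over dimension vectors does not get started. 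Moreover, the correction term in the Kontsevich--Soibelman cocycle condition is a superdeterminant of Ext-spaces between the chosen subobject and quotient, so it is not a universal function of $(\ud_1,\ud_2)$ alone; what one actually uses is that this correction is the \emph{same} for any two orientation data, so that the ratio of two orientation data is strictly multiplicative on extensions and can then be propagated pointwise along Jordan--H\"older filtrations down to the simples. Making this precise, and pinning down in which sense the orientation data is ``determined'' (up to the isomorphism relevant for the integration map), is exactly the content of Davison's theorem --- the step you yourself flag as the outstanding obstacle. As written, the proposal therefore does not establish \ref{op:od_ug:simples_determine}; the clean repair is to cite \cite[Theorem~8.1]{davison_thesis}, as the paper does.
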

\begin{proof}
\ref{op:od_ug:simples_determine} is \cite[Theorem~8.1]{davison_thesis}.
\ref{op:od_ug:simples->even} follows from the fact that $\End_{\mathcal{D}_\infty(Q,W)}^{1}(S)/\ker(m_{1}^{\two}) = 0$ for the simple objects $S$ in the category $\mathcal{H}_{\infty}(Q(T), W(T))$.
This can be verified explicitly from the definition of $\akdqw$, given that the terms of $W(T)$ are all cubic, and so $m_{1}^{\two} = 0$ on $\End_{\mathcal{D}_\infty(Q,W)}^{1}(S)$.
Finally, \ref{op:od_ug:calc} follows from basic linear algebra, but can also be found in \cite[Remark~7.4.1]{davison_thesis} or \cite[Proof of Proposition~5.2]{dm_curves}.
\end{proof}

\subsubsection{Two integration maps}\label{sect:dt:integration_maps}

Before defining the integration maps themselves, we define the rings which are the codomains of the integration maps.
We have two pairings on $\fgg = \mathbb{Z}^{Q_{0}}$ defined by \[
\naef{\ud}{\ud'} := \sum_{i \in Q_{0}}d_{i}d'_{i} - \sum_{\substack{a \in Q_{1} \\ a\colon i \to j}} d_{i}d'_{j}
\]
and \[
\ef{\ud}{\ud'} := \naef{\ud}{\ud'} - \naef{\ud'}{\ud}.
\]
We let $\kbmstaffc[\mathbb{L}^{1/2}]\llbracket{\bm t}\rrbracket=\kbmstaffc[\mathbb{L}^{1/2}]\llbracket t^{\ud} \st \ud \in \mathbb{N}^{Q_0}\rrbracket$ be the formal power series ring in the variables $t^{\ud}$ with coefficients in $\kbmstaffc[\mathbb{L}^{1/2}]$ and multiplication between these generators defined $t^{\ud}t^{\ud'} = \mathbb{L}^{\frac{1}{2}\ef{\ud}{\ud'}}t^{\ud + \ud'}$; see, for example, \cite[Definition~1.11]{mmns}, although note that we use the opposite sign convention.
The map $\tsp$ then naturally extends to a map \[
\tsp_{Q} \colon \kbmstaffc[\mathbb{L}^{1/2}]\llbracket{\bm t}\rrbracket \to \qring,
\]
where we introduce the shorthand 
\begin{equation}\label{eq:tq}
\qring := \mathscr{R}\llbracket{\bm t}\rrbracket= \mathscr{R}\llbracket t^{\ud} \st \ud \in \mathbb{N}^{Q_0}\rrbracket,
\end{equation}
again with multiplication $t^{\ud}t^{\ud'} = q^{\frac{1}{2}\ef{\ud}{\ud'}}t^{\ud + \ud'}$.
One can similarly define the formal power series ring $\kmstaffc[\mathbb{L}^{1/2}]\llbracket{\bm t}\rrbracket$, which also has a map $\tsp_{Q}$ to~$\qring$.

The integration map of Kontsevich and Soibelman \cite{ks_stability} is then defined as follows.
Here, given regular functions $f \colon \ag{X} \to \mathbb{A}_{\mathbb{C}}^{1}$ and $g \colon \ag{Y} \to \mathbb{A}_{\mathbb{C}}^{1}$ on varieties $\ag{X}$ and $\ag{Y}$, the function $f \mathsmaller{\,\boxplus\,} g \colon \ag{X} \times \ag{Y} \to \mathbb{A}_{\mathbb{C}}^{1}$ is defined by $(f \mathsmaller{\,\boxplus\,} g)(x, y) = f(x) + g(y)$.

\begin{theorem}[{\cite{ks_stability}, \cite[Theorem~4.6]{dm_curves}}]\label{thm:ks_int}
Let $(Q, W)$ be a quiver with polynomial potential.
Moreover, let $\{\mathfrak{V}_{\ud}\}_{\ud \in \mathbb{N}^{Q_0}}$ be a family of constructible super vector bundles on $\qwdstn$ with quadratic forms $\odqf$ which gives rise to the canonical orientation data $\odchoice$.
Then there is a $\kstaffc$-algebra homomorphism \[
\intks \colon \kstaffqwn \to \kbmstaffc[\mathbb{L}^{1/2}]\llbracket{\bm t}\rrbracket
\]
determined by \[
[\ag{X} \xrightarrow{f} \qwdstn] \mapsto \biggl( \int_{f} \vcrela{\wmin \mathsmaller{\boxplus} \odqf} \biggr) t^{\ud}
\]
for $[\ag{X} \xrightarrow{f} \qwdstn] \in \kvarxdn$.
\end{theorem}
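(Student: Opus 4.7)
The plan is to construct the map on the generators of $\kvarxdn$ first, then extend to $\kstaffqwn$ using the isomorphism~\eqref{eq:stack_ring_desc}, and finally verify the algebra homomorphism property. For a generator $[\ag{X}\xrightarrow{f}\qwdstn]$ with $\ag{X}$ a variety, the relative motivic vanishing cycle $\vcrela{\wmin\boxplus\odqf}$ lies in $\kbmvara{\ag{X}}[\mathbb{L}^{-1/2}]$ by the discussion in Section~\ref{sect:dt:mvc} together with \cite[Proposition~4.5]{dm_curves}, which moreover ensures this element is independent of the choice of cyclic minimal model implicit in~$\wmin$ and of the choice of representative $(\ag V,\odqf)$ for $\odchoice$. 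Pushing forward along the structure morphism and multiplying by~$t^{\ud}$ gives a well-defined element of the codomain. Respecting the scissor relation is automatic because both $\int_f$ and the relative motivic vanishing cycle are additive on locally closed decompositions. To extend to $\kstaffqwn$, the element $[\glnc]^{-1}$ is sent to its image in $\kbmstaffc[\mathbb{L}^{1/2}]$; the completion is needed to make sense of the infinite sum over dimension vectors.

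The real content is the multiplicativity statement. By linearity it suffices to check it for classes $\alpha_{i}=[\ag{X}_i\xrightarrow{f_i}\qwastn{\ud^{i}}]$, $i=1,2$. Their Hall product $\alpha_1\star\alpha_2$ is the pushforward along the Cartesian square of Section~3.3 of the pullback of $\vcrela{\wmin\boxplus\odqf}$ to the extension stack $\qwastn{\ud^{1},\ud^{2}}$, whereas $\intks(\alpha_1)\cdot\intks(\alpha_2)$ is the exotic external product of the two individual pushforwards, weighted by $\mathbb{L}^{\frac12\ef{\ud^{1}}{\ud^{2}}}$. Comparing the two rests on three ingredients: (i) the Denef--Loeser motivic Thom--Sebastiani isomorphism $\vca{f\boxplus g}=\vcf\exprod\vca{g}$, applied fibrewise over $\ag{X}_1\times\ag{X}_2$; (ii) a cocycle identity for the minimal potential under extensions, showing that $\wmin$ on an extension agrees, up to the addition of a non-degenerate quadratic form coming from $\Hom^{1}$-differentials between the summands, with the sum of the minimal potentials on the two factors (this is the Koszul-dual shadow of the $A_\infty$-structure on $\tw{\akdqw}$ computed in Proposition~\ref{prop:min_model}); and (iii) the cocycle condition on $(\ag L,\tau)$ forming the orientation data, ensuring that the super line bundle $\sdet{\ag V}$ on the extension stack decomposes as a tensor product of the ones on the factors, possibly with a parity shift recorded by $\ef{\ud^{1}}{\ud^{2}}$.

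The Euler-form twist $\mathbb{L}^{\frac12\ef{\ud^{1}}{\ud^{2}}}$ in the product on $\qring$ emerges precisely to absorb the dimension difference between $\End^{\bullet}$ on the extension and on the direct sum, as dictated by Proposition~\ref{prop:od_ug}\ref{op:od_ug:calc}; a dimension count using $\naef{-}{-}$ on the odd constructible bundle from~\eqref{eq:orient_data:vec_bund} shows the two half-integer shifts combine to $\frac12\ef{\ud^{1}}{\ud^{2}}$. Putting (i)--(iii) together, and using that the fibres of $\pi_1\times\pi_3$ are affine spaces of rank $\naef{\ud^{1}}{\ud^{2}}$ so that pushforward multiplies by a corresponding power of $\mathbb{L}$ after passage to $\kbmstaffc$, one matches both sides.

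The main obstacle is step~(iii): verifying that the canonical orientation data behaves correctly under extensions, since the constructible vector bundle \eqref{eq:orient_data:vec_bund} has fibre dimension that jumps on loci where the $A_\infty$-differential $m_{1}^{\two}$ degenerates, and matching its superdeterminant on the extension stack with the tensor product of the superdeterminants on the factors requires the full force of the cocycle condition from \cite[Section~5.2]{ks_stability}. Everything else, including the well-definedness on the image of the completion and the identification of the Euler-form twist, follows from standard properties of $\int_{f}$ and of the motivic Thom--Sebastiani formula; this last step is where one must transfer from $\kmvara{-}$ to the quotient $\kbmvara{-}$, which is exactly the reason the codomain of Theorem~\ref{thm:ks_int} involves $\overline{K}$ rather than $K$.
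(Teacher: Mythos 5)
The paper does not actually prove this statement: Theorem~\ref{thm:ks_int} is imported wholesale from \cite{ks_stability} and \cite[Theorem~4.6]{dm_curves}, so your proposal has to be measured against those sources rather than against anything in the text. At the level of architecture you have reproduced their strategy correctly --- define the map on generators of $\kvarxdn$, extend through the isomorphism \eqref{eq:stack_ring_desc} and the completion, and reduce multiplicativity to (i) motivic Thom--Sebastiani, (ii) control of $\wmin$ over the extension stack, and (iii) the cocycle condition on the orientation data. The passage to $\kbmvara{-}$ and the role of the Euler-form twist are also correctly placed.

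However, the proposal has a genuine gap precisely where the theorem is hard, and it is not only step (iii). Your step (ii) asserts that $\wmin$ on an extension agrees with the sum of the minimal potentials on the two factors up to a non-degenerate quadratic form; this is false as stated. Over a point of $\qwastn{\ud^{1},\ud^{2}}$ the degree-one endomorphisms of the extension contain both $\Ext^{1}(M_{2},M_{1})$ and $\Ext^{1}(M_{1},M_{2})$, and the cyclic minimal potential couples these cross-term directions to each other and to the potentials of the factors through higher $A_\infty$-products; there is no splitting of the form $f\boxplus g\boxplus(\text{quadratic})$ to which fibrewise Thom--Sebastiani could be applied, because the relevant function does not live on a product in the first place --- it lives on the total space of the extension stack fibred over $\qwastn{\ud^{1}}\times\qwastn{\ud^{2}}$. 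The actual argument in \cite{ks_stability} and in Davison--Meinhardt handles this by integrating the vanishing cycle over the fibres of $\pi_{2}$ and invoking the motivic integral identity (Kontsevich--Soibelman's Conjecture~4, proved by L\^e Qu\^y Thu\'\o ng) to show that the cross-term contributions collapse to the expected power of $\mathbb{L}$, which is then what combines with the orientation-data superdeterminants and the $\mathbb{L}^{\frac{1}{2}\ef{\ud^{1}}{\ud^{2}}}$ twist. Your sketch never mentions this identity, and without it the comparison of $\intks(\alpha_{1}\star\alpha_{2})$ with $\intks(\alpha_{1})\cdot\intks(\alpha_{2})$ does not go through; likewise your dimension count for the fibres of $\pi_{1}\times\pi_{3}$ (rank $\naef{\ud^{1}}{\ud^{2}}$) is only the hereditary heuristic and needs the full three-term complex governing $\Hom$, $\Ext^{1}$ and the vanishing-cycle weight in the $3$-Calabi--Yau setting. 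Step (iii), which you flag as the main obstacle, is indeed also left unproven, but the more fundamental missing ingredient is the integral identity underlying step (ii).
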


There is another integration map which has its roots in \cite{bbs}, whose Hodge-theoretic version occurs in~\cite{ks_coha}.
Given a potential $W = \sum_{i = 1}^r \lambda_i a_1 a_2 \dots a_{\ell_i}$, we define the function 
\begin{align*}
    \tr{W_{\ud}} \colon \qdsch &\to \mathbb{A}_{\mathbb{C}}^{1} \\
    (f_a)_{a \in Q_{1}} &\mapsto \sum_{i = 1}^{r} \lambda_{i} \tr{f_{\ell_i}f_{\ell_i - 1} \dots f_{a_{1}}}.
\end{align*}
Since this is invariant under the $\gld$ action, the function $\tr{W_{\ud}}$ descends to a function $\trst{W_{\ud}}$ on the stack $\qdst$.
It is well-known that $\qwdsch = \crloc(\tr{W_{\ud}})$ \cite[Proposition~3.8]{segal_dtp}.
Moreover, it can be seen that here we have $\qwdsch \subseteq \tr{W_{\ud}}^{-1}(0)$.
Recall then from Section~\ref{sect:dt:mvc} that $\vca{\tr{W_{\ud}}} \in  \kmvara{\tr{W_{\ud}}^{-1}(0)}[\mathbb{L}^{-1/2}]$ is supported on $\qwdsch$.
It follows that $\vca{\trst{W_{\ud}}} \in \kmstaffa{\qdst}[\mathbb{L}^{1/2}]$ is supported on $\qwdst$; denote by $\iota_{\ud} \colon \qwdst \to \qdst$ the inclusion.

\begin{theorem}[{\cite{bbs}, \cite[p.1270]{dm_curves}}]
For a quiver $(Q, W)$ with polynomial potential there is a $\kstaffc$-algebra homomorphism \[
\intbbs \colon \kstaffqw \to \kmstaffc[\mathbb{L}^{1/2}]\llbracket{\bm t}\rrbracket
\]
given by \[
[\ag{X} \xrightarrow{f} \qwdst] \mapsto \biggl(\int_{f} \iota_{\ud}^{\ast}\vca{\trst{W_{\ud}}}\biggr) t^{\ud}
\]
for $[\ag{X} \xrightarrow{f} \qwdst] \in \kvarxd$.
\end{theorem}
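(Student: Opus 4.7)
My plan is to follow the strategy of \cite{bbs}: first verify that $\intbbs$ descends to a well-defined map out of $\kstaffqw$, and then show it intertwines the Hall product with the twisted convolution on $\qring$ (from which $\kstaffc$-linearity follows as a special case). I would begin with well-definedness. Because $\vca{\trst{W_{\ud}}}$ is supported on $\crloc(\trst{W_{\ud}}) = \qwdst$, the pull-back $\iota_{\ud}^{\ast} \vca{\trst{W_{\ud}}}$ lies in $\kmstaffa{\qwdst}[\mathbb{L}^{1/2}]$, so $\int_{f}$ makes sense on any class $[\ag{X} \xrightarrow{f} \qwdst]$. The cut-and-paste relation in the source transports across by additivity of the push-forward and the compatibility of motivic vanishing cycles with closed-open decompositions, while the vector bundle relation follows from the standard identity $\vca{f \circ \pi} = \mathbb{L}^{r} \cdot \vca{f}$ for a rank-$r$ trivial bundle~$\pi$.

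The crux is multiplicativity. Given classes $[\mathfrak{X}_{i} \xrightarrow{f_{i}} \qwast{\ud^{i}}]$ for $i = 1, 2$, I would examine the Hall product as computed via the extension stack $\qwast{\ud^{1}, \ud^{2}}$ with its projections $\pi_{1}, \pi_{2}, \pi_{3}$. The key geometric observation is that for a representation in block-upper-triangular form adapted to a subrepresentation, the trace of any cycle depends only on the diagonal blocks; hence as regular functions on the extension stack,
$$\trst{W_{\ud^{1} + \ud^{2}}} \circ \pi_{2} = \bigl( \trst{W_{\ud^{1}}} \boxplus \trst{W_{\ud^{2}}} \bigr) \circ (\pi_{1} \times \pi_{3}).$$
Combining this with the Thom--Sebastiani theorem $\vca{f \boxplus g} = \vca{f} \boxtimes \vca{g}$ and base change along $\pi_{1} \times \pi_{3}$, the pull-back of $\iota_{\ud^{1} + \ud^{2}}^{\ast} \vca{\trst{W_{\ud^{1} + \ud^{2}}}}$ along $\pi_{2}$ identifies with the external product of the corresponding classes on the two factors, modulo the motive of the extension fibres.

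The remaining step is to confirm that the motive contributed by the stacky quotient and the extension fibres is precisely the twist $\mathbb{L}^{\frac{1}{2}\ef{\ud^{1}}{\ud^{2}}}$ built into the multiplication on $\qring$. The forgetful map $\pi_{1} \times \pi_{3}$ from the extension stack to $\qwast{\ud^{1}} \times \qwast{\ud^{2}}$ is a constructible vector bundle of rank $\sum_{a\colon i \to j} d_{i}^{1} d_{j}^{2}$, and the discrepancy between $[\gla{\ud^{1} + \ud^{2}}]$ and $[\gla{\ud^{1}}] \cdot [\gla{\ud^{2}}]$ contributes a compensating factor $\mathbb{L}^{\sum_{i} d_{i}^{1} d_{i}^{2}}$ arising from the unipotent radical of the relevant parabolic. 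Matching these against $\ef{\ud^{1}}{\ud^{2}} = \naef{\ud^{1}}{\ud^{2}} - \naef{\ud^{2}}{\ud^{1}}$ and the $\mathbb{L}^{1/2}$-normalisation of the vanishing cycle produces the required twist. The main obstacle is exactly this bookkeeping of half-integer powers of $\mathbb{L}$ across the stacky quotients: getting the normalisation right is what forces one to work in $\kmstaffc[\mathbb{L}^{1/2}]$ rather than a coarser ring.
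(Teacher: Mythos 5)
First, a point of comparison: the paper itself does not prove this statement at all — it is imported wholesale from Behrend--Bryan--Szendr\H{o}i and Davison--Meinhardt (the citation to p.1270 of the latter), so there is no in-paper argument to measure your sketch against. Judged on its own terms, your outline correctly isolates the standard ingredients (support of $\vca{\trst{W_{\ud}}}$ on $\qwdst$, additivity of the trace of a cycle on block-upper-triangular representations, Thom--Sebastiani, and the $\mathbb{L}$-bookkeeping against the twist $\mathbb{L}^{\frac{1}{2}\ef{\ud^{1}}{\ud^{2}}}$), but it has a genuine gap at the central step.

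The gap is the sentence claiming that, by Thom--Sebastiani and ``base change along $\pi_{1} \times \pi_{3}$'', the pull-back of $\vca{\trst{W_{\ud^{1}+\ud^{2}}}}$ along $\pi_{2}$ identifies with the external product of the vanishing cycles on the two factors. Motivic vanishing cycles commute with \emph{smooth} pull-back, and $\pi_{2}$ is not smooth: at the level of representation varieties it is induced by the closed immersion of the block-upper-triangular locus $V_{0} \oplus V_{+}$ into the full space $V = V_{0} \oplus V_{+} \oplus V_{-}$ of representations of dimension $\ud^{1}+\ud^{2}$. Trace additivity tells you that $\tr{W}$ restricted to $V_{0} \oplus V_{+}$ is pulled back from $V_{0}$, and Thom--Sebastiani then splits $\vca{\tr{W}|_{V_{0}}}$ as the external product; but what the Hall product actually feeds into $\intbbs$ is the restriction to $V_{0}\oplus V_{+}$ of the vanishing cycle of $\tr{W}$ computed on all of $V$, and relating that restriction to $\vca{\tr{W}|_{V_{0}}}$ (with the factor $\mathbb{L}^{\dim V_{+}}$ you want) is not formal. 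It is exactly the Kontsevich--Soibelman integral identity (Conjecture~4 of their paper, proved by L\^e Quy Thuong), applied to the $\mathbb{G}_{m}$-action with weights $0, +1, -1$ on $V_{0}, V_{+}, V_{-}$ for which $\tr{W}$ is invariant; Davison--Meinhardt's treatment invokes an equivalent statement for $\mathbb{G}_{m}$-invariant functions. Without this input the multiplicativity argument does not close. A secondary inaccuracy: over $\qwst$ the morphism $\pi_{1} \times \pi_{3}$ is not a constructible vector bundle of rank $\sum_{a\colon i \to j} d^{1}_{i} d^{2}_{j}$ — that rank is correct only for the quiver without relations, and the fibre dimensions over the Jacobian-algebra stacks jump with $\Hom$ and $\Ext^{1}$; the correct bookkeeping is done in the ambient smooth spaces $\qdsch$, where the relations enter only through the support of the vanishing cycle, and this is also where the unipotent-radical factor $\mathbb{L}^{\sum_{i} d^{1}_{i}d^{2}_{i}}$ you mention is extracted.
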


Restricted to the stack of nilpotent modules, these two integration maps agree when subject to the quotient map \[\overline{q} \colon
\kmstaffc[\mathbb{L}^{1/2}]\llbracket{\bm t}\rrbracket \to \kbmstaffc[\mathbb{L}^{1/2}]\llbracket{\bm t}\rrbracket.
\]
Note that $\kstaffqwn$ is a subalgebra of $\kstaffqw$ via pushing forward along the inclusion $\qwstn \hookrightarrow \qwst$.

\begin{theorem}[{\cite[Theorem~7.1.3]{davison_thesis}, \cite[Proposition~4.8]{dm_curves}}]
We have that \[
\overline{q} \circ \intbbs|_{\kstaffqwn} = \intks.
\]
\end{theorem}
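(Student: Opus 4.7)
The plan is to exploit the fact that both $\intks$ and $\intbbs$ are $\kstaffc$-algebra homomorphisms, reducing the problem to verifying equality of motivic integrands over the stacks $\qwastn{\ud}$ for each dimension vector $\ud \in \mathbb{N}^{Q_{0}}$. More precisely, it suffices to show that, after applying $\overline{q}$, the relative motivic vanishing cycles $\iota_{\ud}^{\ast}\vca{\trst{W_{\ud}}}$ and $\vcrela{\wmin \boxplus \odqf}$ agree as elements of $\kbmstaffa{\qwastn{\ud}}[\mathbb{L}^{1/2}]$, since both maps are then obtained by pushing forward along the same structure morphism.

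The key structural input is Proposition~\ref{prop:min_model}, which for each point $M \in \qwastn{\ud}$ provides a (locally constructible, formal) splitting of the $3$-Calabi--Yau bundle of $\End^{\bullet}_{\tw{\akdqw}}$ as $\Ext^{\bullet}_{\tw{\akdqw}} \oplus \ag{V}^{\bullet}$ with $\ag{V}^{\bullet}$ acyclic. Tracing through the construction of $\akdqw$, the categorical potential $W_{\tw{\akdqw}}$ of \eqref{eq:cat_potential} restricts on $\End^{1}_{\tw{\akdqw}}(M)$ to a function whose pull-back along this isomorphism equals $\wmin$ on the $\Ext^{1}$-summand, plus the quadratic form $\cy{m^{\two}_{1}(-)}{-}$ on the acyclic complement. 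Moreover, $\tr{W_{\ud}}$ on the formal neighbourhood of $M$ in $\qdst$ coincides with $W_{\tw{\akdqw}}$ under the usual identification of formal deformations with solutions to the Maurer--Cartan equation \eqref{eq:mc} on $\End^{1}_{\tw{\akdqw}}(M)$, so locally we get a decomposition
\[
\tr{W_{\ud}} \;=\; \wmin \,\boxplus\, \odqf_{\mathrm{acyclic}}
\]
where $\odqf_{\mathrm{acyclic}}$ is a non-degenerate quadratic form on the complement of $\Ext^{1} \oplus \ker(m_{1}^{\two})$ inside $\End^{1}_{\tw{\akdqw}}(M)$.

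Applying the Thom--Sebastiani theorem for motivic vanishing cycles then yields a factorisation of $\iota_{\ud}^{\ast}\vca{\trst{W_{\ud}}}$ as a product of $\vcrela{\wmin}$ and a contribution coming from the non-degenerate quadratic form $\odqf_{\mathrm{acyclic}}$. The motivic vanishing cycle of a non-degenerate quadratic form is classically computable in terms of the superdeterminant of its underlying bundle; this is the point at which orientation data enters. By construction of the canonical orientation $\odchoice$ in Section~\ref{sect:dt:orient_data}, the superdeterminant of $\End^{1}_{\tw{\akdqw}}(M)/\ker(m_{1}^{\two})$ together with the trivialisation coming from $\cy{m_{1}^{\two}(-)}{-}$ gives precisely the line bundle whose motivic vanishing cycle appears in $\vcrela{\wmin \boxplus \odqf}$. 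Restricting to the nilpotent locus $\qwdstn$ and applying $\overline{q}$ kills exactly the ambiguity in the motivic class of a non-degenerate quadratic form modulo the relation defining $\kbmstaffc$, giving the desired equality.

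The main technical obstacle will be executing this argument in the relative and stacky setting: one must produce the splitting of the $A_{\infty}$-structure as a morphism of constructible $3$-Calabi--Yau bundles globally over $\qwastn{\ud}$ (not just pointwise), verify that Thom--Sebastiani holds for the relative motivic vanishing cycles $\vcfrel$ built on constructible vector bundles, and check that the quadratic form $\odqf_{\mathrm{acyclic}}$ obtained here yields orientation data isomorphic to $\odchoice$, so that Proposition~\ref{prop:od_ug} guarantees the same integration map. The nilpotency hypothesis is needed because the splitting of Proposition~\ref{prop:min_model} and the identification of $\tr{W_{\ud}}$ with $W_{\tw{\akdqw}}$ are only formal, valid in a formal neighbourhood of each module.
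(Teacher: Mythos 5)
This statement is not proved in the paper at all: it is imported verbatim from the literature, with the proof living in Davison's thesis (Theorem~7.1.3) and Davison--Meinhardt (Proposition~4.8). Your sketch reproduces essentially the strategy of those cited proofs --- identify $\tr{W_{\ud}}$ with the categorical potential on the formal deformation space, split it via the cyclic minimal model of Proposition~\ref{prop:min_model} into $\wmin$ plus a non-degenerate quadratic form, apply motivic Thom--Sebastiani, and match the quadratic-form contribution with the canonical orientation data after passing to $\kbmstaffc$ via $\overline{q}$ --- so it is the same approach, with the acknowledged technical work (constructible globalisation, relative Thom--Sebastiani, comparison of the resulting quadratic form with $\odchoice$) being exactly the content of the cited references; the only small imprecision is that the quadratic form lives on $\End^{1}_{\tw{\akdqw}}(M)/\ker(m_{1}^{\two})$ rather than on a complement of $\Ext^{1}\oplus\ker(m_{1}^{\two})$.
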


Consequently, we have that $\tsp_{Q} \circ \intbbs|_{\kstaffqwn} = \tsp_{Q} \circ \intks$.
While we will principally be using the integration map $\intks$, our calculations in Section~\ref{sect:db} will require us to consider DT invariants for categories of non-nilpotent representations, which then requires the integration map~$\intbbs$.
The advantages of using the integration map $\intks$ are that firstly it is useful for calculations, as we show in Section~\ref{sect:db}, and secondly that it is suited to the triangulated setting.
Under change of heart the quiver with potential $(Q, W)$ will change, which affects the map $\intbbs$.
On the other hand, for $\intks$, since the potential $\wmin$ is intrinsic to the $A_\infty$-category, all we will need to do is check that the $A_\infty$-enhancement and choice of orientation data is preserved by change of heart, which we do in Section~\ref{sect:inv}.

\subsection{Donaldson--Thomas invariants}

We now at last explain how one extracts DT invariants from the above apparatus.
We let \[
\gensern{Q, W} = \tsp_{Q}(\intks([\qwstn \xrightarrow{\mathrm{id}} \qwstn])) \in \qring
\]
be the \emph{Donaldson--Thomas generating series} of $\modules \jac{Q, W}$, recalling the definition of $\qring$ from~\eqref{eq:tq}.
Note that the element $[\qwstn \xrightarrow{\mathrm{id}} \qwstn]$ is obtained via the completion from Section~\ref{sect:dt:motives:fund}, since $\qwstn$ is not of finite type.
We similarly obtain the DT generating series $\genser{Q, W}$ of $\modules \jacu{Q, W}$ by \[
\genser{Q, W} = \tsp_{Q}(\intbbs([\qwst \xrightarrow{\mathrm{id}} \qwst])) \in \qring.
\]
These are formal power series in the variables $t^{\ud}$ with coefficients in $\mathscr{R}$.
The best-known example is for $Q = \bullet$, the quiver with one vertex and no arrows, where we have \[
\genser{Q, 0} = \sum_{d = 0}^\infty \frac{(-1)^{d}q^{d/2}}{(1 - q)(1 - q^2) \dots (1 - q^d)} t^d.
\]
This formal power series is known as the \emph{quantum dilogarithm}, and we denote it $\qdl{t}$.
See \cite[Appendix~A]{haiden} for helpful discussion on related series in the literature and differing conventions.

The generating series $\gensern{Q, W}$ counts all modules over the completed Jacobian algebra $\jac{Q, W}$.
We will also need generating series counting semistable modules of a particular phase.
We let $\sigma$ be a stability condition and let $\ssc{\sigma}{\vartheta}$ be the category of semistable $\jac{Q, W}$-modules of phase~$\vartheta$.
To this end, we define \[
\gensernp{\vartheta}{Q, W} = \tsp_{Q}(\intks([\sqwpstn \hookrightarrow \qwstn])) \in \qring.
\]
We refer to this as \emph{the generating series of the category $\ssc{\sigma}{\vartheta}$ of semistable objects of phase~$\vartheta$}, with $\genserp{\vartheta}{Q, W}$ defined similarly.
This of course depends upon the choice of stability condition on $\modules \jac{Q, W}$.
One can similarly define $\genserp{\vartheta}{Q, W}$.

One can define DT invariants from the generating series as follows.

\begin{definition}[{See \cite[Section~5.1]{haiden}}]\label{def:dt}
If $\ef{-}{-}$ vanishes on $\ssc{\sigma}{\vartheta}$, then there is a factorisation \[
\gensernp{\vartheta}{Q, W} = \prod_{\underline{d} \in \dimu\ssc{\sigma}{\vartheta}}\prod_{k \in \mathbb{Z}} \qdl{(-q^{1/2})^k t^{\underline{d}}}^{(-1)^k \dt{k}{}{\underline{d}}}\]
where $\dt{k}{}{\underline{d}}$ are all integers with only finitely many non-zero, by \cite[Theorem~A]{dm_qea}.
Given $\ud$ with $\scph{\ud} = \vartheta$, the \emph{refined Donaldson--Thomas invariant} is then \[\dtref{\underline{d}} := \sum_{k \in \mathbb{Z}}\dt{k}{}{\underline{d}}q^{k/2}.\]
The \emph{numerical Donaldson--Thomas invariant} is then $\dtnum{\ud} := \dtref{\ud}\big|_{q^{1/2} = -1}$.
\end{definition}

One can make a similar definition for the category of semistable $\jacu{Q, W}$-modules of phase~$\vartheta$ using $\genserp{\vartheta}{Q, W}$.
Note that, in particular, $\ef{-}{-}$ vanishes on $\ssc{\sigma}{\vartheta}$ if $\dimu \ssc{\sigma}{\vartheta}$ is generated by a single dimension vector.

\begin{remark}\label{rmk:od_fine}
Given that we work over $\mathbb{C}$, which contains $\sqrt{-1}$, we have that $(1 - [\roua{2}])^2 = \mathbb{L}$, see \cite[End of Section~2.2]{dm_loop} or \cite[Remark~19]{ks_stability}.
Recall that $[\roua{2}]$ is the equivariant absolute motive of the group of second roots of unity, with respect to the natural $\rou$-action.
It should be a consequence of \cite[Example~8.1.1, Theorem~8.3.1]{davison_thesis}, that if one therefore imposes the relation $\mathbb{L}^{1/2} = (1 - [\roua{2}])$, then the choice of orientation data does not affect the motivic DT invariants.
For examples of how different choices of orientation data can affect the motivic invariants if one does not impose this relation, see \cite[Example~8.1.1]{davison_thesis} or \cite[Remark~5.8]{dm_curves}.

In our case, the map $\tsp$ sends both $\mathbb{L}^{1/2}$ and $(1 - [\roua{2}])$ to $q^{1/2}$; see \cite[Proof of Proposition~5.8]{haiden}.
Hence, the refined invariants we compute ought not to depend upon the choice of orientation data.
We nevertheless feel that it is worthwhile showing in Section~\ref{sect:inv} that the orientation data does not depend upon the choice of quiver with potential given by the choice of heart.
Furthermore, we feel that it is worthwhile showing that in the cases from Section~\ref{sect:spiral}, the orientation data on the stack of semistable objects coincides with the canonical orientation data from the quiver with potential we use to describe this category of semistable objects.
Doing so gives a stronger statement, which does not rely on working in the refined rather than the motivic setting, and which does not rely on making the identification $\mathbb{L}^{1/2} = (1 - [\roua{2}])$.
It is also interesting in its own right that it is always the canonical choice of orientation data that appears.
\end{remark}

\subsubsection{Calculations from the literature}

We finish this section by presenting the following proposition containing calculations of generating series, which we will use later.

\begin{proposition}[{\cite[Proposition~5.8, Proposition~5.9]{haiden}, \cite[Theorem~1.1]{dm_loop}}]\label{prop:haiden}
The following quivers with potential have the following generating series. \[
\begin{tabular}{c|c|c|c}
    Quiver & Potential & Generating series $\gensern{Q, W}$  & Generating series $\genser{Q,W}$\\
    \hline
     $\bullet$ & $0$ & $\qdl{t}$ & $\qdl{t}$ \\
     $\begin{tikzcd}\bullet \ar[loop left]\end{tikzcd}$ & $0$ & $\qdl{-q^{-1/2}t}^{-1}$ & $\qdl{-q^{1/2}t}^{-1}$ \\
     $\begin{tikzcd}\bullet \ar[loop left,"a"]\end{tikzcd}$ & $a^3$ & $\qdl{t}^2$ & $\qdl{t}^2$ \\
     $\begin{tikzcd}\bullet \ar[loop left,"a"] \ar[loop right,"b"]\end{tikzcd}$ & $a^3 + b^3$ & $\qdl{t}^4\qdl{-q^{-1/2}t^2}^{-1}$ & $\qdl{t}^4\qdl{-q^{1/2}t^2}^{-1}$ \\
     $\begin{tikzcd} \overset{1}{\bullet} \ar[r,shift left] & \overset{2}{\bullet} \ar[l,shift left]\end{tikzcd}$ & $0$ & $\qdl{t^{(1, 0)}}\qdl{t^{(0, 1)}}\qdl{-q^{-1/2}t^{(1, 1)}}^{-1}$ & $\qdl{t^{(1, 0)}}\qdl{t^{(0, 1)}}\qdl{-q^{1/2}t^{(1, 1)}}^{-1}$
\end{tabular}
\]
\end{proposition}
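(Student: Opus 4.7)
The plan is to deduce each row of the table from the cited literature, matching our setup (the integration map $\intks$, the canonical orientation data $\odchoice$, and the quantum torus structure coming from $\ef{-}{-}$) to the conventions of \cite{haiden,dm_loop}. The statement is essentially a consolidation of computations already performed there, so the work lies in identifying the correct input for each row and verifying that the conventions align.

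First I would handle the trivial quiver $\bullet$ with $W=0$: the stack is $\qwstn = \coprod_d \mathrm{pt}/\gla{d}$, the minimal potential vanishes, and the orientation form is trivial, so the generating series is $\sum_d \mathbb{L}^{-d^2/2}[\gla{d}]^{-1} t^d$. Applying $\tsp_{Q}$ with the standard evaluation of $[\gla{d}]$ in terms of $q$-Pochhammer symbols reproduces $\qdl{t}$, and both columns agree because every representation is nilpotent. For the three loop rows (Jordan quiver with $W=0$, one loop with $W=a^3$, two loops with $W=a^3+b^3$), the nilpotent and non-nilpotent computations are carried out in \cite[Propositions~5.8,~5.9]{haiden} respectively, with the cubic cases also falling under the scope of \cite[Theorem~1.1]{dm_loop}. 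I would unpack the two geometric ingredients: (i) the motive of the nilpotent cone inside $\mathrm{End}(\mathbb{C}^d)$, whose shift relative to the full endomorphism motive under $\tsp$ produces the sign $\pm q^{\pm 1/2}$ distinguishing the two columns; and (ii) the vanishing cycle computation for $\tr{f^3}$ on a single loop, which doubles the bare vertex contribution by a Thom--Sebastiani identity and explains the squared and quartic dilogarithm factors.

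For the final row, the two-vertex quiver with one arrow in each direction and $W=0$, I would first observe that $\ef{(1,0)}{(0,1)} = 0$ so the relevant quantum torus variables commute, and then apply the Hall-algebra factorization over the torsion pair generated by the two vertex-supported simples. These simples contribute $\qdl{t^{(1,0)}}\qdl{t^{(0,1)}}$, and the remaining stable representations of dimension $(1,1)$ give a Jordan-like single-loop factor $\qdl{-q^{\mp 1/2} t^{(1,1)}}^{-1}$ by exactly the computation of the Jordan row. The result again matches \cite[Propositions~5.8,~5.9]{haiden}.

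The only step beyond bare citation, and so the main obstacle, is to confirm that the canonical orientation data $\odchoice$ used here coincides with the implicit choices of \cite{haiden,dm_loop}. This follows from Proposition~\ref{prop:od_ug}: in each of the five quivers the simples have even parity on the orientation vector bundle by part~\ref{op:od_ug:simples->even}, and by part~\ref{op:od_ug:simples_determine} this suffices to pin down the orientation data uniquely, so the refined generating series produced by $\intks$ here agree unambiguously with those from \cite{haiden,dm_loop}.
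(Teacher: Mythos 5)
The paper gives no independent proof of this proposition—it is stated purely as a consolidation of the computations in the cited works of Haiden and Davison--Meinhardt—so your citation-plus-convention-matching argument is essentially the same approach, and your supplementary checks (vanishing of $\ef{(1,0)}{(0,1)}$, the wall-crossing factorisation for the two-cycle quiver, and the orientation-data comparison via Proposition~\ref{prop:od_ug}) are sound. The only slip is in your heuristic verification of the first row: the stacky normalisation contributes $\mathbb{L}^{+d^{2}/2}[\gla{d}]^{-1}$ rather than $\mathbb{L}^{-d^{2}/2}[\gla{d}]^{-1}$ (only the former reproduces the coefficients of $\qdl{t}$), but since that row is in any case covered by the citation this does not affect the argument.
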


\section{Invariance of \texorpdfstring{$A_\infty$}{A-infinity}-enhancement and orientation data}\label{sect:inv}

Flipping a triangulation corresponds to an operation on the associated heart known as simple tilting.
In this section, we prove that simple tilting of hearts produces a quasi-equivalence of 3-Calabi--Yau $A_\infty$-categories between twisted complexes for $(Q,W)$ a quiver with potential and from $(Q',W')$ resulting from a flip.
We also show that the canonical choice of orientation data from Section~\ref{sect:dt:orient_data} is invariant under the process of simple tilting of hearts, extending \cite[Theorem~8.3.2]{davison_thesis}.
Due to Remark~\ref{rmk:od_fine}, the refined invariants should be unaffected by the change in orientation data, but it is interesting that there is a single canonical choice of orientation data for $\mathcal{D}$, independent of the choice of quiver with potential associated to a particular heart.

\begin{remark}
In \cite[Theorem~8.3.2]{davison_thesis}, Davison proves that the orientation data from Section~\ref{sect:dt:orient_data} is invariant under simple tilting of so-called cluster collections \cite[Definition~20]{ks_stability}, which are defined as follows.
Recall that an object $S$ of $\mathcal{D}$ is (3-)\emph{spherical} if $\Ext_{\mathcal{D}}^{\bullet}(S) \cong \mathbb{C} \oplus \mathbb{C}[-3]$.
A~\emph{cluster collection} is then a set of spherical objects $\{S_{1}, S_{2}, \dots, S_{n}\}$ such that for $i \neq j$, we have that $\oplus_{k \in \mathbb{Z}}\Ext_{\mathcal{D}}^{k}(S_{i}, S_{j})$ is zero, concentrated in degree~$1$, or concentrated in degree~$2$.
Our sets of simple objects do not always form cluster collections, since our quivers in general contain loops and two-cycles.
\end{remark}

\subsection{Simple tilting of hearts}\label{sect:simple_tilting}

Simple tilting is a process which produces a new heart from an old one, given a simple object in the old heart.
To describe this process, we first need the following notions.

\subsubsection{Approximations and functorial finiteness}

Let $\mathcal{C}$ be an additive category with a full subcategory~$\mathcal{X}$.
Given a map $f\colon X \rightarrow M$, where $X \in \mathcal{X}$ and $M \in \mathcal{C}$, we say that $f$ is a \emph{right $\mathcal{X}$-approximation} if for any $X' \in \mathcal{X}$, the sequence \[\mathrm{Hom}_{\mathcal{C}}(X',X) \xrightarrow{f \circ -} \mathrm{Hom}_{\mathcal{C}}(X',M) \rightarrow 0\] is exact.
\emph{Left $\mathcal{X}$-approximations} are defined dually, so that a map $f \colon M \to X$ is a left $\mathcal{X}$-approximation if for any $X' \in \mathcal{X}$ the sequence \[\Hom_{\mathcal{C}}(X, X') \xrightarrow{- \circ f} \Hom_{\mathcal{C}}(M, X') \to 0\] is exact.

We will be interested in the following particular kinds of approximations.
A morphism $f \colon X \to Y$ is \emph{right minimal} if whenever a morphism $g \colon X \to X$ is such that $fg = f$, then $g$ is an isomorphism.
\emph{Left minimal} morphisms are defined dually.
A right $\mathcal{X}$-approximation is a \emph{minimal right $\mathcal{X}$-approximation} if it is also right minimal, and \emph{minimal left $\mathcal{X}$-approximations} are defined analogously.

\subsubsection{Simple tilting}

Given an object $S$ of a triangulated category, we write $\extclos{S}$ for the smallest subcategory containing $S$ which is closed under extensions.
Given a full subcategory $\mathcal{B}$ of an additive category~$\mathcal{C}$, we also write 
\[\rorth{\mathcal{B}} = \{C \in \mathcal{C} \st \Hom_{\mathcal{C}}(B, C) = 0 \text{ for all } B \in \mathcal{B}\},\]
with $\lorth{\mathcal{B}}$ defined analogously.
Recall the notation $\ast$ from Section~\ref{sect:app:heart_tstr}.

\begin{definition}[{\cite{hrs}}]
Let $\mathcal{H}$ be the heart of a t-structure on a triangulated category $\mathcal{D}$ with $S$ a simple object of~$\mathcal{H}$.
The \emph{forwards tilt} of $\mathcal{H}$ at $S$ is $\mathcal{H}^{S, \sharp} := \extclos{S}[1] \ast \lorth{\extclos{S}}$, whilst the \emph{backwards tilt} is $\mathcal{H}^{S,\flat} := \rorth{\extclos{S}} \ast \extclos{S}[-1]$.
\end{definition}

As is very standard, we also refer to simple tilting of hearts and flipping triangulations as \emph{mutation}.
We have the following proposition which describes the simple objects of the new heart under forwards or backwards tilting in terms of the simple objects of the old heart.
We call a heart $\mathcal{H}$ \emph{finite} if it is length and has finitely many simple objects and write $\simp{\mathcal{H}}$ for the set of isomorphism-class representatives of its simple objects.

\begin{proposition}[{\cite [Proposition~A.2]{chq}}]\label{prop:simple_tilt_chq}
Let $\mathcal{D}$ be a triangulated category with a bounded t-structure with finite heart $\mathcal{H}$.
If both $\mathcal{H}^{S, \sharp}$ and $\mathcal{H}^{S,\flat}$ are finite hearts, then we have the following formulae for the simple objects in the new hearts $\mathcal{H}^{S,\sharp}$ and~$\mathcal{H}^{S,\flat}$.

For the forwards tilt we have that \[
\simp{\mathcal{H}^{S,\sharp}} = \{S[1]\} \cup \{\psi^{S,\sharp}(S') \st S' \in \simp{\mathcal{H}}, S' \neq S\},
\]
where $\psi^{S, \sharp}(S') = \mathrm{Cone}(f)[-1]$ for $f$ the minimal left $\extclos{S[1]}$-approximation of $S'$.

For the backwards tilt, we have that \[
\simp{\mathcal{H}^{S,\flat}} = \{S[-1]\} \cup \{\psi^{S,\flat}(S') \st S' \in \simp{\mathcal{H}}, S' \neq S\},
\]
where $\psi^{S,\flat}(S') = \mathrm{Cone}(g)$ for $g$ the minimal right $\extclos{S[-1]}$-approximation of $S'$.
\end{proposition}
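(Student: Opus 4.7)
The plan is to realise the forwards and backwards tilts as instances of Happel--Reiten--Smal\o{} (HRS) tilting with respect to explicit torsion pairs on $\mathcal{H}$. Since $\mathcal{H}$ is length and $S$ is simple, the full subcategory $\extclos{S} \cap \mathcal{H}$ consists precisely of those objects whose composition factors are all isomorphic to $S$, and by the length hypothesis every $M \in \mathcal{H}$ admits a (unique) maximal such subobject. Hence $(\extclos{S} \cap \mathcal{H},\, \rorth{\extclos{S}} \cap \mathcal{H})$ is a torsion pair on $\mathcal{H}$, with the dual pair governing the backwards tilt. Once we check that these indeed match the descriptions $\mathcal{H}^{S, \sharp} = \extclos{S}[1] \ast \lorth{\extclos{S}}$ and $\mathcal{H}^{S, \flat} = \rorth{\extclos{S}} \ast \extclos{S}[-1]$, standard HRS theory produces both tilted hearts as hearts of bounded t-structures on $\mathcal{D}$.

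I would then argue that each new heart is length with finitely many simple objects. Functorial finiteness of $\extclos{S}$ in $\mathcal{D}$ guarantees that every $M \in \mathcal{H}$ admits both minimal left and minimal right approximations involving $\extclos{S[\pm 1]}$, and the resulting triangles allow one to carve up $M$ inside the tilted heart. Combined with the length property of $\mathcal{H}$ and the finiteness of $\simp{\mathcal{H}}$, this propagates by induction on composition length to give finite composition series in $\mathcal{H}^{S, \sharp}$ and $\mathcal{H}^{S, \flat}$, with all composition factors among a finite list of candidates.

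For the explicit formulas, note first that $S[1]$ is plainly simple in $\mathcal{H}^{S, \sharp}$, since $\extclos{S[1]}$ is the ``torsion'' side of the tilted t-structure and $S$ is simple in $\mathcal{H}$. For a simple $S' \neq S$ of $\mathcal{H}$, we take the minimal left $\extclos{S[1]}$-approximation $f \colon S' \to T$, which exists by functorial finiteness and is well defined up to isomorphism by Krull--Schmidt, and complete to a triangle $\mathrm{Cone}(f)[-1] \to S' \xrightarrow{f} T \to \mathrm{Cone}(f)$. The plan is then to show (i) that $\psi^{S, \sharp}(S') := \mathrm{Cone}(f)[-1]$ lies in $\extclos{S[1]} \ast \lorth{\extclos{S}}$ by using the approximation property of $f$ together with the ambient triangle to exhibit a witnessing decomposition, and (ii) that it is simple in $\mathcal{H}^{S, \sharp}$ by combining minimality of $f$ with the long exact $\Hom$-sequence around the defining triangle to rule out any nontrivial subobjects in the new heart. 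The backwards assertion is dual, using minimal right $\extclos{S[-1]}$-approximations.

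The main obstacle will be step (ii): verifying that $\mathrm{Cone}(f)[-1]$ is genuinely simple in $\mathcal{H}^{S, \sharp}$ rather than merely indecomposable. Minimality of $f$ precludes a summand of $T$ from splitting off into $\extclos{S[1]}$, but to upgrade this to honest simplicity one has to transport the argument into the tilted heart, chasing any putative subobject through the defining triangle and the decomposition $\extclos{S[1]} \ast \lorth{\extclos{S}}$ to reach a contradiction with either minimality or the simplicity of $S'$ in $\mathcal{H}$. The interplay of approximations, the length property, and the HRS torsion pair is where the technical heart of the proof lies.
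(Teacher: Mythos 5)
This proposition is not proved in the paper at all: it is quoted from \cite[Theorem~2.7]{bcpw} and \cite[Proposition~A.2]{chq}, so your attempt has to stand on its own. Your overall strategy (realise the two tilts as Happel--Reiten--Smal\o{} tilts at torsion pairs on $\mathcal{H}$ cut out by $S$, then identify the new simples via the minimal approximations) is the standard route, but there are two problems. The smaller one is that your torsion pair is attached to the wrong tilt: the pair with torsion class $\extclos{S}$ and torsion-free class $\rorth{\extclos{S}} \cap \mathcal{H}$ gives exactly $\mathcal{H}^{S,\flat} = \rorth{\extclos{S}} \ast \extclos{S}[-1]$ (the downward HRS tilt), whereas $\mathcal{H}^{S,\sharp} = \extclos{S}[1] \ast \lorth{\extclos{S}}$ is the upward HRS tilt at the \emph{other} pair, with torsion class $\{M \in \mathcal{H} \st \Hom(M, S) = 0\}$ and torsion-free class $\extclos{S}$; the decomposition for that pair needs the dual maximality argument (the minimal subobject of $M$ whose quotient lies in $\extclos{S}$), not the maximal torsion subobject you describe. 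This is fixable, but as written the identification ``$\mathcal{H}^{S,\sharp}$ is the HRS tilt at your pair'' is false, and the existence of minimal (as opposed to merely some) approximations also deserves a word, since the hypotheses give functorial finiteness but minimality normally requires a Krull--Schmidt or Hom-finiteness assumption that you invoke without justification.

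The genuine gap is that the actual content of the proposition is left as a plan. What has to be proved is: (i) each $\psi^{S,\sharp}(S') = \mathrm{Cone}(f)[-1]$ lies in $\mathcal{H}^{S,\sharp}$ and is \emph{simple} there; (ii) together with $S[1]$ these exhaust $\simp{\mathcal{H}^{S,\sharp}}$; and (iii) $\mathcal{H}^{S,\sharp}$ is length. You explicitly concede that (i) --- which you correctly identify as the main obstacle --- is not carried out, and (ii) is not addressed beyond the phrase ``among a finite list of candidates''. Your sketch for (iii) also inducts over $M \in \mathcal{H}$, but the objects one must filter live in $\mathcal{H}^{S,\sharp}$, not in $\mathcal{H}$; the argument has to run over arbitrary $E \in \mathcal{H}^{S,\sharp}$ using the triangle $H^{-1}(E)[1] \to E \to H^0(E)$ with $H^{-1}(E) \in \extclos{S}$ and $H^{0}(E)$ in the torsion class, combined with the approximation triangles, and none of that chase is performed. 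Since $S$ may have self-extensions in this setting (so $\extclos{S}$ is much larger than $\additive(S)$), these verifications are exactly where the work lies, and without them the proposal is an outline of the known proof rather than a proof.
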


In our case, we will always have that the hearts in Proposition~\ref{prop:simple_tilt_chq} will be finite by \cite[Proposition~4.18, Corollary~4.21, Proposition~4.22, Theorem~6.20, and Proposition~6.22]{chq}.

\subsection{Invariance under simple tilting}

For our purposes, there are four different types of simple tilting, corresponding to flipping the following four different types of arcs of the triangulation.
\begin{enumerate}
    \item We may flip an arc of the triangulation lying in a quadrilateral with no identical adjacent sides, as shown in Figure~\ref{fig:quad_mut}.
    We call these arcs \emph{standard arcs}.
    These vertices of the quiver are not incident to either loops or two-cycles.
    Note that opposite sides of the quadrilateral may be identified, but having adjacent sides identical leads to two-cycles, which are instead covered by case~\eqref{op:flips:two_cyc}.
    \item We may flip an arc which is the boundary of a monogon encircling an orbifold point, as shown in Figure~\ref{fig:simp_pole_mut}.    
    We call these arcs \emph{monogon arcs}.
    These vertices of the quiver are incident to loops whose third power is in the potential.
    Note also that there is the edge case of a sphere with one puncture and two orbifold points, with one arc cutting the sphere into two monogons, each containing one orbifold point; we do not need to consider this case since there is only one simple module in the heart.
    This case will appear in Section~\ref{sect:main:drd:iii}.
    \item We may flip an arc which is incident to a two-cycle of the quiver, but no loops.
    We call these arcs \emph{almost encircling arcs}.\label{op:flips:two_cyc}
    These either arise from the encircling edges of self-folded triangles, or from a puncture incident to precisely two arcs.
    In fact, these two cases are related by a flip, as shown in Figure~\ref{fig:2cycle}.
    \item Finally, we may perform a simple tilt at the simple object coming from a vertex of the quiver which comes from the self-folded edge of a self-folded triangle.
    In this case, the vertex is incident to a loop whose powers do not lie in the potential.
\end{enumerate}

\begin{remark}
Strictly speaking, when we show quasi-equivalences of 3-Calabi--Yau $A_\infty$-categories in this paper, we should show quasi-equivalences of ``ind-constructible'' $3$-Calabi--Yau $A_\infty$-categories, in the sense of \cite[Definition~8]{ks_stability}, in order to verify that the equivalence respects the geometric structure of the category used to define DT invariants.
Since we only ever work with categories generated by finitely many objects, we ignore this technicality.
\end{remark}

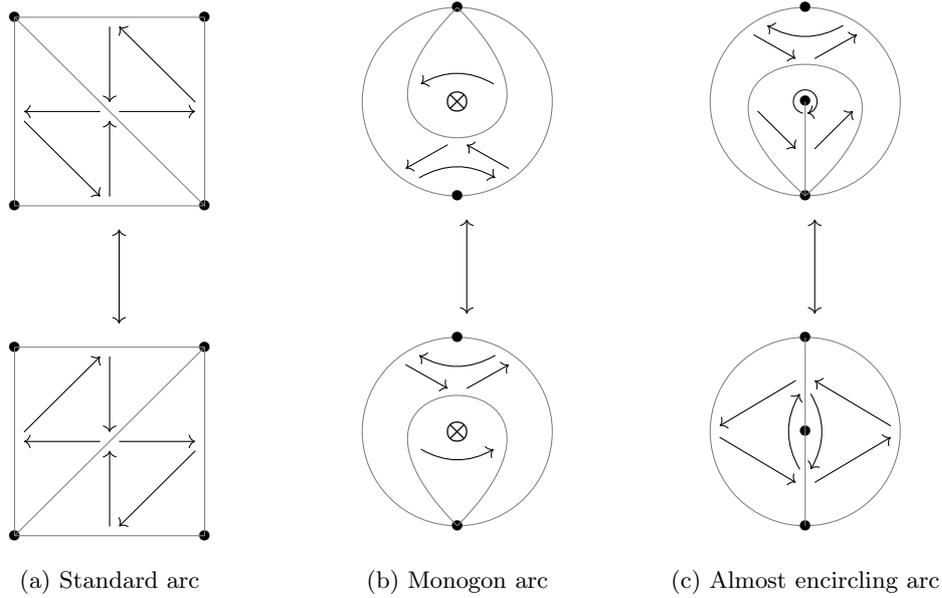
\begin{figure}
    \begin{subfigure}{0.28\textwidth}
    \[
    \begin{tikzpicture}[scale=1.25]

    \begin{scope}[shift={(0,1.75)}]

    \node at (1,1) {$\bullet$};
    \node at (-1,1) {$\bullet$};
    \node at (-1,-1) {$\bullet$};
    \node at (1,-1) {$\bullet$};

    \draw[\edgecol] (1,1) -- (-1,1) -- (-1,-1) -- (1,-1) -- (1,1);
    \draw[\edgecol] (-1,1) -- (1,-1);

    \node(c) at (0,0) {};
    \node(r) at (1,0) {};
    \node(t) at (0,1) {};
    \node(l) at (-1,0) {};
    \node(b) at (0,-1) {};

    \draw[->,\arrcol] (c) -- (r);
    \draw[->,\arrcol] (t) -- (c);
    \draw[->,\arrcol] (r) -- (t);

    \draw[->,\arrcol] (c) -- (l);
    \draw[->,\arrcol] (b) -- (c);
    \draw[->,\arrcol] (l) -- (b);

    \end{scope}

    \begin{scope}

    \draw[<->] (0.1,-0.5) -- (0.1,0.5);
        
    \end{scope}

    \begin{scope}[shift={(0,-1.75)}]

    \node at (1,1) {$\bullet$};
    \node at (-1,1) {$\bullet$};
    \node at (-1,-1) {$\bullet$};
    \node at (1,-1) {$\bullet$};

    \draw[\edgecol] (1,1) -- (-1,1) -- (-1,-1) -- (1,-1) -- (1,1);
    \draw[\edgecol] (1,1) -- (-1,-1);

    \node(c) at (0,0) {};
    \node(r) at (1,0) {};
    \node(t) at (0,1) {};
    \node(l) at (-1,0) {};
    \node(b) at (0,-1) {};

    \draw[->,\arrcol] (c) -- (r);
    \draw[->,\arrcol] (b) -- (c);
    \draw[->,\arrcol] (r) -- (b);

    \draw[->,\arrcol] (c) -- (l);
    \draw[->,\arrcol] (t) -- (c);
    \draw[->,\arrcol] (l) -- (t);
        
    \end{scope}
        
    \end{tikzpicture}
    \]
    \subcaption{Standard arc}\label{fig:quad_mut}
    \end{subfigure}
    \begin{subfigure}{0.28\textwidth}
\[
    \begin{tikzpicture}[scale=1.25]

    \begin{scope}[shift={(0,-1.75)}]
    
   	\path[use as bounding box] (-1.5,1.2) rectangle (1.5,-1.2);

    \draw[\edgecol] (1,0) arc (0:360:1);

    \node at (90:1) {$\bullet$};
    \node at (270:1) {$\bullet$};
    \node at (0,0) {$\bm{\otimes}$};
    
    \draw[\edgecol] (270:1) .. controls (25:2) and (155:2) .. (270:1);

    \node(tl) at (120:1) {};
    \node(tr) at (60:1) {};
    \node(tll) at (130:1) {};
    \node(trr) at (50:1) {};
    \node(t) at (90:0.4) {};
    \node(bl) at (195:0.5) {};
    \node(br) at (345:0.5) {};
    \node(b) at (270:0.6) {};

    \draw[->,\arrcol] (tr) to [bend left] (tl);
    \draw[->,\arrcol] (t) -- (trr);
    \draw[->,\arrcol] (tll) -- (t);
    \draw[->,\arrcol] (bl) to [bend right] (br);
    
    \end{scope}

    \begin{scope}

    \draw[<->] (0.1,-0.5) -- (0.1,0.5);
        
    \end{scope}

    \begin{scope}[shift={(0,1.75)}]
    
   	\path[use as bounding box] (-1.5,1.2) rectangle (1.5,-1.2);

    \draw[\edgecol] (1,0) arc (0:360:1);

    \node at (90:1) {$\bullet$};
    \node at (270:1) {$\bullet$};
    \node at (0,0) {$\bm{\otimes}$};

    \draw[\edgecol] (90:1) .. controls (205:2) and (335:2) .. (90:1);

    \node(tl) at (300:1) {};
    \node(tr) at (240:1) {};
    \node(tll) at (310:1) {};
    \node(trr) at (230:1) {};
    \node(t) at (270:0.4) {};
    \node(bl) at (165:0.5) {};
    \node(br) at (15:0.5) {};
    \node(b) at (90:0.6) {};

    \draw[->,\arrcol] (tr) to [bend left] (tl);
    \draw[->,\arrcol] (t) -- (trr);
    \draw[->,\arrcol] (tll) -- (t);
    \draw[->,\arrcol] (br) to [bend right] (bl);
        
    \end{scope}
        
    \end{tikzpicture}
    \]
    \caption{Monogon arc}
    \label{fig:simp_pole_mut}
    \end{subfigure}
    \begin{subfigure}{0.28\textwidth}
    \[
    \begin{tikzpicture}[scale=1.25]

    \begin{scope}[shift={(0,-1.75)}]
    
   	\path[use as bounding box] (-1.5,1.2) rectangle (1.5,-1.2);

    \draw[white] (270:1) .. controls (22.5:2.25) and (157.5:2.25) .. (270:1);

    \draw[\edgecol] (1,0) arc (0:360:1);

    \node at (90:1) {$\bullet$};
    \node at (270:1) {$\bullet$};
    \node at (0,0) {$\bullet$};

    \draw[\edgecol] (90:1) -- (0,0);
    \draw[\edgecol] (0,0) -- (270:1);

    \node(t) at (90:0.5) {};
    \node(tt) at (90:0.6) {};
    \node(b) at (270:0.5) {};
    \node(bb) at (270:0.6) {};
    \node(r) at (0:1) {};
    \node(l) at (180:1) {};

    \draw[->,\arrcol] (t) to [bend left] (b);
    \draw[->,\arrcol] (r) -- (tt);
    \draw[->,\arrcol] (bb) -- (r);

    \draw[->,\arrcol] (b) to [bend left] (t);
    \draw[->,\arrcol] (l) -- (bb);
    \draw[->,\arrcol] (tt) -- (l);
    
    \end{scope}

    \begin{scope}

    \draw[<->] (0.1,-0.5) -- (0.1,0.5);
        
    \end{scope}

    \begin{scope}[shift={(0,1.75)}]
    
   	\path[use as bounding box] (-1.5,1.2) rectangle (1.5,-1.2);

    \draw[\edgecol] (1,0) arc (0:360:1);

    \node at (90:1) {$\bullet$};
    \node at (270:1) {$\bullet$};
    \node at (0,0) {$\bullet$};

    \draw[\edgecol] (270:1) .. controls (22.5:2.25) and (157.5:2.25) .. (270:1);
    \draw[\edgecol] (0,0) -- (270:1);

    \node(tl) at (120:1) {};
    \node(tr) at (60:1) {};
    \node(tll) at (130:1) {};
    \node(trr) at (50:1) {};
    \node(t) at (90:0.4) {};
    \node(bl) at (180:0.6) {};
    \node(br) at (0:0.6) {};
    \node(b) at (270:0.6) {};

    \draw[<-,\arrcol] (280:0.125) arc (-80:260:0.125);
    \draw[->,\arrcol] (b) -- (br);
    \draw[->,\arrcol] (bl) -- (b);

    \draw[->,\arrcol] (tr) to [bend left] (tl);
    \draw[->,\arrcol] (t) -- (trr);
    \draw[->,\arrcol] (tll) -- (t);
        
    \end{scope}
        
    \end{tikzpicture}
    \]
    \subcaption{Almost encircling arc}
    \label{fig:2cycle}        
    \end{subfigure}
    \caption{Different cases of mutation}
    \label{fig:mut}
\end{figure}

Let $\mbso$ be a marked bordered surface with orbifold points and let $T$ and $T'$ be triangulations of $\mbso$ related by a flip at an arc~$\alpha_1$ with corresponding simple object~$S_{1}$.
Denote $(Q, W) = (Q(T), W(T))$ and $(Q', W') = (Q(T'), W(T'))$. 
We write as usual $\mathcal{H}_{\infty} = \mathcal{H}_{\infty}(Q, W)$ and $\mathcal{H} = H^{0}(\mathcal{H}_\infty)$.
We further write $\mathcal{H}_{\infty}^{S_{1}, \sharp}$ for the full $A_\infty$-subcategory of $\mathcal{D}_\infty(Q, W)$ corresponding to the forwards tilted heart $\mathcal{H}_{\infty}^{S_{1}, \sharp}$.
We then have orientation data on the stacks of objects in $\mathcal{H}^{S_1, \sharp}$ given by the constructible super vector bundles $\End_{\mathcal{D}_\infty(Q,W)}^{1}(-)/\ker(m_{1}^{\two})$ with quadratic form $\cy{m_{1}^{\two}(-)}{-}$,  which we also denote $\odchoice$.
Along with showing that there is a quasi-equivalence $\mathcal{H}_{\infty}^{S_1, \sharp} \simeq \mathcal{H}_{\infty}(Q', W')$, we wish to show that we also have $\odchoice \cong \odcha{Q', W'}$ on $\qastn{Q', W'}$.

\begin{theorem}\label{thm:enhance_equiv}
Let $\mbso$ be a marked bordered surface with orbifold points and let $T$ and $T'$ be triangulations of $\mbso$ related by a flip at an arc~$\alpha_1$, possibly self-folded, with corresponding simple object~$S_{1}$.
Then there are quasi-equivalences of 3-Calabi--Yau $A_\infty$-categories \[
\mathcal{H}_\infty^{S_1,\flat} \simeq \mathcal{H}_{\infty}(Q', W') \simeq \mathcal{H}_\infty^{S_1,\sharp},
\]
which therefore induce two quasi-equivalences of 3-Calabi--Yau $A_\infty$-categories $\mathcal{D}_\infty(Q,W) \simeq \mathcal{D}_\infty(Q',W')$.
Moreover, these quasi-equivalences induce isomorphisms of orientation data $\odchoice \cong \odcha{Q', W'}$ on $\qastn{Q', W'}$. 
\end{theorem}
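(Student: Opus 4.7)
The plan is to treat both claims case by case according to the four types of simple tilts enumerated before the theorem: flips at standard, monogon, almost encircling, and self-folded arcs. The standard case corresponds to a vertex incident to neither a loop nor a two-cycle, so the local simples form a cluster collection and reduce, up to cosmetic adjustments, to \cite[Theorem~8.3.2]{davison_thesis}; the main substance of the argument lies in the other three cases, in which loops or two-cycles are present near~$\alpha_1$.

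I would establish the quasi-equivalence as follows. By Proposition~\ref{prop:simple_tilt_chq}, the simples of $\mathcal{H}^{S_1,\sharp}$ are $S_1[1]$ together with the cones $\psi^{S_1,\sharp}(S_j) = \mathrm{Cone}(f_j)[-1]$ for the minimal left $\extclos{S_1[1]}$-approximations $f_j$ of $S_j$. In each flip case these approximations can be written down explicitly from the local subquiver of $(Q,W)$ around~$\alpha_1$, giving degree-zero twisted complexes $T_j \in \mathcal{H}_\infty^{S_1,\sharp} \subset \tw{\akdqw}$ whose underlying twisting $a_j$ solves the Maurer--Cartan equation~\eqref{eq:mc}. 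A direct calculation of $\Hom^\bullet_{\tw{\akdqw}}(T_i,T_j)$ and of the induced operations $m_n^{\two}$ identifies the full $A_\infty$-subcategory on the $T_j$ with $\akd{Q'}{W'}$: on the one hand, the $\Ext$-quiver between the $T_j$ matches the arrows of $Q'$ produced by the flipped triangulation, and on the other, the potential $W'$ is recovered from the $A_\infty$-operations via~\eqref{eq:cat_potential}, since the local combinatorics of Section~\ref{sect:3cy_stab:qwp_from_triang} agree on both sides. Applying Proposition~\ref{prop:min_model} to obtain a cyclic minimal model and then passing to twisted complexes gives the quasi-equivalence $\twz{\akd{Q'}{W'}} \simeq \mathcal{H}_\infty^{S_1,\sharp}$, and the analogous argument yields $\mathcal{H}_\infty^{S_1,\flat} \simeq \twz{\akd{Q'}{W'}}$.

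For the orientation data, Proposition~\ref{prop:od_ug}\ref{op:od_ug:simples_determine} reduces the claim $\odchoice \cong \odcha{Q',W'}$ to checking that the two have the same parity on each simple of $\mathcal{H}^{S_1,\sharp}$. By part~\ref{op:od_ug:simples->even}, $\odcha{Q',W'}$ is even on all such simples, so the task becomes to verify that $\dim \End^1_{\tw{\akdqw}}(T_j)/\ker(m_1^{\two})$ is even for every~$T_j$. By~\ref{op:od_ug:calc} this amounts to showing that
\[
\dim \Ext^{\leqslant 1}_{\tw{\akdqw}}(T_j,T_j) + \dim \Hom^{\leqslant 1}_{\tw{\akdqw}}(T_j,T_j)
\]
is even in each case. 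For the shifted simple $S_1[1]$ the argument of Proposition~\ref{prop:od_ug}\ref{op:od_ug:simples->even} applies verbatim, and the remaining $T_j$ are handled by direct computation using the explicit twisted complexes constructed above.

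The principal obstacle I anticipate is the flip at the self-folded edge of a self-folded triangle, where $S_1$ carries a loop whose cube does \emph{not} appear in~$W$, so the corresponding cones $T_j$ have non-trivial twisting $a_j$ and $m_1^{\two}$ on their endomorphism complexes no longer vanishes. Here $\Ext^{\leqslant 1}_{\tw{\akdqw}}(T_j,T_j)$ must be computed by honestly taking cohomology of the induced differential, rather than reading off the underlying graded vector space as in the simple-vertex case. The almost encircling flip presents a milder version of the same difficulty via the two-cycle. In both cases the cubic terms of $W$ adjacent to $\alpha_1$ arise from a single normal triangle, which constrains the operations $m_2$ appearing in the definition~\eqref{eq:tw_op} of $m_1^{\two}$ enough that the parity count can be carried out directly in the local subquiver.
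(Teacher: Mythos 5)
Your proposal follows essentially the same route as the paper: a case-by-case analysis over the four flip types, identification of the new simples as cones via Proposition~\ref{prop:simple_tilt_chq}, an explicit computation of the minimal-model morphisms and $A_\infty$-operations between these twisted complexes to match them with $\akd{Q'}{W'}$, passage to (degree-zero) twisted complexes for the quasi-equivalences, and the parity count of Proposition~\ref{prop:od_ug} for the orientation data. Two points deserve attention, however.

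First, a concrete missing step: Proposition~\ref{prop:simple_tilt_chq} only applies once one knows that $\extclos{S_1}$ is functorially finite in $\mathcal{D}$, and this is exactly non-automatic in the self-folded case you single out as the hardest. There $S_1$ carries a loop none of whose powers lie in the potential, so $\extclos{S_1}$ has infinitely many indecomposable objects and functorial finiteness must be argued separately (the paper does this in Lemma~\ref{lem:enhance_equiv:self_fold} by observing that only objects of $\extclos{S_1}$ itself and twisted complexes built from $S_2$ admit non-zero morphisms to or from $\extclos{S_1}$, and that the relations from $W$ kill morphisms between $S_2$ and the non-trivial self-extensions of $S_1$). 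Your proposal invokes the proposition without this check, so as written the description of the simples of $\mathcal{H}^{S_1,\sharp}$ in that case is not yet justified. Second, a smaller inaccuracy: the standard-arc case does not literally reduce to \cite[Theorem~8.3.2]{davison_thesis}, because the vertices adjacent to $\alpha_1$ may carry loops, so the relevant simples need not form a cluster collection; the paper instead performs the computation directly, tracking whether the neighbouring simples are spherical or not (this is precisely where the parity bookkeeping uses that $S_i$ is spherical if and only if the mutated $S_i'$ is). With these two repairs your argument matches the paper's proof.
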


We will only show the forwards tilting case, since the backwards tilting case is similar.
Note that by \cite{kajiura_jppa}, $A_\infty$-enhancements of triangulated categories are not generally unique up to quasi-equivalence.

To prove Theorem~\ref{thm:enhance_equiv}, we use the following lemmas, which consider the different types of flip.
We write $\simpinf{\mathcal{H}_{\infty}^{S_{1},\sharp}}$ for the full $A_\infty$-subcategory of $\mathcal{D}_\infty$ consisting of one representative of each simple object of $\mathcal{H}^{S_{1},\sharp}$.

\begin{lemma}\label{lem:enhance_equiv:standard}
If $\alpha_1$ is a standard arc, then there is a quasi-equivalence of 3-Calabi--Yau $A_\infty$-categories $\simpinf{\mathcal{H}_{\infty}^{S_{1}, \sharp}} \simeq \akd{Q'}{W'}$, with an isomorphism of orientation data $\odchoice \cong \odcha{Q', W'}$ on $\qastn{Q', W'}$.
\end{lemma}
\begin{proof}
If the arc $\alpha_1$ is a standard arc, then we are in the situation on the left of Figure~\ref{fig:enhance_equiv:quad_mut}, and we label the simple objects corresponding to the arcs and the degree~$1$ morphisms between them as shown.
Note that we are assuming here that $S_3 \neq S_5$ and $S_2 \neq S_4$.
We leave as an exercise the calculations in the cases where this fails to hold.

Note that we have the description of the simple objects of the new heart from Proposition~\ref{prop:simple_tilt_chq}. %
Hence, the new simple objects after the forwards tilt at $S_1$ are as shown on the right in Figure~\ref{fig:enhance_equiv:quad_mut}.
Outside the quadrilateral, the simple objects remain the same, since their minimal left $\extclos{S_1[1]}$-approximations are zero.
Here $S_5 \xrightarrow{e} S_1$ means the twisted complex \[
\left((S_5, S_1), \left(\begin{smallmatrix}0 & 0 \\ e & 0\end{smallmatrix}\right) \right);
\]
we will usually write twisted complexes like this.

We must show that the degree~$1$ morphisms shown indeed give non-zero cohomology classes in the minimal model.
This is clear for $a^{\ast}$, $d^{\ast}$, and $\left(\begin{smallmatrix}0 \\ \id_1\end{smallmatrix}\right)$.
For $\left(\begin{smallmatrix}0 & a\end{smallmatrix}\right) \colon (S_5 \xrightarrow{e} S_1) \to S_2$, we compute
\begin{align*}
m_1^{\two}\left( \left(\begin{smallmatrix}0 & a\end{smallmatrix}\right) \right) = m_1\left(\begin{smallmatrix}0 & a\end{smallmatrix}\right) + m_2\left(\left(\begin{smallmatrix}0 & a \end{smallmatrix}\right),\left(\begin{smallmatrix}0 & 0 \\ e & 0 \end{smallmatrix}\right)\right) = \left(\begin{smallmatrix}0 & m_1(a)\end{smallmatrix}\right) + \left(\begin{smallmatrix}m_2(a,e) & 0 \end{smallmatrix}\right) = 0
\end{align*}
Hence, $\left(\begin{smallmatrix}0 & a\end{smallmatrix}\right)$ is a cocycle; since there are no degree~$0$ morphisms from $S_5 \xrightarrow{e} S_1$ to $S_2$, it therefore gives a cohomology class.
The same can be shown for $\left(\begin{smallmatrix}0 & d\end{smallmatrix}\right)$.

We now show that there are no further degree~$1$ morphisms which give cohomology classes.
The only possibilities are  $\left(\begin{smallmatrix}0 & a\end{smallmatrix}\right) \colon (S_3 \xrightarrow{b} S_1) \to S_2$ and $\left(\begin{smallmatrix}0 & d\end{smallmatrix}\right) \colon (S_5 \xrightarrow{e} S_1) \to S_4$.
However, one can compute that the first is not a cocycle, since
\begin{align*}
    m_1^{\two}\left(\begin{smallmatrix}0 & a\end{smallmatrix}\right) = \left(\begin{smallmatrix}0 & m_1(a)\end{smallmatrix}\right) + \left(\begin{smallmatrix}m_2(a,b) & 0 \end{smallmatrix}\right) = \left(\begin{smallmatrix}c^{\ast} & 0 \end{smallmatrix}\right).
\end{align*}
A similar argument applies to the other morphism.

We now verify that the $A_\infty$-operations behave as expected, first computing the operations in the top left cycle.
\begin{align*}
    m_2^{\two}\left(a^{\ast}, \left(\begin{smallmatrix}0 & a\end{smallmatrix}\right)\right) &=  \left(\begin{smallmatrix}0 & m_2(a^{\ast}, a)\end{smallmatrix}\right) = \left(\begin{smallmatrix}0 & \id_1^{\ast}\end{smallmatrix}\right). \\
    m_2^{\two}\left(\left(\begin{smallmatrix}0 \\ \id_1\end{smallmatrix}\right), a^{\ast}\right) &=  \left(\begin{smallmatrix}0 \\ m_2(\id_1, a^{\ast})\end{smallmatrix}\right) = \left(\begin{smallmatrix}0 \\ a^{\ast}\end{smallmatrix}\right). \\
    m_2^{\two}\left(\left(\begin{smallmatrix}0 & a\end{smallmatrix}\right), \left(\begin{smallmatrix}0 \\ \id_1\end{smallmatrix}\right)\right) &=  m_2(a, \id_1) = a.
\end{align*}
Note that these degree~$2$ morphisms pair with the respective degree~$1$ morphisms $\left(\begin{smallmatrix}0 \\ \id_1\end{smallmatrix}\right)$, $\left(\begin{smallmatrix}0 & a\end{smallmatrix}\right)$, and~$a^{\ast}$, as required by the 3-Calabi--Yau property.
Similar computations apply to the other three-cycle.
We must also show that $m_2^{\two}$ gives zero on appropriate compositions of morphisms.
Indeed,
\begin{align*}
    m_2^{\two}\left(\left(\begin{smallmatrix}0 \\ \id_1\end{smallmatrix}\right)\colon S_1[1] \to (S_3 \xrightarrow{b} S_1), a^{\ast} \colon S_2 \to S_1[1]\right) = \left(\begin{smallmatrix}0 \\ a^{\ast}\end{smallmatrix}\right)\colon S_2 \to (S_3 \xrightarrow{b} S_1).
\end{align*}
However, this is now a coboundary, as can be seen from
\begin{align*}
    m_1^{\two}\left(\left(\begin{smallmatrix}c \\ 0\end{smallmatrix}\right) \colon S_2 \to (S_3 \xrightarrow{b} S_1)\right) = \left(\begin{smallmatrix}m_1(c) \\ 0\end{smallmatrix}\right) - \left(\begin{smallmatrix}0 \\ m_2(b,c)\end{smallmatrix}\right) = \left(\begin{smallmatrix}0 \\ - a^{\ast}\end{smallmatrix}\right).
\end{align*}
We also have that $m_2^{\two}$ gives zero on the pair of morphisms $d^{\ast} \colon S_4 \to S_1[1]$ and $\left(\begin{smallmatrix}0 \\ \id_1\end{smallmatrix}\right) \colon S_1[1] \to (S_5 \xrightarrow{e} S_1)$.

We conclude that we have a quasi-equivalence of 3-Calabi--Yau $A_\infty$-categories $\simpinf{\mathcal{H}_{\infty}^{S_{1}, \sharp}} \simeq \akd{Q'}{W'}$, since the degree~$1$ morphisms outside the quadrilateral are unaffected.

We now prove the statement on orientation data.
We let $S'_3 := (S_3 \xrightarrow{b} S_1)$ and $S'_5: = (S_5 \xrightarrow{e} S_1)$.
We introduce the shorthand $e^{\mathsmaller{\bullet}} = \dim \Ext_{\mathcal{D}_\infty(Q, W)}^{\mathsmaller{\bullet}}$ and $h^{\mathsmaller{\bullet}} := \dim \Hom_{\mathcal{D}_\infty(Q, W)}^{\mathsmaller{\bullet}}$.
Following Proposition~\ref{prop:od_ug}, it suffices to prove that
\begin{align*}
e^{\leqslant 1}((S'_i, S'_i) + h^{\leqslant 1}(S'_i, S'_i)
\end{align*}
is even, where $i \in \{3, 5\}$.
We have that $e^{\leqslant 1}(S'_i, S'_{i})$ is $1$ if $S'_i$ is spherical and $2$ if $S'_i$ is non-spherical.
We then have that
\begin{align*}
    h^{0}((S_i \to S_1), (S_i \to S_1)) &= 2,  \\
    h^{1}((S_i \to S_1), (S_i \to S_1)) &= h^{1}(S_i, S_i) + 1,
\end{align*}
with the second line holding since $S_1$ is spherical.
Note that we must have that $\Hom^{1}_{\mathcal{D}_{\infty}}(S_1, S_i) = 0$, since adjacent sides of the quadrilateral in Figure~\ref{fig:enhance_equiv:quad_mut} are distinct by assumption.
We then have that $h^{1}(S_i, S_i)$ is $0$ if $S_i$ is spherical and $1$ if it is non-spherical.
We then use the fact that $S_i$ is spherical if and only if $S'_{i}$ is spherical, since the mutation does not change loops at other vertices in the quiver if we are not mutating at a two-cycle.
Thus, we indeed get an even number overall, as desired.
\end{proof}

\begin{figure}[h]
    \[
    \begin{tikzpicture}[scale=1.85]

    \begin{scope}[shift={(-1.75,0)}]

    \node at (1,1) {$\bullet$};
    \node at (-1,1) {$\bullet$};
    \node at (-1,-1) {$\bullet$};
    \node at (1,-1) {$\bullet$};

    \node at (-1.2,0) {$S_2$};
    \node at (0,1.2) {$S_5$};
    \node at (1.2,0) {$S_4$};
    \node at (0,-1.2) {$S_3$};
    \node at (-0.4,0.6) {$S_{1}$};

    \draw[\edgecol] (1,1) -- (-1,1) -- (-1,-1) -- (1,-1) -- (1,1);
    \draw[\edgecol] (-1,1) -- (1,-1);

    \node(c) at (0,0) {};
    \node(r) at (1,0) {};
    \node(t) at (0,1) {};
    \node(l) at (-1,0) {};
    \node(b) at (0,-1) {};

    \draw[->,\arrcol] (c) -- (r) node [midway,below] {$d$};
    \draw[->,\arrcol] (t) -- (c) node [midway,left] {$e$};
    \draw[->,\arrcol] (r) -- (t) node [midway,above right] {$f$};

    \draw[->,\arrcol] (c) -- (l) node [midway,above] {$a$};
    \draw[->,\arrcol] (b) -- (c) node [midway,right] {$b$};
    \draw[->,\arrcol] (l) -- (b) node [midway,below left] {$c$};

    \end{scope}

    \begin{scope}

    \draw[->] (-0.3,0.1) -- (0.3,0.1);
        
    \end{scope}

    \begin{scope}[shift={(1.75,0)}]

    \node at (1,1) {$\bullet$};
    \node at (-1,1) {$\bullet$};
    \node at (-1,-1) {$\bullet$};
    \node at (1,-1) {$\bullet$};

    \node at (-1.2,0) {$S_2$};
    \node at (0,1.2) {$S_5 \xrightarrow{e} S_1$};
    \node at (1.2,0) {$S_4$};
    \node at (0,-1.2) {$S_3 \xrightarrow{b} S_1$};
    \node at (0.4,0.7) {$S_{1}[1]$};

    \draw[\edgecol] (1,1) -- (-1,1) -- (-1,-1) -- (1,-1) -- (1,1);
    \draw[\edgecol] (1,1) -- (-1,-1);

    \node(c) at (0,0) {};
    \node(r) at (1,0) {};
    \node(t) at (0,1) {};
    \node(l) at (-1,0) {};
    \node(b) at (0,-1) {};

    \draw[->,\arrcol] (r) -- (c) node [midway,above] {$d^{\ast}$};
    \draw[->,\arrcol] (c) -- (b) node [midway,below left] {$\left(\begin{smallmatrix}0 \\ \id_{1}\end{smallmatrix}\right)$};
    \draw[->,\arrcol] (b) -- (r) node [midway,below right] {$\left(\begin{smallmatrix}0 & d\end{smallmatrix}\right)$};

    \draw[->,\arrcol] (l) -- (c) node [midway,below] {$a^{\ast}$};
    \draw[->,\arrcol] (c) -- (t) node [midway,below left] {$\left(\begin{smallmatrix}0 \\ \id_{1}\end{smallmatrix}\right)$};
    \draw[->,\arrcol] (t) -- (l) node [midway,above left] {$\left(\begin{smallmatrix}0 & a\end{smallmatrix}\right)$};
        
    \end{scope}
        
    \end{tikzpicture}
    \]
    \caption{Mutating at a standard arc}
    \label{fig:enhance_equiv:quad_mut}
\end{figure}
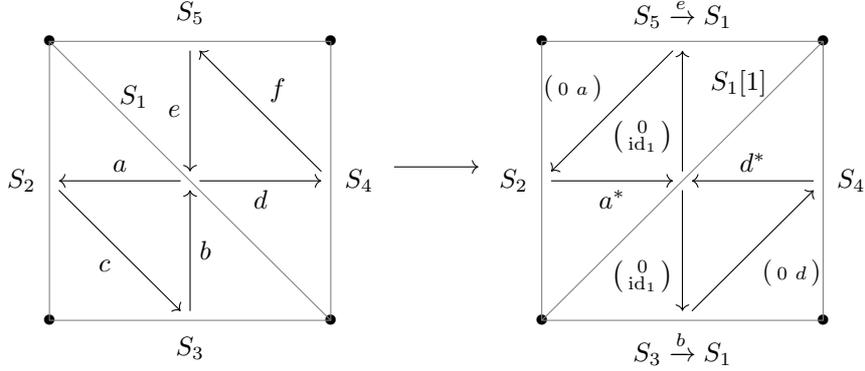

\begin{lemma}\label{lem:enhance_equiv:monogon}
If $\alpha_1$ is a monogon arc, then there is a quasi-equivalence of 3-Calabi--Yau $A_\infty$-categories $\simpinf{\mathcal{H}_{\infty}^{S_{1}, \sharp}} \simeq \akd{Q'}{W'}$, with an isomorphism of orientation data $\odchoice \cong \odcha{Q', W'}$ on $\qastn{Q', W'}$.
\end{lemma}
\begin{proof}
We take the same approach as Lemma~\ref{lem:enhance_equiv:standard}.
If the arc $\alpha_1$ is a monogon arc, then we are in the situation on the left of Figure~\ref{fig:enhance_equiv:simp_pole_mut}, and we label the simple objects corresponding to the arcs and the degree~$1$ morphisms between them as shown.
Here we have assumed that $S_2 \neq S_3$; we leave the case where this fails to hold as an exercise.

We apply Proposition~\ref{prop:simple_tilt_chq}.
Note that the fact that $S_3$ is exchanged for $S_3 \xrightarrow{b} S_1 \xrightarrow{a} S_1$ follows from \cite[Example~A.4]{chq}.
It can also be verified directly that the minimal left $\extclos{S_1[1]}$-approximation of $S_3$ is indeed $\left(\begin{smallmatrix}b \\ 0\end{smallmatrix}\right) \colon S_3 \to (S_1 \xrightarrow{a} S_1)$, since all degree~$1$ morphisms from $S_3$ to $S_1$ and $S_1 \xrightarrow{b} S_1$ factor through this.
The new simple objects after the forwards tilt at $S_1$ are hence as shown on the right in Figure~\ref{fig:enhance_equiv:simp_pole_mut}.
Outside the region of the triangulation $T$ shown, the simple objects remain the same.

We claim that the degree~$1$ morphisms in the minimal model of this subcategory are those shown.
We first show that $\left(\begin{smallmatrix}0 \\ 0 \\ \id_1 \end{smallmatrix}\right)$ spans all degree~$1$ morphisms in the minimal model from $S_1[1]$ to $S_3 \xrightarrow{b} S_1 \xrightarrow{a} S_1$.
Indeed,
\begin{align*}
    m_1^{\two}\left(\left(\begin{smallmatrix}0 \\ \id_1 \\ 0\end{smallmatrix}\right)\right) = \left(\begin{smallmatrix}0 \\ m_1(\id_1) \\ 0\end{smallmatrix}\right) - \left(\begin{smallmatrix}0 \\ m_2(a, \id_1) \\ 0\end{smallmatrix}\right) = \left(\begin{smallmatrix}0 \\ -a \\ 0\end{smallmatrix}\right).
\end{align*}
Hence $\left(\begin{smallmatrix}0 \\ 0 \\ \id_1\end{smallmatrix}\right)$ is the only morphism which is a cocycle, up to scalar.
A similar calculation establishes that $\left(\begin{smallmatrix}0 & 0 & c\end{smallmatrix}\right)$ spans the degree~$1$ cocycles from $S_3 \xrightarrow{b} S_1 \xrightarrow{a} S_1$ to $S_2$.
These morphisms moreover give cohomology classes, since there are no degree~$0$ morphisms between the relevant objects.

Showing that the $A_\infty$-operations and 3-Calabi--Yau pairing behave as expected is similar to Lemma~\ref{lem:enhance_equiv:standard}.
Hence, we indeed have a quasi-equivalence $\simpinf{\mathcal{H}_{\infty}^{S_{1}, \sharp}} \simeq \akd{Q'}{W'}$, since the degree~$1$ morphisms outside the region are unaffected.

We now prove the statement on orientation data.
We denote $S'_3 = (S_3 \xrightarrow{b} S_1 \xrightarrow{a} S_1)$.
Again, we have by Proposition~\ref{prop:od_ug} that it suffices to show that
\begin{align*}
e^{\leqslant 1}(S'_3, S'_3) + h^{\leqslant 1}(S'_3, S'_3) 
\end{align*}
is even.
As before, we have that $e^{\leqslant 1}(S'_3, S'_3)$ is $1$ if $S'_3$ is spherical and $2$ if $S'_3$ is non-spherical.
Note that $S'_3$ is spherical if and only if $S_3$ is.
Furthermore, because we are assuming $S_3 \neq S_2$, we have that $\Hom_{\mathcal{D}_{\infty}}^{1}(S_1, S_3) = 0$.
We then have
\begin{align*}
    h^{0}((S_3 \xrightarrow{b} S_1 \xrightarrow{a} S_1), (S_3 \xrightarrow{b} S_1 \xrightarrow{a} S_1)) &= h^{0}(S_3, S_3) + h^{0}(S_1^{\oplus 2}, S_1^{\oplus 2}) = 1 + 4 \\
    h^{1}((S_3 \xrightarrow{b} S_1 \xrightarrow{a} S_1), (S_3 \xrightarrow{b} S_1 \xrightarrow{a} S_1)) &= h^{1}(S_3, S_3) + h^{1}(S_3, S_1^{\oplus 2}) + h^{1}(S_1^{\oplus 2}, S_1^{\oplus 2}) \\
    &= h^{1}(S_3, S_3) + 2 + 4
\end{align*}
We then have that $h^{1}(S_3, S_3)$ is $0$ if $S_3$ is spherical and is $1$ is $S_3$ is non-spherical, so we obtain an even number in the same way as the previous case, since $S'_{3}$ is spherical if and only if $S_{3}$ is.
In the case where $S_3 = S_2$, we have an additional contribution from $h^{1}(S_1^{\oplus 2}, S_3)$, which does not change the parity.
\end{proof}

\begin{figure}
    \[
    \begin{tikzpicture}[scale=2]

    \begin{scope}[shift={(-1.75,0)}]
    
   	\path[use as bounding box] (-1.5,1.2) rectangle (1.5,-1.2);

    \draw[\edgecol] (1,0) arc (0:360:1);

    \node at (90:1) {$\bullet$};
    \node at (270:1) {$\bullet$};
    \node at (0,0) {$\bm{\otimes}$};

    \node at (90:0.25) {$S_1$};
    \node at (60:1.2) {$S_2$};
    \node at (120:1.2) {$S_3$};

    \draw[\edgecol] (270:1) .. controls (25:2) and (155:2) .. (270:1);

    \node(tl) at (120:1) {};
    \node(tr) at (60:1) {};
    \node(tll) at (130:1) {};
    \node(trr) at (50:1) {};
    \node(t) at (90:0.4) {};
    \node(bl) at (195:0.5) {};
    \node(br) at (345:0.5) {};
    \node(b) at (270:0.6) {};

    \draw[->,\arrcol] (tr) to [bend left] (tl);
    \draw[->,\arrcol] (t) -- (trr);
    \draw[->,\arrcol] (tll) -- (t);
    \draw[->,\arrcol] (bl) to [bend right] (br);
    \node at (270:0.4) {$a$};
    \node at (130:0.6) {$b$};
    \node at (50:0.6) {$c$};
    \node at (90:0.85) {$d$};
    
    \end{scope}

    \begin{scope}

    \draw[->] (-0.3,0.1) -- (0.3,0.1);
        
    \end{scope}

    \begin{scope}[shift={(1.75,0)}]
    
   	\path[use as bounding box] (-1.5,1.2) rectangle (1.5,-1.2);

    \draw[\edgecol] (1,0) arc (0:360:1);

    \node at (90:1) {$\bullet$};
    \node at (270:1) {$\bullet$};
    \node at (0,0) {$\bm{\otimes}$};

    \node at (270:0.25) {$S_1[1]$};
    \node at (240:1.2) {$S_3 \xrightarrow{b} S_1 \xrightarrow{a} S_1$};
    \node at (300:1.2) {$S_2$};

    \draw[\edgecol] (90:1) .. controls (205:2) and (335:2) .. (90:1);

    \node(tl) at (300:1) {};
    \node(tr) at (240:1) {};
    \node(tll) at (310:1) {};
    \node(trr) at (230:1) {};
    \node(t) at (270:0.4) {};
    \node(bl) at (165:0.5) {};
    \node(br) at (15:0.5) {};
    \node(b) at (90:0.6) {};

    \draw[->,\arrcol] (tr) to [bend left] (tl);
    \draw[->,\arrcol] (t) -- (trr);
    \draw[->,\arrcol] (tll) -- (t);
    \draw[->,\arrcol] (br) to [bend right] (bl);
    \node at (90:0.4) {$a$};
    \node at (310:0.6) {$c^{\ast}$};
    \node at (220:0.7) {$\left(\begin{smallmatrix}0 \\ 0 \\ \id_1\end{smallmatrix}\right)$};
    \node at (270:0.85) {$\left(\begin{smallmatrix}0 & 0 & c\end{smallmatrix}\right)$};
        
    \end{scope}
        
    \end{tikzpicture}
    \]
    \caption{Mutating at a monogon arc}
    \label{fig:enhance_equiv:simp_pole_mut}
\end{figure}
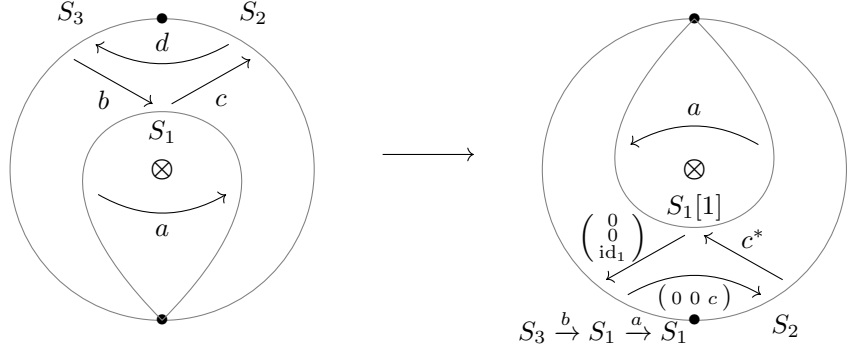

\begin{lemma}\label{lem:enhance_equiv:two_cyc}
If $\alpha_1$ is an almost encircling arc, then there is a quasi-equivalence of 3-Calabi--Yau $A_\infty$-categories $\simpinf{\mathcal{H}_{\infty}^{S_{1}, \sharp}} \simeq \akd{Q'}{W'}$, with an isomorphism of orientation data $\odchoice \cong \odcha{Q', W'}$ on $\qastn{Q', W'}$.
\end{lemma}
\begin{proof}
If $\alpha_1$ is an almost encircling arc, then it is either the encircling edge of a self-folded triangle, or one of two arcs incident to a puncture.
We assume that we are in the latter of these cases, so that we are in the situation on the left of Figure~\ref{fig:enhance_equiv:quad_mut}, and we follow the same approach as usual.

By Proposition~\ref{prop:simple_tilt_chq}, the new simple objects after the forwards tilt at $S_1$ are as shown on the right in Figure~\ref{fig:enhance_equiv:quad_mut}.
Outside the region of the triangulation $T$ shown, the simple objects remain the same.
We claim that the degree~$1$ morphisms in the minimal model of this subcategory are those shown.
The only morphism which is different from those of the previous cases is \[
 \left(\begin{smallmatrix}0 & d \\ 0 & 0\end{smallmatrix}\right) \colon (S_2 \xrightarrow{a} S_1) \to (S_2 \xrightarrow{a} S_1).
\]
We show that this generates $\Ext^{1}_{\mathcal{D}_\infty}(S_2 \xrightarrow{a} S_1, S_2 \xrightarrow{a} S_1)$.
Indeed, it is a cocycle, since
\begin{align*}
    m_1^{\two}\left( \left(\begin{smallmatrix}0 & d \\ 0 & 0\end{smallmatrix}\right) \right) = \left(\begin{smallmatrix}0 & m_1(d) \\ 0 & 0\end{smallmatrix}\right) + \left(\begin{smallmatrix}0 & m_2(d, a) \\ 0 & 0\end{smallmatrix}\right) - \left(\begin{smallmatrix}0 & m_2(a, d) \\ 0 & 0\end{smallmatrix}\right) +  \left(\begin{smallmatrix}0 & m_3(a, d, a) \\ 0 & 0\end{smallmatrix}\right) = 0.
\end{align*}
Moreover, it is not a coboundary, since
\begin{align*}
     m_1^{\two}\left( \left(\begin{smallmatrix}\lambda\id_1 & 0 \\ 0 & \lambda'\id_2\end{smallmatrix}\right) \right) =  \left(\begin{smallmatrix}\lambda m_1(\id_1) & 0 \\ 0 & \lambda' m_1(\id_2)\end{smallmatrix}\right) - \left(\begin{smallmatrix}0 & 0 \\ \lambda m_2(a, \id_1) & 0\end{smallmatrix}\right) + \left(\begin{smallmatrix}0 & 0 \\ \lambda' m_2(\id_1, a) & 0\end{smallmatrix}\right) = \left(\begin{smallmatrix}0 & 0 \\ (\lambda' - \lambda)a  & 0\end{smallmatrix}\right).
\end{align*}
This also shows that there are no other cohomology classes of degree~$1$ endomorphisms of $S_2 \xrightarrow{a} S_1$, up to scalar.

Verifying that the $A_\infty$-operations behave as they should works as in Lemma~\ref{lem:enhance_equiv:standard}.
This shows that we indeed have a quasi-equivalence $\simpinf{\mathcal{H}_{\infty}^{S_{1}, \sharp}} \simeq \akd{Q'}{W'}$, since the degree~$1$ morphisms outside the region shown are unaffected.

The backwards tilt at the encircling edge of a self-folded triangle is then the reverse of this, and hence also gives the desired equivalence.
Since the backwards tilt behaves similarly to the forwards tilt, the encircling edge case also holds for the forwards tilt.

We now prove the statement on orientation data, denoting $S'_2 := (S_2 \xrightarrow{a} S_1)$.
As ever, we follow Proposition~\ref{prop:od_ug}.%
We have that $e^{\leqslant 1}(S'_2, S'_2) = 2$, since $S'_2$ is non-spherical.
Then
\begin{align*}
    h^{0}((S_2 \xrightarrow{a} S_1), (S_2 \xrightarrow{a} S_1)) &= h^{0}(S_2, S_2) + h^{0}(S_1, S_1) = 2,  \\
    h^{1}((S_2 \xrightarrow{a} S_1), (S_2 \xrightarrow{a} S_1)) &= h^{1}(S_1, S_2) + h^{1}(S_2, S_1) = 2.
\end{align*}
Hence, we obtain an even number overall.
Note here that the orientation data is preserved even though $S_2$ is spherical whilst $S'_2$ is not.
Checking that the orientation data is even over $S_3 \xrightarrow{f} S_1$ is routine.
\end{proof}

\begin{figure}
    \[
    \begin{tikzpicture}[scale=2]

    \begin{scope}[shift={(-1.75,0)}]
    
   	\path[use as bounding box] (-1.5,1.2) rectangle (1.5,-1.2);

    \draw[\edgecol] (1,0) arc (0:360:1);

    \node at (90:1) {$\bullet$};
    \node at (270:1) {$\bullet$};
    \node at (0,0) {$\bullet$};

    \node at (80:0.7) {$S_1$};
    \node at (260:0.7) {$S_2$};
    \node at (180:1.2) {$S_4$};
    \node at (0:1.2) {$S_3$};

    \draw[\edgecol] (90:1) -- (0,0);
    \draw[\edgecol] (0,0) -- (270:1);

    \node(t) at (90:0.5) {};
    \node(tt) at (90:0.6) {};
    \node(b) at (270:0.5) {};
    \node(bb) at (270:0.6) {};
    \node(r) at (0:1) {};
    \node(l) at (180:1) {};

    \draw[->,\arrcol] (t) to [bend left] (b);
    \draw[->,\arrcol] (r) -- (tt) node [midway,above right] {$f$};
    \draw[->,\arrcol] (bb) -- (r) node [midway,below right] {$e$};
    \node at (0:0.3) {$d$};

    \draw[->,\arrcol] (b) to [bend left] (t);
    \draw[->,\arrcol] (l) -- (bb) node [midway,below left] {$c$};
    \draw[->,\arrcol] (tt) -- (l) node [midway,above left] {$b$};
    \node at (180:0.3) {$a$};
    
    \end{scope}

    \begin{scope}

    \draw[->] (-0.3,0.1) -- (0.3,0.1);
        
    \end{scope}

    \begin{scope}[shift={(1.75,0)}]
    
   	\path[use as bounding box] (-1.5,1.2) rectangle (1.5,-1.2);

    \draw[\edgecol] (1,0) arc (0:360:1);

    \node at (90:1) {$\bullet$};
    \node at (270:1) {$\bullet$};
    \node at (0,0) {$\bullet$};

    \node at (90:0.25) {$S_1[1]$};
    \node at (270:0.7) {$S_2 \xrightarrow{a} S_1$};
    \node at (120:1.2) {$S_4$};
    \node at (60:1.25) {$S_3 \xrightarrow{f} S_1$};
    
    \draw[\edgecol] (270:1) .. controls (22.5:2.25) and (157.5:2.25) .. (270:1);
    \draw[\edgecol] (0,0) -- (270:1);

    \node(tl) at (120:1) {};
    \node(tr) at (60:1) {};
    \node(tll) at (130:1) {};
    \node(trr) at (50:1) {};
    \node(t) at (90:0.4) {};
    \node(bl) at (180:0.6) {};
    \node(br) at (0:0.6) {};
    \node(b) at (270:0.6) {};

    \draw[<-,\arrcol] (280:0.125) arc (-80:260:0.125);
    \draw[->,\arrcol] (b) -- (br);
    \draw[->,\arrcol] (bl) -- (b);
    \node at (10:0.35) {$\left(\begin{smallmatrix}0 & d \\ 0 & 0\end{smallmatrix}\right)$};
    \node at (220:0.65) {$\left(\begin{smallmatrix}0 \\ \id_1\end{smallmatrix}\right)$};
    \node at (320:0.65) {$\left(\begin{smallmatrix}d^{\ast} & 0\end{smallmatrix}\right)$};
   
    \draw[->,\arrcol] (tr) to [bend left] (tl);
    \draw[->,\arrcol] (t) -- (trr);
    \draw[->,\arrcol] (tll) -- (t);
    \node at (130:0.6) {$b^{\ast}$};
    \node at (40:0.7) {$\left(\begin{smallmatrix}0 \\ \id_1\end{smallmatrix}\right)$};
    \node at (90:0.85) {$\left(\begin{smallmatrix}0 & b\end{smallmatrix}\right)$};

    \end{scope}
        
    \end{tikzpicture}
    \]
    \caption{Flipping at one of two arcs incident to a puncture}
    \label{fig:enhance_equiv:two_cyc}
\end{figure}
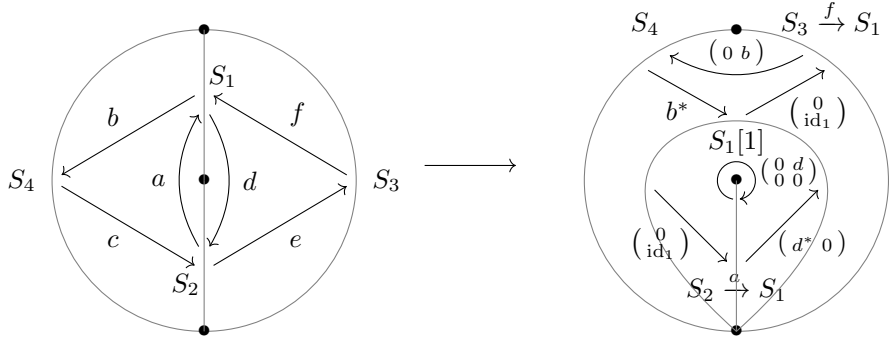

\begin{lemma}\label{lem:enhance_equiv:self_fold}
If $\alpha_1$ is the self-folded edge of a self-folded triangle, then there is a quasi-equivalence of 3-Calabi--Yau $A_\infty$-categories $\simpinf{\mathcal{H}_{\infty}^{S_{1}, \sharp}} \simeq \akd{Q'}{W'}$, with an isomorphism of orientation data $\odchoice \cong \odcha{Q', W'}$ on $\qastn{Q', W'}$.
\end{lemma}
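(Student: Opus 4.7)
The plan is to mirror the preceding three lemmas: identify the local quiver with potential around the self-folded edge~$\alpha_1$, describe the new simple objects in the forwards-tilted heart $\mathcal{H}_\infty^{S_1, \sharp}$ together with the degree-one cocycles between them, and then check orientation-data invariance via Proposition~\ref{prop:od_ug}.

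Locally around $\alpha_1$, the quiver contains arrows $a \colon S_0 \to S_1$ and $c \colon S_1 \to S_0$ forming a two-cycle with the encircling-edge vertex $S_0$, together with a loop $b \colon S_1 \to S_1$ at $S_1$, and the potential contains the cyclic cubic term $cab$; no power $b^n$ for $n \geqslant 2$ appears in~$W$. The principal difficulty is that $\extclos{S_1}$ is equivalent to the category of finite-dimensional nilpotent $\mathbb{C}[b]$-modules, which has infinite representation type, so Proposition~\ref{prop:simple_tilt_chq} does not apply verbatim. We would handle this by computing the minimal left approximation of $S_0$ directly: using the $A_\infty$-composition $m_2(b, a) = c^{\ast} \neq 0$ in $\Hom_{\akdqw}^{2}(S_0, S_1)$, the connecting homomorphism in the long exact sequence for each Jordan-block extension $0 \to J_{n-1} \to J_n \to S_1 \to 0$ is non-zero. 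A short induction then gives $\dim \Hom_{\mathcal{D}_\infty}^{1}(S_0, J_n) = 1$, the class being realised by $a$ followed by the socle inclusion $S_1 \hookrightarrow J_n$. Consequently $a \colon S_0 \to S_1[1]$ will be a minimal left $\extclos{S_1[1]}$-approximation of $S_0$, and the new simple objects are $S_1[1]$, $\psi^{S_1, \sharp}(S_0) = (S_0 \xrightarrow{a} S_1)$, together with the unchanged $S_i$ for $i \neq 0, 1$.

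With the new simples identified, routine $A_\infty$-computations in the minimal model, analogous to those in Lemma~\ref{lem:enhance_equiv:two_cyc}, will determine the degree-one cocycles and the cubic terms of the new potential~$W'$; in particular, the loop at $S_1$ is transported to a loop at the new vertex replacing $S_1$, reproducing the $(Q', W')$ predicted by the flip in the weighted marked surface framework of~\cite{chq}. The orientation-data statement will then follow from part~\ref{op:od_ug:calc} of Proposition~\ref{prop:od_ug}: for each new simple $S$, the parity of $\dim \Ext_{\mathcal{D}_\infty}^{\leqslant 1}(S, S) + \dim \Hom_{\mathcal{D}_\infty}^{\leqslant 1}(S, S)$ matches that of the corresponding $(Q', W')$-simple, by the same parity bookkeeping as in the previous three lemmas, with the loop $b$ contributing an even number of summands via its self-pairing under $\cy{-}{-}$.
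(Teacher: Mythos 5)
Your local analysis is essentially the paper's: the same local quiver with a loop at the self-folded-edge vertex whose powers are absent from $W$, the same new simples $S_1[1]$ and $(S_0 \xrightarrow{a} S_1)$ (the paper's $S_2 \xrightarrow{c} S_1$ in Figure~\ref{fig:enhance_equiv:pop}), the same style of $A_\infty$-verification, and the same parity bookkeeping via Proposition~\ref{prop:od_ug}; your computation $m_2(b,a)=c^{\ast}\neq 0$ and the resulting $\dim\Hom^{1}_{\mathcal{D}_\infty}(S_0,J_n)=1$ with the class factoring through the socle is correct. The gap is at precisely the point this lemma is delicate. The list of simples of $\mathcal{H}^{S_1,\sharp}$ that you write down, $S_1[1]$, $\psi^{S_1,\sharp}(S_0)=\mathrm{Cone}(a)[-1]$ and the untouched $S_i$, is the \emph{conclusion} of Proposition~\ref{prop:simple_tilt_chq}, whose hypothesis is that $\extclos{S_1}$ is functorially finite in $\mathcal{D}$: every object of $\mathcal{D}$ must admit left and right $\extclos{S_1}$-approximations, not just the one simple $S_0$. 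Infinite representation type of $\extclos{S_1}$ does not make the proposition inapplicable — it only kills the cheap sufficient criterion (finitely many indecomposables) — so the correct move is to verify functorial finiteness, after which the proposition applies as stated. Computing the minimal left approximation of $S_0$ alone neither restores that hypothesis nor independently shows that $\mathcal{H}^{S_1,\sharp}$ is a length heart with exactly the asserted simples; note also that right approximations are genuinely needed (the backwards tilt at $S_1[1]$ is used in the proof of Theorem~\ref{thm:enhance_equiv}), e.g.\ one must exhibit a right $\extclos{S_1}$-approximation of $S_0[1]$.

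The gap is repairable, and your socle computation is most of the missing ingredient. The paper's proof argues that only objects of $\extclos{S_1}$ itself and twisted complexes containing $S_0$ have non-zero morphisms to or from $\extclos{S_1}$, and that the relations coming from the cubic potential term involving the loop force any morphism between such an object and a non-trivial self-extension $J_n$ of $S_1$ to factor through the socle copy (respectively the top copy) of $S_1$ — exactly the pattern your long-exact-sequence argument exhibits for $\Hom^{1}(S_0,J_n)$. Upgrading your calculation to this uniform statement, in both directions and for arbitrary objects (for instance, the minimal right $\extclos{S_1}$-approximation of $S_0[1]$ is the non-zero map $S_1 \to S_0[1]$), gives functorial finiteness; with that in place the rest of your plan — the routine $A_\infty$-computations recovering $(Q',W')$ and the evenness of $\dim\Ext^{\leqslant 1}_{\mathcal{D}_\infty}(S,S)+\dim\Hom^{\leqslant 1}_{\mathcal{D}_\infty}(S,S)$ for each new simple, which is what the isomorphism $\odchoice \cong \odcha{Q',W'}$ on $\qastn{Q',W'}$ requires — goes through exactly as in the other three cases.
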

\begin{proof}
Suppose that the arc $\alpha_1$ is the self-folded edge of a self-folded triangle.
If we suppose that the encircling edge is not also the boundary of a monogon, then we are in the situation on the left of Figure~\ref{fig:enhance_equiv:pop}.
The case where the encircling edge does form the boundary of a monogon is left as an exercise.
We proceed as always, and label objects and morphisms as shown.

After forwards tilting at $S_1$, we are at the situation on the right.
Verifying that this indeed gives us $\simpinf{\mathcal{H}_{\infty}^{S_{1}, \sharp}} \simeq \akd{Q'}{W'}$ is not substantially different from the other cases.
Note that the objects outside the self-folded triangle are unchanged, and the morphisms are essentially unchanged.

Showing that orientation data is preserved can be done as before; we leave the details to the reader.
\end{proof}

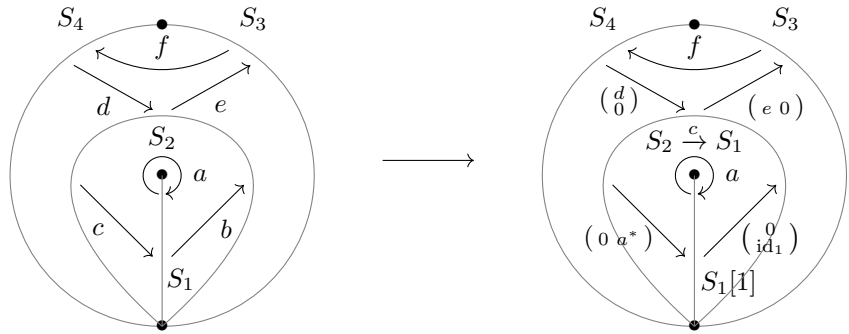
\begin{figure}
    \[
    \begin{tikzpicture}[scale=2]

    \begin{scope}[shift={(-1.75,0)}]
    
   	\path[use as bounding box] (-1.5,1.2) rectangle (1.5,-1.2);

    \draw[\edgecol] (1,0) arc (0:360:1);

    \node at (90:1) {$\bullet$};
    \node at (270:1) {$\bullet$};
    \node at (0,0) {$\bullet$};

    \node at (90:0.25) {$S_2$};
    \node at (280:0.7) {$S_1$};
    \node at (120:1.2) {$S_4$};
    \node at (60:1.2) {$S_3$};
    
    \draw[\edgecol] (270:1) .. controls (22.5:2.25) and (157.5:2.25) .. (270:1);
    \draw[\edgecol] (0,0) -- (270:1);

    \node(tl) at (120:1) {};
    \node(tr) at (60:1) {};
    \node(tll) at (130:1) {};
    \node(trr) at (50:1) {};
    \node(t) at (90:0.4) {};
    \node(bl) at (180:0.6) {};
    \node(br) at (0:0.6) {};
    \node(b) at (270:0.6) {};

    \draw[<-,\arrcol] (280:0.125) arc (-80:260:0.125);
    \draw[->,\arrcol] (b) -- (br);
    \draw[->,\arrcol] (bl) -- (b);
    \node at (0:0.25) {$a$};
    \node at (220:0.55) {$c$};
    \node at (320:0.55) {$b$};

    \draw[->,\arrcol] (tr) to [bend left] (tl);
    \draw[->,\arrcol] (t) -- (trr);
    \draw[->,\arrcol] (tll) -- (t);
    \node at (130:0.6) {$d$};
    \node at (50:0.6) {$e$};
    \node at (90:0.85) {$f$};
        
    \end{scope}

    \begin{scope}

    \draw[->] (-0.3,0.1) -- (0.3,0.1);
        
    \end{scope}

    \begin{scope}[shift={(1.75,0)}]
    
   	\path[use as bounding box] (-1.5,1.2) rectangle (1.5,-1.2);

    \draw[\edgecol] (1,0) arc (0:360:1);

    \node at (90:1) {$\bullet$};
    \node at (270:1) {$\bullet$};
    \node at (0,0) {$\bullet$};

    \node at (90:0.25) {$S_2 \xrightarrow{c} S_1$};
    \node at (287.5:0.75) {$S_1[1]$};
    \node at (120:1.2) {$S_4$};
    \node at (60:1.2) {$S_3$};
    
    \draw[\edgecol] (270:1) .. controls (22.5:2.25) and (157.5:2.25) .. (270:1);
    \draw[\edgecol] (0,0) -- (270:1);

    \node(tl) at (120:1) {};
    \node(tr) at (60:1) {};
    \node(tll) at (130:1) {};
    \node(trr) at (50:1) {};
    \node(t) at (90:0.4) {};
    \node(bl) at (180:0.6) {};
    \node(br) at (0:0.6) {};
    \node(b) at (270:0.6) {};

    \draw[<-,\arrcol] (280:0.125) arc (-80:260:0.125);
    \draw[->,\arrcol] (b) -- (br);
    \draw[->,\arrcol] (bl) -- (b);
    \node at (0:0.25) {$a$};
    \node at (220:0.65) {$\left(\begin{smallmatrix}0 & a^{\ast}\end{smallmatrix}\right)$};
    \node at (320:0.65) {$\left(\begin{smallmatrix}0 \\ \id_1\end{smallmatrix}\right)$};

    \draw[->,\arrcol] (tr) to [bend left] (tl);
    \draw[->,\arrcol] (t) -- (trr);
    \draw[->,\arrcol] (tll) -- (t);
    \node at (135:0.7) {$\left(\begin{smallmatrix}d \\ 0\end{smallmatrix}\right)$};
    \node at (40:0.7) {$\left(\begin{smallmatrix}e & 0\end{smallmatrix}\right)$};
    \node at (90:0.85) {$f$};
        
    \end{scope}
        
    \end{tikzpicture}
    \]
    \caption{Tilting at the self-folded edge of a self-folded triangle}
    \label{fig:enhance_equiv:pop}
\end{figure}

\begin{proof}[{Proof of Theorem~\ref{thm:enhance_equiv}}]
By Lemmas~\ref{lem:enhance_equiv:standard}, \ref{lem:enhance_equiv:monogon}, \ref{lem:enhance_equiv:two_cyc}, and~\ref{lem:enhance_equiv:self_fold}, we have that $\simpinf{\mathcal{H}_{\infty}^{S_{1}, \sharp}} \simeq \akd{Q'}{W'}$.
Hence, we firstly have quasi-equivalences $\mathcal{H}_\infty(Q',W') \simeq \twz{\simpinf{\mathcal{H}_{\infty}^{S_{1}, \sharp}}} \simeq \mathcal{H}_{\infty}^{S_{1}, \sharp}$.
Secondly, we have a quasi-equivalence $\mathcal{D}_\infty(Q',W') \simeq \tw{\simpinf{\mathcal{H}_{\infty}^{S_{1}, \sharp}}}$.
As can be seen from backwards tilting at $S_1[1]$, we have a quasi-equivalence $\tw{\simpinf{\mathcal{H}_{\infty}^{S_{1}, \sharp}}} \simeq \mathcal{D}_\infty(Q,W)$, and hence a quasi-equivalence $\mathcal{D}_\infty(Q,W) \simeq \mathcal{D}_\infty(Q',W')$.
The lemmas also establish the desired isomorphism of orientation data on $\qastn{Q', W'}$ by Proposition~\ref{prop:od_ug}.
\end{proof}

\section{Classifying configurations of finite-length trajectories}\label{sect:prelim}

In this section, we will classify all of the different situations in which finite-length trajectories appear for generic infinite GMN differentials, as laid out in Table~\ref{table}.
We will then study rotations of these differentials with finite-length trajectories to nearby trajectories which are saddle-free, and so have no finite-length trajectories.
This will be important later for computing the categories associated to the finite-length trajectories, since construction of the category associated to the surface uses saddle-free differentials.

\subsection{Finite-length trajectories of generic infinite GMN differentials}

Let $\varphi$ be an infinite GMN differential on a Riemann surface $X$.
This infinite GMN differential $\varphi$ is \emph{generic} if for any $\hhc{\gamma}_1, \hhc{\gamma}_2 \in \hhspan{\varphi}$ there is an implication \[\scph{\hhc{\gamma}_1}  = \scph{\hhc{\gamma}_2} \quad \Longrightarrow \quad \mathbb{Q}\{\hhc{\gamma}_1\} = \mathbb{Q}\{\hhc{\gamma}_2\} \subset \hhspan{\varphi} \otimes_{\mathbb{Z}} \mathbb{Q},\] that is, $\hhc{\gamma}_1$ and $\hhc{\gamma}_2$ are \emph{$\mathbb{Q}$-proportional}.
Here we use braces to indicate linear spans.
We are interested in generic differentials because they give well-defined DT invariants, since the pairing from Definition~\ref{def:dt} will vanish.
\emph{Generic} stability conditions are defined analogously.

The following result can be seen as a synthesis of \cite[Lemma~5.1]{bs}, in which there are no simple poles, and \cite[Proposition~5.4]{haiden}, in which there are no infinite critical points.
Our proof combines arguments from each of these.

\begin{proposition}\label{prop:fin_len_traj}
Suppose that $\varphi$ is a generic infinite GMN differential on a Riemann surface~$X$.
The union of all finite-length trajectories of $\varphi$ consists of one of the following.
\begin{enumerate}[label=\textup{(}\arabic*\textup{)}]
\item A single saddle trajectory of type~I.\label{op:fin_len_traj:single_i}
\item A single saddle trajectory of type~II.\label{op:fin_len_traj:single_ii}
\item A degenerate ring domain, with one boundary given by a double pole and the other by either\label{op:fin_len_traj:drd}
	\begin{enumerate}[label=\textup{(}\alph*\textup{)}]
	\item a closed saddle trajectory,\label{op:fin_len_traj:drd:single}
	\item two type~I saddle trajectories, which on their other side form the boundary of a torus whose interior is a spiral domain, or\label{op:fin_len_traj:drd:torus}
        \item a type~III saddle trajectory.\label{op:fin_len_traj:drd:iii}
    \end{enumerate}
\item A non-degenerate ring domain, with boundaries given by either\label{op:fin_len_traj:nrd}
	\begin{enumerate}[label=\textup{(}\alph*\textup{)}]
	\item a closed saddle trajectory for each boundary,\label{op:fin_len_traj:nrd:single}
	\item a closed saddle trajectory for one boundary, and for the other boundary two type~I saddle trajectories, which on their other side form the boundary of a torus whose interior is a spiral domain, or\label{op:fin_len_traj:nrd:torus}
        \item a closed saddle trajectory for one boundary, and a type~III saddle trajectory for the other boundary.\label{op:fin_len_traj:nrd:iii}
	\end{enumerate}
\end{enumerate}
In cases \ref{op:fin_len_traj:drd}\ref{op:fin_len_traj:drd:torus} and \ref{op:fin_len_traj:nrd}\ref{op:fin_len_traj:nrd:torus} where we have two type~I saddle trajectories bounding a torus, the two corresponding hat-homology classes are equal.
Moreover, these are the only two cases where spiral domains appear.
\end{proposition}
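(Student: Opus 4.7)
The plan is to combine and extend the classifications of \cite[Lemma~5.1]{bs} (which omits simple poles) and \cite[Proposition~5.4]{haiden} (which omits higher-order poles and double-pole ring domains), adapting each argument to cover the features of the other. The key initial observation is that every finite-length trajectory $\gamma$ is horizontal, so $Z_\varphi(\hhc{\gamma}) \in \mathbb{R}$; genericity then forces the hat-homology classes of any two finite-length trajectories to be $\mathbb{Q}$-proportional. This $\mathbb{Q}$-proportionality, together with the local foliation pictures recalled in Section~\ref{sect:qd:crit_points}, is the engine driving the classification.

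I would then split the argument according to whether or not a closed trajectory is present. In the absence of closed trajectories, every finite-length trajectory is a saddle and no ring domain appears. Using that only three horizontal separating trajectories emerge from a simple zero and only one from a simple pole, combined with $\mathbb{Q}$-proportionality, one checks that two distinct saddle trajectories cannot coexist without producing $\mathbb{Q}$-linearly independent hat-homology classes, leaving cases \ref{op:fin_len_traj:single_i} and \ref{op:fin_len_traj:single_ii}.

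In the presence of closed trajectories, they fill a ring domain $R$, necessarily unique by genericity since the class of a central closed trajectory of $R$ must be proportional to that of every other finite-length trajectory. Each of the two boundary components of $R$ is a closed loop built from saddle trajectories. A local analysis of how saddles concatenate at each type of finite critical point, together with the requirement that the resulting cycle on $\spct{X}$ be $\mathbb{Q}$-proportional to the class of a central closed trajectory of $R$, leaves exactly three possible shapes for a boundary component: a single closed saddle at a simple zero, a pair of type~I saddles arranged in a figure-eight with a one-holed component on the far side, or a single type~III saddle whose lift wraps around both simple poles. Classifying $R$ as degenerate or non-degenerate according to whether the inner boundary is a double pole then yields cases \ref{op:fin_len_traj:drd} and \ref{op:fin_len_traj:nrd}. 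The equality of hat-homology classes claimed in the torus configurations follows since each of the two bounding type~I saddles is homotopic on $\spct{X}$ to the core cycle of $R$.

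The main obstacle is controlling recurrent trajectories and spiral domains: one must show that no spiral domain can occur except as the interior of the one-holed torus in cases \ref{op:fin_len_traj:drd}\ref{op:fin_len_traj:drd:torus} and \ref{op:fin_len_traj:nrd}\ref{op:fin_len_traj:nrd:torus}. By Strebel's structure theorem, any spiral domain is bounded by a union of saddle trajectories, and the boundary cycle has a well-defined hat-homology class. Invoking genericity and $\mathbb{Q}$-proportionality once more forces this boundary to consist of the same pair of parallel type~I saddles already identified as a boundary of the adjacent ring domain, and then the Euler characteristic constraint together with the fact that the spiral domain has no other boundary pins down the topology of the spiral component as a once-punctured torus, ruling out all other spiral configurations.
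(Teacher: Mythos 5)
Your overall strategy (combine \cite[Lemma~5.1]{bs} with \cite[Proposition~5.4]{haiden}, driven by $\mathbb{Q}$-proportionality of the classes of horizontal finite-length trajectories) is the same starting point as the paper's, but the proposal omits the global mechanism that does the real work, and this is a genuine gap rather than a matter of detail. In your no-closed-trajectory branch you assert that two distinct saddle trajectories ``cannot coexist without producing $\mathbb{Q}$-linearly independent hat-homology classes'', justified only by counting separatrices at zeros and simple poles. As a general statement this is false --- in the toral configurations \ref{op:fin_len_traj:drd}\ref{op:fin_len_traj:drd:torus} and \ref{op:fin_len_traj:nrd}\ref{op:fin_len_traj:nrd:torus} the two type~I saddles have \emph{equal} classes --- and under your hypothesis its proof is exactly the content you are skipping: one must show that two coexisting saddles with proportional classes force a ring domain (hence closed trajectories), which in the paper is the role of Lemmas~\ref{lem:gen_sit:cc}--\ref{lem:gen_sit:co_nconn}. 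Those lemmas are not local: they choose auxiliary paths or cycles meeting one trajectory but not the other and evaluate the intersection and Lefschetz pairings of Section~\ref{sect:qd:pairings} against their lifts, and they use the doubling surgery along a closed trajectory together with the Riemann--Roch constraint~\eqref{eq:rr} (one sub-case even needs \cite[Example~4.10]{bs}). The same applies to your claim that ``local analysis plus proportionality leaves exactly three boundary shapes'': excluding a type~II saddle from a ring-domain boundary, excluding boundaries mixing closed and non-closed saddles, and showing that the far side of the two type~I saddles is a critical-point-free torus carrying a spiral domain all require these pairing and doubling/Riemann--Roch arguments; they do not follow from how separatrices concatenate at finite critical points.

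Two further concrete problems. First, your justification of the equality of the two type~I saddle classes --- that each bounding saddle is homotopic on the spectral cover to the core cycle of $R$ --- is incorrect: the class of a core closed trajectory is the \emph{sum} of the two (equal) saddle classes, i.e.\ twice each of them, as is used later in Propositions~\ref{prop:drd_ii:3_vertex} and~\ref{prop:nrd_ii:4_vertex}; the equality instead follows as in \cite[Lemma~5.1]{bs}. Second, your spiral-domain paragraph presupposes that an adjacent ring domain already exists, so it neither excludes a spiral domain in the branch where no closed trajectories are present, nor a spiral domain bounded by a single closed saddle trajectory; the paper eliminates the latter by observing that such a closed saddle is also incident to a ring domain and then doubling along one of its closed trajectories to contradict~\eqref{eq:rr}. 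Until these steps are supplied, the classification is asserted rather than proved.
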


We illustrate the different configurations from Proposition~\ref{prop:fin_len_traj} in Figure~\ref{fig:configs}. We refer to ring domains of type~\ref{op:fin_len_traj:drd:single} as \emph{standard} ring domains, type~\ref{op:fin_len_traj:drd:torus} as \emph{toral} ring domains, and \ref{op:fin_len_traj:drd:iii} as \emph{type~III} ring domains.
Note that a standard, toral, or type~III degenerate ring domain may be obtained from a non-degenerate ring domain of the corresponding type by replacing the boundary which is a closed saddle trajectory by a double pole.
Note finally that toral and type~III degenerate ring domains form the whole surface and comprise only one polar type each, respectively the torus with a double pole and the sphere with a double pole and two simple poles.

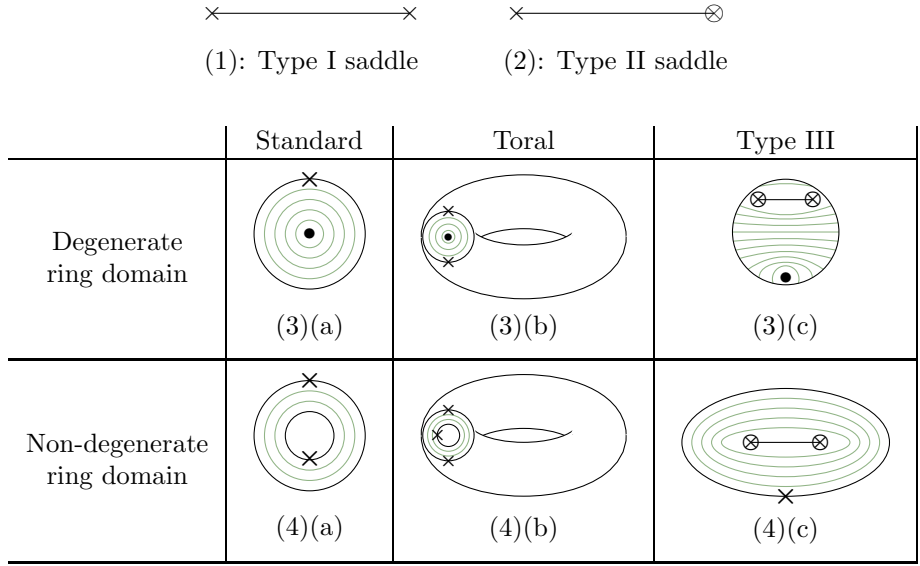
\begin{figure}
\[
    \begin{tikzpicture}

        \begin{scope}[shift={(-2,4.4)},scale=1.3]

            \node at (-1,0) {$\times$};
            \node at (1,0) {$\times$};

            \draw[\saddletraj] (-1,0) -- (1,0);
            
        \end{scope}

        \begin{scope}[shift={(2,4.4)},scale=1.3]

            \node at (-1,0) {$\times$};
            \node at (1,0) {$\otimes$};

            \draw[\saddletraj] (-1,0) -- (1,0);
            
        \end{scope}

        \node at (-2,3.75) {\ref{op:fin_len_traj:single_i}: Type~I saddle};
        \node at (2,3.75) {\ref{op:fin_len_traj:single_ii}: Type~II saddle};

    \node at (0,0) {
    \begin{tabular}{c|c|c|c|}
           & Standard & Toral & Type III \\
    \hline
       \parbox{2cm}{\centering Degenerate ring domain}  & 
       \begin{tabular}{c}
            \begin{tikzpicture}[scale=1.3]
            \node at (0,0) {$\bullet$};
            \draw[\closedtraj] (0,0) circle (4pt);
            \draw[\closedtraj] (0,0) circle (7pt);
            \draw[\closedtraj] (0,0) circle (10pt);
            \draw[\closedtraj] (0,0) circle (13pt);
            \draw[\saddletraj] (0,0) circle (16pt);
            \node at (0,0.56) {$\bm{\times}$};

            \node at (0,-0.95) {\ref{op:fin_len_traj:drd}\ref{op:fin_len_traj:drd:single}};
            
            \end{tikzpicture}
       \end{tabular}
       & \begin{tabular}{c}
            \begin{tikzpicture}

                \begin{scope}[yscale=cos(62),scale=1.1]
                \node at (0,1.75) {};
                \draw[double distance=7mm] (0:0.9) arc (0:181:0.9);
                \draw[double distance=7mm] (180:0.9) arc (180:361:0.9);    
                \end{scope}

                \begin{scope}[scale=0.6,shift={(-1.65,0)},font=\tiny]
                \filldraw[white] (0,0) circle (16pt);
                \node at (0,0) {$\bullet$};
                \draw[\closedtraj] (0,0) circle (4pt);
                \draw[\closedtraj] (0,0) circle (8pt);
                \draw[\closedtraj] (0,0) circle (12pt);
                \draw[\saddletraj] (0,0) circle (16pt);
                \node at (0,0.56) {$\bm{\times}$};
                \node at (0,-0.56) {$\bm{\times}$};
                \end{scope}

                \begin{scope}[scale=0.6,shift={(1.65,0)},font=\tiny]
                \filldraw[white] (0,0) circle (16pt);
                \end{scope}

                \node at (0,-1.2) {\ref{op:fin_len_traj:drd}\ref{op:fin_len_traj:drd:torus}};
           
            \end{tikzpicture}
       \end{tabular}
       & \begin{tabular}{c}
            \begin{tikzpicture}[scale=1.4]
            \node at (0,0.6) {};
            \coordinate(sp1) at (130:0.4);
            \node at (sp1) {\scriptsize $\bm{\otimes}$};
            \coordinate(sp2) at (50:0.4);
            \node at (sp2) {\scriptsize $\bm{\otimes}$};
            \coordinate(dp) at (270:0.435);
            \node at (dp) {$\bullet$};
            \draw[\saddletraj] (sp1) -- (sp2);
            \draw[\closedtraj] (120:0.5) to [out=10,in=170] (60:0.5);
            \draw[\closedtraj] (150:0.5) to [out=-20,in=200] (30:0.5);
            \draw[\closedtraj] (160:0.5) to [out=-10,in=190] (20:0.5);
            \draw[\closedtraj] (170:0.5) to [out=-5,in=185] (10:0.5);
            \draw[\closedtraj] (180:0.5) -- (0:0.5);
            \draw[\closedtraj] (190:0.5) to [out=5,in=175] (-10:0.5);
            \draw[\closedtraj] (200:0.5) to [out=10,in=170] (-20:0.5);
            \draw[\closedtraj] (210:0.5) to [out=15,in=165] (-30:0.5);
            \draw[\closedtraj] (225:0.5) to [out=35,in=145] (-45:0.5);
            \draw[\closedtraj] (240:0.5) to [out=90,in=90] (-60:0.5);
            \draw[\closedtraj] ($(-90:0.445)+(-20:0.125)$) arc (-20:200:0.125);
            \draw (0,0) circle (0.5cm);

            \node at (0,-0.9) {\ref{op:fin_len_traj:drd}\ref{op:fin_len_traj:drd:iii}};
                    
            \end{tikzpicture}
       \end{tabular} \\
       \hline
       \parbox{2.5cm}{\centering Non-degenerate ring domain}
        & \begin{tabular}{c}
            \begin{tikzpicture}[scale=1.3]

            \draw[\saddletraj] (0,0) circle (7pt);
            \draw[\closedtraj] (0,0) circle (10pt);
            \draw[\closedtraj] (0,0) circle (13pt);
            \draw[\saddletraj] (0,0) circle (16pt);
            \node at (0,0.56) {$\bm{\times}$};
            \node at (0,-0.236) {$\bm{\times}$};

            \node at (0,-0.95) {\ref{op:fin_len_traj:nrd}\ref{op:fin_len_traj:nrd:single}};
           
            \end{tikzpicture}
        \end{tabular}
       & \begin{tabular}{c}
           \begin{tikzpicture}

                \begin{scope}[yscale=cos(63),scale=1.1]
                \node at (0,1.75) {};
                \draw[double distance=7mm] (0:0.9) arc (0:181:0.9);
                \draw[double distance=7mm] (180:0.9) arc (180:361:0.9);    
                \end{scope}

                \begin{scope}[scale=0.6,shift={(-1.65,0)},font=\tiny]
                \filldraw[white] (0,0) circle (16pt);
                \draw[\saddletraj] (0,0) circle (7pt);
                \draw[\closedtraj] (0,0) circle (10pt);
                \draw[\closedtraj] (0,0) circle (13pt);
                \draw[\saddletraj] (0,0) circle (16pt);
                \node at (0,0.56) {$\bm{\times}$};
                \node at (0,-0.56) {$\bm{\times}$};
                \node at (-0.236,0) {$\bm{\times}$};
                \end{scope}

                \begin{scope}[scale=0.6,shift={(1.65,0)},font=\tiny]
                \filldraw[white] (0,0) circle (16pt);
                \end{scope}

                \node at (0,-1.25) {\ref{op:fin_len_traj:nrd}\ref{op:fin_len_traj:nrd:torus}};
           
            \end{tikzpicture}
       \end{tabular}
       & \begin{tabular}{c}
            \begin{tikzpicture}[scale=1.3]
                \draw[\saddletraj] (-0.35,0) -- (0.35,0);
                \node at (-0.35,0) {\scriptsize $\bm{\otimes}$};
                \node at (0.35,0) {\scriptsize $\bm{\otimes}$};
                \draw[\closedtraj] (0,0) ellipse (0.65cm and 0.15cm);
                \draw[\closedtraj] (0,0) ellipse (0.75cm and 0.25cm);
                \draw[\closedtraj] (0,0) ellipse (0.85cm and 0.35cm);
                \draw[\closedtraj] (0,0) ellipse (0.95cm and 0.45cm);
                \draw[\saddletraj] (0,0) ellipse (1.05cm and 0.55cm);
                \node at (0,-0.55) {$\bm{\times}$};
                \node at (0,0.75) {};

                \node at (0,-0.9) {\ref{op:fin_len_traj:nrd}\ref{op:fin_len_traj:nrd:iii}};
            \end{tikzpicture}
       \end{tabular} \\
       \hline
    \end{tabular}
    
    };
        
    \end{tikzpicture}
\]
    \caption{Configurations from Proposition~\ref{prop:fin_len_traj}}
    \label{fig:configs}
\end{figure}

We split parts of the proof of Proposition~\ref{prop:fin_len_traj} into the following lemmas.
We will frequently make use of the intersection and Lefschetz pairings from Section~\ref{sect:qd:pairings}.

\begin{lemma}\label{lem:gen_sit:co_conn}
Suppose that $\varphi$ is a generic infinite GMN differential on a Riemann surface~$X$.
If $\gamma_1$ and $\gamma_2$ are two distinct saddle trajectories such that $\gamma_1$ is closed and $\gamma_2$ is not, then it is not possible for $\gamma_1 \cup \gamma_2$ to be connected.
\end{lemma}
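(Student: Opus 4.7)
The plan is to argue by contradiction. Suppose $\gamma_1 \cup \gamma_2$ is connected. Since the endpoints of a saddle trajectory lie in $\crit(\varphi)$ and the only such point incident to $\gamma_1$ is the zero $z$ at which the loop $\gamma_1$ begins and ends, this forces one endpoint of $\gamma_2$ to be~$z$. At the simple zero $z$, the three horizontal half-trajectories of $\varphi$ are then the two halves $\gamma_{1,\pm}$ making up $\gamma_1$ together with the initial half $\gamma_2^+$ of $\gamma_2$. Since both $\gamma_1$ and $\gamma_2$ are horizontal saddle trajectories, the classes $\hhc{\gamma_1}, \hhc{\gamma_2} \in \hh{\varphi}$ both have phase~$1$, so the genericity assumption on $\varphi$ forces them to be $\mathbb{Q}$-proportional in $\hh{\varphi} \otimes_{\mathbb{Z}} \mathbb{Q}$.

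To reach a contradiction, I rotate to $\varphi_\varepsilon := e^{-2i\pi\varepsilon}\varphi$ for generic small $\varepsilon > 0$. For such $\varepsilon$, the phase-$1$ saddles $\gamma_1, \gamma_2$ of $\varphi$ become saddle connections of phase $1 - \varepsilon$ for $\varphi_\varepsilon$, which is therefore saddle-free. By Proposition~\ref{prop:strip_basis}, the standard saddle classes of the horizontal strips of~$\varphi_\varepsilon$ form a basis of $\hh{\varphi_\varepsilon} = \hh{\varphi}$. The three horizontal rays of $\varphi_\varepsilon$ at $z$ are small perturbations of $\gamma_{1,\pm}$ and $\gamma_2^+$, each becoming a horizontal separating trajectory lying on the boundary of some horizontal strip of~$\varphi_\varepsilon$; analogously, the horizontal rays of $\varphi_\varepsilon$ at the opposite endpoint $p$ of $\gamma_2$ lie on the boundary of further horizontal strips.

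The final step is to express $\hhc{\gamma_1}$ and $\hhc{\gamma_2}$ in this standard saddle class basis and show that no $\mathbb{Q}$-linear relation between them is possible. Using Lemma~\ref{lem:geodesics} to track how $\gamma_1$ and $\gamma_2$ decompose as unions of straight arcs in the perturbed geometry, one obtains explicit expressions in which $\hhc{\gamma_1}$ involves only standard saddle classes of strips adjacent to $z$ (arising from the ring of closed trajectories on the interior sector of $\gamma_1$ together with the strips bordering $\gamma_{1,\pm}$ on the outside), whereas $\hhc{\gamma_2}$ necessarily contains a standard saddle class of a strip adjacent to~$p$. Since $p \neq z$ and distinct basis elements are $\mathbb{Q}$-linearly independent, the two expressions cannot be $\mathbb{Q}$-proportional, contradicting genericity. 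The main obstacle is the case analysis required to verify this for the various types of outside-of-$\gamma_1$ structure (non-degenerate ring domain, horizontal strip, spiral domain) and for the two possibilities for the nature of $p$ (zero vs.\ simple pole), confirming in each case that a strip class incident to $p$ genuinely appears in the expansion of $\hhc{\gamma_2}$ without also appearing in that of $\hhc{\gamma_1}$.
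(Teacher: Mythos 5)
Your reduction to the statement that $\hhc{\gamma_1}$ and $\hhc{\gamma_2}$ must be $\mathbb{Q}$-proportional is the same starting point as the paper, but the mechanism you propose for contradicting proportionality has a genuine gap. The decisive step --- that after a saddle-free rotation the expansion of $\hhc{\gamma_1}$ in the standard saddle class basis involves only strips ``adjacent to $z$'' while the expansion of $\hhc{\gamma_2}$ contains a basis class ``adjacent to $p$'' not occurring in the first expansion --- is asserted, not proved, and it is not a consequence of Lemma~\ref{lem:geodesics}, which only controls geodesics locally near a single zero. The dichotomy is not even well posed: in a saddle-free differential every horizontal strip has exactly one finite critical point on each boundary component, so a strip can perfectly well be adjacent to both $z$ and $p$, and its standard saddle class could a priori occur in the expansion of $\hhc{\gamma_1}$ as well (the class of a closed saddle trajectory is typically a sum of classes of connections crossing the adjacent ring or spiral domain, which reach the finite critical points on the \emph{far} boundary). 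Determining these expansions, with signs and possible cancellations, is a global problem and is essentially the content you have deferred to an unexecuted ``case analysis''; note also that Proposition~\ref{prop:fin_len_traj}(3b),(4b) exhibits distinct saddle trajectories of the same phase with \emph{equal} hat-homology classes, so no general ``different support in a basis'' principle is available and any such argument must exploit the closed/non-closed asymmetry in a way you have not supplied. (The preliminary claim that $e^{-2i\pi\varepsilon}\varphi$ is saddle-free for generic small $\varepsilon$ also needs an argument --- countability of the phases of saddle connections --- since ``the two given saddles are no longer horizontal'' does not by itself rule out other horizontal saddles.)

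More seriously, there is a configuration in which no proportionality contradiction appears to be available at all, so your strategy cannot be completed as stated. This is the case where the other endpoint of $\gamma_2$ is a zero, $\gamma_1$ is separating, and the component $Y$ containing $\gamma_2$ contains no further critical points: there one cannot pair against any auxiliary cycle to separate $\hhc{\gamma_1}$ from $\hhc{\gamma_2}$, and the paper instead rules out the configuration by doubling $Y$ along a closed trajectory of the adjacent ring domain, using the Riemann--Roch count \eqref{eq:rr} to force $Y$ to be a torus, and then invoking the explicit torus analysis of \cite[Example~4.10]{bs}. This genuinely non-homological input is what your outline is missing; in the remaining cases the paper does argue homologically, but by pairing $\hhc{\gamma_1}$ and $\hhc{\gamma_2}$ against explicitly chosen cycles via the intersection and Lefschetz pairings of Section~\ref{sect:qd:pairings}, which is both simpler and more robust than computing basis expansions after a rotation. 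To repair your proof you would need, at minimum, to carry out and justify the support computation in every configuration and to add a separate argument of the doubling/Riemann--Roch type for the torus case.
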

\begin{proof}
If we assume that $\gamma_1 \cup \gamma_2$ is connected, then one endpoint of $\gamma_2$ must be the zero incident to~$\gamma_1$.
There are now two cases, depending upon whether the other endpoint of $\gamma_2$ is a zero or a simple pole.

We first consider the case where the other endpoint of $\gamma_2$ is a simple pole~$s$.
It follows from the trajectory structures that there is then a ring domain $R$ with a boundary given by the sequence $\gamma_1\gamma_2\gamma_2$.
If the ring domain $R$ is non-degenerate, then one can choose a path $\alpha$ from the simple pole $s$ incident to $\gamma_2$ to a finite critical point in the other boundary of~$R$.
The ring domain $R$ cannot be the same as the ring domain on the other side of $\gamma_1$, since the hat-homology classes corresponding to $\gamma_1$ and $\gamma_1\gamma_2\gamma_2$ are not the same.
Hence, lifting $\alpha$ to its pre-image $\hhc{\alpha}$ on the spectral cover, we obtain that $\intp{\hhc{\gamma}_2}{\hhc{\alpha}} = \pm 1$ but $\intp{\hhc{\gamma}_1}{\hhc{\alpha}} = 0$, contradicting the $\mathbb{Q}$-proportionality of $\gamma_1$ and~$\gamma_2$.

Hence, suppose that the ring domain $R$ is degenerate with double pole $p$.
Then one may take a path $\alpha$ from $p$ to itself which loops around the simple pole~$s$, and so intersects~$\gamma_2$, but which does not intersect~$\gamma_1$.
If we let $\hhc{\alpha}$ be one of the two pre-images of $\alpha$ on the spectral cover, then we have the Lefschetz pairings $\lef{\hhc{\alpha}}{\hhc{\gamma}_1} = 0$ and $\lef{\hhc{\alpha}}{\hhc{\gamma}_2} = \pm 1$.
This then contradicts the $\mathbb{Q}$-proportionality of $\hhc{\gamma}_1$ and~$\hhc{\gamma}_2$.
Hence the other endpoint of $\gamma_2$ cannot be a simple pole.

We now consider the case where the other endpoint of $\gamma_2$ is a zero $z$.
We first suppose that $\gamma_1$ separates $X$ into two connected components.
Consider the connected component $Y$ containing the interior of~$\gamma_2$.
If $Y$ contains a finite critical point $r \neq z$, then one can connect $r$ to $z$ via a path $\alpha$ and obtain a contradiction via the intersection pairing.
If $Y$ contains an infinite critical point $p$, then one can take a path $\beta$ which loops around the zero $z$ and returns to $p$, and so obtain a contradiction via the Lefschetz pairing.
Hence, $Y$ contains no critical points besides~$z$.
If we double $Y$ along a closed trajectory in the ring domain incident to $\gamma_1$, then we obtain a quadratic differential with four zeros, coming from the two zeros incident to~$\gamma_2$.
Here, by doubling we mean cutting along the closed trajectory, taking the connected component containing $Y$ and gluing on its mirror image along the closed trajectory, see \cite[Section~4.4]{strebel} or \cite[proof of Lemma~5.1]{bs}.
By~\eqref{eq:rr}, the resulting surface must be genus~2, which gives that $Y$ is a torus.
Using an argument from \cite[proof of Lemma~5.1]{bs}, in this situation one can replace $X \setminus Y$ with a degenerate ring domain, obtaining a quadratic differential on the torus as shown in \cite[right-hand part of Figure~20]{bs}, but this contradicts \cite[Example~4.10]{bs}.

We now suppose that $\gamma_1$ does not separate $X$ into two connected components.
There is a ring domain $R$ incident to~$\gamma_1$.
Moreover, $R$ must be non-degenerate, since otherwise $\gamma_1$ does separate~$X$.
On the other boundary of $R$, there must be a finite critical point~$r$.
As $\gamma_1$ does not separate $X$, one can find a path $\alpha$ from $r$ to $z$ which avoids $\gamma_1$, recalling that $z$ is the zero which gives the other endpoint of~$\gamma_2$.
This then gives a contradiction via the intersection pairing.

In every case we have obtained a contradiction, which establishes the lemma.
\end{proof}

\begin{lemma}\label{lem:gen_sit}
Suppose that $\varphi$ is a generic infinite GMN differential on a Riemann surface~$X$ and let $\gamma_1$ and $\gamma_2$ be two distinct saddle trajectories. Then either:
\begin{enumerate}[label=\textup{(}\alph*\textup{)}]
\item Both $\gamma_1$ and $\gamma_2$ are closed, and $\varphi$ has a standard ring domain with $\gamma_1$ as one boundary and $\gamma_2$ as the other boundary. \label{lem:gen_sit:cc}
\item Both $\gamma_1$ and $\gamma_2$ are not closed, and $\varphi$ has a toral ring domain with $\gamma_1$ and $\gamma_2$ the two type~I saddle trajectories. \label{lem:gen_sit:oo}
\item $\gamma_1$ is closed and $\gamma_2$ is not, and $\varphi$ has a non-degenerate ring domain, with $\gamma_1$ forming one boundary, which is either\label{lem:gen_sit:co_nconn}
    \begin{enumerate}[label=\textup{(}\roman*\textup{)}]
    \item a toral ring domain, with $\gamma_2$ one of the two type~I saddle trajectories, or\label{op:gen_sit:co_nconn:toral}
    \item a type~III ring domain, with $\gamma_{2}$ the type~III saddle trajectory.\label{op:gen_sit:co_nconn:iii}
    \end{enumerate}
\end{enumerate}
\end{lemma}
\begin{proof}
Cases \ref{lem:gen_sit:cc} and \ref{lem:gen_sit:oo} follow from similar arguments to \cite[Lemma~5.1, Case~(1) and Case~(2)]{bs} respectively.
In Case \ref{lem:gen_sit:oo}, instead of showing that the torus contains no zeros, one must show that it contains no finite critical points, but this is established in the same way.

Suppose then that $\gamma_1$ is closed and $\gamma_2$ is not. By Lemma~\ref{lem:gen_sit:co_conn}, we have that $\gamma_1 \cup \gamma_2$ is not connected.
If $\gamma_1$ does not separate $X$ into two connected components, then there is a path from an infinite critical point to an infinite critical point which only meets $\gamma_1$ once and is disjoint from $\gamma_2$, which contradicts $\mathbb{Q}$-proportionality via the Lefschetz pairing.
The same argument applies if $\gamma_{1}$ separates $X$ into subsurfaces which each contain infinite critical points.
Hence, we may assume that $\gamma_{1}$ bounds a subsurface $Y$ containing no infinite critical points.

If $\gamma_{2}$ lies inside $X \setminus Y$, then we may take a path from an infinite critical point $p$ which loops around one endpoint of $\gamma_{2}$ and then returns to~$p$, whilst being entirely disjoint from $\gamma_{1}$.
Using the Lefschetz pairing argument, we have that $\hhc{\gamma}_{1}$ and $\hhc{\gamma}_{2}$ would not be $\mathbb{Q}$-proportional in this case.
We hence conclude that $\gamma_{2}$ lies inside the subsurface $Y$ bound by $\gamma_{1}$.

Suppose for contradiction that $Y$ contains a point $r$ which is a finite critical point of $\varphi$ not lying on either $\gamma_{1}$ or $\gamma_{2}$.
In this case, we may take a closed path $\alpha$ which starts at an infinite critical point of~$\varphi$, crosses $\gamma_{1}$, loops around $r$, and then returns parallel to itself to the pole it started from.
Again, the Lefschetz pairing argument gives a contradiction to $\mathbb{Q}$-proportionality here.

We conclude that $Y$ contains no infinite critical points, but contains $\gamma_2$, on which all of the finite critical points of $Y$ lie.
We now observe that the curve $\gamma_{1}$ is the boundary of a ring domain~$R$, being a closed saddle trajectory.
We must either have that $R$ lies inside $Y$ or inside $X \setminus Y$.

We first consider the case where $R$ lies inside $Y$, in which case $R$ must be non-degenerate, since $Y$ does not contain any infinite critical points.
Since all finite critical points of $Y$ lie either on $\gamma_1$ or~$\gamma_2$, we have that $\gamma_2$ must form part of the other boundary of~$R$.
If $\gamma_2$ is a type~I saddle trajectory, then doubling the surface along a closed trajectory in~$R$, we obtain a surface with a quadratic differential with four zeros and no poles, which must be a genus two surface by~\eqref{eq:rr}.
We conclude that the other side of $\gamma_2$ from $R$ must be a torus with no critical points, as in the statement of the lemma.
The fact that this torus must contain a spiral domain follows from \cite[comment after Lemma~5.1]{bs}.
This then gives case~\ref{op:gen_sit:co_nconn:toral} in the statement.
If $\gamma_2$ is a type~II saddle trajectory, then $\gamma_2$ must be incident to some other saddle trajectories to form the boundary of~$R$, but this contradicts Lemma~\ref{lem:gen_sit:co_conn} if these saddle trajectories consist of a single closed saddle trajectory and case~\ref{lem:gen_sit:oo} if they consist of non-closed saddle trajectories.
The case where $\gamma_2$ is a type~III saddle trajectory is case~\ref{op:gen_sit:co_nconn:iii} in the statement.

We now consider the case where $R$ lies inside $X \setminus Y$.
If we double along a closed trajectory of $R$ we obtain the following cases, since $\gamma_1$ and $\gamma_2$ are incident to all of the finite critical points in $Y$.
If $\gamma_2$ is a type~I saddle trajectory, then we obtain a surface with a quadratic differential with six zeros, coming from the two zeros on $\gamma_{2}$ and the one zero on~$\gamma_{1}$.
If $\gamma_2$ is a type~II saddle trajectory, then we obtain a surface with a quadratic differential with four zeros and two simple poles.
And, finally, if $\gamma_2$ is a type~III saddle trajectory, then we obtain a surface with a quadratic differential with two zeros and four simple poles.
However, these all contradict the restriction~\eqref{eq:rr}.
\end{proof}

We can now prove Proposition~\ref{prop:fin_len_traj}.

\begin{proof}[Proof of Proposition~\ref{prop:fin_len_traj}]
We first suppose that $\varphi$ has only one saddle trajectory~$\gamma$.
Note that if $\varphi$ has any finite-length trajectories then it must have at least one saddle trajectory, since the boundary of a ring domain must be formed of saddle trajectories.
If $\gamma$ is not closed, then it is of type~I, II, or~III.
If $\gamma$ is of type~I or~II, then we have case~\ref{op:fin_len_traj:single_i}, or case~\ref{op:fin_len_traj:single_ii}, respectively, noting that a single saddle trajectory cannot form the boundary of a ring domain.
If $\gamma$ is a type~III saddle trajectory, then it follows from the trajectory structures that it must be surrounded by a ring domain.
This ring domain must be degenerate, since a non-degenerate ring domain would have saddle trajectories in its other boundary.
This then gives us case~\ref{op:fin_len_traj:drd}\ref{op:fin_len_traj:drd:iii}.

If $\varphi$ has a single saddle trajectory $\gamma$ which is closed, then this must form the boundary of a ring domain.
A single closed saddle trajectory cannot form both boundaries of a non-degenerate ring domain, due to the third trajectory emanating from the zero, so the ring domain must be degenerate, which gives us case~\ref{op:fin_len_traj:drd}\ref{op:fin_len_traj:drd:single}.

Hence, we will now assume that $\varphi$ has at least two saddle trajectories $\gamma_{1}$ and~$\gamma_{2}$.
We split into cases according to how many of $\gamma_{1}$ and $\gamma_{2}$ are closed and apply the preceding lemmas.

If $\gamma_1$ and $\gamma_2$ are both closed, then we are in the situation of Lemma~\ref{lem:gen_sit}\ref{lem:gen_sit:cc}, which gives us a non-degenerate ring domain with boundaries $\gamma_{1}$ and $\gamma_{2}$, as in case~\ref{op:fin_len_traj:nrd}\ref{op:fin_len_traj:nrd:single}.
We then cannot have that $\varphi$ has another saddle trajectory~$\gamma_{3}$.
If such a saddle trajectory $\gamma_{3}$ were closed, one could apply Lemma~\ref{lem:gen_sit}\ref{lem:gen_sit:cc} to conclude that $\gamma_{3}$ has to bound a non-degenerate ring domain with each of $\gamma_{1}$ and $\gamma_{2}$, which is impossible.
If such a saddle trajectory $\gamma_{3}$ were not closed, then one could apply Lemma~\ref{lem:gen_sit:co_conn} and Lemma~\ref{lem:gen_sit}\ref{lem:gen_sit:co_nconn} to conclude that both $\gamma_{1}$ and $\gamma_{2}$ have to bound non-degenerate ring domains with $\gamma_{3}$ in the other boundary, but this is similarly impossible.
We moreover conclude that $\varphi$ cannot have any more finite-length trajectories, since this would require having another saddle trajectory.

If $\gamma_1$ is closed whilst $\gamma_2$ is not, then Lemma~\ref{lem:gen_sit:co_conn} gives us that $\gamma_{1} \cup \gamma_{2}$ cannot be connected.
By Lemma~\ref{lem:gen_sit}\ref{lem:gen_sit:co_nconn}, we obtain that there is a non-degenerate ring domain with $\gamma_1$ forming one boundary, and with the other boundary either case~\ref{op:fin_len_traj:nrd}\ref{op:fin_len_traj:nrd:torus} with $\gamma_2$ one of the two type~I saddle trajectories adjoining the torus, or case~\ref{op:fin_len_traj:nrd}\ref{op:fin_len_traj:nrd:iii} with $\gamma_2$ the type~III saddle trajectory.
Just as in the case where both $\gamma_{1}$ and $\gamma_{2}$ are closed, one can argue that there cannot be any other finite-length trajectories outside the ring domain and its boundaries.

If neither $\gamma_1$ nor $\gamma_2$ is closed, then by Lemma~\ref{lem:gen_sit}\ref{lem:gen_sit:oo}, $\gamma_{1} \cup \gamma_{2}$ is the boundary of a toral ring domain.
If this ring domain is degenerate, then we are in case~\ref{op:fin_len_traj:drd}\ref{op:fin_len_traj:drd:torus}.
If the ring domain is non-degenerate, then we must consider the options for the other boundary.
If the other boundary is a closed saddle trajectory, then we are in case~\ref{op:fin_len_traj:nrd}\ref{op:fin_len_traj:nrd:torus}.
The other boundary cannot be a type~III saddle trajectory, since then $\varphi$ would have no infinite critical points.
Lemma~\ref{lem:gen_sit:co_conn} gives us that the other boundary cannot be formed of a union of both closed trajectories and open trajectories.
Applying Lemma~\ref{lem:gen_sit}\ref{lem:gen_sit:oo} again gives us that if the other boundary is formed of several non-closed saddle trajectories, then the other boundary must also consist of two type~I saddle trajectories adjoining a torus with a spiral domain.
However, $\varphi$ would then again have no infinite critical points.
We conclude that the only possibilities are case~\ref{op:fin_len_traj:drd}\ref{op:fin_len_traj:drd:torus} and case~\ref{op:fin_len_traj:nrd}\ref{op:fin_len_traj:nrd:torus}.
As in the other cases, these ring domains and their boundaries must comprise the union of all finite-length trajectories.
The claim that the hat-homology classes of the type~I saddle trajectories $\gamma_{1}$ and $\gamma_{2}$ in \ref{op:fin_len_traj:drd}\ref{op:fin_len_traj:drd:torus} and \ref{op:fin_len_traj:nrd}\ref{op:fin_len_traj:nrd:torus} must be equal follows just as in \cite[Lemma~5.1]{bs}.

Finally, we show that cases \ref{op:fin_len_traj:drd}\ref{op:fin_len_traj:drd:torus} and \ref{op:fin_len_traj:nrd}\ref{op:fin_len_traj:nrd:torus} are the only two cases in which spiral domains appear.
The boundary of a spiral domain must be a union of saddle trajectories, as per Section~\ref{sect:qd:domains}.
A saddle trajectory of type~I or II cannot form the entire boundary itself, so spiral domains cannot appear in cases~\ref{op:fin_len_traj:single_i} or~\ref{op:fin_len_traj:single_ii}.
The only case remaining to consider is whether a closed saddle trajectory can form the boundary of a spiral domain.
A closed saddle trajectory is always incident to a ring domain, which must lie on the other side of the saddle trajectory to the spiral domain.
Lemma~\ref{lem:gen_sit}\ref{lem:gen_sit:cc} and Lemma~\ref{lem:gen_sit}\ref{lem:gen_sit:co_nconn} give us that the spiral domain cannot have any other boundaries besides the closed saddle trajectory.
Hence, doubling the surface along a closed trajectory in the ring domain would give a surface with a quadratic differential with two zeros and no poles, which contradicts \eqref{eq:rr}.
\end{proof}

\subsection{Effect of rotations}

It will be important to understand the effect of rotation on the configuration of trajectories.
First, we need to know that separating trajectories persist to nearby rotations of the differential.
This is given by the following proposition, which is a special case of \cite[Proposition~4.8]{bs}.

\begin{proposition}[{\cite[Proposition~4.8]{bs}}]\label{prop:sep_traj_persist}
Suppose that $\gamma_{0} \colon [0, \infty) \to \nocrit{X}$ is a separating trajectory for an infinite GMN differential $\varphi_{0}$, which starts at a finite critical point $r \in X$ and limits to an infinite critical point $p \in X$.
Then there exists $\varepsilon' > 0$ and a family of curves $\gamma_{\varepsilon} \colon [0, \infty) \to \nocrit{X}$ for $\varepsilon \in (-\varepsilon', \varepsilon')$ varying continuously with $\varepsilon$ such that $\gamma_{\varepsilon}$ is a separating trajectory for $\varphi^{(\varepsilon)}$ which starts at $r$ and limits to~$p$.
\end{proposition}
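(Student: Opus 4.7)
Since the result is cited from \cite[Proposition~4.8]{bs}, the proposal below outlines how I would reconstruct the proof using only local normal forms and a compactness argument along $\gamma_0$.

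The plan is to build $\gamma_{\varepsilon}$ in three pieces: a short initial piece near $r$, a long middle piece tracking $\gamma_0$, and an infinite tail entering $p$. For the initial piece, I would fix the local coordinate at $r$ given by Proposition~\ref{prop:normal_form}, so that $\varphi_0$ takes the form $z\, dz^2$ in a disc $U_r$. In this chart the three horizontal half-trajectories emerging from $r$ are at angles $0, 2\pi/3, 4\pi/3$ (in the distinguished parameter), and under the rotation $e^{2i\pi\varepsilon}\varphi_0$ these rigidly rotate to $-\tfrac{2\pi}{3}\varepsilon$, $-\tfrac{2\pi}{3}\varepsilon + 2\pi/3$, etc. In particular, for $|\varepsilon|$ small there is a unique separating half-trajectory of $e^{2i\pi\varepsilon}\varphi_0$ out of $r$ close to the initial direction of $\gamma_0$, and it depends continuously on $\varepsilon$.

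For the tail near $p$, I would again invoke Proposition~\ref{prop:normal_form}, writing $\varphi_0$ in a punctured neighbourhood $U_p$ of the pole. When $p$ has order $m \geqslant 3$, the $m-2$ asymptotic directions are permuted continuously by rotation, and the standard dynamical picture (Figure~\ref{fig:higherorder}) shows that every trajectory entering a sufficiently small punctured disc around $p$ sufficiently close to one of these directions is trapped and limits to $p$; this persists for the rotated differential because the normal form deforms continuously in $\varepsilon$. The case of a pole of order $m=2$ requires a separate but easier check using the spiralling normal form $\lambda z^{-2} dz^2$: a separating trajectory exists only when $\lambda \in \mathbb{R}_{>0}$, and rotation by $e^{2i\pi\varepsilon}$ scales $\lambda$ by $e^{2i\pi\varepsilon}$, so we implicitly restrict to the regime where the asymptotic picture is preserved (in practice this is exactly the hypothesis that $\gamma_0$ limits to $p$).

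For the middle, I would cover $\gamma_0([\delta, T])$ (for $\delta$ small and $T$ large enough that $\gamma_0(T)$ lies in the neighbourhood of $p$ chosen above) by finitely many flow boxes for the horizontal foliation of $\varphi_0$, using that this compact arc is disjoint from $\crit(\varphi_0)$ and that the horizontal foliation of $e^{2i\pi\varepsilon}\varphi$ varies smoothly in $\varepsilon$ on any such compact region. A straightforward induction across the flow boxes, together with continuous dependence of solutions of ODEs on parameters, shows that the trajectory started at the initial condition supplied by the local analysis at $r$ remains in a tubular neighbourhood of $\gamma_0|_{[\delta,T]}$ and emerges into the neighbourhood of $p$ in a direction arbitrarily close to the asymptotic direction of $\gamma_0$ at $p$. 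Combining the three pieces and invoking the trapping statement near $p$ produces the family $\gamma_{\varepsilon}$, with continuous dependence on $\varepsilon$ inherited from each step.

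The main obstacle is the middle compactness step: one must verify that the rotated trajectory does not accidentally encounter another finite critical point before reaching the neighbourhood of $p$, which requires making the tubular neighbourhood of $\gamma_0|_{[\delta,T]}$ small enough to avoid all other critical points of $\varphi_0$ and then shrinking $\varepsilon'$ so the estimate persists uniformly along the finite chain of flow boxes. Both local analyses are essentially algebraic and pose no difficulty.
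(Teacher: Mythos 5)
The paper does not prove this statement at all: it is quoted verbatim (as a special case) from Bridgeland--Smith, \cite[Proposition~4.8]{bs}, so there is no internal proof to compare against. Your reconstruction follows the standard three-piece argument that also underlies the proof in \textit{op.\ cit.} (local rigid rotation of the critical directions at $r$, continuous dependence along the compact middle segment via flow boxes avoiding the other critical points, and a trapping statement in a punctured neighbourhood of $p$), and as a whole it is sound. Two points deserve correction or completion. First, your claim in the order-two case that ``a separating trajectory exists only when $\lambda \in \mathbb{R}_{>0}$'' is false: with the normal form $\lambda\, dz^2/z^2$, trajectories limit to the double pole both in the radial case $\lambda \in \mathbb{R}_{>0}$ and in the spiralling case $\lambda \notin \mathbb{R}$; the only excluded locus is $\lambda \in \mathbb{R}_{<0}$, where the local trajectories are closed loops. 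The correct observation is that the hypothesis that $\gamma_0$ limits to $p$ forces $\lambda \notin \mathbb{R}_{<0}$, and since rotation multiplies $\lambda$ by $e^{2i\pi\varepsilon}$ this is an open condition, so trapping persists for $|\varepsilon|$ small -- which is essentially what your closing hedge says, but the quantitative statement should be phrased as ``$\lambda$ stays off the negative real axis'', not ``$\lambda \in \mathbb{R}_{>0}$''. Second, for the infinite GMN differentials of this paper the finite critical point $r$ may be a simple pole as well as a simple zero, so the local analysis at $r$ should also invoke the normal form $z^{-1}dz^2$, where the unique horizontal half-trajectory emanating from $r$ again rotates continuously with $\varepsilon$; the argument is identical, but the case should be stated.
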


As for the effect of rotation on saddle trajectories, there are two different types of cases which appear.
Either the saddle trajectories occur at a phase which is a limit of phases which also contain saddle trajectories, or they occur at a phase such that all small rotations are saddle-free.
It will be useful to record this fact.
In the case where there are no simple poles, Proposition~\ref{prop:limit_walls}\ref{op:no_lim} follows from the results of \cite[Section~5.3 and 5.4]{bs}.

\begin{proposition}\label{prop:limit_walls}
Let $\varphi$ be a generic infinite GMN differential on a Riemann surface~$X$.
Suppose that $\varphi$ has a saddle trajectory~$\gamma$.
There are then two cases.
\begin{enumerate}[label=\textup{(}\arabic*\textup{)}]
    \item There exists a sequence $(\varepsilon_k)_{k \in \mathbb{Z}_{\geqslant 0}}$ such that $\varepsilon_k \to 0$ as $k \to \infty$ and such that $\varphi^{(\varepsilon_{k})}$ has a saddle trajectory for all~$k$.\label{op:lim}
    \item There exists $\varepsilon' > 0$ such that for all $\varepsilon' > \varepsilon > 0$, we have that $\varphi^{(\pm\varepsilon)}$ is saddle-free.\label{op:no_lim}
\end{enumerate}
Case~\ref{op:lim} occurs if and only if $\gamma$ lies in the boundary of a non-degenerate ring domain or spiral domain of $\varphi$, otherwise case~\ref{op:no_lim} occurs. In case~\ref{op:no_lim}, $\gamma$ is a standard saddle connection in $\varphi^{(\pm \varepsilon)}$.
\end{proposition}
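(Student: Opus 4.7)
The plan is to combine the classification in Proposition~\ref{prop:fin_len_traj} with an analysis of how each configuration behaves under small rotation of $\varphi$. The listed configurations naturally split into an \emph{isolated} group (cases \ref{op:fin_len_traj:single_i}, \ref{op:fin_len_traj:single_ii}, \ref{op:fin_len_traj:drd}\ref{op:fin_len_traj:drd:single}, \ref{op:fin_len_traj:drd}\ref{op:fin_len_traj:drd:iii}), in which $\gamma$ does not bound any non-degenerate ring domain or spiral domain, and a \emph{persistent} group (cases \ref{op:fin_len_traj:drd}\ref{op:fin_len_traj:drd:torus} and all of case \ref{op:fin_len_traj:nrd}), in which it does. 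I would establish that the isolated group yields case \ref{op:no_lim} and the persistent group yields case \ref{op:lim}, which together give the biconditional and the ``standard saddle connection'' claim.

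\textbf{Isolated group.} By Proposition~\ref{prop:sep_traj_persist}, each separating trajectory of $\varphi$ emanating from a finite critical point persists continuously to a separating trajectory of $e^{\pm 2i\pi\varepsilon}\varphi$ for all sufficiently small $\varepsilon > 0$. Using the local pictures of Figure~\ref{fig:turningpoints} near the endpoints of $\gamma$, the pair of separating trajectories bounding the horizontal strip in $\varphi$ that contains $\gamma$ (which in case \ref{op:fin_len_traj:drd}\ref{op:fin_len_traj:drd:single} is obtained by uncoalescing the two separating trajectories forming the closed saddle as $\varepsilon$ becomes non-zero) deforms continuously to a pair of separating trajectories of $e^{\pm 2i\pi\varepsilon}\varphi$ bounding a horizontal strip still containing $\gamma$; hence $\gamma$ becomes a standard saddle connection of $e^{\pm 2i\pi\varepsilon}\varphi$. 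To rule out further saddle trajectories at arbitrarily small $\varepsilon$, I would invoke genericity: any saddle trajectory of $e^{\pm 2i\pi\varepsilon}\varphi$ in class $\hhc{\gamma}'$ satisfies $\arg Z_\varphi(\hhc{\gamma}') = \pi (1 \mp \varepsilon)$, forcing $\hhc{\gamma}'$ to be $\mathbb{Q}$-proportional to $\hhc{\gamma}$; combined with a local-finiteness statement for hat-homology classes realised by straight arcs of bounded combinatorial complexity, this confines the possible $\varepsilon$ to a discrete set, yielding the required $\varepsilon' > 0$.

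\textbf{Persistent group.} For a non-degenerate ring domain $R$ bounded in part by $\gamma$, let $\hhc{\delta} \in \hh{\varphi}$ be the class of a closed trajectory in $R$, so $Z_\varphi(\hhc{\delta}) \in \mathbb{R}_{> 0}$. Choose finite critical points $r_1, r_2$ on the two boundaries of $R$ and a straight arc $\sigma$ across $R$ from $r_1$ to $r_2$, giving a class $\hhc{\sigma} \in \hh{\varphi}$. For each $n \in \mathbb{Z}$, the class $\hhc{\sigma}_n := \hhc{\sigma} + n\hhc{\delta}$ has central charge $Z_\varphi(\hhc{\sigma}) + n Z_\varphi(\hhc{\delta})$, whose argument tends to $\pi = \arg Z_\varphi(\hhc{\gamma})$ as $n \to \pm\infty$. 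Choosing $\varepsilon_n$ to be the unique small rotation making $\arg Z_{e^{2i\pi\varepsilon_n}\varphi}(\hhc{\sigma}_n) = \pi$, one realises $\hhc{\sigma}_n$ as a genuine horizontal saddle trajectory of $e^{2i\pi\varepsilon_n}\varphi$ via a winding-$n$ arc construction inside the (deformed) ring domain, and $\varepsilon_n \to 0$. For the spiral-domain configurations, the defining non-closedness of the recurrent trajectories on the torus $Y$ forces two independent hat-homology cycles $\hhc{\alpha}, \hhc{\beta}$ on $Y$ to have central charges of irrational argument ratio; Weyl equidistribution then makes the arguments of $m\hhc{\alpha} + n\hhc{\beta}$ dense modulo $\pi$, and concatenating such cycles with the boundary saddle realises saddle connections at a sequence of rotations tending to zero.

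\textbf{Main obstacle.} The principal technical difficulty is realising the abstract hat-homology classes $\hhc{\sigma}_n$ (and their spiral analogues) as actual straight-arc saddle connections with the prescribed endpoints: this requires combining the local geodesic description of Lemma~\ref{lem:geodesics} at $r_1, r_2$ with a global winding-number analysis inside the ring or spiral domain, and in the degenerate case a careful use of the local normal form $\nu dz^2/z^2$ from Proposition~\ref{prop:normal_form} at the double pole to show the winding construction does not hit any critical point. A secondary subtlety is the local-finiteness of straight-arc classes underpinning the isolated-group genericity argument; I expect this to follow from the support property implicit in the stability condition produced by Construction~\ref{const:bs}, but it requires a compactness argument on the space of straight arcs with bounded length.
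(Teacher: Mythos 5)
Your overall strategy (classify the configuration via Proposition~\ref{prop:fin_len_traj}, then analyse each case under rotation) differs from the paper, which outsources the dichotomy entirely to a citation of \cite[Proposition~5.10]{tahar} and only argues the final ``standard saddle connection'' claim directly. Your ``persistent'' direction is fine in outline: winding saddle connections of class $\hhc{\sigma}+n\hhc{\delta}$ across a flat non-degenerate ring domain, and straight arcs between the boundary zeros of the spiral-domain torus in a dense set of directions, do produce saddle trajectories at rotations tending to $0$. The genuine gap is in the ``isolated'' direction, i.e.\ showing that when $\gamma$ bounds no non-degenerate ring domain or spiral domain, \emph{all} sufficiently small rotations are saddle-free. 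First, the genericity step is misapplied: a saddle trajectory $\gamma'$ of $e^{\pm 2i\pi\varepsilon}\varphi$ has $\arg Z_\varphi(\hhc{\gamma}')=\pi(1\mp\varepsilon)\neq\pi=\arg Z_\varphi(\hhc{\gamma})$, and genericity only compares classes of \emph{equal} phase, so it gives no $\mathbb{Q}$-proportionality between $\hhc{\gamma}'$ and $\hhc{\gamma}$ (at best it relates the saddle classes at one fixed $\varepsilon$ to each other, which bounds nothing about the set of admissible $\varepsilon$).

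Second, the asserted ``local-finiteness for straight arcs of bounded combinatorial complexity, hence discreteness of the set of $\varepsilon$'' is precisely the hard content of the proposition and is not supplied: saddle connections occurring at rotations $\varepsilon\to 0$ need not have bounded length or complexity (in the persistent cases they wind more and more, which is exactly how your own accumulation argument works), so stated unconditionally this local-finiteness would contradict case~\textup{(}1\textup{)}; stated conditionally, it needs an argument that uses the absence of non-degenerate ring domains and spiral domains at phase $1$. Concretely, one must show that a sequence of saddle trajectories of $e^{2i\pi\varepsilon_k}\varphi$ with $\varepsilon_k\to 0$ forces, in the limit, a configuration of finite-length trajectories of $\varphi$ at phase $1$ containing a non-degenerate ring domain or spiral domain --- either by a compactness/limit-of-geodesics argument handling both the bounded- and unbounded-length regimes, or by invoking the result the paper cites, \cite[Proposition~5.10]{tahar}. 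The support property alone cannot do this (it holds equally in the persistent cases), so without this step your proof of case~\textup{(}2\textup{)} for the isolated configurations, and hence of the ``only if'' half of the biconditional, is incomplete; the final claim that $\gamma$ becomes a standard saddle connection is fine once saddle-freeness nearby is known, and is argued much as in the paper.
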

\begin{proof}
Between them the two cases are clearly exhaustive, and by \cite[Proposition~5.10]{tahar} we have that case~\ref{op:lim} occurs if and only if $\gamma$ lies in the boundary of a non-degenerate ring or spiral domain of~$\varphi$.

For the last statement, let $r_1$ and $r_2$ denote the finite critical points at the ends of~$\gamma$, which are not necessarily distinct.
After rotating~$\varphi$, there are separating trajectories from $r_1$ and $r_2$ which collide to form $\gamma$ in~$\varphi$.
Since all smaller rotations are saddle-free, these separating trajectories do not pass through any finite critical points if one rotates back to~$\varphi$.
Therefore $\gamma$ is now a saddle connection contained in the domain with these separating trajectories in the boundary.
The only possibility is thus that $\gamma$ is a standard saddle connection crossing a horizontal strip.
\end{proof}

\section{Donaldson--Thomas invariants for finite-length trajectories}\label{sect:main}

In this section, we derive the DT invariants for the different types of finite-length trajectories by considering the different configurations from Proposition~\ref{prop:fin_len_traj}.
This is a two-step process, where the first step is to obtain a quiver with potential and stability condition describing the category of semistable objects of the same phase as the finite-length trajectories.
One then obtains the DT generating series for the resulting quiver with potential and stability condition.
By Definition~\ref{def:dt}, the values of the DT invariants follow from expressing this series as a particular product of quantum dilogarithms, such as they are expressed in Table~\ref{table}.

There are three different sorts of cases.
In the first and simplest sort, one can use Proposition~\ref{prop:limit_walls} to rotate to obtain the relevant finite-length connection as a standard saddle connection.
This gives the category of semistable objects as representations concentrated at a single vertex of the quiver of the rotated differential.
The cases where a single saddle trajectory appears are of this sort.
In the second and third cases, we are in the limiting case of Proposition~\ref{prop:limit_walls}.
In these, we cannot obtain the relevant finite-length connection as a standard saddle connection, but we can still make a sufficiently small rotation to determine the quiver.
In the second sort of case, we describe the category of semistable objects in terms of a subquiver of this quiver. %
The non-degenerate ring domains which are not toral are of this sort.
Finally, in the third sort of case, we describe the category of semistable objects in terms of a quiver which is not a subquiver of that of the rotated differential. %
The toral ring domains are of this sort; the detail of the computations of these categories is carried out in Section~\ref{sect:spiral}.

\subsection{Type~I and II saddle trajectories}

We begin by considering the cases of type~I and type~II saddle trajectories.

\subsubsection{Type I case}\label{sect:main:i} 

We first consider case~\ref{op:fin_len_traj:single_i} from Proposition~\ref{prop:fin_len_traj}, given by a single type~I saddle trajectory.
This case is straightforward, and already covered by \cite{bs}, but it is instructive to consider it nonetheless.

\begin{proposition}\label{prop:i}
Suppose that $\varphi$ is a generic infinite GMN differential on a Riemann surface $X$ with a single type~I saddle trajectory $\gamma$.
Then the category $\ssc{\sigma_{\varphi}}{1}$ of $\sigma_{\varphi}$-semistable representations of phase~$1$ is given by representations of the quiver 
\[\bullet.\]
Hence, the DT generating series of the category $\ssc{\sigma_{\varphi}}{1}$ is \[
\qdl{t^{\hhc{\gamma}}},
\]
and the refined DT invariants are $\dtref{\hhc{\gamma}} = 1$.
\end{proposition}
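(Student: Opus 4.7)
The plan is to reduce to a saddle-free nearby differential where $\gamma$ becomes a standard saddle connection, and then identify the category of semistable objects of phase~$1$ as representations supported at a single vertex with no arrows.

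First, since $\gamma$ is a type~I saddle trajectory, it is not closed, so by Proposition~\ref{prop:fin_len_traj} the configuration is simply case~\ref{op:fin_len_traj:single_i}; in particular $\gamma$ does not lie in the boundary of a non-degenerate ring domain or spiral domain. Proposition~\ref{prop:limit_walls}\ref{op:no_lim} therefore applies: there exists $\varepsilon' > 0$ such that $e^{-2i\pi\varepsilon}\varphi$ is saddle-free for all $0 < \varepsilon < \varepsilon'$, and $\gamma$ is a standard saddle connection for any such rotated differential. Fix such an $\varepsilon$, set $\varphi' = e^{-2i\pi\varepsilon}\varphi$, and let $T = T_{\varphi'}$ with associated quiver with potential $(Q, W) = (Q(T), W(T))$. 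Construction~\ref{const:bs} gives a stability condition $\sigma_{\varphi'}$ on $\mathcal{D}$ with heart $\mathcal{H} \simeq \modules\jac{Q, W}$, and the isomorphism $\hh{\varphi'} \xrightarrow{\sim} K_0(\mathcal{H})$ identifies $\hhc{\gamma}$ with $\dimu S_i$, where $S_i$ is the simple module at the vertex $i \in Q_0$ corresponding to the horizontal strip of $\varphi'$ containing~$\gamma$.

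Next, I would translate phases: since $\sigma_\varphi = e^{2i\pi\varepsilon}\sigma_{\varphi'}$, the category of $\sigma_\varphi$-semistable objects of phase $1$ coincides with the category of $\sigma_{\varphi'}$-semistable objects of phase $1 - \varepsilon$, which in turn is the full subcategory of $\mathcal{H}$ of semistable objects of phase $1 - \varepsilon$ with respect to the central charge $Z_{\varphi'}$. By the genericity hypothesis on $\varphi$, any hat-homology class of phase $1$ in $\varphi$ is a positive rational multiple of $\hhc{\gamma}$; transporting through the rotation, the same statement holds at phase $1-\varepsilon$ in $\varphi'$. Hence any semistable object $M$ of phase $1-\varepsilon$ in $\mathcal{H}$ has dimension vector a positive multiple of $\dimu S_i$, i.e.\ supported only at vertex~$i$. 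It follows that every composition factor of $M$ in $\mathcal{H}$ is isomorphic to $S_i$, so $M$ lies in the extension closure of~$S_i$.

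Now I would invoke Proposition~\ref{prop:dt_subquiv} with the full subquiver $Q'$ on the single vertex $\{i\}$: any $\jac{Q, W}$-module $M$ of phase $1 - \varepsilon$ has support contained in $Q'_0$, and the induced potential $W'$ on $Q'$ is $0$ (the arrow-free quiver has no cycles, and even if there were a loop at $i$ it could not appear in~$W'$ as type~I requires no loop at~$i$ — one can read this off from the triangle types in Section~\ref{sect:3cy_stab:qwp_from_triang}, since a type~I saddle comes from a standard triangle). Therefore
\[
\gensernp{1-\varepsilon}{Q, W}\big|_{t^{\dimu S_i} \mapsto t^{\hhc{\gamma}}} = \gensern{Q', 0}\big|_{t \mapsto t^{\hhc{\gamma}}} = \qdl{t^{\hhc{\gamma}}},
\]
using the first row of Proposition~\ref{prop:haiden}. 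Reading off Donaldson--Thomas invariants from Definition~\ref{def:dt} gives $\dtref{\hhc{\gamma}} = 1$ and $\dtnum{\hhc{\gamma}} = 1$. The main subtlety — rather than an obstacle — is the careful use of genericity to exclude the possibility that some other hat-homology class of phase $1$ contributes an object to $\ssc{\sigma_\varphi}{1}$; this is clean here because $\hhc{\gamma}$ is primitive as a standard saddle class in the basis provided by Proposition~\ref{prop:strip_basis}.
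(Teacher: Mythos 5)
Your proposal is correct and follows essentially the same route as the paper: rotate by a small $\varepsilon$ using Proposition~\ref{prop:limit_walls} so that $\gamma$ becomes a standard saddle connection of a saddle-free differential, identify $\hhc{\gamma}$ with the class of the simple module at a loop-free vertex of $Q(T_{\varphi'})$, use genericity to confine semistable objects of that phase to this vertex, and conclude via Propositions~\ref{prop:dt_subquiv} and~\ref{prop:haiden}. Your extra bookkeeping of phases ($1$ versus $1-\varepsilon$) and the explicit genericity argument only make explicit what the paper's proof leaves implicit, so there is no substantive difference.
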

\begin{proof}
We have by Proposition~\ref{prop:limit_walls} that there is some $\varepsilon > 0$ such that $\varphi^{(\varepsilon)}$ is saddle-free and has $\gamma$ as a standard saddle connection in a horizontal strip~$H$.
Note that, due to the direction we have rotated, $\gamma$ gives the same hat-homology class for both $\varphi$ and $\varphi^{(\varepsilon)}$, rather than differing by a sign; see the construction in Section~\ref{sect:qd:ssc}.
Since $\gamma$ is a type~I saddle trajectory, $H$ has a distinct zero on each of its boundaries at the ends of~$\gamma$.
By the isomorphism between $\hhspan{\varphi^{(\varepsilon)}}$ and $\mathbb{Z}^{Q_{0}(T_{\varphi^{(\varepsilon)}})}$ from Construction~\ref{const:bs}, the class $\hhc{\gamma}$ corresponds to $\dimu S_i$, where $i$ is the vertex corresponding to $H$.
By the construction of $(Q(T_{\varphi^{(\varepsilon)}}), W(T_{\varphi^{(\varepsilon)}}))$ in Section~\ref{sect:3cy_stab:qwp_from_triang}, $i$ has no loops at it.
Hence, $\ssc{\sigma_{\varphi^{(\varepsilon)}}}{\scph{\hhc{\gamma}}}$ is given by representations of the quiver $Q(T_{\varphi^{(\varepsilon)}})$ concentrated at~$i$.
We then have $\genser{\bullet, 0} = \qdl{t}$; in particular, see Proposition~\ref{prop:haiden}.
\end{proof}

\subsubsection{Type~II case}\label{sect:main:ii}

We now consider case~\ref{op:fin_len_traj:single_ii} of Proposition~\ref{prop:fin_len_traj}, given by type~II saddle trajectories, which follows similarly to the previous.

\begin{proposition}\label{prop:ii}
Suppose that $\varphi$ is a generic infinite GMN differential on a Riemann surface $X$ with a single type~II saddle trajectory $\gamma$.
The category $\ssc{\sigma_{\varphi}}{1}$ is given by representations of the quiver \[\begin{tikzcd}\bullet \ar[loop left,"a"]\end{tikzcd}\] with potential $W = a^3$.
Hence, the DT generating series of $\ssc{\sigma_{\varphi}}{1}$ is \[
\qdl{t^{\hhc{\gamma}}}^2,
\]
and the refined DT invariants are $\dtref{\hhc{\gamma}} = 2$.
\end{proposition}
\begin{proof}
As above, we have for some $\varepsilon > 0$ that $\varphi^{(\varepsilon)}$ is saddle-free and has $\gamma$ as a standard saddle connection in a horizontal strip~$H$.
Since $\gamma$ is a type~II saddle trajectory, $H$ has a zero on one of its boundaries and a simple pole on the other.
Hence, the edge corresponding to $H$ in the triangulation $T_{\varphi^{(\varepsilon)}}$ is the edge of the monogon surrounding the orbifold point corresponding to the simple pole.
The class $\hhc{\gamma}$ thus corresponds to $\dimu S_i$ for $i$ a vertex with a loop $a$, with the term $a^3$ in the potential.
One then uses Proposition~\ref{prop:haiden}.%
\end{proof}

\subsection{Degenerate ring domains}\label{sect:main:drd}

We now treat the three different types of degenerate ring domain from Proposition~\ref{prop:fin_len_traj} in turn. In each case, we denote this ring domain by $R$.

\subsubsection{Standard case}\label{sect:main:drd:i}

We first consider case~\ref{op:fin_len_traj:drd}\ref{op:fin_len_traj:drd:single} from Proposition~\ref{prop:fin_len_traj}, where the boundary of $R$ is a single closed saddle trajectory.
As the name suggests, this is the typical case of a degenerate ring domain, since the other cases \ref{op:fin_len_traj:drd:torus} and \ref{op:fin_len_traj:drd:iii} only comprise one polar type each.

\begin{proposition}\label{prop:drd_i}
Suppose that $\varphi$ is a generic infinite GMN differential on a Riemann surface $X$ with a standard degenerate ring domain with the boundary closed saddle denoted $\gamma$.
The category $\ssc{\sigma_{\varphi}}{1}$ is given by nilpotent representations of the quiver \[\begin{tikzcd}\bullet \ar[loop left]\end{tikzcd}\] with zero potential.
Hence, the DT generating series of $\ssc{\sigma_{\varphi}}{1}$ is \[
\qdl{-q^{-1/2}t^{\hhc{\gamma}}}^{-1},
\]
and the refined DT invariants are $\dtref{\hhc{\gamma}} = q^{-1/2}$.
\end{proposition}
\begin{proof}
Again, we have for some $\varepsilon > 0$ that $\varphi^{(\varepsilon)}$ is saddle-free and has $\gamma$ as a standard saddle connection in a horizontal strip~$H$.
Since $\gamma$ is closed, $H$ is \emph{degenerate}, meaning that it is folded so that the same zero occurs on each of its boundaries; see \cite[Figure~12]{bs} for an illustration.
Hence, two of the three domains incident to the zero are actually both $H$; in terms of the associated triangulation, this means $H$ corresponds to the self-folded edge of a self-folded triangle.
Noting Figure~\ref{fig:three_triangles}, the class $\hhc{\gamma}$ corresponds to a vertex of the quiver which has a loop whose powers do not lie in the potential, and the generating series thus follows from Proposition~\ref{prop:haiden}.%
\end{proof}

\subsubsection{Toral case}\label{sect:main:drd:ii}

We now come to case~\ref{op:fin_len_traj:drd}\ref{op:fin_len_traj:drd:torus} from Proposition~\ref{prop:fin_len_traj}, the toral degenerate ring domain.
These are precisely the quadratic differentials of polar type $\{-2\}$, which do not fall under \cite[Theorem~5.4]{chq}.
However, for saddle-free differentials $\varphi$ of this polar type, one can still construct the corresponding stability condition~$\sigma_{\varphi}$, as discussed in Section~\ref{sect:3cy_stab:bs}.

Hence, instead of choosing a generic infinite GMN differential with the relevant configuration of finite-length trajectories as we do in other sections, here we choose a saddle-free differential whose rotation has a toral degenerate ring domain.
The detail of the calculation of the corresponding category of semistable objects is carried in out in Section~\ref{sect:spiral:drd}.

\begin{proposition}\label{prop:drd_ii}
Suppose that $\varphi$ is a saddle-free generic infinite GMN differential on the torus $X$ such that for some $0 < \varepsilon < 1$, $\varphi^{(\varepsilon)}$ has a toral degenerate ring domain, with bounding type~I saddles of class~$\hhc{\gamma}$.
We have that $\ssce{\sigma_{\varphi}}{1 - \varepsilon}$ is quasi-equivalent to $\mathcal{H}_\infty(Q',0)$ for the quiver $Q'$ \[\begin{tikzcd}\overset{1}{\bullet} \ar[r,shift left] & \overset{2}{\bullet}. \ar[l,shift left]\end{tikzcd}\]
There is moreover an isomorphism between the canonical choices of orientation data on the stacks of objects in each of these categories.

It follows that the DT generating series of the category $\ssc{\sigma_{\varphi}}{1 - \varepsilon}$ is \[
\qdl{t^{\hhc{\gamma}}}^2\qdl{-q^{-1/2}t^{2\hhc{\gamma}}}^{-1},
\]
so that the refined DT invariants are $\dtref{\hhc{\gamma}} = 2$ and $\dtref{2\hhc{\gamma}} = q^{-1/2}$.
\end{proposition}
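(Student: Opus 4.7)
The plan is to defer the categorical heart of the argument to Section~\ref{sect:spiral:drd} and then mechanically apply Propositions~\ref{prop:dt_equiv_cats} and~\ref{prop:haiden} to read off the invariants. Since this case lies outside the general correspondence of \cite{chq} (the polar type $\{-2\}$ is excluded there), I begin from the saddle-free differential $\varphi$ and its stability condition $\sigma_\varphi$, rather than from the limit point; the objects of phase $1-\varepsilon$ are then those $\jac{T_\varphi}$-modules in the heart $\mathcal{H}$ which become semistable as one rotates into the wall where the two type~I saddle trajectories, with common class $\hhc{\gamma}$ by Proposition~\ref{prop:fin_len_traj}, appear.

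The main work in Section~\ref{sect:spiral:drd} will be to locate, inside $\mathcal{H} = \modules \jac{T_\varphi}$, two stable objects $S_1, S_2$ of phase $1-\varepsilon$ with $\dimu S_1 = \dimu S_2 = \hhc{\gamma}$ realising the two type~I trajectories, and to show that the full $A_\infty$-subcategory they generate inside $\mathcal{D}_\infty$ is quasi-equivalent to $\akd{Q'}{0}$. Concretely, I will verify that for $i \neq j$ the only degree-1 Ext classes between $S_i$ and $S_j$ reproduce the two arrows of $Q'$, that all higher $A_\infty$-operations on these morphisms vanish (so that the minimal potential of this subcategory vanishes, matching $W' = 0$), and that the induced map on stacks of semistable objects gives the equivalence $\sqwpstn \simeq \sqapstn{Q', 0}$ required by Proposition~\ref{prop:dt_equiv_cats}. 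Agreement of the canonical orientation data on the two sides will then follow from Proposition~\ref{prop:od_ug}, since on both sides it is determined by the parity on the simples $S_1, S_2$, which is even because the potentials are trivial.

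Granted these inputs, the computation of the invariants is straightforward. I apply Proposition~\ref{prop:dt_equiv_cats} with the group homomorphism $\xi \colon \mathbb{Z}^{Q'_0} \to \fgg$ sending each standard basis vector to $\hhc{\gamma}$. Substituting $t^{(1,0)}, t^{(0,1)} \mapsto t^{\hhc{\gamma}}$ and $t^{(1,1)} \mapsto t^{2\hhc{\gamma}}$ into the Kronecker-2-cycle generating series of Proposition~\ref{prop:haiden} yields $\qdl{t^{\hhc{\gamma}}}^2 \qdl{-q^{-1/2} t^{2\hhc{\gamma}}}^{-1}$. Every dimension vector appearing in $\ssc{\sigma_\varphi}{1-\varepsilon}$ is a positive multiple of $\hhc{\gamma}$, so the Euler form $\ef{-}{-}$ vanishes identically on this subcategory and Definition~\ref{def:dt} applies: reading off the exponents of $\qdl{t^{\hhc{\gamma}}}$ and $\qdl{-q^{-1/2} t^{2\hhc{\gamma}}}^{-1}$ gives $\dtref{\hhc{\gamma}} = 2$ and $\dtref{2\hhc{\gamma}} = q^{-1/2}$, with the stated numerical values obtained by the specialization $q^{1/2} = -1$.

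The main obstacle is the content of Section~\ref{sect:spiral:drd}. The rotated differential has a genuinely recurrent foliation on the adjoined torus, so it admits no triangulation and its stability condition is not directly visible through the Bridgeland--Smith dictionary; the semistable objects $S_1, S_2$ and the 2-cycle quiver structure must be extracted by working in $\mathcal{H}$ and taking a limit to the wall, or by a direct categorical construction. Once they are in hand, the vanishing of higher $A_\infty$-operations is a finite, if delicate, calculation in $\twz{\akd{Q(T_\varphi)}{W(T_\varphi)}}$, analogous in flavour to the computations carried out in Section~\ref{sect:inv}.
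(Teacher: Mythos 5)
Your overall route coincides with the paper's: the proof of this proposition is a short deduction that defers the categorical content to Section~\ref{sect:spiral:drd} and then combines Proposition~\ref{prop:haiden} with Proposition~\ref{prop:dt_equiv_cats}, substituting $t^{(1,0)}, t^{(0,1)} \mapsto t^{\hhc{\gamma}}$; your closing computation, including the observation that $\ef{-}{-}$ vanishes because every relevant class is a multiple of $\hhc{\gamma}$, matches the paper. The gap is that the deferred content is left essentially unestablished, and the sketch you give of it is off in two places. First, the identification of $\ssc{\sigma_{\varphi}}{1-\varepsilon}$: no limiting procedure or ``rotation into the wall'' is needed, since semistability of phase $1-\varepsilon$ is taken directly at $\sigma_{\varphi}$. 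The paper proceeds by noting that the marked surface for $\varphi$ is the once-punctured torus, so the heart is presented by the three-vertex quiver with potential $a_1b_1c_1 + a_2b_2c_2$ (Proposition~\ref{prop:drd_ii:3_vertex}), deriving geometrically that $\hhc{\gamma}$ corresponds to the dimension vector $(1,1,1)$ together with the phase inequalities $\scph{S_C} < \scph{S_A \oplus S_B \oplus S_C} < \scph{S_A}$, and then classifying the stables of that phase by string/band combinatorics over the completed locally gentle Jacobian algebra, finding exactly $M(a_1b_2)$ and $M(a_2b_1)$ (Lemma~\ref{lem:drd_ii:stables}). Your $S_1, S_2$ are precisely these, but nothing in your sketch produces them or rules out further stables (for instance band modules), which is where the real work lies.

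Second, and concretely wrong, is the orientation-data step: you claim evenness ``because the potentials are trivial''. That reasoning only applies on the $Q'$ side. On the other side the ambient category is $\twz{\akd{Q}{W}}$ with the nontrivial potential $a_1b_1c_1 + a_2b_2c_2$, and the two stable objects are not simples of that category but twisted complexes built on $S_A \oplus S_B \oplus S_C$, so Proposition~\ref{prop:od_ug}\ref{op:od_ug:simples->even} does not apply to them directly. One must instead compute the parity via Proposition~\ref{prop:od_ug}\ref{op:od_ug:calc}; the paper finds $\dim \Ext^{\leqslant 1}_{\tw{\akdqw}}(M_i, M_i) + \dim \Hom^{\leqslant 1}_{\tw{\akdqw}}(M_i, M_i) = 1 + 3 + 6 = 10$, which is even, and without some such computation the asserted isomorphism of orientation data is unjustified. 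Likewise, the vanishing of the higher $A_\infty$-operations on the degree-one morphisms (so that the minimal model really is $\akd{Q'}{0}$) is asserted in your sketch but requires the explicit check carried out in Proposition~\ref{prop:drd_ii:2_vertex}.
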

\begin{proof}
Note that because the finite-length trajectories we are interested in have phase~$1$ for $\varphi^{(\varepsilon)}$, the corresponding finite-length connections have phase $1 - \varepsilon$ for $\varphi$.
The description of the category of semistable objects along with the isomorphism of orientation data follows from Proposition~\ref{prop:drd_ii:2_vertex}.
In terms of the quiver~$Q'$, the DT generating series is \[\qdl{t^{(1, 0)}}\qdl{t^{(0, 1)}}\qdl{-q^{-1/2}t^{(1, 1)}}^{-1},\] by Proposition~\ref{prop:haiden}.
It follows from Propositions~\ref{prop:drd_ii:3_vertex} and \ref{prop:drd_ii:2_vertex} that the classes $(1, 0)$ and $(0, 1)$ both correspond to the class $\hhc{\gamma}$, so we obtain the generating series $\qdl{t^{\hhc{\gamma}}}^2\qdl{-q^{-1/2}t^{2\hhc{\gamma}}}^{-1}$.
\end{proof}

\subsubsection{Type~III case}\label{sect:main:drd:iii}

We now consider case~\ref{op:fin_len_traj:drd}\ref{op:fin_len_traj:drd:iii} from Proposition~\ref{prop:fin_len_traj}, where the degenerate ring domain has a type~III saddle trajectory in its boundary.
This can only occur in polar type $\{-1,-1,-2\}$, and corresponds precisely to the Legendre differential on $X=\mathbb{P}^1_{\mathbb{C}}$ in  the terminology of \cite[Section~4.3.3]{ik}.

\begin{proposition}\label{prop:drd_iii}
Suppose that $\varphi$ is a generic infinite GMN differential on a Riemann surface $X$ with a type~III degenerate ring domain, with the type~III saddle trajectory denoted~$\gamma$.
The category $\ssc{\sigma_{\varphi}}{1}$ is given by nilpotent representations of the quiver \[\begin{tikzcd} \bullet \ar[loop left,"a"] \ar[loop right,"b"]\end{tikzcd}\] with potential $W = a^3 + b^3$.
Hence, the DT generating series of $\ssc{\sigma_{\varphi}}{1}$ is \[
\qdl{t^{\hhc{\gamma}}}^4\qdl{-q^{-1/2}t^{2\hhc{\gamma}}}^{-1},
\]
and the refined DT invariants are $\dtref{\hhc{\gamma}} = 4$ and $\dtref{2\hhc{\gamma}} = q^{-1/2}$.
\end{proposition}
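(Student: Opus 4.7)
The plan is to identify the ambient surface, rotate to a saddle-free differential, read off the quiver with potential, and then invoke Proposition~\ref{prop:haiden}. First, I would pin down the topology of $X$. Since $\gamma$ is a type~III saddle trajectory whose image forms the entire non-polar boundary of a degenerate ring domain, the two endpoints of $\gamma$ are the only finite critical points of $\varphi$, and $\varphi$ has at least the double pole sitting at the centre of the degenerate ring domain. The Riemann--Roch constraint \eqref{eq:rr} with $l = 0$ simple zeros forces $4g - 4 + \sum_i m_i = 0$; together with the presence of two simple poles and at least one double pole, the only solution is $g = 0$ with polar type $\{-2, -1, -1\}$. Thus $X$ is the Riemann sphere, $\varphi$ has no zeros, and its critical set consists of one double pole and two simple poles, as already anticipated in the discussion following Proposition~\ref{prop:fin_len_traj}.

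Next I would use Proposition~\ref{prop:limit_walls}. Since $\gamma$ lies on the boundary of a degenerate, rather than non-degenerate, ring domain, and does not lie in the boundary of any spiral domain, case~\ref{op:no_lim} applies. Hence for some $\varepsilon' > 0$ and all $0 < \varepsilon < \varepsilon'$, the rotation $\varphi' := e^{-2i\pi\varepsilon}\varphi$ is saddle-free, and the unique finite-length connection of $\varphi$, namely $\gamma$, persists as a standard saddle connection of $\varphi'$. The class $\hhc{\gamma} \in \hh{\varphi} \cong \hh{\varphi'}$ therefore corresponds to a standard saddle class of $\varphi'$, and in particular is identified with $\dimu S$ for the simple module $S$ at the vertex of $Q(T_{\varphi'})$ labelled by the horizontal strip crossed by $\gamma$.

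I now need to identify this triangulation $T_{\varphi'}$. With no zeros and only two simple poles as finite critical points, the saddle-free differential $\varphi'$ has exactly one horizontal strip, with a simple pole on each of its two boundary components and the double pole as the unique puncture in $\sfm$. On the marked bordered surface $\mbso$, this forces $\bs$ to be a sphere, $\sfm = \bp$ a single puncture (the double pole), and $\bo$ the two orbifold points (the simple poles); the triangulation $T_{\varphi'}$ consists of the single arc from the puncture to itself associated with the strip. This is precisely the edge case flagged in Lemma~\ref{lem:enhance_equiv:monogon}: the arc cuts the sphere into two monogons, each enclosing one orbifold point. By the construction of Section~\ref{sect:3cy_stab:qwp_from_triang} (see Figure~\ref{fig:three_triangles_arrows}), the quiver $Q(T_{\varphi'})$ has one vertex with two loops $a, b$, and the potential is $W(T_{\varphi'}) = a^3 + b^3$, matching the table entry.

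Finally, every non-zero object of $\mathcal{H}(T_{\varphi'})$ has phase $1$ for $\sigma_\varphi$ after rotating back by $e^{2i\pi\varepsilon}$, so the category $\ssc{\sigma_\varphi}{1}$ coincides with the entire module category $\modules \jac{Q(T_{\varphi'}), W(T_{\varphi'})}$, and no subquiver reduction is needed. Applying Proposition~\ref{prop:haiden} to this quiver with potential yields the generating series
\[
\gensern{Q(T_{\varphi'}), W(T_{\varphi'})} = \qdl{t^{\hhc{\gamma}}}^{4} \qdl{-q^{-1/2} t^{2\hhc{\gamma}}}^{-1},
\]
after substituting $t \mapsto t^{\hhc{\gamma}}$ under the identification of the single class. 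Matching this against the factorisation in Definition~\ref{def:dt} gives $\Omega^{0}(\hhc{\gamma}) = 4$ and $\Omega^{-1}(2\hhc{\gamma}) = 1$, from which the stated refined and numerical Donaldson--Thomas invariants follow immediately. The only genuinely non-routine step is the topological argument fixing $X$ and the triangulation $T_{\varphi'}$; once these are in hand, everything reduces to a direct application of the results cited.
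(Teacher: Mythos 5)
Your proposal is correct and follows essentially the same route as the paper: rotate slightly using Proposition~\ref{prop:limit_walls} so that $\gamma$ becomes a standard saddle connection, identify the triangulation as a single arc on the sphere enclosing two monogons around the orbifold points, read off the two-loop quiver with potential $a^3+b^3$, and apply Proposition~\ref{prop:haiden}. The only difference is cosmetic: you pin down $X$ and the absence of zeros up front via \eqref{eq:rr} and the discussion after Proposition~\ref{prop:fin_len_traj}, whereas the paper obtains the same identification by noting that, since only one trajectory emanates from each simple pole, the horizontal strip crossed by $\gamma$ folds up to give the whole surface.
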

\begin{proof}
As in previous proofs, we have for some $\varepsilon > 0$ that $\varphi^{(\varepsilon)}$ is saddle-free and has $\gamma$ as a standard saddle connection in a horizontal strip~$H$.
Since $\gamma$ is a type~III saddle trajectory, $H$ has one of the simple poles on each side.
Note here that, since only one trajectory emanates from a simple pole, $H$ folds up to give the whole surface~$X$; recall, in any case, from Figure~\ref{fig:configs} that $X$ must be a sphere.
Hence, the associated triangulation $T_{\varphi^{(\varepsilon)}}$ consists of a single arc from the puncture to itself, which on each side encloses a monogon containing an orbifold point, from which the quiver with potential follows. The sole vertex corresponds to the horizontal strip, and hence to the class~$\hhc{\gamma}$.
The generating series then follows from Proposition~\ref{prop:haiden}.
\end{proof}

\begin{remark}
In each of these cases where a degenerate ring domain appears, one could obtain the refined DT invariant $q^{1/2}$ for the class of a closed trajectory in the ring domain by taking the uncompleted algebra, by Proposition~\ref{prop:haiden}.
This would then also give a numerical DT invariant of $-1$, agreeing with \cite[Definition~3.12]{ik}.
On the one hand, using the completed algebras and so obtaining the invariant $q^{-1/2}$ is preferable from a theoretical perspective.
It is convenient to have finitely many simple objects.
This is the case to which the results of \cite{chq} apply, since these results concern the subcategory generated by the simple objects corresponding to vertices of the quiver.
Furthermore, it is not clear how to construct orientation data for modules over the uncompleted path algebra.
On the other hand, one disadvantage of using the completed path algebra is that in the toral case, there are no stable objects with the same class as the ring domain.
If we instead considered the uncompleted path algebra, there would be such stable objects.
Using the uncompleted path algebra also accords better with homological mirror symmetry, whereby the mirror of the degenerate ring domain should be $\mathbb{A}_{\mathbb{C}}^{1}$ \cite[Section~4.5]{bocklandt}.
The virtual motive of $\mathbb{A}_{\mathbb{C}}^{1}$, in the sense of \cite[Definition~2.14]{bbs}, is $\mathbb{L}^{1/2}$, which corresponds to the DT invariant $q^{1/2}$.
Both $q^{1/2}$ and $q^{-1/2}$ are consistent with the result of \cite{ko}.
\end{remark}

\subsection{Non-degenerate ring domains}\label{sect:main:nrd}

We consider the three different types of non-degenerate ring domain from Proposition~\ref{prop:fin_len_traj} in turn.
As per Proposition~\ref{prop:limit_walls}, non-degenerate ring domains are less straightforward than the other (non-toral) cases, since we cannot make a small rotation to obtain the saddle trajectory as a standard saddle connection (which made it easy to identify the relevant category of semistable objects).  In each case, we denote the ring domain by $R$.

\begin{remark}
In \cite{bs}, the categories of semistable objects for the cases when $R$ has only zeros in its boundary is obtained using a technique known as `ring-shrinking'.
We take a different approach to identifying this category by taking a rotation of the quadratic differential small enough that that we can still determine the trajectory structure.
There would be additional technicalities in applying the ring-shrinking technique to the case in which $R$ surrounds a type~III saddle trajectory, due to the fact that such $R$ is then not `strongly non-degenerate' in the sense of \cite[Section~3.4]{bs}.
We could apply the ring-shrinking argument in the other cases, but we give an alternative argument, showing that the same category can be obtained by different means.
\end{remark}

\subsubsection{Standard case}\label{sect:main:nrd:i}

We begin by considering case~\ref{op:fin_len_traj:nrd}\ref{op:fin_len_traj:nrd:single} from Proposition~\ref{prop:fin_len_traj}, where each of the two boundaries of the non-degenerate ring domain is a closed saddle trajectory.
This is a case where more than one saddle trajectory appears, and which can appear in polar type~$\{-2\}$.
We make the restriction that the polar type is not $\{-2\}$, but we could instead proceed in the same way as in Section~\ref{sect:main:drd}.
Let $\gamma$ be a closed trajectory lying in the ring domain and $\hhc{\gamma} \in \hh{\varphi}$ the associated hat-homology class.

\begin{proposition}\label{prop:nrd_i}
Suppose that $\varphi$ is a generic infinite GMN differential on a Riemann surface~$X$, whose polar type is not $\{-2\}$, with a standard non-degenerate ring domain of class $\hhc{\gamma}$. The category $\ssc{\sigma_{\varphi}}{1}$ is given by semistable representations of the Kronecker quiver \[
    \begin{tikzcd}
        \overset{1}{\bullet} \ar[r,shift left] \ar[r,shift right] & \overset{2}{\bullet}
    \end{tikzcd}
    \] with zero potential which have phase $\scph{S_1 \oplus S_2}$ under a stability condition such that $\scph{S_2} < \scph{S_1}$.
Hence, the DT generating series of $\ssc{\sigma_{\varphi}}{1}$ is \[
\qdl{-q^{1/2}t^{\hhc{\gamma}}}^{-1}\qdl{-q^{-1/2}t^{\hhc{\gamma}}}^{-1},
\]
and the refined DT invariants are $\dtref{\hhc{\gamma}} = q^{1/2} + q^{-1/2}$.
\end{proposition}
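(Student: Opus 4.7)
The plan is to mirror the approach of earlier cases in this section by passing to a nearby saddle-free rotation of $\varphi$, but applying Proposition~\ref{prop:limit_walls}\ref{op:lim} in place of Proposition~\ref{prop:limit_walls}\ref{op:no_lim}. Since the boundary saddle trajectories lie in a non-degenerate ring domain, they cannot appear as standard saddle connections of any small rotation; nevertheless, for $\varepsilon > 0$ sufficiently small, $\varphi' := e^{-2i\pi\varepsilon}\varphi$ is saddle-free. I would first analyse the effect of this rotation geometrically. At each of the boundary zeros $z_1$ and $z_2$, the two separating trajectories that previously formed the closed saddle loop split apart. The former ring domain, being free of critical points, decomposes in the horizontal foliation of $\varphi'$ into two horizontal strips, each spanning from a neighbourhood of $z_1$ to a neighbourhood of $z_2$. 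Excluding polar type $\{-2\}$ ensures that this local analysis is not disturbed by global identifications of the sort that appear in Section~\ref{sect:main:drd:ii}.

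By the construction of the quiver with potential in Section~\ref{sect:3cy_stab:qwp_from_triang}, the two new arcs of $T_{\varphi'}$ arising from these strips produce two vertices of $Q(T_{\varphi'})$, and the anti-clockwise incidence counts $\triangnum{\alpha}{\beta}$ at $z_1$ and $z_2$ each contribute one arrow, both pointing in the same direction owing to the opposite orientations of the zeros on the two sides of the former ring domain; this yields a Kronecker subquiver $Q'$ with two parallel arrows. Writing $S_1$ and $S_2$ for the simples at these vertices, labelled so that the arrows go from $1$ to $2$, one verifies $\scph{S_2} < \scph{S_1}$ from the direction of rotation. All other simples of $\mathcal{H}(T_{\varphi'})$ have phases bounded away from $\scph{\hhc{\gamma}}$ for $\varepsilon$ small, so any object semistable of phase $1$ under $\sigma_\varphi$ --- equivalently, of phase $1 - \varepsilon$ under $\sigma_{\varphi'}$ --- has support concentrated on $Q'$. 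Proposition~\ref{prop:dt_subquiv} then reduces the computation to the generating series of representations of $Q'$ semistable of phase $\vartheta = \scph{S_1 \oplus S_2}$.

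For this Kronecker computation, I would use wall-crossing in the motivic Hall algebra. The stability $\sigma_-$ with $\scph{S_2} > \scph{S_1}$ has no non-simple semistable objects, since the unique non-trivial proper subrepresentation of a $(1,1)$-dimensional representation is $S_2$, whose phase exceeds that of the representation and violates semistability. Hence the total series in $\sigma_-$ is $\qdl{t^{(0,1)}}\qdl{t^{(1,0)}}$, in decreasing order of phase. For $\sigma_+$ with $\scph{S_2} < \scph{S_1}$, the Harder--Narasimhan filtration gives $\qdl{t^{(1,0)}} \cdot F \cdot \qdl{t^{(0,1)}}$, where $F$ is the central factor at phase $\vartheta$. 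Equating the two expressions for the full series, using the twisted multiplication $t^{(1,0)}t^{(0,1)} = q^{-1}t^{(1,1)}$ induced by $\ef{(1,0)}{(0,1)} = -2$, yields a pentagon-type quantum dilogarithm identity whose solution is $F = \qdl{-q^{1/2}t^{(1,1)}}^{-1}\qdl{-q^{-1/2}t^{(1,1)}}^{-1}$. Under the identification of $(1, 1)$ with $\hhc{\gamma}$, arising because $\hhc{\gamma}$ is the sum of the two standard saddle classes of the strips, we obtain the stated series, and the refined and numerical invariants follow from Definition~\ref{def:dt}.

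The main obstacle I anticipate is the geometric step: one must verify carefully that the two new arcs in $T_{\varphi'}$ indeed produce two parallel arrows in the same direction (the Kronecker quiver) rather than opposing arrows (a two-cycle quiver), which hinges on the anti-clockwise incidences around $z_1$ and $z_2$ in relation to the global orientation of the former ring domain. This is the analogue for this setting of the delicate geometric analysis carried out in \cite{bs} via the ring-shrinking construction, which the paper is deliberately avoiding here. Once this picture is established, the wall-crossing computation reduces to the standard Kronecker pentagon identity and presents no conceptual difficulties.
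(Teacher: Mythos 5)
Your overall strategy --- rotate to a nearby saddle-free $\varphi' = e^{-2i\pi\varepsilon}\varphi$, identify the two horizontal strips crossing the former ring domain, and reduce via Proposition~\ref{prop:dt_subquiv} to a Kronecker-quiver computation --- is the same as the paper's, but two steps do not go through as written. The first is the one you flag yourself: you never establish that the two new arcs give the Kronecker quiver rather than a two-cycle, nor that the labelling satisfies $\scph{S_2} < \scph{S_1}$; ``from the direction of rotation'' is not an argument. The paper closes this without tracking incidences at the zeros: by Proposition~\ref{prop:sep_traj_persist} the separating trajectories on the far boundaries of the strips or half-planes adjacent to the ring domain persist under the small rotation, so the region of $\mbso$ they cut out is an annulus with one marked point on each boundary component; all triangulations of this annulus are combinatorially equivalent and their quiver is the Kronecker quiver. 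The phase ordering then comes from Lemma~\ref{lem:geodesics}: points on the two standard saddle connections $\gamma_1, \gamma_2$ are joined by geodesics arising from closed connections in the ring domain, and since $\gamma_2$ lies in the strip anti-clockwise adjacent to that of $\gamma_1$ around each zero, this forces $\qdph{\gamma_2} < \qdph{\gamma_1}$; finally $\hhc{\gamma} = \hhc{\gamma}_1 + \hhc{\gamma}_2$ as you say (and this is unaffected by how many times the standard saddle connections wind around the ring domain).

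The second problem is a step that would fail: your wall-crossing derivation of the central factor. For the Kronecker quiver with $\scph{S_2} < \scph{S_1}$ the semistable phases are not only $\scph{S_1}$, $\vartheta$, and $\scph{S_2}$: the quiver is affine, so there are stable preprojective and preinjective modules of dimension vectors $(n, n+1)$ and $(n+1, n)$ for every $n \geqslant 1$, each contributing its own factor to the Harder--Narasimhan factorisation. The correct identity is $\qdl{t^{(0,1)}}\qdl{t^{(1,0)}} = \qdl{t^{(1,0)}}\qdl{t^{(2,1)}}\cdots F \cdots \qdl{t^{(1,2)}}\qdl{t^{(0,1)}}$ with infinitely many real-root factors; the three-term identity you write is the $A_2$ pentagon identity, which is false for the Kronecker quiver, and solving your truncated equation for $F$ would not yield $\qdl{-q^{1/2}t^{(1,1)}}^{-1}\qdl{-q^{-1/2}t^{(1,1)}}^{-1}$. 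The conclusion itself is correct and standard: the paper simply quotes it from \cite{mr} (see also \cite{reineke_wqdi}), which is how you could repair this step; alternatively, one can compute the phase-$\vartheta$ factor directly from the stable objects at that phase (the $\mathbb{P}^1$-family of $(1,1)$-dimensional representations), rather than from the infinite wall-crossing product.
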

\begin{proof}
Aside from the non-degenerate ring domain $R$, the remainder of $X\setminus\mathcal{W}_0(\varphi)$ consists of horizontal strips and half-planes, there being no spiral domains by Proposition~\ref{prop:fin_len_traj}.
Hence, the domains $D_1$, $D_2$ adjacent to $R$ are either horizontal strips or half-planes.
An example is shown in Figure~\ref{fig:nrd_i:before}, where $D_1$ and $D_2$ are each horizontal strips with zeros on both boundaries.

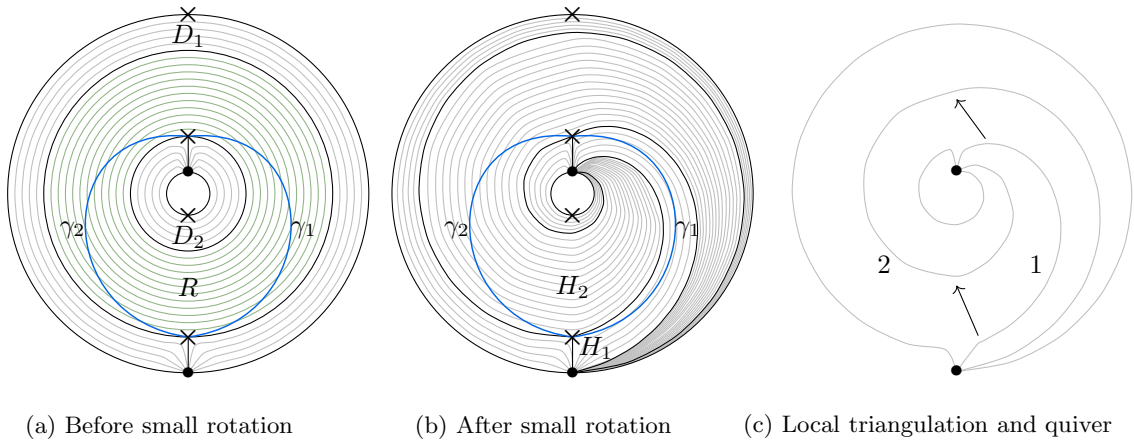
\begin{figure}[ht]
 \begin{subfigure}[t]{0.31\textwidth}
 \[
	\begin{tikzpicture}[scale=0.95]
	
	\draw[\septraj] (0,0) circle (0.3cm);
\draw[\saddletraj] (0,0) circle (0.8cm);
\draw[\saddletraj] (0,0) circle (2cm);
\draw[\septraj] (0,0) circle (2.5cm);

\draw[\generictraj] plot [smooth] coordinates {(0,0.3) (110:0.4) (120:0.4) (130:0.4) (140:0.4) (150:0.4) (160:0.4) (170:0.4) (180:0.4) (190:0.4) (200:0.4) (210:0.4) (220:0.4) (230:0.4) (240:0.4) (250:0.4) (260:0.4) (270:0.4) (280:0.4) (290:0.4) (300:0.4) (310:0.4) (320:0.4) (330:0.4) (340:0.4) (350:0.4) (360:0.4) (10:0.4) (20:0.4) (30:0.4) (40:0.4) (50:0.4) (60:0.4) (70:0.4) (0,0.3)};
\draw[\generictraj] plot [smooth] coordinates {(0,0.3) (105:0.475) (110:0.5) (120:0.5) (130:0.5) (140:0.5) (150:0.5) (160:0.5) (170:0.5) (180:0.5) (190:0.5) (200:0.5) (210:0.5) (220:0.5) (230:0.5) (240:0.5) (250:0.5) (260:0.5) (270:0.5) (280:0.5) (290:0.5) (300:0.5) (310:0.5) (320:0.5) (330:0.5) (340:0.5) (350:0.5) (360:0.5) (10:0.5) (20:0.5) (30:0.5) (40:0.5) (50:0.5) (60:0.5) (70:0.5) (75:0.475) (0,0.3)};
\draw[\generictraj] plot [smooth] coordinates {(0,0.3) (100:0.55) (110:0.6) (120:0.6) (130:0.6) (140:0.6) (150:0.6) (160:0.6) (170:0.6) (180:0.6) (190:0.6) (200:0.6) (210:0.6) (220:0.6) (230:0.6) (240:0.6) (250:0.6) (260:0.6) (270:0.6) (280:0.6) (290:0.6) (300:0.6) (310:0.6) (320:0.6) (330:0.6) (340:0.6) (350:0.6) (360:0.6) (10:0.6) (20:0.6) (30:0.6) (40:0.6) (50:0.6) (60:0.6) (70:0.6) (80:0.55) (0,0.3)};
\draw[\generictraj] plot [smooth] coordinates {(0,0.3) (95:0.625) (110:0.7) (120:0.7) (130:0.7) (140:0.7) (150:0.7) (160:0.7) (170:0.7) (180:0.7) (190:0.7) (200:0.7) (210:0.7) (220:0.7) (230:0.7) (240:0.7) (250:0.7) (260:0.7) (270:0.7) (280:0.7) (290:0.7) (300:0.7) (310:0.7) (320:0.7) (330:0.7) (340:0.7) (350:0.7) (360:0.7) (10:0.7) (20:0.7) (30:0.7) (40:0.7) (50:0.7) (60:0.7) (70:0.7) (85:0.625) (0,0.3)};

\draw[\generictraj] plot [smooth] coordinates {(0,-2.5) (272.5:2.125) (280:2.1) (290:2.1) (300:2.1) (310:2.1) (320:2.1) (330:2.1) (340:2.1) (350:2.1) (360:2.1) (10:2.1) (20:2.1) (30:2.1) (40:2.1) (50:2.1) (60:2.1) (70:2.1) (80:2.1) (90:2.1) (100:2.1) (110:2.1) (120:2.1) (130:2.1) (140:2.1) (150:2.1) (160:2.1) (170:2.1) (180:2.1) (190:2.1) (200:2.1) (210:2.1) (220:2.1) (230:2.1) (240:2.1) (250:2.1) (260:2.1) (267.5:2.125) (0,-2.5)};
\draw[\generictraj] plot [smooth] coordinates {(0,-2.5) (275:2.25) (280:2.2) (290:2.2) (300:2.2) (310:2.2) (320:2.2) (330:2.2) (340:2.2) (350:2.2) (360:2.2) (10:2.2) (20:2.2) (30:2.2) (40:2.2) (50:2.2) (60:2.2) (70:2.2) (80:2.2) (90:2.2) (100:2.2) (110:2.2) (120:2.2) (130:2.2) (140:2.2) (150:2.2) (160:2.2) (170:2.2) (180:2.2) (190:2.2) (200:2.2) (210:2.2) (220:2.2) (230:2.2) (240:2.2) (250:2.2) (260:2.2) (265:2.25) (0,-2.5)};
\draw[\generictraj] plot [smooth] coordinates {(0,-2.5) (277.5:2.325) (280:2.3) (290:2.3) (300:2.3) (310:2.3) (320:2.3) (330:2.3) (340:2.3) (350:2.3) (360:2.3) (10:2.3) (20:2.3) (30:2.3) (40:2.3) (50:2.3) (60:2.3) (70:2.3) (80:2.3) (90:2.3) (100:2.3) (110:2.3) (120:2.3) (130:2.3) (140:2.3) (150:2.3) (160:2.3) (170:2.3) (180:2.3) (190:2.3) (200:2.3) (210:2.3) (220:2.3) (230:2.3) (240:2.3) (250:2.3) (260:2.3) (262.5:2.325) (0,-2.5)};
\draw[\generictraj] plot [smooth] coordinates {(0,-2.5) (280:2.4) (290:2.4) (300:2.4) (310:2.4) (320:2.4) (330:2.4) (340:2.4) (350:2.4) (360:2.4) (10:2.4) (20:2.4) (30:2.4) (40:2.4) (50:2.4) (60:2.4) (70:2.4) (80:2.4) (90:2.4) (100:2.4) (110:2.4) (120:2.4) (130:2.4) (140:2.4) (150:2.4) (160:2.4) (170:2.4) (180:2.4) (190:2.4) (200:2.4) (210:2.4) (220:2.4) (230:2.4) (240:2.4) (250:2.4) (260:2.4) (0,-2.5)};

\draw[\closedtraj] (0,0) circle (0.9cm);
\draw[\closedtraj] (0,0) circle (1cm);
\draw[\closedtraj] (0,0) circle (1.1cm);
\draw[\closedtraj] (0,0) circle (1.2cm);
\draw[\closedtraj] (0,0) circle (1.3cm);
\draw[\closedtraj] (0,0) circle (1.4cm);
\draw[\closedtraj] (0,0) circle (1.5cm);
\draw[\closedtraj] (0,0) circle (1.6cm);
\draw[\closedtraj] (0,0) circle (1.7cm);
\draw[\closedtraj] (0,0) circle (1.8cm);
\draw[\closedtraj] (0,0) circle (1.9cm);

\draw[\septraj] (0,-2.5) -- (0,-2);
\draw[\septraj] (0,0.8) -- (0,0.3);

\draw[\conn, line width=0.55pt] (0,0.8) .. controls (-1.9,1) and (-1.9,-1.75)  .. (0,-2);
\draw[\conn, line width=0.55pt] (0,0.8) .. controls (1.9,1) and (1.9,-1.75)  .. (0,-2);

\node at (0,2.5) {$\bm{\times}$};
\node at (0,-2.5) {$\bullet$};
\node at (0,-2) {$\bm{\times}$};
\node at (0,0.8) {$\bm{\times}$};
\node at (0,0.3) {$\bullet$};
\node at (0,-0.3) {$\bm{\times}$};

\node at (0,2.2) {$D_1$};
\node at (0,-0.6) {$D_2$};

\node at (1.6,-0.5) {$\gamma_{1}$};
\node at (-1.6,-0.5) {$\gamma_{2}$};

\node at (0,-1.3) {$R$};

\path[use as bounding box] (-3,-2.75) rectangle (3,2.75);

	\end{tikzpicture} 
 \]
 \caption{Before small rotation}\label{fig:nrd_i:before}
 \end{subfigure}
 \begin{subfigure}[t]{0.31\textwidth}
 \[
	\begin{tikzpicture}[scale=0.95]

\draw[\generictraj] plot [smooth] coordinates {(90:0.3) (82:0.4) (79.4:0.5) (68.8:0.6) (58.2:0.7) (47.6:0.8) (37.1:0.9) (26.5:1) (15.9:1.1) (5.3:1.2) (354:1.3) (344:1.4) (333.5:1.5) (322.9:1.6) (312.4:1.7) (301.7:1.8) (291.2:1.9) (280.6:2) (275.3:2.075) (270:2.5)};
\draw[\generictraj] plot [smooth] coordinates {(90:0.3) (84:0.5) (79.4:0.6) (68.8:0.7) (58.2:0.8) (47.6:0.9) (37.1:1) (26.5:1.1) (15.9:1.2) (5.3:1.3) (354:1.4) (344:1.5) (333.5:1.6) (322.9:1.7) (312.4:1.8) (301.7:1.9) (291.2:2) (280.6:2.1) (277.4:2.15) (270:2.5)};
\draw[\generictraj] plot [smooth] coordinates {(90:0.3) (86:0.6) (79.4:0.7) (68.8:0.8) (58.2:0.9) (47.6:1) (37.1:1.1) (26.5:1.2) (15.9:1.3) (5.3:1.4) (354:1.5) (344:1.6) (333.5:1.7) (322.9:1.8) (312.4:1.9) (301.7:2) (291.2:2.1) (280.6:2.2) (270:2.5)};
\draw[\generictraj] plot [smooth] coordinates {(90:0.3) (88:0.7) (79.4:0.8) (68.8:0.9) (58.2:1) (47.6:1.1) (37.1:1.2) (26.5:1.3) (15.9:1.4) (5.3:1.5) (354:1.6) (344:1.7) (333.5:1.8) (322.9:1.9) (312.4:2) (301.7:2.1) (291.2:2.2) (280.6:2.3) (270:2.5)};

\draw[\generictraj] plot [smooth] coordinates {(90:0.3) (54:0.37) (18:0.44) (342:0.51) (322.9:0.55) (306:0.58) (291.2:0.61) (270:0.65) (234:0.7) (198:0.75) (162:0.8) (126:0.85) (90:0.9) (72:1.06) (54:1.24) (36:1.38) (18:1.54) (0:1.70) (342:1.86) (324:2.02) (306:2.18) (288:2.34) (270:2.5)};
\draw[\generictraj] plot [smooth] coordinates {(90:0.3) (54:0.39) (18:0.48) (342:0.57) (322.9:0.62) (306:0.66) (291.2:0.70) (270:0.76) (234:0.81) (198:0.86) (162:0.91) (126:0.96) (90:1) (72:1.16) (54:1.32) (36:1.46) (18:1.6) (0:1.75) (342:1.9) (324:2.05) (306:2.2) (288:2.35) (270:2.5)};
\draw[\generictraj] plot [smooth] coordinates {(90:0.3) (54:0.41) (18:0.52) (342:0.64) (322.9:0.7) (306:0.74) (291.2:0.79) (270:0.86) (234:0.91) (198:0.96) (162:1.01) (126:1.06) (90:1.11) (72:1.25)  (54:1.40) (36:1.53) (18:1.67) (0:1.81) (342:1.94) (324:2.08) (306:2.22) (288:2.36) (270:2.5)};
\draw[\generictraj] plot [smooth] coordinates {(90:0.3) (54:0.43) (18:0.57) (342:0.70) (322.9:0.77) (306:0.83) (291.2:0.89) (270:0.96) (234:1.01) (198:1.06) (162:1.11) (126:1.16) (90:1.21) (72:1.34)(54:1.49) (36:1.6) (18:1.73) (0:1.86) (342:1.99) (324:2.11) (306:2.24) (288:2.37) (270:2.5)};
\draw[\generictraj] plot [smooth] coordinates {(90:0.3) (54:0.45) (18:0.61) (342:0.76) (322.9:0.84) (306:0.91) (291.2:0.98) (270:1.07) (234:1.12) (198:1.17) (162:1.22) (126:1.27) (90:1.32) (72:1.44) (54:1.57) (36:1.67) (18:1.79) (0:1.91) (342:2.03) (324:2.14) (306:2.26) (288:2.38) (270:2.5)};
\draw[\generictraj] plot [smooth] coordinates {(90:0.3) (54:0.47) (18:0.65) (342:0.82) (322.9:0.92) (306:0.99) (291.2:1.07) (270:1.17) (234:1.22) (198:1.27) (162:1.32) (126:1.37) (90:1.42) (72:1.53) (54:1.65) (36:1.75) (18:1.85) (0:1.96) (342:2.07) (324:2.18) (306:2.28) (288:2.39) (270:2.5)};
\draw[\generictraj] plot [smooth] coordinates {(90:0.3) (54:0.49) (18:0.69) (342:0.88) (322.9:0.99) (306:1.07) (291.2:1.16) (270:1.28) (234:1.33) (198:1.38) (162:1.43) (126:1.48) (90:1.53) (72:1.62) (54:1.73) (36:1.82) (18:1.92) (0:2.01) (342:2.11) (324:2.21) (306:2.3) (288:2.4) (270:2.5)};
\draw[\generictraj] plot [smooth] coordinates {(90:0.3) (54:0.52) (18:0.73) (342:0.95) (322.9:1.06) (306:1.15) (291.2:1.25) (270:1.38) (234:1.43) (198:1.48) (162:1.53) (126:1.58) (90:1.63) (72:1.72) (54:1.81) (36:1.89) (18:1.98) (0:2.06) (342:2.15) (324:2.24) (306:2.33) (288:2.41) (270:2.5)};
\draw[\generictraj] plot [smooth] coordinates {(90:0.3) (54:0.54) (18:0.77) (342:1) (322.9:1.13) (306:1.23) (291.2:1.34) (270:1.48) (234:1.53) (198:1.58) (162:1.63) (126:1.68) (90:1.73) (72:1.81) (54:1.89) (36:1.96) (18:2.04) (0:2.12) (342:2.19) (324:2.27) (306:2.35) (288:2.42) (270:2.5)};
\draw[\generictraj] plot [smooth] coordinates {(90:0.3) (54:0.56) (18:0.81) (342:1.07) (322.9:1.21) (306:1.31) (291.2:1.43) (270:1.59) (234:1.64) (198:1.69) (162:1.74) (126:1.79) (90:1.84) (72:1.9) (54:1.97) (36:2.04) (18:2.1) (0:2.17) (342:2.23) (324:2.3) (306:2.37) (288:2.43) (270:2.5)};
\draw[\generictraj] plot [smooth] coordinates {(90:0.3) (54:0.58) (18:0.86) (342:1.13) (322.9:1.28) (306:1.40) (291.2:1.53) (270:1.69) (234:1.74) (198:1.79) (162:1.84) (126:1.89) (90:1.94) (72:2) (54:2.06) (36:2.11) (18:2.16) (0:2.22) (342:2.28) (324:2.33) (306:2.39) (288:2.44) (270:2.5)};
\draw[\generictraj] plot [smooth] coordinates {(90:0.3) (54:0.6) (18:0.9) (342:1.20) (322.9:1.35) (306:1.48) (291.2:1.62) (270:1.79) (234:1.84) (198:1.89) (162:1.94) (126:2) (90:2.04) (72:2.09) (54:2.14) (36:2.18) (18:2.23) (0:2.27) (342:2.32) (324:2.36) (306:2.41) (288:2.45) (270:2.5)};
\draw[\generictraj] plot [smooth] coordinates {(90:0.3) (54:0.62) (18:0.94) (342:1.26) (322.9:1.43) (306:1.56) (291.2:1.71) (270:1.9) (234:1.95) (198:2) (162:2.05) (126:2.1) (90:2.15) (72:2.18) (54:2.22) (36:2.25) (18:2.29) (0:2.32) (342:2.36) (324:2.39) (306:2.43) (288:2.46) (270:2.5)};

\draw[\generictraj] plot [smooth] coordinates {(270:2.5) (268:2.1) (252:2.12) (234:2.14) (216:2.16) (198:2.18) (180:2.2) (162:2.22) (144:2.24) (126:2.26) (108:2.28) (90:2.3) (72:2.32) (54:2.34) (36:2.36) (18:2.38) (0:2.4) (342:2.42) (324:2.44) (306:2.46) (288:2.48) (270:2.5)};
\draw[\generictraj] plot [smooth] coordinates {(270:2.5) (266:2.2) (252:2.22) (234:2.23) (216:2.245) (198:2.26) (180:2.275) (162:2.29) (144:2.3) (126:2.32) (108:2.34) (90:2.35) (72:2.37) (54:2.38) (36:2.4) (18:2.41) (0:2.425) (342:2.44) (324:2.455) (306:2.47) (288:2.485) (270:2.5)};
\draw[\generictraj] plot [smooth] coordinates {(270:2.5) (264:2.3) (252:2.31) (234:2.32) (216:2.33) (198:2.34) (180:2.35) (162:2.36) (144:2.37) (126:2.38) (108:2.39) (90:2.4) (72:2.41) (54:2.42) (36:2.43) (18:2.44) (0:2.45) (342:2.46) (324:2.47) (306:2.48) (288:2.49) (270:2.5)};
\draw[\generictraj] plot [smooth] coordinates {(270:2.5) (262:2.4) (252:2.4)(234:2.41) (216:2.415) (198:2.42) (180:2.425) (162:2.43) (144:2.43) (126:2.44) (108:2.45) (90:2.45) (72:2.45) (54:2.46) (36:2.47) (18:2.47) (0:2.475) (342:2.48) (324:2.485) (306:2.49) (288:2.495) (270:2.5)};

\draw[\generictraj] plot [smooth] coordinates {(90:0.3) (94:0.7) (126:0.66) (162:0.62) (198:0.58) (234:0.54) (270:0.5) (306:0.46) (342:0.42) (18:0.38) (54:0.34) (90:0.3)};
\draw[\generictraj] plot [smooth] coordinates {(90:0.3) (98:0.6) (126:0.57) (162:0.54) (198:0.51) (234:0.48) (270:0.45) (306:0.42) (342:0.39) (18:0.36) (54:0.33) (90:0.3)};
\draw[\generictraj] plot [smooth] coordinates {(90:0.3) (102:0.5) (126:0.48) (162:0.46) (198:0.44) (234:0.42) (270:0.4) (306:0.38) (342:0.36) (18:0.34) (54:0.32) (90:0.3)};
\draw[\generictraj] plot [smooth] coordinates {(90:0.3) (106:0.4) (126:0.39) (162:0.38) (198:0.37) (234:0.36) (270:0.35) (306:0.34) (342:0.33) (18:0.32) (54:0.31) (90:0.3)};

\draw[\septraj] (0,0) circle (0.3cm);
\draw[\septraj] (0,0) circle (2.5cm);

\draw[\septraj] plot [smooth] coordinates {(90:0.8) (126:0.75) (162:0.7) (198:0.65) (234:0.6) (270:0.55) (306:0.5) (342:0.45) (18:0.4) (54:0.35) (90:0.3)}; %
\draw[\septraj] plot [smooth] coordinates {(90:0.8) (79.4:0.9) (68.8:1) (58.2:1.1) (47.6:1.2) (37.1:1.3) (26.5:1.4) (15.9:1.5) (5.3:1.6) (354.7:1.7) (344.1:1.8) (333.5:1.9) (322.9:2) (312.4:2.1) (301.7:2.2) (291.2:2.3) (280.6:2.4)  (270:2.5)}; %
\draw[\septraj] plot [smooth] coordinates {(90:0.3) (79.4:0.4) (68.8:0.5) (58.2:0.6) (47.6:0.7) (37.1:0.8) (26.5:0.9) (15.9:1) (5.3:1.1) (354.7:1.2) (344.1:1.3) (333.5:1.4) (322.9:1.5) (312.4:1.6) (301.7:1.7) (291.2:1.8) (280.6:1.9)  (270:2)}; %
\draw[\septraj] plot [smooth] coordinates {(270:2) (252:2.025) (234:2.05) (216:2.075) (198:2.1) (180:2.125) (162:2.15) (144:2.175) (126:2.2) (108:2.225) (90:2.25) (72:2.275) (54:2.3) (36:2.325) (18:2.35) (0:2.375) (342:2.4) (324:2.425) (306:2.45) (288:2.475) (270:2.5)}; %

\draw[\septraj] (0,-2.5) -- (0,-2);
\draw[\septraj] (0,0.8) -- (0,0.3);

\draw[\conn, line width=0.55pt] (0,0.8) .. controls (-1.9,1) and (-1.9,-1.75)  .. (0,-2);
\draw[\conn, line width=0.55pt] (0,0.8) .. controls (1.9,1) and (1.9,-1.75)  .. (0,-2);

\node at (0,2.5) {$\bm{\times}$};
\node at (0,-2.5) {$\bullet$};
\node at (0,-2) {$\bm{\times}$};
\node at (0,0.8) {$\bm{\times}$};
\node at (0,0.3) {$\bullet$};
\node at (0,-0.3) {$\bm{\times}$};

\node at (0,-1.3) {$H_2$};
\node at (0.325,-2.15) {$H_1$};

\node at (1.6,-0.5) {$\gamma_{1}$};
\node at (-1.6,-0.5) {$\gamma_{2}$};

\path[use as bounding box] (-3,-2.75) rectangle (3,2.75);
	
	\end{tikzpicture} 
 \]
 \caption{After small rotation}\label{fig:nrd_i:after}
 \end{subfigure}
 \begin{subfigure}[t]{0.31\textwidth}
 \[
	\begin{tikzpicture}[scale=0.95]

\draw[\generictraj] plot [smooth] coordinates {(90:0.3) (84:0.5) (79.4:0.6) (68.8:0.7) (58.2:0.8) (47.6:0.9) (37.1:1) (26.5:1.1) (15.9:1.2) (5.3:1.3) (354:1.4) (344:1.5) (333.5:1.6) (322.9:1.7) (312.4:1.8) (301.7:1.9) (291.2:2) (280.6:2.1) (277.4:2.15) (270:2.5)};
\draw[\generictraj] plot [smooth] coordinates {(90:0.3) (54:0.47) (18:0.65) (342:0.82) (322.9:0.92) (306:0.99) (291.2:1.07) (270:1.17) (234:1.22) (198:1.27) (162:1.32) (126:1.37) (90:1.42) (72:1.53) (54:1.65) (36:1.75) (18:1.85) (0:1.96) (342:2.07) (324:2.18) (306:2.28) (288:2.39) (270:2.5)};
\draw[\generictraj] plot [smooth] coordinates {(270:2.5) (266:2.2) (252:2.22) (234:2.23) (216:2.245) (198:2.26) (180:2.275) (162:2.29) (144:2.3) (126:2.32) (108:2.34) (90:2.35) (72:2.37) (54:2.38) (36:2.4) (18:2.41) (0:2.425) (342:2.44) (324:2.455) (306:2.47) (288:2.485) (270:2.5)};
\draw[\generictraj] plot [smooth] coordinates {(90:0.3) (98:0.6) (126:0.57) (162:0.54) (198:0.51) (234:0.48) (270:0.45) (306:0.42) (342:0.39) (18:0.36) (54:0.33) (90:0.3)};
\node at (0,-2.5) {$\bullet$};
\node at (0,0.3) {$\bullet$};

\draw[->] (0.3,-2) -- (0,-1.3);
\draw[->] (0.4,0.75) -- (0,1.3);

\node at (1.1,-1) {$1$};
\node at (-1,-1) {$2$};

\path[use as bounding box] (-3,-2.75) rectangle (3,2.75);
	
	\end{tikzpicture} 
 \]
 \caption{Local triangulation and quiver}\label{fig:nrd_i:triang}
 \end{subfigure}
\caption{Trajectory structure before and after a small rotation for a standard non-degenerate ring domain.}\label{fig:nrd_i_rotation}
\end{figure}

By Proposition~\ref{prop:sep_traj_persist}, we can rotate by some $\varepsilon>0$ so that $\varphi^{(\varepsilon)}$ preserves the separating trajectories on the boundaries of $D_1$ and $D_2$.
The region of the associated surface $\mbso$ between the arcs or boundary components corresponding to their generic trajectories is an annulus with one marked point on each boundary.
All triangulations of this surface give rise to the same quiver, and are given by a pair of arcs between each of the marked points, forming two triangles.
An example of such a triangulation can be seen by taking one generic trajectory from each strip in Figure~\ref{fig:nrd_i:after}, as shown in Figure~\ref{fig:nrd_i:triang}.

The quiver corresponding to the triangulation of this annulus is then the Kronecker quiver, since the two arcs are anti-clockwise adjacent to each other in both triangles.
We label the vertices corresponding to $H_1$ and $H_2$ as $1$ and $2$, such that the pair of arrows goes from vertex $1$ to vertex~$2$, with $\gamma_{1}$ and $\gamma_{2}$ the corresponding standard saddle connections.
We have that $\gamma_{2}$ is in the horizontal strip anti-clockwise adjacent from $\gamma_{1}$ around each zero, and that points on these saddle connections are linked by the geodesics given by the closed connections in $R$, as can be seen in in Figure~\ref{fig:nrd_i:before}.
We therefore have that $\qdph{\gamma_{2}} < \qdph{\gamma_{1}}$ by Lemma~\ref{lem:geodesics}. 
This gives the desired stability condition.

It is not hard to see from Figure~\ref{fig:nrd_i:before} that the hat-homology class of the closed connections in $R$ is equal to $\hhc{\gamma}_{1} + \hhc{\gamma}_2$.
Note that this remains true if the standard saddle connections wind around $R$ more times.

One can derive that the DT generating series of this category is $\qdl{-q^{1/2}t}^{-1}\qdl{-q^{-1/2}t}^{-1}$ from \cite[Theorem~1.1]{mr}, as is already well-known.
This is done, for example, in \cite[Theorem~2.4]{reineke_wqdi}, although note that the conventions there differ from ours.
\end{proof}

\subsubsection{Toral case}\label{sect:main:nrd:ii}

We now treat case~\ref{op:fin_len_traj:nrd}\ref{op:fin_len_traj:nrd:torus} in Proposition~\ref{prop:fin_len_traj}, the toral non-degenerate ring domain.

\begin{proposition}\label{prop:nrd_ii}
Suppose that $\varphi$ is a generic infinite GMN differential on a Riemann surface $X$ with a toral non-degenerate ring domain, and let the two type~I saddle trajectories be of class $\hhc{\gamma}$.
Then $\ssce{\sigma_{\varphi}}{1}$ is quasi-equivalent to the full $A_\infty$-subcategory of $\mathcal{H}_\infty(Q',0)$, where $Q'$ is the quiver \[
    \begin{tikzcd}
        \overset{1}{\bullet} \ar[rr] \ar[dr] && \overset{2}{\bullet} \\
        & \overset{3}{\bullet}, \ar[ur] &
    \end{tikzcd}
    \] 
consisting of objects in $\mathcal{H}(Q',0)$ which are semistable of phase $\scph{S_1 \oplus S_2} = \scph{S_3}$ under a stability condition where $\scph{S_2} < \scph{S_1}$.
Moreover, there is an isomorphism between the canonical choices of orientation data on the stacks of objects in each of these categories.

It follows that the DT generating series of the category $\ssc{\sigma_{\varphi}}{1}$ is \[
\qdl{t^{\hhc{\gamma}}}^{2}\qdl{-q^{1/2}t^{2\hhc{\gamma}}}^{-1}\qdl{-q^{-1/2}t^{2\hhc{\gamma}}}^{-1},
\]
and so the refined DT invariants are $\dtref{\hhc{\gamma}} = 2$ and $\dtref{2\hhc{\gamma}} = q^{1/2} + q^{-1/2}$.
\end{proposition}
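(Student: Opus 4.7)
The plan is to follow the structure of Proposition~\ref{prop:drd_ii}, which treats the analogous toral degenerate case, and to combine it with the closed-saddle-trajectory analysis from Proposition~\ref{prop:nrd_i}. First I would invoke non-degenerate analogues of Propositions~\ref{prop:drd_ii:2_vertex} and~\ref{prop:drd_ii:3_vertex} carried out in Section~\ref{sect:spiral}. These analogues, which extend the spiral-domain constructions to the non-degenerate case, should yield an equivalence of stacks $\sqwpstn \simeq \sqapstn{Q', 0}$ inducing the desired quasi-equivalence $\ssce{\sigma_\varphi}{1} \simeq \ssce{\sigma'}{\vartheta'} \subset \twz{\akd{Q'}{0}}$, together with an isomorphism of canonical orientation data verified via Proposition~\ref{prop:od_ug}.

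Second, I would use the class identification from these same analogues, together with the geodesic argument of Lemma~\ref{lem:geodesics} employed in Proposition~\ref{prop:nrd_i}, to establish the stability condition on $Q'$. The expected picture is that $S_3$ corresponds to a type~I saddle trajectory, so that $\dimu S_3 \mapsto \hhc{\gamma}$, whilst $S_1$ and $S_2$ correspond to two standard saddle connections bordering the closed-saddle side of the ring domain after a small rotation of $\varphi$ (obtained by Proposition~\ref{prop:sep_traj_persist} applied to the separating trajectories outside the ring domain). Their classes should satisfy $\dimu S_1 + \dimu S_2 \mapsto 2\hhc{\gamma}$, the class of a closed trajectory in the ring domain. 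The stability condition $\scph{S_2} < \scph{S_1}$ arises from the anti-clockwise geodesic argument of Lemma~\ref{lem:geodesics} at the zero on the closed-saddle boundary, exactly as in Proposition~\ref{prop:nrd_i}, whilst $\scph{S_3} = \scph{S_1 \oplus S_2}$ reflects that the torus classes and the ring-domain classes share the same $\sigma_\varphi$-phase.

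Third, I would compute the Donaldson--Thomas generating series of $\sigma'$-semistable $\jac{Q', 0}$-modules of phase $\vartheta'$ by enumerating the semistable simple objects in this subcategory. Given the stability $\scph{S_2} < \vartheta' = \scph{S_3} < \scph{S_1}$, the stable objects at phase $\vartheta'$ are $S_3$ together with the Kronecker-type indecomposable of dimension $\dimu S_1 + \dimu S_2$ coming from the non-split extension of $S_1$ by $S_2$; their extensions give a category equivalent to representations of a two-vertex quiver, whose generating series is known. The final expression should split conceptually into the torus-like contribution $\qdl{t^{\hhc{\gamma}}}^{2}\qdl{-q^{-1/2}t^{2\hhc{\gamma}}}^{-1}$ (analogous to Proposition~\ref{prop:drd_ii}) and the additional Kronecker-like factor $\qdl{-q^{1/2}t^{2\hhc{\gamma}}}^{-1}$ (analogous to Proposition~\ref{prop:nrd_i}). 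Applying Proposition~\ref{prop:dt_equiv_cats} with the class identification $\xi$ then gives the stated series and hence the claimed refined and numerical invariants.

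The main obstacle will be carrying out the non-degenerate analogues in Section~\ref{sect:spiral}: one must track how the presence of the outer ring domain modifies the categorical equivalence established for the torus-with-spiral-domain, both at the level of the stack of objects and of the $A_\infty$-structure, and verify that the canonical orientation data remains compatible across this modification. A secondary difficulty lies in carefully enumerating the $\vartheta'$-semistable indecomposables of $Q'$, since the direct arrow $S_1 \to S_2$ coexists with the composite path $S_1 \to S_3 \to S_2$; this requires careful analysis of subobject lattices, but once the relevant simples and extensions are isolated, the combinatorial computation of the generating series reduces to manipulation of quantum dilogarithm identities of the type appearing in Proposition~\ref{prop:haiden}.
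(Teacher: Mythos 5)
Your overall architecture is the same as the paper's: the heavy lifting is deferred to the non-degenerate analogues in Section~\ref{sect:spiral} (these are Propositions~\ref{prop:nrd_ii:4_vertex} and~\ref{prop:nrd_ii:3_vertex}), orientation data is checked via Proposition~\ref{prop:od_ug}, and the final series is transported through Proposition~\ref{prop:dt_equiv_cats}. However, your steps two and three contain genuine errors. First, the class identification is wrong: under the equivalence of Proposition~\ref{prop:nrd_ii:3_vertex}, \emph{both} dimension vectors $(1,1,0)$ (i.e.\ $\dimu S_1 + \dimu S_2$) and $(0,0,1)$ (i.e.\ $\dimu S_3$) correspond to $\hhc{\gamma}$, while $(1,1,1)$ corresponds to $2\hhc{\gamma}$, the class of a closed trajectory in the ring domain. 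This is forced both by the geometry (there are two type~I saddle trajectories of class $\hhc{\gamma}$, and $\dtref{\hhc{\gamma}}=2$ requires two rigid stables in that class) and by the answer itself: with your assignment $\dimu S_1 + \dimu S_2 \mapsto 2\hhc{\gamma}$ the series would involve classes $\hhc{\gamma}$, $2\hhc{\gamma}$ and $3\hhc{\gamma}$ and could not equal $\qdl{t^{\hhc{\gamma}}}^{2}\qdl{-q^{1/2}t^{2\hhc{\gamma}}}^{-1}\qdl{-q^{-1/2}t^{2\hhc{\gamma}}}^{-1}$.

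Second, your enumeration of stables of phase $\vartheta'$ is incomplete, and the proposed reduction fails as a consequence. Besides $S_3$ and the $(1,1,0)$ indecomposable, there is a $\mathbb{P}^1$ family of stable representations of dimension vector $(1,1,1)$ (this is \cite[Lemma~11.7]{bs}), arising because the direct arrow $1 \to 2$ and the composite through $3$ give a pencil of stable $(1,1,1)$ representations. These are simple objects of the abelian category of $\vartheta'$-semistables, so that category is \emph{not} the extension closure of two rigid objects and is not equivalent to representations of a two-vertex quiver; any such reduction would reproduce the degenerate-case series of Proposition~\ref{prop:drd_ii} and miss a factor. In fact it is precisely the $\mathbb{P}^1$ family that produces \emph{both} factors $\qdl{-q^{1/2}t^{2\hhc{\gamma}}}^{-1}\qdl{-q^{-1/2}t^{2\hhc{\gamma}}}^{-1}$, while the two factors $\qdl{t^{\hhc{\gamma}}}$ come from the two rigid stables; the paper obtains the series for this three-vertex quiver directly from \cite[Theorem~1.1]{mr}, rather than by splitting it into a ``torus-like'' piece and a ``Kronecker-like'' piece as you suggest, and your attribution of the factors to those two pieces does not match the actual stable objects. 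To repair the argument you should correct the class map, include the $\mathbb{P}^1$ family in the list of stables, and compute the generating series for the subcategory of $\modules \jacu{Q',0}$ of phase $\vartheta'$ by the methods of \cite{mr} (or \cite{bs}) rather than by a two-vertex reduction.
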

\begin{proof}
We derive the relevant category of semistable objects and isomorphism of orientation data in Propositions~\ref{prop:nrd_ii:4_vertex} and \ref{prop:nrd_ii:3_vertex}.
We have by \cite[Lemma~11.9(J2)]{bs} that the stable objects in this subcategory of $\modules \jac{Q', 0}$ consist of the simple representation with dimension vector $(0, 0, 1)$, the indecomposable representation of dimension vector $(1, 1, 0)$ and a $\mathbb{P}^1$ family of representations of dimension vector $(1, 1, 1)$.
Using \cite[Theorem~1.1]{mr}, we obtain that the DT generating series of this category is \[\qdl{t^{(1, 1, 0)}}\qdl{t^{(0, 0, 1)}}\qdl{-q^{1/2}t^{(1, 1, 1)}}^{-1}\qdl{-q^{-1/2}t^{(1, 1, 1)}}^{-1}.\]
Here the factors $\qdl{t^{(1, 1, 0)}}$ and $\qdl{t^{(0, 0, 1)}}$ correspond to the single stable objects and the factor $\qdl{-q^{1/2}t^{(1, 1, 1)}}^{-1}\qdl{-q^{-1/2}t^{(1, 1, 1)}}^{-1}$ corresponds to the $\mathbb{P}^1$ family.
Each of the classes $(1, 1, 0)$ and $(0, 0, 1)$ corresponds to the class $\hhc{\gamma} \in \hhspan{\varphi}$ by Propositions~\ref{prop:nrd_ii:4_vertex} and \ref{prop:nrd_ii:3_vertex}.
We obtain that the DT generating series of  $\ssc{\sigma_{\varphi}}{1}$ is $\qdl{t^{\hhc{\gamma}}}^{2}\qdl{-q^{1/2}t^{2\hhc{\gamma}}}^{-1}\qdl{-q^{-1/2}t^{2\hhc{\gamma}}}^{-1}$ .%
\end{proof}

\subsubsection{Type~III case}\label{sect:main:nrd:iii}

We now consider case~\ref{op:fin_len_traj:nrd}\ref{op:fin_len_traj:nrd:iii} in Proposition~\ref{prop:fin_len_traj}
We call the quiver with potential appearing below the \emph{barbell quiver with potential}, and postpone computing its DT generating series to Section~\ref{sect:db}.

\begin{proposition}\label{prop:nrd_iii}
Suppose that $\varphi$ is a generic infinite GMN differential on a Riemann surface $X$ with a type~III non-degenerate ring domain, with $\gamma$ the type~III saddle trajectory. The category $\ssc{\sigma_{\varphi}}{1}$ is given by semistable representations of the quiver $Q^{\db}$\[
    \begin{tikzcd}
    \overset{1}{\bullet} \ar[loop left,"a"] \ar[r] & \overset{2}{\bullet} \ar[loop right,"b"]
    \end{tikzcd}
    \] with potential $W^{\db}:=a^3 + b^3$ of phase $\scph{S_1 \oplus S_2}$ under a stability condition such that $\scph{S_2} < \scph{S_1}$.
The DT generating series of the category $\ssc{\sigma_{\varphi}}{1}$ is \[
\qdl{t^{\hhc{\gamma}}}^4\qdl{-q^{1/2}t^{2\hhc{\gamma}}}^{-1}\qdl{-q^{-1/2}t^{2\hhc{\gamma}}}^{-1},
\]
and the refined DT invariants are $\dtref{\hhc{\gamma}} = 4$ and $\dtref{2\hhc{\gamma}} = q^{1/2} + q^{-1/2}$.
\end{proposition}
\begin{proof}
Let $s_1$ and $s_2$ be the simple poles connected by the type~III saddle trajectory, with $R$ the non-degenerate ring domain.
We know from Proposition~\ref{prop:fin_len_traj} that the only finite-length trajectories of $\varphi$ lie in $R$ and its boundaries, and that there are no spiral domains.
Hence, the rest of $X\setminus\mathcal{W}_0(\varphi) $ consists of horizontal strips and half-planes.
Let $H$ be the horizontal strip or half-plane which is incident to the zero $z$ on the other boundary of $R$.
In the case where $H$ is a horizontal strip with a zero on its other boundary, we are in the situation of Figure~\ref{fig:nrd_iii:before}.

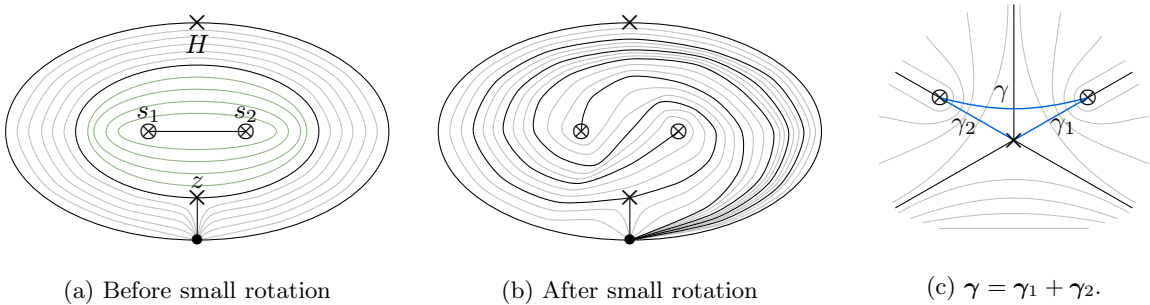
\begin{figure}[h]
        \begin{subfigure}[t]{0.35\textwidth}
    \[
    \begin{tikzpicture}[scale=0.8]
        
                \draw[\saddletraj] (-0.8,0) -- (0.8,0);
                \draw[\closedtraj] (0,0) ellipse (1.3cm and 0.3cm);
                \draw[\closedtraj] (0,0) ellipse (1.5cm and 0.5cm);
                \draw[\closedtraj] (0,0) ellipse (1.7cm and 0.7cm);
                \draw[\closedtraj] (0,0) ellipse (1.8cm and 0.9cm);
                \draw[\saddletraj] (0,0) ellipse (2cm and 1.1cm);

                \draw[\generictraj] plot [smooth] coordinates {(270:1.8) (225:2.05) (200:2.65) (180:2.96) (160:2.65) (135:2.07) (90:1.68) (45:2.07) (20:2.65) (0:2.96) (340:2.65) (315:2.07) (270:1.8)};

                \draw[\generictraj] plot [smooth] coordinates {(270:1.8) (260:1.65) (225:1.93) (200:2.48) (180:2.77) (160:2.48) (135:1.93) (90:1.57) (45:1.93) (20:2.48) (0:2.77) (340:2.48) (315:1.93) (280:1.65) (270:1.8)};
                
                \draw[\generictraj] plot [smooth] coordinates {(270:1.8) (260:1.525) (225:1.79) (200:2.3) (180:2.56) (160:2.3) (135:1.79) (90:1.45) (45:1.79) (20:2.3) (0:2.56) (340:2.3) (315:1.79) (280:1.525) (270:1.8)};

                \draw[\generictraj] plot [smooth] coordinates {(270:1.8) (260:1.4) (225:1.65) (200:2.13) (180:2.38) (160:2.13) (135:1.65) (90:1.33) (45:1.65) (20:2.13) (0:2.38) (340:2.13) (315:1.65) (280:1.4) (270:1.8)};

                \draw[\generictraj] plot [smooth] coordinates {(270:1.8) (262:1.3) (225:1.5) (200:1.95) (180:2.19) (160:1.95) (135:1.5) (90:1.22) (45:1.5) (20:1.95) (0:2.19) (340:1.95) (315:1.5) (278:1.3) (270:1.8)};

                \node at (-0.8,0) {\scriptsize $\bm{\otimes}$};
                \node at (0.8,0) {\scriptsize $\bm{\otimes}$};
                \node at (-0.8,0) [above] {$s_1$};
                \node at (0.8,0) [above] {$s_2$};

                \node at (0,-1.1) {$\bm{\times}$};
                \node at (0,-1.1) [above] {$z$};

                \node at (0,-1.8) {$\bullet$};
                \node at (0,1.8) {$\bm{\times}$};

                \draw[\septraj] (0,-1.1) -- (0,-1.8);
                \draw[\septraj] (0,0) ellipse (3.15cm and 1.8cm);

                \node at (0,1.45) {$H$};
        
    \end{tikzpicture}
    \]
    \caption{Before small rotation}\label{fig:nrd_iii:before}
    \end{subfigure}
    \begin{subfigure}[t]{0.35\textwidth}
    \[
    \begin{tikzpicture}[scale=0.8]

                \draw[\generictraj] plot [smooth] coordinates {(270:1.8) (280:1.33) ($0.67*(315:1.18)+0.33*(315:1.78)$) ($0.67*(340:1.39)+0.33*(340:1.91)$) ($0.67*(0:1.41)+0.33*(0:1.84)$) ($0.67*(20:1.11)+0.33*(20:1.51)$) ($0.65*(45:0.68)+0.35*(45:1.15)$) ($0.67*(90:0)+0.33*(90:0.91)$) ($0.67*(90:0)+0.33*(-0.8,0)$) ($0.67*(225:0.78)+0.33*(-0.8,0)$) ($0.67*(200:1.24)+0.33*(-0.8,0)$) ($0.7*(180:1.575)+0.3*(-0.8,0)$) ($0.7*(160:1.56)+0.3*(-0.8,0)$) ($0.7*(135:1.35)+0.3*(135:0.84)$) ($0.67*(90:1.2)+0.33*(90:0.91)$) ($0.67*(45:1.56)+0.33*(45:1.15)$) ($0.67*(20:1.96)+0.33*(20:1.51)$) ($0.67*(0:2.36)+0.33*(0:1.84)$) ($0.67*(340:2.28)+0.33*(340:1.91)$) ($0.67*(315:1.93)+0.33*(315:1.78)$) (270:1.8)};
                \draw[\generictraj] plot [smooth] coordinates {(270:1.8) (290:1.5) ($0.33*(315:1.18)+0.67*(315:1.78)$) ($0.33*(340:1.39)+0.67*(340:1.91)$) ($0.33*(0:1.41)+0.67*(0:1.84)$) ($0.33*(20:1.11)+0.67*(20:1.51)$) ($0.3*(45:0.68)+0.7*(45:1.15)$) ($0.3*(90:0)+0.7*(90:0.91)$) ($0.4*(90:0)+0.6*(-0.8,0)$) ($0.4*(225:0.78)+0.6*(-0.8,0)$) ($0.4*(200:1.24)+0.6*(-0.8,0)$) ($0.4*(180:1.575)+0.6*(-0.8,0)$) ($0.4*(160:1.56)+0.6*(-0.8,0)$)       ($0.33*(135:1.35)+0.67*(135:0.84)$) ($0.33*(90:1.2)+0.67*(90:0.91)$) ($0.33*(45:1.56)+0.67*(45:1.15)$) ($0.33*(20:1.96)+0.67*(20:1.51)$) ($0.33*(0:2.36)+0.67*(0:1.84)$) ($0.33*(340:2.28)+0.67*(340:1.91)$) ($0.33*(315:1.93)+0.67*(315:1.78)$) (270:1.8)};

                \draw[\generictraj] plot [smooth] coordinates {(270:1.8) (250:1.57) (225:2.03) (200:2.63) (180:2.96) (160:2.67) (135:2.1) (90:1.71) (45:2.1) (20:2.72) (0:3.05) (340:2.76) (315:2.17) (270:1.8)};
                \draw[\generictraj] plot [smooth] coordinates {(270:1.8) (260:1.33) (225:1.84) (200:2.44) (180:2.77) (160:2.52) (135:1.99) (90:1.62) (45:2) (20:2.62) (0:2.96) (340:2.67) (315:2.14) (270:1.8)};

                \draw[\generictraj] plot [smooth] coordinates {(270:1.8) (315:2.05) (340:2.51) (0:2.69) (20:2.33) (45:1.78) (90:1.41) (135:1.71) (160:2.09) (180:2.245) (200:1.91) (225:1.37) (270:0.73) ($0.7*(0.8,0)+0.33*(315:1.18)$) ($0.65*(0.8,0)+0.35*(340:1.39)$) ($0.6*(0.8,0)+0.4*(0:1.41)$) ($0.5*(0.8,0)+0.5*(20:1.11)$) ($0.5*(0.8,0)+0.5*(45:0.68)$) ($0.55*(0.8,0)+0.55*(90:0)$) ($0.7*(270:0.55)+0.3*(90:0)$) ($0.75*(225:1.22)+0.25*(225:0.78)$) ($0.7*(200:1.74)+0.3*(200:1.24)$) ($0.67*(180:2.08)+0.33*(180:1.575)$) ($0.67*(160:1.96)+0.33*(160:1.56)$) ($0.67*(135:1.62)+0.33*(135:1.35)$) ($0.67*(90:1.35)+0.33*(90:1.2)$) ($0.67*(45:1.73)+0.33*(45:1.56)$) ($0.67*(20:2.24)+0.33*(20:1.96)$) ($0.7*(0:2.61)+0.3*(0:2.36)$) ($0.7*(340:2.45)+0.3*(340:2.28)$) ($0.7*(315:2.02)+0.3*(315:1.93)$) (270:1.8)};
                \draw[\generictraj] plot [smooth] coordinates {(270:1.8) (315:2.07) (340:2.56) (0:2.78) (20:2.42) (45:1.84) (90:1.47) (135:1.79) (160:2.23) (180:2.41) (200:2.07) (225:1.51) (270:0.92) ($0.33*(0.8,0)+0.67*(315:1.18)$) ($0.3*(0.8,0)+0.7*(340:1.39)$) ($0.3*(0.8,0)+0.7*(0:1.41)$) ($0.2*(0.8,0)+0.8*(20:1.11)$) ($0.3*(0.8,0)+0.7*(45:0.68)$) ($0.3*(0.8,0)+0.7*(90:0)$) ($0.4*(270:0.55)+0.6*(90:0)$) ($0.45*(225:1.22)+0.55*(225:0.78)$) ($0.4*(200:1.74)+0.6*(200:1.24)$) ($0.33*(180:2.08)+0.67*(180:1.575)$) ($0.33*(160:1.96)+0.67*(160:1.56)$) ($0.33*(135:1.62)+0.67*(135:1.35)$) ($0.33*(90:1.35)+0.67*(90:1.2)$) ($0.33*(45:1.73)+0.67*(45:1.56)$) ($0.33*(20:2.24)+0.67*(20:1.96)$) ($0.4*(0:2.61)+0.6*(0:2.36)$) ($0.4*(340:2.45)+0.6*(340:2.28)$) ($0.4*(315:2.02)+0.6*(315:1.93)$) (270:1.8)};

                \draw[\septraj] plot [smooth] coordinates {(270:1.1) (225:1.66) (200:2.24) (180:2.575) (160:2.36) (135:1.88) (90:1.53) (45:1.89) (20:2.51) (0:2.86) (340:2.62) (315:2.1) (270:1.8)}; %
                
                \draw[\septraj] plot [smooth] coordinates {(270:1.1) (315:1.18) (340:1.39) (0:1.41) (20:1.11) (45:0.68) (90:0) (225:0.78) (200:1.24) (180:1.575) (160:1.56) (135:1.35) (90:1.2) (45:1.56) (20:1.96) (0:2.36) (340:2.28) (315:1.93) (270:1.8)}; %

                \draw[\septraj] plot [smooth] coordinates {(180:0.8) (160:0.81) (135:0.84) (90:0.91) (45:1.15) (20:1.51) (0:1.84) (340:1.91) (315:1.78) (270:1.8)}; %

                \draw[\septraj] plot [smooth] coordinates {(0:0.8) (270:0.55) (225:1.22) (200:1.74) (180:2.08) (160:1.96) (135:1.62) (90:1.35) (45:1.73) (20:2.24) (0:2.61) (340:2.45) (315:2.02) (270:1.8)}; %

                \node at (-0.8,0) {\scriptsize $\bm{\otimes}$};
                \node at (0.8,0) {\scriptsize $\bm{\otimes}$};
                \node at (0,-1.1) {$\bm{\times}$};

                \draw (0,-1.1) -- (0,-1.8);
                \node at (0,-1.8) {$\bullet$};
                \node at (0,1.8) {$\bm{\times}$};
                \draw (0,0) ellipse (3.15cm and 1.8cm);
        
    \end{tikzpicture}
    \]
    \caption{After small rotation}\label{fig:nrd_iii:after}
    \end{subfigure}
    \begin{subfigure}[t]{0.27\textwidth}
           \[
    \begin{tikzpicture}[scale=0.9]
        \draw[\septraj] (0:0) -- (210:2);
        \draw[\septraj] (0:0) -- (90:2);
        \draw[\septraj] (0:0) -- (330:2);

        \draw[\generictraj] (87.5:2) to [out=270,in=150] (332.5:2);
        \draw[\generictraj] (70:2) .. controls (30:0.5) .. (350:2);
        \draw[\generictraj] ($(30:1.25)+(170:0.35)$) arc (170:250:0.35);
        \draw[\generictraj] ($(30:1.25)+(170:0.35)$) to [out=80,in=232] (50:2);
        \draw[\generictraj] ($(30:1.25)+(250:0.35)$) to [out=-20,in=188] (10:2);
        \draw[\generictraj] ($(30:1.25)+(120:0.2)$) arc (120:300:0.2);
        \draw[\generictraj] ($(30:1.25)+(120:0.2)$) -- ($(30:2)+(120:0.2)$);
        \draw[\generictraj] ($(30:1.25)+(300:0.2)$) -- ($(30:2)+(300:0.2)$);

        \draw[\generictraj] (207.5:2) to [out=30,in=270] (92.5:2);
        \draw[\generictraj] (190:2) .. controls (150:0.55) .. (110:2);
        \draw[\generictraj] ($(150:1.25)+(290:0.35)$) arc (290:370:0.35);
        \draw[\generictraj] ($(150:1.25)+(290:0.35)$) to [out=200,in=352] (170:2);
        \draw[\generictraj] ($(150:1.25)+(10:0.35)$) to [out=100,in=308] (130:2);
        \draw[\generictraj] ($(150:1.25)+(240:0.2)$) arc (240:420:0.2);
        \draw[\generictraj] ($(150:1.25)+(240:0.2)$) -- ($(150:2)+(240:0.2)$);
        \draw[\generictraj] ($(150:1.25)+(60:0.2)$) -- ($(150:2)+(60:0.2)$);

        \draw[\generictraj] (327.5:2) to [out=150,in=30] (212.5:2);
        \draw[\generictraj] (322.5:1.9) to [out=160,in=20] (217.5:1.9);
        \draw[\generictraj] (317.5:1.8) to [out=170,in=10] (222.5:1.8);
        \draw[\generictraj] (310:1.7) to [out=180,in=0] (230:1.7);

        \draw[\septraj] (30:1.25) -- (30:2);
        \draw[\septraj] (150:1.25) -- (150:2);

        \draw[\conn, line width=0.55pt] (30:1.25) -- (0,0);
        \draw[\conn, line width=0.55pt] (150:1.25) -- (0,0);
        \draw[\conn, line width=0.55pt] (30:1.25) to [out=195,in=-15] (150:1.25);

        \node at (30:1.25) {\scriptsize $\bm{\otimes}$};
        \node at (150:1.25) {\scriptsize $\bm{\otimes}$};
        \node at (0,0) {$\bm{\times}$};

        \node at (165:0.75) {$\gamma_2$};
        \node at (15:0.75) {$\gamma_1$};
        \node at (105:0.7) {$\gamma$};

        \node at (270:1.5) {};
    \end{tikzpicture}
    \]
    \caption{$\hhc{\gamma}=\hhc{\gamma}_1+\hhc{\gamma}_2$.}
    \label{fig:iii_vs_ii}
    \end{subfigure}
    \caption{Trajectory structure before and after a small rotation of a type~III saddle trajectory surrounded by a non-degenerate ring domain}\label{fig:nrd_iii}
\end{figure}

As for the standard case, the rotation $\varphi^{(\varepsilon)}$ preserves the separating trajectories on the boundary of~$H$.
The part of the associated surface $\mbso$ enclosed by the arc or boundary component corresponding to generic trajectories of $H$ is a disc with one marked point on its boundary and two orbifold points in its interior.
All triangulations of this surface give the same quiver: in each case we have that each orbifold point is enclosed in a monogon, with the remainder of the surface forming a triangle.
As for the standard case, the arcs of the triangulation enclosing the orbifold points may wind around each other many times, but this does not affect the associated quiver.
An example of a possible trajectory structure is shown in Figure~\ref{fig:nrd_iii:after}.

Since the monogons containing the orbifold points border the same triangle in the triangulation, in $\varphi^{(\varepsilon)}$ we have that $s_1$ and $s_2$ are in adjacent horizontal strips around the zero~$z$.
It follows from considering the pre-image of Figure~\ref{fig:iii_vs_ii} on the spectral cover that $\hhc{\gamma}=\hhc{\gamma}_1+\hhc{\gamma}_2$, since the central charge of all of these classes must have positive imaginary part. This remains true no matter how many times the two saddle connections wind around each other.

Without loss of generality, we can assume that everything is labelled such that the horizontal strip $H_2$ containing $\gamma_2$ is anti-clockwise-adjacent to the horizontal strip $H_1$ containing~$\gamma_1$, and let the corresponding vertices of the quiver be labelled $1$ and~$2$.
Since the arcs of the triangulation corresponding to these vertices enclose monogons containing orbifold points, they have respective loops $a$ and $b$ such that $a^3 + b^3$ is a summand of the potential.
Moreover, since these two monogons only border one triangle together, there is a single arrow from $1$ to $2$ in $Q(T)$.
Hence, we obtain that the full subquiver of $Q(T_{\varphi^{(\varepsilon)}})$ at vertices $1$ and $2$ is exactly $Q^{\db}$
with potential $W^{\db}$. The category $\ssc{\sigma_{\varphi}}{1}$ then consists of representations of this quiver with dimension vector $(d, d)$ under some stability condition, since the class of the type~III saddle corresponds to dimension vector~$(1, 1)$.

To determine the stability condition, note that $\gamma$ is a geodesic between $s_1$ and $s_2$, so that by Lemma~\ref{lem:geodesics} we must have $\qdph{\gamma_2} < \qdph{\gamma_1}$. Thus $\scph{S_2} < \scph{S_1}$ for the stability condition.

The generating series for this category is then given by Proposition~\ref{prop:db_gen_series}.
\end{proof}

The results of this section are summarised in Table~\ref{table}.
This illustrates all of the different possible configurations of finite-length trajectories for a generic infinite GMN differential, along with the quiver with potential, stability condition, and phase of the corresponding category, along with its DT generating series.
For aesthetic reasons, in Table~\ref{table} we write $t$ instead of~$t^{\hhc{\gamma}}$, where $\hhc{\gamma}$ is the smallest hat-homology class associated to the finite-length trajectories appearing.

\begin{proof}[{Proof of Theorem~\ref{thm:intro}}]
Let $\hhc{\gamma} \in \fgg \setminus \{0\}$ be a class of phase~$\vartheta$.
In the case where $\varphi^{(\vartheta)}$ is saddle-free, then it follows from Construction~\ref{const:bs} that the category of semistable objects of $\sigma_{\varphi^{(\vartheta)}}$ of phase~1 is zero, so $\dtref{\hhc{\gamma}} = 0$.
This then holds because $\varphi$ has no finite-length trajectories.
If $\varphi^{(\vartheta)}$ is not saddle-free, then, by genericity, we have one of the configurations from Proposition~\ref{prop:fin_len_traj}.
Genericity also shows that the subgroup of $\fgg$ given by classes of phase~1 of $\sigma_{\varphi^{(\vartheta)}}$ is one-dimensional, and so $\hhc{\gamma}$ is $\mathbb{Q}$-proportional to the classes of the finite-length trajectories that appear.
The formula from Theorem~\ref{thm:intro} then holds for each of the cases we have considered in this section.
\end{proof}

\section{Semistable objects for toral ring domains}\label{sect:spiral}

In this section, we describe the categories of semistable objects corresponding to the toral ring domains.

\subsection{String and band modules}

In order to describe these categories of semistable objects, we will use the notions of string and band modules, which we now describe.

\subsubsection{Construction of string and band modules}

Let $(Q, I)$ be a quiver with relations.
For every arrow $a \colon i \to j$ in $Q_{1}$, we write $a^{-1} \colon j \to i$ for its formal inverse. 
We write $Q_{1}^{-1}$ for the set of formal inverses of arrows of $Q_{1}$. 
We refer to elements of $Q_{1}$ as \emph{direct arrows}, and elements of $Q_{1}^{-1}$ as \emph{inverse arrows}. 
A \emph{walk} is a sequence $a_{1}^{\pm 1} \dots a_{r}^{\pm 1}$ of elements of $Q_{1} \cup Q_{1}^{-1}$ such that $t(a_{i}^{\pm 1}) = s(a_{i + 1}^{\pm 1})$ and $a_{i + 1}^{\pm 1} \neq a_{i}^{\mp 1}$ for every $i \in \{1, \dots, r - 1\}$. 
Given a walk $w = a_{1}^{\pm 1} \dots a_{r}^{\pm 1}$, we define $w^{-1} = a_{r}^{\mp 1} \dots a_{1}^{\mp 1}$.

A \emph{string} for $(Q, I)$ is a walk $w$ in $Q$ such that neither $w$, $w^{-1}$ nor any subword of either is a summand of an element of~$I$.
Given a string $w = a_{1}^{\pm 1} \dots a_{r}^{\pm 1}$, there are $r + 1$ instances where $w$ traverses a vertex, counting with multiplicity.
If we label these instances $0, 1, \dots, r$, and let $i \in Q_0$, then we label the instances of vertex $i$ by $k_1^i, k_2^i, \dots, k_{d_i}^i \in \{0, 1, \dots, r\}$, where $d_i$ is the number of times $w$ traverses the vertex~$i$.
That is, let $\{k_1^i, k_2^i, \dots, k_{d_i}^i\} =$
\[
 \{k \in \{0\} \st s(a_{k}^{\pm 1}) = i\} \cup \{k \in \{1, \dots r - 1\} \st t(a_{k}^{\pm 1}) = s(a_{k + 1}^{\pm 1}) = i\} \cup \{k \in \{r\} \st t(a_{k}^{\pm 1}) = i\},   
\]
 where $k_1^i < k_2^i < \dots < k_{d_i}^i$.
We define the quiver representation $M(w) := (V_i, f_a)_{i \in Q_0, a \in Q_1}$ with $V_i \cong \mathbb{C}^{d_i}$ and, for $a \colon i \to j \in Q_1$, $f_a \colon V_i \to V_j$ the linear map with components \[
(f_a)_{l, l'} = \left\{
    \begin{array}{ll}
        \mathrm{id}_{\mathbb{C}} & \text{ if } k_{l'}^{j} = k_{l}^{i} + 1 \text{ and } a_{k_{l'}^{j}}^{\pm 1} = a_{k_{l'}^{j}} \text{ is direct}, \\
        \mathrm{id}_{\mathbb{C}} & \text{ if } k_{l'}^{j} = k_{l}^{i} - 1 \text{ and } a_{k_{l}^{i}}^{\pm 1} = a_{k_{l}^{i}}^{-1} \text{ is inverse}, \\
        0 & \text{ otherwise.}
    \end{array}
\right.
\]
One can check that $M(w) \cong M(w^{-1})$.
A \emph{string module} is a $\mathbb{C}Q/I$-module which is isomorphic to $M(w)$ for a string~$w$.

A \emph{cyclic} string is a string which begins and ends at the same vertex.
A \emph{band} $v$ for $(Q, I)$ is defined to be a cyclic string such that every power $v^{n}$ is a string, but $v$ itself is not the proper power of some string~$w$.
A band $v = a_{1}^{\pm 1}a_{2}^{\pm 1} \dots a_{r}^{\pm 1}$ can be represented by several different words, such as $a_{2}^{\pm 1} \dots a_{r}^{\pm 1}a_{1}^{\pm 1}$. We refer to these words as \emph{rotations} of the band.
Given a band $v = a_{1}^{\pm 1}a_{2}^{\pm 1} \dots a_{r}^{\pm 1}$, we label the instances where a vertex $i \in Q_0$ is traversed in a similar way to before.
That is, we write 
\[\{k_1^i, k_2^i, \dots, k_{d_i}^i\} = \{k \in \{1, 2, \dots r\} \st t(a_{k}^{\pm 1}) = s(a_{k + 1}^{\pm 1}) = i\},\]
where $k_1^i < k_2^i < \dots < k_{d_i}^i$, with $k + 1$ taken modulo $r$.
Given $\lambda \in \mathbb{C}^{\ast}$ and $m \in \mathbb{Z}_{>0}$, we define the quiver representation $M(v, \lambda, m) := (V_i, f_a)_{i \in Q_0, a \in Q_1}$ with $V_i \cong (\mathbb{C}^{m})^{d_i}$.
For $a \colon i \to j \in Q_1$, we then have that $f_a \colon V_i \to V_j$ is the linear map with $m \times m$ block components \[
(f_a)_{l, l'} = \left\{
    \begin{array}{lll}
        \mathrm{id}_{\mathbb{C}^m} & \text{ if } k_{l'}^{j} = k_{l}^{i} + 1 \neq r &\text{ and } a_{k_{l'}^{j}}^{\pm 1} = a_{k_{l'}^{j}} \text{ is direct}, \\
        \mathrm{id}_{\mathbb{C}^m} & \text{ if } k_{l'}^{j} = k_{l}^{i} - 1 \neq r - 1 &\text{ and } a_{k_{l}^{i}}^{\pm 1} = a_{k_{l}^{i}}^{-1} \text{ is inverse}, \\
        \jord{\lambda}{m} & \text{ if } k_{l'}^{j} = k_{l}^{i} + 1 = r &\text{ and } a_{k_{l'}^{j}}^{\pm 1} = a_{k_{l'}^{j}} \text{ is direct}, \\
        \jord{\lambda}{m} & \text{ if } k_{l'}^{j} = k_{l}^{i} - 1 = r - 1 &\text{ and } a_{k_{l}^{i}}^{\pm 1} = a_{k_{l}^{i}}^{-1} \text{ is inverse}, \\
        0 & \text{ otherwise.}
    \end{array}
\right.
\]
where $\jord{\lambda}{m}$ is the $m \times m$ Jordan block with eigenvalue~$\lambda$.
A \emph{band module} is a $\mathbb{C}Q/I$-module which is isomorphic to $M(v, \lambda, m)$ for some band $v$, $\lambda \in \mathbb{C}$, and $m \in \mathbb{Z}_{>0}$.

\subsubsection{Gentle and locally gentle algebras}

We recall the notions of gentle and locally gentle algebras.

\begin{definition}[\cite{as,bh}]\label{def:gentle}
A quiver with relations $(Q, I)$ is \emph{locally gentle} if
 \begin{enumerate}
     \item for each vertex $i \in Q_0$, there are at most two arrows $a \in Q_1$ with $s(a) = i$ and at most two arrows $b \in Q_1$ with $t(b) = i$;\label{op:gentle:head_tail}
     \item the ideal $I$ is generated by paths of length exactly two;\label{op:gentle:length_two}
     \item for any arrow $b \in Q_1$, there is at most one path $ab$ of length two with $ab \notin I$ and at most one path $bc$ of length two with $bc \notin I$;\label{op:gentle:string}
     \item for any arrow $b \in Q_1$, there is at most one path $ab$ of length two with $ab \in I$ and at most one path $bc$ of length two with $bc \in I$.\label{op:gentle:not_string}
 \end{enumerate}
 One then calls $\mathbb{C}Q/I$ a \emph{locally gentle algebra}. If $\mathbb{C}Q/I$ is finite-dimensional, it is called a \emph{gentle algebra}.
\end{definition}

For locally gentle algebras, and hence also for gentle algebras, it is known that all indecomposable representations are either string modules or band modules.

\begin{theorem}[{\cite[Theorem~1.2]{cb}, \cite[Section~3]{br}, \cite{ww}}]\label{thm:gentle_string_band}
If $\mathbb{C}Q/I$ is a locally gentle algebra, and $M$ is a finite-dimensional indecomposable $\mathbb{C}Q/I$-module, then $M$ is either a string module or a band module.
\end{theorem}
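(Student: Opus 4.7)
The plan is to follow the classical functorial filtration approach due to Butler--Ringel, adapted to the (locally) gentle setting. First I would observe that conditions (1)--(4) in Definition~\ref{def:gentle} make $\mathbb{C}Q/I$ a \emph{special biserial} algebra, and in fact a string algebra: condition (1) bounds arrows into and out of each vertex by two; conditions (3) and (4) together say that for each arrow $b$ there is at most one direct continuation $ab$ with $ab \notin I$ and at most one ``forbidden'' continuation $ab' \in I$, and symmetrically on the other side. This local bimodule structure allows one to canonically label any element of the algebra, and any basis vector of a module, by a walk in the quiver that avoids the relations, i.e.\ by (a subwalk of) a string.

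The key steps are as follows. First, verify that for every string $w$ and every triple $(v,\lambda,m)$ with $v$ a band and $\lambda\in\mathbb{C}^\ast$, $m\in\mathbb{Z}_{>0}$, the construction preceding the theorem gives well-defined $\mathbb{C}Q/I$-modules; this amounts to checking that the prescribed linear maps respect the length-two relations in $I$, which is immediate from conditions (3) and (4). Second, compute $\End_{\mathbb{C}Q/I}(M(w))$ and $\End_{\mathbb{C}Q/I}(M(v,\lambda,m))$ directly from the basis labeling, showing that each is a local ring and hence that the module is indecomposable; here the gentleness conditions are used to rule out non-trivial idempotents. Third, identify the isomorphism classes, namely $M(w)\cong M(w^{-1})$, with no further coincidences among string modules, and the analogous statement for bands up to rotation and inversion. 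The crucial fourth step is to show that \emph{every} finite-dimensional indecomposable $\mathbb{C}Q/I$-module $M$ arises in this way, which I would carry out via a functorial filtration: for each vertex $i\in Q_0$ and each direction of entry, one filters $Me_i$ by subspaces indexed by strings starting at $i$, and uses the special biserial structure to extract direct summands one by one.

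The main obstacle is the fourth step, the exhaustiveness. The subtle point is that a priori an indecomposable need not be cyclic, and the filtrations at different vertices must be shown to be mutually compatible so that the summands they produce can be globalised to genuine direct summands of $M$. The standard route is to invoke Crawley-Boevey's reduction for functorially filtered modules~\cite{cb}: one verifies that the word combinatorics coming from conditions (3)--(4) defines a \emph{linearly ordered} set of ``string/band positions'' with all the properties needed to apply the general decomposition theorem, and then finite-dimensionality forces the filtration to terminate (yielding a string summand $M(w)$) or to close up periodically, in which case Jordan normal form of the holonomy map around the period yields a band summand $M(v,\lambda,m)$. Since the argument is entirely standard, in the paper we simply quote~\cite{cb,br,ww}; the sketch above is how one would organise a self-contained proof.
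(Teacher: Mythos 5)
Your sketch is correct and follows precisely the route of the sources the paper itself relies on for this statement: the paper gives no proof of Theorem~\ref{thm:gentle_string_band}, but imports it from \cite{cb,br,ww}, and your outline (gentle implies string algebra, well-definedness and indecomposability of $M(w)$ and $M(v,\lambda,m)$, then exhaustiveness via Butler--Ringel-style functorial filtrations, with Crawley-Boevey's version handling the possibly infinite-dimensional locally gentle case) is exactly the argument of those references. So the proposal agrees with the paper's treatment, which is citation rather than proof.
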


If $\mathbb{C}Q/I$ is a locally gentle algebra with $\widehat{\mathbb{C}Q}/\widehat{I}$ the corresponding completed algebra, then, as in Section~\ref{sect:3cy_stab:qwp:quiver_reps}, we have that modules over $\widehat{\mathbb{C}Q}/\widehat{I}$ are given by nilpotent modules over $\mathbb{C}Q/I$.
Hence, indecomposable modules over $\widehat{\mathbb{C}Q}/\widehat{I}$ must also be either string or band modules.
We refer to $\widehat{\mathbb{C}Q}/\widehat{I}$ as a \emph{completed locally gentle algebra}.
If $\mathbb{C}Q/I$ is finite-dimensional, and so gentle, then $\widehat{\mathbb{C}Q}/\widehat{I} \cong \mathbb{C}Q/I$, so there is no distinction to make.

\subsubsection{Extensions between string and band modules}\label{sect:spiral:string_band:ext}

We now describe some criteria for the vanishing of extension spaces between string and band modules.
Extensions between string and band modules are explicitly described in \cite{cps} following earlier work of \cite{schroer} and \cite{zhang}.

We first define the following notions.
Let $w = a_{1}^{\pm 1} \dots a_{i}^{\pm 1} \dots a_{j}^{\pm 1} \dots a_{r}^{\pm 1}$ be a string with substring $u = a_{i}^{\pm 1} \dots a_{j}^{\pm 1}$. 
We say that $u$ is a \emph{submodule substring} if $a_{i - 1}^{\pm 1} = a_{i - 1}$ is a direct arrow, or $i = 1$, and $a_{j + 1}^{\pm 1} = a_{j + 1}^{-1}$ is an inverse arrow, or $j = r$.
We say that $u$ is a \emph{factor substring} if $a_{i - 1}^{\pm 1} = a_{i - 1}^{-1}$ is an inverse arrow, or $i = 1$, and $a_{j + 1}^{\pm 1} = a_{j + 1}$ is a direct arrow, or $j = r$.
In the former case $M(u)$ is indeed a submodule of $M(w)$, and in the latter case it is a factor module.

Given two strings $w_1$ and $w_2$, if there is an inverse arrow $a^{-1}$ such that $w_{1}a^{-1}w_{2}$ is a string, then there is a short exact sequence
\[0 \to M(w_1) \to M(w_{1}a^{-1}w_2) \to M(w_{2}) \to 0,\]
which we call an \emph{arrow extension}.
On the other hand, if we have $w_1 = w_{1, L}uw_{1, R}$ with $u$ a factor substring and $w_2 = w_{2, L}uw_{2, R}$ with $u$ a submodule substring, then there is a short exact sequence \[
0 \to M(w_1) \to M(w_{1, L}uw_{2, R}) \oplus M(w_{2, L}uw_{1, R}) \to M(w_2) \to 0,
\]
which we call an \emph{overlap extension}.
Here $w_{1, L}$, $w_{2, L}$, $w_{1, R}$, and $w_{2, R}$ are possibly empty.

\begin{theorem}[{\cite[Theorem~A]{cps}}]\label{thm:cps_a}
For $\mathbb{C}Q/I$ a gentle algebra with $w_{1}$ and $w_{2}$ two strings, the arrow and overlap extensions form a basis of $\Ext_{\mathbb{C}Q/I}^{1}(M(w_2), M(w_1))$.
\end{theorem}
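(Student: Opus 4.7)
The plan is to reduce the computation of $\Ext_{\mathbb{C}Q/I}^{1}(M(w_2), M(w_1))$ to a combinatorial count by using the explicit structure of projective resolutions of string modules over a gentle algebra. First I would recall that for a gentle algebra $\mathbb{C}Q/I$, each indecomposable projective module $P_i = e_i\mathbb{C}Q/I$ is itself a string module $M(p_i)$, where $p_i$ is (up to inversion) the unique string starting at $i$ whose first letter is a direct arrow and whose last letter is an inverse arrow, extended maximally. Condition~\ref{op:gentle:string} of Definition~\ref{def:gentle} ensures that $p_i$ is well-defined, and condition~\ref{op:gentle:not_string} produces the avoidance of relations.

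Next I would build a minimal projective resolution of $M(w_2)$ via the standard gentle-algebra recipe. Writing $w_2 = a_1^{\pm 1} \cdots a_r^{\pm 1}$, the projective cover of $M(w_2)$ is given by placing a copy of $P_{s(a_k^{\pm 1})}$ at each locally maximal direct/inverse peak of $w_2$ (a combinatorial datum one reads off from the sign pattern of the letters). The kernel of the projective cover is again a direct sum of string modules obtained from the "hooks" and "cohooks" of $w_2$, and iterating yields the minimal resolution $P^\bullet \to M(w_2)$. The key point, which I would prove carefully, is that after applying $\Hom_{\mathbb{C}Q/I}(-, M(w_1))$, a morphism $P_j \to M(w_1)$ corresponds to a factor substring of $w_1$ starting at vertex $j$, and the induced differentials record exactly the operation of prepending or appending an arrow.

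Having this description, I would identify $\Ext^1_{\mathbb{C}Q/I}(M(w_2), M(w_1))$ with the cokernel of an explicit map whose basis elements, after taking the quotient, can be matched one-to-one with (i) pairs $(w_1, a^{-1}, w_2)$ such that $w_1 a^{-1} w_2$ is a string (the arrow extensions) and (ii) triples of decompositions $w_1 = w_{1,L} u w_{1,R}$, $w_2 = w_{2,L} u w_{2,R}$ with $u$ a factor substring of $w_1$ and a submodule substring of $w_2$ (the overlap extensions). In each case I would verify by direct computation that the short exact sequence constructed in Section~\ref{sect:spiral:string_band:ext} represents the corresponding cohomology class, and conversely every basis element is realised this way.

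The main obstacle will be the bookkeeping in the last step: showing that the classes built from arrow and overlap extensions are linearly independent in the cokernel (and not accidentally coboundaries or related by hidden syzygies) requires carefully tracking the boundary conditions at the endpoints of $w_1$ and $w_2$, which is where the four gentle-algebra axioms intervene most sharply. I would handle this by setting up a partial order on pairs of substrings and running an induction: arrow extensions are manifestly non-split because the middle term is indecomposable of strictly larger length, and overlap extensions can be detected by projecting to a suitable subquotient where the overlap is isolated. The finite-dimensionality of $\mathbb{C}Q/I$ guarantees that the combinatorial sum is finite, completing the identification of the basis.
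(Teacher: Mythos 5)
There is nothing in the paper to compare your argument against: Theorem~\ref{thm:cps_a} is not proved in this paper at all, but is imported verbatim from the literature as \cite[Theorem~A]{cps} (building on earlier work of Schr\"oer and Zhang cited in Section~\ref{sect:spiral:string_band:ext}), and the authors use it as a black box to control extensions between string modules in Sections~\ref{sect:spiral} and~\ref{sect:db}. So what you have written is an outline of how one might reprove the cited result, not an alternative to an argument the paper actually gives.

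As a proof plan, your route (minimal projective presentations of string modules, applying $\Hom_{\mathbb{C}Q/I}(-,M(w_1))$, and matching cocycles modulo coboundaries with the arrow and overlap data) is a sensible and standard strategy, and the ingredients you invoke are correct: over a gentle algebra the indecomposable projectives are string modules, the projective cover of $M(w_2)$ is read off from the peaks of $w_2$, and the first syzygy is again a direct sum of string modules built from hooks and cohooks. But the substance of the theorem lies precisely in the steps you defer to ``bookkeeping'': identifying $\Hom(P_j, M(w_1)) \cong M(w_1)e_j$ with the relevant substring combinatorics, showing that every $1$-cocycle is cohomologous to one coming from an arrow or overlap extension, and proving linear independence of the resulting classes in the cokernel. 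Non-splitness of each individual extension (your indecomposability-of-the-middle-term remark only applies to arrow extensions in any case, and overlap extensions typically have decomposable middle term) does not give linear independence of the family, and the endpoint case analysis at the boundaries of $w_1$ and $w_2$ is exactly where \cite{cps} spend their effort. So the proposal is a plausible sketch of the known proof rather than a complete argument, and for the purposes of this paper the correct move is simply to cite \cite{cps}, as the authors do.
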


Given a band $v$, we write $\dinf{v}$ for the infinite string given by repeating $v$ in both directions.
Let $M(v, \lambda, m)$ be a band module given by $v$ and $M(w)$ be a string module given by a string $w$.

\begin{theorem}[{\cite[Theorem~B]{cps}, \cite[Theorem~C]{cps}}]\label{thm:cps_bc}
Suppose that $\mathbb{C}Q/I$ is a gentle algebra with $w$ a string and $v$ a band.
If there is no factor substring of $\dinf{v}$ which is a submodule substring of $w$, then $\Ext_{\mathbb{C}Q/I}^{1}(M(w), M(v, \lambda, m)) = 0$.

Similarly, if there is no submodule substring of $\dinf{v}$ which is a factor substring of $w$, then $\Ext_{\mathbb{C}Q/I}^{1}(M(v, \lambda, m), M(w)) = 0$.
\end{theorem}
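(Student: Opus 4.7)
The plan is to deduce the vanishing from the explicit basis description of $\Ext^1$ between string and band modules provided in \cite{cps}, which extends Theorem~\ref{thm:cps_a} to the mixed string--band case. According to that description, a basis of $\Ext^1(M(w), M(v,\lambda,m))$ and of $\Ext^1(M(v,\lambda,m), M(w))$ is given by two families of extensions, the arrow extensions and the overlap extensions, entirely analogous to those recalled in Section~\ref{sect:spiral:string_band:ext}, with the band module thought of through its associated bi-infinite repeating string $\dinf{v}$. Hence, to prove vanishing of an $\Ext^1$ it suffices to check that under the stated hypothesis both families are empty.

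First, I would rule out arrow extensions. An arrow extension $w_1 a^{\pm 1} w_2$ is built by concatenating the two underlying strings at their endpoints through an arrow of $Q$. Since the bi-infinite string $\dinf{v}$ associated to a band has no endpoints, no such concatenation can involve the band module on either side. Thus, regardless of $w$, the arrow extension family in $\Ext^1(M(w), M(v,\lambda,m))$ and in $\Ext^1(M(v,\lambda,m), M(w))$ is automatically empty, and only overlap extensions can contribute.

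Next, I would analyse the overlap extensions. By the construction recalled in Section~\ref{sect:spiral:string_band:ext}, an overlap extension contributing to $\Ext^1(M(w), M(v,\lambda,m))$ is indexed (among other data) by a common substring $u$ that is a factor substring of $\dinf{v}$ and a submodule substring of $w$; dually, an overlap extension contributing to $\Ext^1(M(v,\lambda,m), M(w))$ is indexed by a common substring $u$ that is a submodule substring of $\dinf{v}$ and a factor substring of $w$. The hypothesis of the first statement excludes all substrings of the first kind, and the hypothesis of the second statement excludes all substrings of the second kind. Combined with the vanishing of arrow extensions, the basis of the relevant $\Ext^1$ is empty, so $\Ext^1 = 0$.

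The main point to verify carefully, and the only mild obstacle, is that the parametrisation of extensions by substrings of $\dinf{v}$ given in \cite{cps} really does exhaust all contributions; in particular, one must check that the scalar $\lambda \in \mathbb{C}^{\ast}$ and the multiplicity $m$ do not introduce additional basis elements beyond those indexed by overlap substrings of $\dinf{v}$. This is precisely the content of \cite[Theorems~B and~C]{cps}; once one accepts that statement, the argument above is purely combinatorial and follows by unpacking the hypothesis on substrings.
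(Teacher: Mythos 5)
Your proposal is correct and matches the paper's treatment: the paper does not prove this statement independently but quotes it directly from \cite{cps}, and your argument is exactly the intended deduction — the full basis of extensions between a string module and a band module given in \cite[Theorems~B and~C]{cps} consists of overlap-type extensions with $\dinf{v}$ (arrow extensions being impossible since the bi-infinite string has no endpoints), so the hypotheses leave no basis elements and the $\Ext^1$ groups vanish.
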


\subsection{Toral degenerate ring domains}\label{sect:spiral:drd}

We now consider the case from Section~\ref{sect:main:drd:ii}.
Recall that in this case the Riemann surface $X$ must be the torus, and the quadratic differential $\varphi$ has a double pole and two zeros as its critical points.
We begin by explicitly determining the category and stability condition.
Recall that, as explained in Section~\ref{sect:main:drd:ii}, in this case we must pick a quadratic differential $\varphi$ which has a toral degenerate ring domain in a rotation $\varphi^{(\varepsilon)}$, rather than having one itself.

\begin{proposition}\label{prop:drd_ii:3_vertex}
Suppose that $\varphi$ is a generic infinite GMN differential on a Riemann surface $X$ such that $\varphi^{(\varepsilon)}$ has a toral degenerate ring domain. The category $\ssc{\sigma_{\varphi}}{1 - \varepsilon}$ of $\sigma_\varphi$-semistable representations of phase $1 - \varepsilon$ is given by semistable representations of dimension vector $(d, d, d)$ of the quiver $Q$ \[
\begin{tikzcd}
    \overset{A}{\bullet} \ar[rr,shift left,"a_1"] \ar[rr,shift right,"a_2",swap] && \overset{B}{\bullet} \ar[ddl,shift left,"b_1"] \ar[ddl,shift right,"b_2",swap] \\ \\
    & \overset{C}{\bullet} \ar[uul,shift left,"c_1"] \ar[uul,shift right,"c_2",swap] &
\end{tikzcd}
\] with potential $W = a_1 b_1 c_1 + a_2 b_2 c_2$ under a generic stability condition such that\[\scph{S_{\pi(C)}} < \scph{S_{\pi(A)} \oplus S_{\pi(B)} \oplus S_{\pi(C)}} < \scph{S_{\pi(A)}}\] where $\pi \colon \{A, B, C\} \to \{A, B, C\}$ is an even permutation, or equivalently, $\pi(A) < \pi(B) < \pi(C)$ is the usual cyclic ordering in the quiver.
Moreover, if we let the two type~I saddle trajectories in the boundary of the ring domain be $\gamma_1$ and $\gamma_2$, so that $\hhc{\gamma}_1 = \hhc{\gamma}_2 =: \hhc{\gamma}$, then we have $\hhc{\gamma}$ corresponds to the dimension vector $(1, 1, 1)$.
\end{proposition}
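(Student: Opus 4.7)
The plan is first to identify the triangulation of the once-punctured torus associated to $\varphi$, then derive the quiver with potential $(Q(T_\varphi), W(T_\varphi))$, and finally pin down both the class $\hhc{\gamma}$ and the stability inequalities on the simples.

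First, since $\varphi$ is a saddle-free infinite GMN differential of polar type $\{-2\}$ on the torus, the associated marked bordered surface of Section~\ref{sect:qd:triangs} is the once-punctured torus with no orbifold points, and by~\eqref{eq:rr} we have $l = 2$ simple zeros. Hence $T_\varphi$ is an ideal triangulation with two triangles and three arcs, all loops based at the puncture, and up to combinatorial equivalence such a triangulation is unique. Applying Section~\ref{sect:3cy_stab:qwp_from_triang} and checking that both triangles induce the \emph{same} cyclic ordering of the three arcs after the torus identifications, one obtains two parallel arrows between each pair of vertices and the potential $W = a_1 b_1 c_1 + a_2 b_2 c_2$ as the sum of the two triangle 3-cycles. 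This reproduces the stated quiver with potential.

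Next, I would pin down the dimension vector corresponding to $\hhc{\gamma}$ under the isomorphism $\hh{\varphi} \cong \mathbb{Z}^{Q_0(T_\varphi)}$ of Construction~\ref{const:bs}, which identifies the standard saddle classes $\hhc{\gamma}_A, \hhc{\gamma}_B, \hhc{\gamma}_C$ with the basis $\dimu S_A, \dimu S_B, \dimu S_C$. Writing $\hhc{\gamma} = (d_A, d_B, d_C)$, one determines the coefficients either by computing the Lefschetz pairings from Section~\ref{sect:qd:pairings} of $\hhc{\gamma}$ with suitable lifts to $\spct{X}$ of paths between poles (chosen to pair trivially with two of the three standard saddle classes), or by noting that the closed trajectories in the toral degenerate ring domain of $e^{2i\pi\varepsilon}\varphi$ generate the homology of the sub-torus and have hat-homology class $2\hhc{\gamma}$; in either case the symmetric role played by the three arcs forces $\hhc{\gamma} = (1,1,1)$. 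Once this is established, genericity of $\varphi$ ensures that the only hat-homology classes with central charge of phase $1 - \varepsilon$ are positive integer multiples $d \cdot (1,1,1)$, so that $\ssc{\sigma_\varphi}{1 - \varepsilon}$ consists precisely of semistable representations of dimension vector $(d, d, d)$.

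The main obstacle is then showing that the cyclic ordering of the phases $\scph{S_A}, \scph{S_B}, \scph{S_C}$ matches the cyclic ordering of the quiver in the sense of the proposition. For this I would invoke Lemma~\ref{lem:geodesics}: as $\varphi$ is rotated by~$\varepsilon$, the pair of type~I saddle trajectories in $e^{2i\pi\varepsilon}\varphi$ arise as geodesics joining the two zeros, which (using Lemma~\ref{lem:geodesics}) forces a specific cyclic order on the phases of the standard saddle connections meeting anti-clockwise at each zero. Matching this cyclic order to the anti-clockwise ordering used in constructing $Q$ in Section~\ref{sect:3cy_stab:qwp_from_triang} identifies the relevant permutation $\pi$ as an even permutation and yields the inequalities $\scph{S_{\pi(C)}} < \scph{S_{\pi(A)} \oplus S_{\pi(B)} \oplus S_{\pi(C)}} < \scph{S_{\pi(A)}}$.
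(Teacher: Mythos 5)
Your overall route matches the paper's: identify the (combinatorially unique) triangulation of the once-punctured torus to get the quiver with potential, then fix the class $\hhc{\gamma}$, then obtain the cyclic phase inequalities by comparing phases of the standard saddle connections with the boundary saddles of the ring domain (the paper does this directly inside the relevant horizontal strips, in the spirit of Lemma~\ref{lem:geodesics}, reducing to one case up to an even permutation by choosing which strip $\gamma_1$ first crosses). Those parts of your plan are fine.

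The genuine gap is the step identifying $\hhc{\gamma}$ with $(1,1,1)$. Appealing to ``the symmetric role played by the three arcs'' is not a valid argument: the differential $\varphi$ and the core curve of the ring domain are not invariant under permuting the arcs of $T_\varphi$ (indeed the three phases $\scph{S_A},\scph{S_B},\scph{S_C}$ are pairwise distinct by genericity), and even if one granted equal coordinates, symmetry would only give $\hhc{\gamma}=(k,k,k)$ for some $k$, not $k=1$. What is actually needed, and what the paper supplies, is an explicit computation on the spectral cover (Figure~\ref{fig:toral_drd}): a closed connection in the degenerate ring domain, whose class is $2\hhc{\gamma}$ since the boundary consists of the two saddles $\gamma_1,\gamma_2$ with $\hhc{\gamma}_1=\hhc{\gamma}_2$, decomposes into six standard saddle connections, two crossing each of the three horizontal strips of $\varphi$, and the orientation convention of Section~\ref{sect:qd:ssc} (positive imaginary part of the periods) forces all six signs to be $+1$; hence $2\hhc{\gamma}=(2,2,2)$ under the isomorphism of Construction~\ref{const:bs}. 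Your alternative via the Lefschetz pairing is also only sketched (and refers to ``paths between poles'' although the polar type $\{-2\}$ differential has a single pole); to make it work you would still have to carry out a concrete computation on $\spct{X}$ playing the same role as the paper's figure. Without one of these computations the assertion $\hhc{\gamma}=(1,1,1)$, on which the identification of $\ssc{\sigma_{\varphi}}{1-\varepsilon}$ with representations of dimension vector $(d,d,d)$ rests, is unproved.
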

\begin{proof}
We first determine the quiver with potential.
The marked bordered surface $\mbso$ associated to $\varphi$ is the once-punctured torus.
All triangulations of the once-punctured torus are combinatorially equivalent.
The triangulations are as shown in Figure~\ref{fig:torus_triang}, and so give the quiver with potential in the statement of the proposition.
We therefore label the horizontal strips $H_A$, $H_B$, and~$H_C$, corresponding to the vertices shown, and let $\gamma_{A}$, $\gamma_{B}$, and $\gamma_{C}$ be the corresponding (type~I) standard saddle connections.

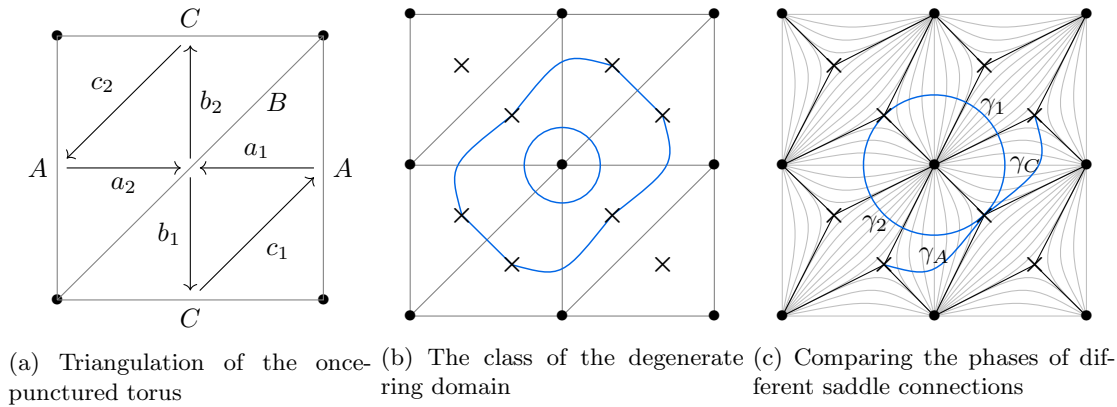
\begin{figure}[h]
\begin{subfigure}[t]{0.30\textwidth}
\[
    \begin{tikzpicture}[scale=3.5]

        \node at (0,0) {$\bullet$};
        \node at (1,0) {$\bullet$};
        \node at (0,1) {$\bullet$};
        \node at (1,1) {$\bullet$};

        \draw[\edgecol] (0,0) -- (1,0) -- (1,1) -- (0,1) -- (0,0);
        \draw[\edgecol] (0,0) -- (1,1);

        \node (l) at (0,0.5) {};
        \node (b) at (0.5,0) {};
        \node (r) at (1,0.5) {};
        \node (t) at (0.5,1) {};
        \node (m) at (0.5,0.5) {};

        \draw[\arrcol,->] (r) -- (m) node [midway,above,black] {$a_1$};
        \draw[\arrcol,->] (m) -- (b) node [midway,left,black] {$b_1$};
        \draw[\arrcol,->] (b) -- (r) node [midway,below right,black] {$c_1$};

        \draw[\arrcol,->] (l) -- (m) node [midway,below,black] {$a_2$};
        \draw[\arrcol,->] (m) -- (t) node [midway,right,black] {$b_2$};
        \draw[\arrcol,->] (t) -- (l) node [midway,above left,black] {$c_2$};

        \node at (l) [left] {$A$};
        \node at (r) [right] {$A$};
        \node at (t) [above] {$C$};
        \node at (b) [below] {$C$};
        \node at (0.75,0.75) [right] {$B$};  
     
    \end{tikzpicture}    
\]
    \subcaption{Triangulation of the once-punctured torus}\label{fig:torus_triang}
\end{subfigure}
\begin{subfigure}[t]{0.30\textwidth}
     \[
    \begin{tikzpicture}[scale=2.0]

    \draw[\edgecol] (-1,-1) -- (1,-1) -- (1,1) -- (-1,1) -- (-1,-1);
    \draw[\edgecol] (-1,0) -- (1,0);
    \draw[\edgecol] (0,-1) -- (0,1);
    \draw[\edgecol] (-1,-1) -- (1,1);
    \draw[\edgecol] (0,-1) -- (1,0);
    \draw[\edgecol] (-1,0) -- (0,1);

    \node at (-1,-1) {$\bullet$};
    \node at (1,-1) {$\bullet$};
    \node at (-1,1) {$\bullet$};
    \node at (1,1) {$\bullet$};
    \node at (-1,0) {$\bullet$};
    \node at (1,0) {$\bullet$};
    \node at (0,1) {$\bullet$};
    \node at (0,-1) {$\bullet$};
    \node at (0,0) {$\bullet$};

    \coordinate (ltrb) at ($0.33*(-1,0) + 0.33*(0,1) + 0.33*(0,0)$);
    \coordinate (ltlt) at ($0.33*(-1,0) + 0.33*(0,1) + 0.33*(-1,1)$);

    \coordinate (rtrb) at ($0.33*(0,0) + 0.33*(1,1) + 0.33*(1,0)$);
    \coordinate (rtlt) at ($0.33*(0,0) + 0.33*(1,1) + 0.33*(0,1)$);

    \coordinate (lbrb) at ($0.33*(-1,-1) + 0.33*(0,0) + 0.33*(0,-1)$);
    \coordinate (lblt) at ($0.33*(-1,-1) + 0.33*(0,0) + 0.33*(-1,0)$);

    \coordinate (rbrb) at ($0.33*(0,-1) + 0.33*(1,0) + 0.33*(1,-1)$);
    \coordinate (rblt) at ($0.33*(0,-1) + 0.33*(1,0) + 0.33*(0,0)$);

    \draw[\conn, line width=0.55pt] (lblt) .. controls (-0.75,0) .. (ltrb);
    \draw[\conn, line width=0.55pt] (ltrb) .. controls (0,0.75) .. (rtlt);
    \draw[\conn, line width=0.55pt] (rtlt) -- (rtrb);
    \draw[\conn, line width=0.55pt] (rtrb) .. controls (0.75,0) .. (rblt);
    \draw[\conn, line width=0.55pt] (rblt)  .. controls (0,-0.75) .. (lbrb);
    \draw[\conn, line width=0.55pt] (lbrb) -- (lblt);

    \draw[\conn, line width=0.55pt] (0,0) circle (0.25cm);
    
    \node at (ltrb) {$\bm{\times}$};
    \node at (ltlt) {$\bm{\times}$};

    \node at (rtrb) {$\bm{\times}$};
    \node at (rtlt) {$\bm{\times}$};

    \node at (lbrb) {$\bm{\times}$};
    \node at (lblt) {$\bm{\times}$};

    \node at (rbrb) {$\bm{\times}$};
    \node at (rblt) {$\bm{\times}$};    
        
    \end{tikzpicture}
    \]
    \subcaption{The class of the degenerate ring domain}\label{fig:torus_drd}
\end{subfigure}
\begin{subfigure}[t]{0.30\textwidth}
    \[
    \begin{tikzpicture}[scale=2.0]

    \draw[\generictraj] (-1,-1) -- (1,-1) -- (1,1) -- (-1,1) -- (-1,-1);
    \draw[\generictraj] (-1,0) -- (1,0);
    \draw[\generictraj] (0,-1) -- (0,1);
    \draw[\generictraj] (-1,-1) -- (1,1);
    \draw[\generictraj] (0,-1) -- (1,0);
    \draw[\generictraj] (-1,0) -- (0,1);

    \coordinate (ltrb) at ($0.33*(-1,0) + 0.33*(0,1) + 0.33*(0,0)$);
    \coordinate (ltlt) at ($0.33*(-1,0) + 0.33*(0,1) + 0.33*(-1,1)$);

    \coordinate (rtrb) at ($0.33*(0,0) + 0.33*(1,1) + 0.33*(1,0)$);
    \coordinate (rtlt) at ($0.33*(0,0) + 0.33*(1,1) + 0.33*(0,1)$);

    \coordinate (lbrb) at ($0.33*(-1,-1) + 0.33*(0,0) + 0.33*(0,-1)$);
    \coordinate (lblt) at ($0.33*(-1,-1) + 0.33*(0,0) + 0.33*(-1,0)$);

    \coordinate (rbrb) at ($0.33*(0,-1) + 0.33*(1,0) + 0.33*(1,-1)$);
    \coordinate (rblt) at ($0.33*(0,-1) + 0.33*(1,0) + 0.33*(0,0)$);

    \draw[\generictraj] (0,0) .. controls ($1*(ltrb)+0*(0,0.5)$) .. (0,1);
    \draw[\generictraj] (0,0) .. controls ($0.67*(ltrb)+0.33*(0,0.5)$) .. (0,1);
    \draw[\generictraj] (0,0) .. controls ($0.33*(ltrb)+0.67*(0,0.5)$) .. (0,1);

    \draw[\generictraj] (-1,0) .. controls ($1*(ltrb)+0*(-0.5,0.5)$) .. (0,1);
    \draw[\generictraj] (-1,0) .. controls ($0.67*(ltrb)+0.33*(-0.5,0.5)$) .. (0,1);
    \draw[\generictraj] (-1,0) .. controls ($0.33*(ltrb)+0.67*(-0.5,0.5)$) .. (0,1);

    \draw[\generictraj] (-1,0) .. controls ($1*(ltrb)+0*(-0.5,0)$) .. (0,0);
    \draw[\generictraj] (-1,0) .. controls ($0.67*(ltrb)+0.33*(-0.5,0)$) .. (0,0);
    \draw[\generictraj] (-1,0) .. controls ($0.33*(ltrb)+0.67*(-0.5,0)$) .. (0,0);

    \draw[\septraj] (ltrb) -- (0,1);
    \draw[\septraj] (ltrb) -- (-1,0);
    \draw[\septraj] (ltrb) -- (0,0);

    \draw[\generictraj] (1,0) .. controls ($1*(rtrb)+0*(1,0.5)$) .. (1,1);
    \draw[\generictraj] (1,0) .. controls ($0.67*(rtrb)+0.33*(1,0.5)$) .. (1,1);
    \draw[\generictraj] (1,0) .. controls ($0.33*(rtrb)+0.67*(1,0.5)$) .. (1,1);

    \draw[\generictraj] (0,0) .. controls ($1*(rtrb)+0*(0.5,0.5)$) .. (1,1);
    \draw[\generictraj] (0,0) .. controls ($0.67*(rtrb)+0.33*(0.5,0.5)$) .. (1,1);
    \draw[\generictraj] (0,0) .. controls ($0.33*(rtrb)+0.67*(0.5,0.5)$) .. (1,1);

    \draw[\generictraj] (0,0) .. controls ($1*(rtrb)+0*(0.5,0)$) .. (1,0);
    \draw[\generictraj] (0,0) .. controls ($0.67*(rtrb)+0.33*(0.5,0)$) .. (1,0);
    \draw[\generictraj] (0,0) .. controls ($0.33*(rtrb)+0.67*(0.5,0)$) .. (1,0);

    \draw[\septraj] (rtrb) -- (1,1);
    \draw[\septraj] (rtrb) -- (0,0);
    \draw[\septraj] (rtrb) -- (1,0);

    \draw[\generictraj] (0,-1) .. controls ($1*(lbrb)+0*(0,-0.5)$) .. (0,0);
    \draw[\generictraj] (0,-1) .. controls ($0.67*(lbrb)+0.33*(0,-0.5)$) .. (0,0);
    \draw[\generictraj] (0,-1) .. controls ($0.33*(lbrb)+0.67*(0,-0.5)$) .. (0,0);

    \draw[\generictraj] (-1,-1) .. controls ($1*(lbrb)+0*(-0.5,-0.5)$) .. (0,0);
    \draw[\generictraj] (-1,-1) .. controls ($0.67*(lbrb)+0.33*(-0.5,-0.5)$) .. (0,0);
    \draw[\generictraj] (-1,-1) .. controls ($0.33*(lbrb)+0.67*(-0.5,-0.5)$) .. (0,0);

    \draw[\generictraj] (-1,-1) .. controls ($1*(lbrb)+0*(-0.5,-1)$) .. (0,-1);
    \draw[\generictraj] (-1,-1) .. controls ($0.67*(lbrb)+0.33*(-0.5,-1)$) .. (0,-1);
    \draw[\generictraj] (-1,-1) .. controls ($0.33*(lbrb)+0.67*(-0.5,-1)$) .. (0,-1);

    \draw[\septraj] (lbrb) -- (0,0);
    \draw[\septraj] (lbrb) -- (-1,-1);
    \draw[\septraj] (lbrb) -- (0,-1);

    \draw[\generictraj] (1,-1) .. controls ($1*(rbrb)+0*(1,-0.5)$) .. (1,0);
    \draw[\generictraj] (1,-1) .. controls ($0.67*(rbrb)+0.33*(1,-0.5)$) .. (1,0);
    \draw[\generictraj] (1,-1) .. controls ($0.33*(rbrb)+0.67*(1,-0.5)$) .. (1,0);

    \draw[\generictraj] (0,-1) .. controls ($1*(rbrb)+0*(0.5,-0.5)$) .. (1,0);
    \draw[\generictraj] (0,-1) .. controls ($0.67*(rbrb)+0.33*(0.5,-0.5)$) .. (1,0);
    \draw[\generictraj] (0,-1) .. controls ($0.33*(rbrb)+0.67*(0.5,-0.5)$) .. (1,0);

    \draw[\generictraj] (0,-1) .. controls ($1*(rbrb)+0*(0.5,-1)$) .. (1,-1);
    \draw[\generictraj] (0,-1) .. controls ($0.67*(rbrb)+0.33*(0.5,-1)$) .. (1,-1);
    \draw[\generictraj] (0,-1) .. controls ($0.33*(rbrb)+0.67*(0.5,-1)$) .. (1,-1);

    \draw[\septraj] (rbrb) -- (1,0);
    \draw[\septraj] (rbrb) -- (0,-1);
    \draw[\septraj] (rbrb) -- (1,-1);

    \draw[\generictraj] (-1,1) .. controls ($1*(ltlt)+0*(-1,0.5)$) .. (-1,0);
    \draw[\generictraj] (-1,1) .. controls ($0.67*(ltlt)+0.33*(-1,0.5)$) .. (-1,0);
    \draw[\generictraj] (-1,1) .. controls ($0.33*(ltlt)+0.67*(-1,0.5)$) .. (-1,0);

    \draw[\generictraj] (0,1) .. controls ($1*(ltlt)+0*(-0.5,0.5)$) .. (-1,0);
    \draw[\generictraj] (0,1) .. controls ($0.67*(ltlt)+0.33*(-0.5,0.5)$) .. (-1,0);
    \draw[\generictraj] (0,1) .. controls ($0.33*(ltlt)+0.67*(-0.5,0.5)$) .. (-1,0);

    \draw[\generictraj] (0,1) .. controls ($1*(ltlt)+0*(-0.5,1)$) .. (-1,1);
    \draw[\generictraj] (0,1) .. controls ($0.67*(ltlt)+0.33*(-0.5,1)$) .. (-1,1);
    \draw[\generictraj] (0,1) .. controls ($0.33*(ltlt)+0.67*(-0.5,1)$) .. (-1,1);

    \draw[\septraj] (ltlt) -- (-1,0);
    \draw[\septraj] (ltlt) -- (0,1);
    \draw[\septraj] (ltlt) -- (-1,1);

    \draw[\generictraj] (0,1) .. controls ($1*(rtlt)+0*(0,0.5)$) .. (0,0);
    \draw[\generictraj] (0,1) .. controls ($0.67*(rtlt)+0.33*(0,0.5)$) .. (0,0);
    \draw[\generictraj] (0,1) .. controls ($0.33*(rtlt)+0.67*(0,0.5)$) .. (0,0);

    \draw[\generictraj] (1,1) .. controls ($1*(rtlt)+0*(0.5,0.5)$) .. (0,0);
    \draw[\generictraj] (1,1) .. controls ($0.67*(rtlt)+0.33*(0.5,0.5)$) .. (0,0);
    \draw[\generictraj] (1,1) .. controls ($0.33*(rtlt)+0.67*(0.5,0.5)$) .. (0,0);

    \draw[\generictraj] (1,1) .. controls ($1*(rtlt)+0*(0.5,1)$) .. (0,1);
    \draw[\generictraj] (1,1) .. controls ($0.67*(rtlt)+0.33*(0.5,1)$) .. (0,1);
    \draw[\generictraj] (1,1) .. controls ($0.33*(rtlt)+0.67*(0.5,1)$) .. (0,1);

    \draw[\septraj] (rtlt) -- (0,0);
    \draw[\septraj] (rtlt) -- (1,1);
    \draw[\septraj] (rtlt) -- (0,1);

    \draw[\generictraj] (-1,0) .. controls ($1*(lblt)+0*(-1,-0.5)$) .. (-1,-1);
    \draw[\generictraj] (-1,0) .. controls ($0.67*(lblt)+0.33*(-1,-0.5)$) .. (-1,-1);
    \draw[\generictraj] (-1,0) .. controls ($0.33*(lblt)+0.67*(-1,-0.5)$) .. (-1,-1);

    \draw[\generictraj] (0,0) .. controls ($1*(lblt)+0*(-0.5,-0.5)$) .. (-1,-1);
    \draw[\generictraj] (0,0) .. controls ($0.67*(lblt)+0.33*(-0.5,-0.5)$) .. (-1,-1);
    \draw[\generictraj] (0,0) .. controls ($0.33*(lblt)+0.67*(-0.5,-0.5)$) .. (-1,-1);

    \draw[\generictraj] (0,0) .. controls ($1*(lblt)+0*(-0.5,0)$) .. (-1,0);
    \draw[\generictraj] (0,0) .. controls ($0.67*(lblt)+0.33*(-0.5,0)$) .. (-1,0);
    \draw[\generictraj] (0,0) .. controls ($0.33*(lblt)+0.67*(-0.5,0)$) .. (-1,0);

    \draw[\septraj] (lblt) -- (-1,-1);
    \draw[\septraj] (lblt) -- (0,0);
    \draw[\septraj] (lblt) -- (-1,0);

    \draw[\generictraj] (0,0) .. controls ($1*(rblt)+0*(0,-0.5)$) .. (0,-1);
    \draw[\generictraj] (0,0) .. controls ($0.67*(rblt)+0.33*(0,-0.5)$) .. (0,-1);
    \draw[\generictraj] (0,0) .. controls ($0.33*(rblt)+0.67*(0,-0.5)$) .. (0,-1);

    \draw[\generictraj] (1,0) .. controls ($1*(rblt)+0*(0.5,-0.5)$) .. (0,-1);
    \draw[\generictraj] (1,0) .. controls ($0.67*(rblt)+0.33*(0.5,-0.5)$) .. (0,-1);
    \draw[\generictraj] (1,0) .. controls ($0.33*(rblt)+0.67*(0.5,-0.5)$) .. (0,-1);

    \draw[\generictraj] (1,0) .. controls ($1*(rblt)+0*(0.5,0)$) .. (0,0);
    \draw[\generictraj] (1,0) .. controls ($0.67*(rblt)+0.33*(0.5,0)$) .. (0,0);
    \draw[\generictraj] (1,0) .. controls ($0.33*(rblt)+0.67*(0.5,0)$) .. (0,0);

    \draw[\septraj] (rblt) -- (0,-1);
    \draw[\septraj] (rblt) -- (1,0);
    \draw[\septraj] (rblt) -- (0,0);

    \draw[\conn, line width=0.55pt] (ltrb) arc (135:-45:0.465);
    \draw[\conn, line width=0.55pt] (ltrb) arc (135:315:0.465);

    \draw[\conn, line width=0.55pt] (rblt) .. controls (0.75,0) .. (rtrb);
    \draw[\conn, line width=0.55pt] (rblt) .. controls (0,-0.75) .. (lbrb);

    \node at (0,-0.6) {$\gamma_A$};
    \node at (0.6,0) {$\gamma_C$};
    \node at (45:0.55) {$\gamma_{1}$};
    \node at (225:0.55) {$\gamma_{2}$};

    \node at (ltrb) {$\bm{\times}$};
    \node at (ltlt) {$\bm{\times}$};

    \node at (rtrb) {$\bm{\times}$};
    \node at (rtlt) {$\bm{\times}$};

    \node at (lbrb) {$\bm{\times}$};
    \node at (lblt) {$\bm{\times}$};

    \node at (rbrb) {$\bm{\times}$};
    \node at (rblt) {$\bm{\times}$};

    \node at (-1,-1) {$\bullet$};
    \node at (1,-1) {$\bullet$};
    \node at (-1,1) {$\bullet$};
    \node at (1,1) {$\bullet$};
    \node at (-1,0) {$\bullet$};
    \node at (1,0) {$\bullet$};
    \node at (0,1) {$\bullet$};
    \node at (0,-1) {$\bullet$};
    \node at (0,0) {$\bullet$};      
        
    \end{tikzpicture}
    \]
    \subcaption{Comparing the phases of different saddle connections}    \label{fig:torus_drd_saddles}
\end{subfigure}
\caption{Triangulation and finite-length connections, illustrated using four copies of the fundamental domain of the torus}\label{fig:toral_drd}       
\end{figure}

Now we determine which dimension vectors correspond to the degenerate ring domain and the type~I saddle trajectories in its boundaries.
Considering the pre-images on the spectral cover, it can be seen from Figure~\ref{fig:torus_drd} that the hat-homology class associated to a closed connection in the degenerate ring domain is equal to the sum of six standard saddle classes, up to sign.
Moreover, the signs must all be positive due to the definition of the orientation of the hat-homology classes in Section~\ref{sect:qd:ssc}: the evaluation of the 1-form on the spectral cover must have positive imaginary part for each of the standard saddle classes, as well as the class of the closed connection in the ring domain.
Amongst these six saddle connections, each of the standard saddle connections corresponding to the vertices of the quiver is represented twice, so we obtain that $2\hhc{\gamma} = (2, 2, 2)$ and $\hhc{\gamma} = (1, 1, 1)$.

Finally, we determine the stability condition.
We know that the stability condition must be generic, since $\varphi$ was a generic quadratic differential.
We pick a zero and assume that $\gamma_1$ first crosses $H_C$ as it emanates from this zero, which implies that $\gamma_2$ first crosses $H_A$.
That is, we assume that we are in the situtation shown in Figure~\ref{fig:torus_drd_saddles}.
The cases where $\gamma_1$ first crosses a horizontal strip with a different label are related to this one by an even permutation of the letters $\{A, B, C\}$.

It can be seen from $H_C$ that we must have that $\qdph{\gamma_C} < \qdph{\gamma_{1}}$, since the phase of $\gamma_C$ must be less than the phase of $\gamma_{1}$ if $\gamma_{1}$ is to pass $\gamma_C$ on the left, as illustrated in Figure~\ref{fig:torus_drd_saddles}.
Hence we have $\scph{S_C} = \qdph{\gamma_C} < \qdph{\gamma_{1}} = \scph{S_A \oplus S_B \oplus S_C}$.
Similarly, by considering $H_A$, one obtains that $\scph{S_A \oplus S_B \oplus S_C} < \scph{S_A}$.
\end{proof}

We denote the category of semistable quiver representations from Proposition~\ref{prop:drd_ii:3_vertex} by $\mathcal{P}$, and denote its 3-Calabi--Yau $A_\infty$-enhancement by $\mathcal{P}_{\infty}$, which consists of twisted complexes in $\mathcal{H}_\infty(Q,W)$ which give semistable objects in~$\mathcal{P}$.
We now describe the stable representations in~$\mathcal{P}$.

\begin{lemma}\label{lem:drd_ii:stables}
There are precisely two stable representations in $\mathcal{P}$, namely $M(a_{1}b_{2})$ and $M(a_{2}b_{1})$.
\end{lemma}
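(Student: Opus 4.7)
The overall approach is to reduce to the combinatorics of strings and bands in a (completed) gentle algebra. The first step is to verify that $\jac{Q,W}$ is such an algebra. Cyclic derivatives of $W = a_1 b_1 c_1 + a_2 b_2 c_2$ produce precisely the six quadratic relations $a_i b_i = b_i c_i = c_i a_i = 0$ for $i = 1,2$, and each vertex has exactly two arrows in and two out. Conditions (1)--(4) of Definition~\ref{def:gentle} are then immediate, with the role of ``unique continuation'' played by the pairing $a_i b_i$ (relation) versus $a_i b_{3-i}$ (not a relation), and similarly at the other arrows. Theorem~\ref{thm:gentle_string_band} then tells us that every finite-dimensional indecomposable $\jac{Q,W}$-module is a string or a band module, and since stable objects are indecomposable this restricts the search.

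Next I would rule out bands: for any band $v$ and any $\lambda \in \mathbb{C}^{\ast}$, the composition along $v$ acts on $M(v,\lambda,m)$ as a Jordan block of eigenvalue $\lambda \neq 0$, hence non-nilpotently. But finite-dimensional modules over the \emph{completed} algebra $\jac{Q,W}$ correspond to nilpotent representations of the quiver with relations, so no genuine band module lives in $\mathcal{P}$. The stable objects in $\mathcal{P}$ are therefore string modules. By the genericity of $\sigma_{\varphi}$, the dimension vectors appearing in $\ssc{\sigma_\varphi}{1-\varepsilon}$ are the non-negative integer multiples of $(1,1,1)$, so a candidate stable string $M(w)$ has $w$ of length $3d-1$ for some $d \geq 1$.

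For $d = 1$ I would enumerate the length-two walks that visit all three vertices while avoiding the relations: up to reversal these are $a_1 b_2,\; a_2 b_1,\; b_1 c_2,\; b_2 c_1,\; c_1 a_2,\; c_2 a_1$. The stability inequalities $\scph{S_C} < \vartheta < \scph{S_A}$ imply $\scph{S_A \oplus S_B} > \vartheta$ and $\scph{S_B \oplus S_C} < \vartheta$ by vector subtraction on the phase line. A direct inspection of the (short) subrepresentation lattices shows that $M(a_1 b_2)$ and $M(a_2 b_1)$ have only the proper subobjects $0$, $S_C$ and $M(b_i)$, all of phase $< \vartheta$, and hence are stable; while each of the other four length-two string modules contains either $S_A$ or an $M(a_i)$ as a proper subrepresentation of phase $> \vartheta$ and therefore fails semistability. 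For $d \geq 2$, the plan is to exhibit a proper sub-string module of $M(w)$ of dimension vector $(d',d',d')$ with $0 < d' < d$; any such submodule has phase exactly $\vartheta$, forcing $M(w)$ to be strictly semistable. Concretely, one uses the submodule-substring formalism of Section~\ref{sect:spiral:string_band:ext} to locate inside $w$ a submodule substring whose associated string module has dimension vector $(1,1,1)$, which must then be one of the six length-two modules above; if it is a stable one the semistability obstruction is immediate, and otherwise $M(w)$ inherits from it a sub-submodule of phase $> \vartheta$ and is not even semistable.

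I expect Step~4, the case $d \geq 2$, to be the main obstacle: producing a submodule substring of dimension vector $(1,1,1)$ inside an arbitrary string of dimension vector $(d,d,d)$, uniformly in $w$. A clean argument will require a small case analysis on the directions (direct versus inverse) of the first and last arrows of $w$, combined with the observation that the absence of loops at the three vertices and the gentleness condition severely restrict how a walk of length $\geq 5$ can zigzag before it is forced to traverse a direct length-two subpath $a_i b_j$, $b_i c_j$ or $c_i a_j$ of the desired form; once this combinatorial lemma is in hand the conclusion follows immediately from the $d = 1$ analysis.
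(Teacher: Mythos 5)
Your first step (gentleness of $\jac{Q,W}$) and your $d=1$ analysis are fine, and the overall strategy (strings/bands plus submodule-substring destabilisers) is in the same spirit as the paper's argument, but there are two genuine gaps. The first is that your blanket exclusion of band modules is based on a false claim. The ``composition along $v$'' is only an element of the algebra when $v$ consists entirely of direct arrows; for bands with mixed direct and inverse letters there is no such composition, and such bands can give nilpotent modules. Concretely, $v = a_1 b_2 c_1 c_2^{-1} b_1^{-1} a_2^{-1}$ is a band, the module $M(v,\lambda,1)$ satisfies the Jacobian relations, is nilpotent (every directed cycle of $Q$ factors through the basis vector at the ``sink'' copy of $A$, so all cycle compositions square to zero), and has dimension vector $(2,2,2)$ --- so it lies among exactly the dimension vectors you are considering and is not eliminated by your Step~2. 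It must instead be ruled out by a stability argument (here it contains $S_A$ as a submodule, which has phase greater than $\vartheta$). The paper avoids this trap by first running the letter-forcing argument on the word $w$ (any occurrence of $c_1$ forces the continuation $a_2 b_1 c_2$, etc.), which shows a stable word would have to be the all-direct periodic word in $c_1 a_2 b_1 c_2 a_1 b_2$; only for that all-direct band is non-nilpotency invoked.

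The second gap is that your Step~4, which you acknowledge is the crux, is not actually carried out: the combinatorial claim that every string of dimension vector $(d,d,d)$ with $d\geqslant 2$ contains a proper submodule substring of dimension vector $(1,1,1)$ is asserted, not proved, and your sketch conflates ``traversing a direct length-two subpath $a_i b_j$'' with that subpath being a \emph{submodule} substring, which additionally requires the correct boundary letters (preceded by a direct arrow or the start of $w$, followed by an inverse arrow or the end of $w$). Without this lemma the case $d\geqslant 2$ is open, and it is not clear the lemma is the most economical route: the paper instead shows directly that if $w$ contains any $c_i^{\pm 1}$ then stability forces $w$ to continue indefinitely in the pattern $\dots c_1 a_2 b_1 c_2 a_1 b_2 \dots$, using destabilising submodule substrings of dimension vectors $(1,0,0)$, $(1,1,0)$ and $(1,1,1)$ along the way, and then derives a contradiction from the admissible first letters of $w$ (which would force $M(a_1b_2)$ or $M(a_2b_1)$ to be a proper \emph{factor} of the same phase); if $w$ contains no $c_i^{\pm 1}$ the balanced dimension vector immediately forces $w^{\pm 1} \in \{a_1b_2, a_2b_1\}$. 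To complete your proof you would need both to repair the band case and to prove (or replace) your Step~4 lemma, for instance by adapting this forcing argument.
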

\begin{proof}
It can be verified using Definition~\ref{def:gentle} that the Jacobian algebra $\jac{Q, W}$ is a completed locally gentle algebra.
Hence, we know that all of its indecomposable representations are string and band modules by Theorem~\ref{thm:gentle_string_band}.
If we then let $M$ be a stable module of phase $\scph{S_{A} \oplus S_{B} \oplus S_{C}}$, we have that $M$ is either a string module or a band module.
Hence, let $w$ be the string giving rise to $M$.

Suppose first that $w$ does not contain either $c_{1}^{\pm 1}$ or $c_{2}^{\pm 1}$.
Since the stability condition is generic, we know that $\dimu M = (d, d, d)$.
If $w$ does not contain $c_1^{\pm 1}$ or~$c_2^{\pm 1}$, then the only way of reaching the vertices $A$ and $C$ is by going via the vertex $B$.
Hence, the only way that $\dimu M$ can be a multiple of $(1, 1, 1)$ is if $w^{\pm 1} = a_{1}b_{2}$ or $w^{\pm 1} = a_{2}b_{1}$.

Suppose now that $w$ contains $c_{1}^{\pm 1}$ as a letter, the case where $w$ contains $c_{2}^{\pm 1}$ being similar.
By inverting $w$, we may assume that we have $c_{1}$.
We know that $c_{1}$ cannot be the last letter of $w$ with $M = M(w)$ a string module, since then $S_{A}$ is a submodule of $M$, which contradicts stability.
Similarly, $c_{1}$ cannot be succeeded by $c_{2}^{-1}$ in $w$, since then $S_{A}$ is again a submodule.
It follows that $c_{1}$ must be succeeded by $a_{2}$.
We cannot then have $c_{1}a_{2}a_{1}^{-1}$, since $a_{2}$ is then a submodule substring with $\dimu M(a_{2}) = (1, 1, 0)$, which contradicts stability; for the same reason, the string cannot end $c_{1}a_{2}$.
Hence, we must have $c_{1}a_{2}b_{1}$.
If we have $c_{1}a_{2}b_{1}b_{2}^{-1}$, then $a_{2}b_{1}$ is a submodule substring with $\dimu M(a_{2}b_{1}) = (1, 1, 1)$, which contradicts stability.
We must therefore have $c_{1}a_{2}b_{1}c_{2}$.

One can repeat the same analysis with $c_{2}$, so we conclude that $w$ must be a string of the form $\dots c_{1}a_{2}b_{1}c_{2}a_{1}b_{2}c_{1}a_{2} \dots$.
Consequently, $M$ cannot be a band module, since such a band module $M(w, \lambda, m)$ would not be nilpotent.
Since $w$ cannot end or begin with $c_{1}$ or $c_{2}$, as argued in the previous paragraph, we must have that $w = a_{1}b_{2}c_{1} \dots$ or $w = a_{2}b_{1}c_{2} \dots$.
This then gives that either $M(a_{1}b_{2})$ or $M(a_{2}b_{1})$ is a factor module, which again contradicts stability.
We conclude that $M(a_{1}b_{2})$ and $M(a_{2}b_{1})$ are the only stable representations of phase $\scph{S_{A} \oplus S_{B} \oplus S_{C}}$, as desired.
\end{proof}

We can now describe $\mathcal{P}_\infty$ in the more tractable form given in Proposition~\ref{prop:drd_ii}, which allowed us to compute the associated DT invariants.

\begin{proposition}\label{prop:drd_ii:2_vertex}
The $A_\infty$-category $\mathcal{P}_\infty$ is quasi-equivalent to $\mathcal{H}_\infty(Q',0)$, where $Q'$ is the quiver
\[\begin{tikzcd}\overset{1}{\bullet} \ar[r,shift left] & \overset{2}{\bullet}. \ar[l,shift left]\end{tikzcd}\]
Under this equivalence, $S_{1}$ and $S_{2}$ both correspond to modules of dimension vector $(1, 1, 1)$.
Moreover, there is an isomorphism between the canonical choices of orientation data on the stacks of objects in each of these categories.
\end{proposition}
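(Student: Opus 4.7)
The plan is to identify the two stable objects of Lemma~\ref{lem:drd_ii:stables}, namely $M_1 := M(a_1 b_2)$ and $M_2 := M(a_2 b_1)$, with the simples $S_1$ and $S_2$ of $\modules \jac{Q', 0}$, and then promote this identification to a quasi-equivalence of $3$-Calabi--Yau $A_\infty$-categories. First I would compute the graded endomorphism algebra $\bigoplus_{k} \Ext^{k}_{\mathcal{D}}(M_1 \oplus M_2, M_1 \oplus M_2)$. The Hom groups $\Hom(M_i, M_j) = \delta_{ij} \mathbb{C}$ follow from the string-module description of morphisms, since no vertex-substring of $a_1 b_2$ is simultaneously a factor substring of one module and a submodule substring of the other. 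For $\Ext^1(M_2, M_1)$, I would parametrise extensions of total dimension vector $(2,2,2)$ by six scalars attached to the arrows $a_1, a_2, b_1, b_2, c_1, c_2$; the Jacobian relations $\partial_x W = 0$ together with the three scalar basis changes at $V_A$, $V_B$, $V_C$ reduce these six moduli to a single free parameter coming from the $c_2$-action, so $\Ext^1(M_2, M_1) \cong \mathbb{C}$. Symmetrically $\Ext^1(M_1, M_2) \cong \mathbb{C}$, and the analogous calculation with $M_1$ as both sub and quotient gives $\Ext^1(M_i, M_i) = 0$. The remaining Exts follow from $3$-Calabi--Yau duality, yielding exactly the Ext algebra of $S_1, S_2$ in $\twz{\akd{Q', 0}}$.

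The second step is to upgrade this to a quasi-equivalence $\mathcal{S}_\infty \simeq \akd{Q', 0}$ of $3$-Calabi--Yau $A_\infty$-categories, where $\mathcal{S}_\infty$ denotes the full $A_\infty$-subcategory of $\mathcal{P}_\infty$ on $\{M_1, M_2\}$, equivalent by Proposition~\ref{prop:min_model} to its cyclic minimal model. All $m_2$-products of degree-one morphisms automatically vanish because the only composable pattern $\Ext^1(M_i, M_j) \otimes \Ext^1(M_j, M_i) \to \Ext^2(M_i, M_i)$ lands in a space dual to $\Ext^1(M_i, M_i) = 0$. The only remaining obstruction is the possible presence of non-trivial higher Massey products $m_n : (\Ext^1)^{\otimes n} \to \Ext^2$ of alternating pattern for $n \geqslant 3$, corresponding to potential terms $(\alpha\beta)^k$ on $Q'$. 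The hard part of the argument is to rule these out: I would choose explicit cocycle representatives in $\tw{\akdqw}$ for the generators of $\Ext^1(M_1, M_2)$ and $\Ext^1(M_2, M_1)$, built from $c_1$ and $c_2$ respectively as in the extension calculation, and compute Massey products using the explicit $A_\infty$-operations of Definition~\ref{def:akd} applied to $W = a_1 b_1 c_1 + a_2 b_2 c_2$. The combinatorial observation making this work is that each monomial of $W$ uses arrows indexed entirely by $1$ or entirely by $2$, whereas the strings $a_1 b_2$ and $a_2 b_1$ mix the two indices crosswise; no summand of $W$ can produce the alternating cyclic word required for a non-zero Massey product, forcing $W_{\min} = 0$ on $\mathcal{S}_\infty$.

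Once $\mathcal{S}_\infty \simeq \akd{Q', 0}$ is established, the quasi-equivalence $\mathcal{P}_\infty \simeq \twz{\akd{Q', 0}}$ follows by passing to twisted complexes, since every semistable object in $\mathcal{P}$ is an iterated extension of $M_1$ and $M_2$, so $\mathcal{P}_\infty \simeq \twz{\mathcal{S}_\infty}$. The induced equivalence of moduli stacks of objects is then formal, and sends the class $\dimu S_i$ to the class of $M_i$, which is $(1,1,1)$ as asserted. For the orientation data, I would invoke Proposition~\ref{prop:od_ug}: on the $Q'$-side $\odcha{Q', 0}$ has even parity on $\qaastn{Q'}{\dimu S_i}$ because $Q'$ is loop-free, so the vector space $\End^1 / \ker m_1^{\two}$ vanishes at the simples; on the $\mathcal{P}_\infty$-side, Proposition~\ref{prop:od_ug}\ref{op:od_ug:calc} identifies the parity of $\dim \End^1 / \ker m_{1}^{\two}$ at $M_i$ with that of $\dim \Ext^{\leqslant 1}(M_i, M_i) + \dim \Hom^{\leqslant 1}_{\tw{\akdqw}}(M_i, M_i) = 1 + (3 + 6) = 10$, which is also even. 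Hence both stacks carry orientation data of matching parity on simples, and Proposition~\ref{prop:od_ug}\ref{op:od_ug:simples_determine} delivers the desired isomorphism. The main obstacle throughout is the vanishing-of-Massey-products argument of the second paragraph.
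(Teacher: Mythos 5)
Your proposal is correct and follows essentially the same route as the paper: identify the two stables $M(a_1b_2)$, $M(a_2b_1)$ from Lemma~\ref{lem:drd_ii:stables}, show their Ext-quiver is $Q'$ with vanishing higher $A_\infty$-products (the paper disposes of this with "one can check from the definition of the operations", which your cubic-potential/Massey-product argument makes explicit), pass to twisted complexes via Jordan--H\"older generation, and verify orientation data through Proposition~\ref{prop:od_ug} with the same parity count $1+3+6=10$. The extra detail you supply on the extension computation and the vanishing of higher products is a fleshing-out of the paper's asserted checks rather than a different method.
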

\begin{proof}
We assume, following the proof of Proposition~\ref{prop:drd_ii:3_vertex}, that the stability condition $\sigma_\varphi$ is such that $\scph{S_C} < \scph{S_A \oplus S_B \oplus S_C}  < \scph{S_A}$, since the other cases are similar.
Hence, we may use Lemma~\ref{lem:drd_ii:stables} to conclude that the only stable representations of the quiver with potential $(Q, W)$ are the string modules $M_{1} := M(a_{1}b_{2})$ and $M_{2} := M(a_{2}b_{1})$.
Note that this fact remains true even if we are given more information about the phases of other objects.

We consider the full $A_\infty$-subcategory of $\mathcal{P}_\infty$ consisting of $M_1$ and $M_2$.
We have that \[\Ext_{\mathcal{P}_\infty}^{1}(M_{1}, M_{2}) = \Ext_{\mathcal{P}_\infty}^{1}(M_{2}, M_{1}) = \mathbb{C},\] being generated respectively by arrow extensions given by $c_{2}$ and~$c_{1}$.
The degree~$0$ morphisms are just the identities.
The morphisms of degree~$2$ and $3$ are paired with those of degree~$0$ and $1$ the 3-Calabi--Yau property.
Moreover, one can check from the definition of the $A_\infty$-operations in $\mathcal{D}_\infty(Q,W)$ that all $A_\infty$-operations are zero on the degree~$1$ morphisms.
The objects $M_1$ and $M_2$ generate the category of semistable objects, in the sense that every semistable module possesses a Jordan--H\"older filtration with stable modules as factors.
Hence, we have that $\mathcal{P}_\infty$ is quasi-equivalent to $\mathcal{H}_\infty(Q',0)$.

We now verify that the orientation data $\odchoice$ for $(Q, W)$ coincides with that of $\odcha{Q'}$ for the representations $M_{1}$ and~$M_{2}$, which correspond to the nilpotent simple representations of $Q'$.
We compute the parity of the orientation data by computing \[
e^{\leqslant 1}(M_1, M_1) + h^{\leqslant 1}(M_1, M_1),
\]
following Proposition~\ref{prop:od_ug}\ref{op:od_ug:calc}.
By Proposition~\ref{prop:od_ug}\ref{op:od_ug:simples->even}, we must show that this is even to show that we have an isomorphism between $\odchoice$ and $\odcha{Q'}$. 
We have that $e^{\leqslant 1}(M_1, M_1) = 1$, and
\begin{align*}
   h^{0}(M_1, M_1) &= h^{0}(S_A \oplus S_B \oplus S_C, S_A \oplus S_B \oplus S_C) = 3, \\
    h^{1}(M_1, M_1) &= h^{1}(S_A \oplus S_B \oplus S_C, S_A \oplus S_B \oplus S_C) \\
    &= h^{1}(S_C, S_A) +h^{1}(S_A, S_B) + h^{1}(S_B, S_C) = 6.
\end{align*}
Adding these up, we obtain $1 + 3 + 6 = 10$, which is even. The computation for $M_2$ is identical.
\end{proof}

\subsection{Toral non-degenerate ring domains}\label{sect:spiral:nrd}

We consider the case from Section~\ref{sect:main:nrd:ii}, where we have a toral non-degenerate ring domain.
Note that this case cannot have polar type $\{-2\}$.

\begin{proposition}\label{prop:nrd_ii:4_vertex}
Suppose that we have a generic infinite GMN differential $\varphi$ on a Riemann surface $X$ with a toral non-degenerate ring domain. The category $\ssc{\sigma_{\varphi}}{1}$ is given by semistable representations of dimension vector $(d, d, d, d)$ of the quiver $Q$ \[
\begin{tikzcd}
    A \ar[rr,"b_1"] \ar[ddrr,"b_2",near end,swap] && B \ar[ddll,"c_1",near start,swap] \\\\
    D \ar[uu,shift left,"a_1"] \ar[uu,shift right,"a_2",swap] && C \ar[ll,"c_2"] \ar[uu,"a_3",swap]
\end{tikzcd}
\] with potential $W = a_1 b_1 c_1 + a_2 b_2 c_2$ under a stability condition such that
\begin{align*}
    \scph{S_B} < &\scph{S_A \oplus S_B \oplus S_C \oplus S_D} < \scph{S_C}, \\
    \scph{S_D} < &\scph{S_A \oplus S_B \oplus S_C \oplus S_D} < \scph{S_A}, \\
    \scph{S_A \oplus S_B} < &\scph{S_A \oplus S_B \oplus S_C \oplus S_D} < \scph{S_C \oplus S_D}.
\end{align*}
Denoting the hat-homology class of the two type~I saddles in the boundary of the ring domain by $\hhc{\gamma}$, we have that $\hhc{\gamma}$ corresponds to the dimension vector $(1, 1, 1, 1)$.
\end{proposition}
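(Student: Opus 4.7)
The plan is to mirror the proof of Proposition~\ref{prop:drd_ii:3_vertex}, adapted to the non-degenerate setting. First I would identify the marked bordered surface $\mbso$ associated to $\varphi$; second rotate $\varphi$ slightly to a saddle-free differential and read off an explicit triangulation; third extract the quiver with potential, determine the dimension vector corresponding to $\hhc{\gamma}$ by lifting to the spectral cover, and deduce the stability inequalities via Lemma~\ref{lem:geodesics}.

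Since the quiver in the statement has no loops, the associated triangulation has no monogons, so $\varphi$ has no simple poles. The Riemann--Roch formula \eqref{eq:rr} with $g=1$ and $l=3$ then forces $\sum m_i = 3$; as $\varphi$ must admit at least one infinite critical point, the only admissible polar type is $\{3\}$. Hence $\mbso$ is a torus with one boundary disk removed, with a single marked point on that boundary and no punctures or orbifold points. A direct Euler-characteristic count (using $V - E + F = -1$, $V = 1$, $3F = 2k + 1$) shows that such a surface is triangulated by four arcs into three triangles, exactly one of which must have a boundary edge as one of its sides. By Proposition~\ref{prop:sep_traj_persist} we may rotate to $\varphi' = e^{-2i\pi\varepsilon}\varphi$ for small $\varepsilon > 0$, preserving the separating trajectories emerging from all three zeros (one on the closed saddle and two on the type~I saddles, by Proposition~\ref{prop:fin_len_traj}\ref{op:fin_len_traj:nrd}\ref{op:fin_len_traj:nrd:torus}); this produces a saddle-free $\varphi'$ whose induced triangulation $T_{\varphi'}$ has the abstract form above.

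Two of the three triangles should tile the toral portion near the type~I saddles in direct analogy with Figure~\ref{fig:torus_triang}, sharing two arcs $A$ and $D$ that appear in both toral triangles; the remaining two arcs $B$ and $C$ each lie in one of the two toral triangles and in the single boundary triangle, which contains the zero on the closed saddle. Applying the construction of Section~\ref{sect:3cy_stab:qwp_from_triang}, the two toral triangles contribute the $3$-cycles $a_1 b_1 c_1$ and $a_2 b_2 c_2$ to the potential, while the boundary triangle contributes only the single arrow $a_3$ between $C$ and $B$ (and no potential term, since the boundary edge is not an arc), reproducing the full quiver with potential in the statement. Lifting a closed connection in the ring domain to the spectral cover as in Figure~\ref{fig:torus_drd}, with the count augmented to include the additional strip $D$, shows that each of the four standard saddle classes $A, B, C, D$ appears twice in the preimage, yielding $\hhc{\gamma} = (1,1,1,1)$. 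Finally, applying Lemma~\ref{lem:geodesics} at each of the three zeros converts the orderings of neighbouring standard saddle classes into strict inequalities on phases: the zero on the closed saddle yields $\scph{S_A \oplus S_B} < \scph{\hhc{\gamma}} < \scph{S_C \oplus S_D}$ and the two zeros on the type~I saddles yield $\scph{S_B} < \scph{\hhc{\gamma}} < \scph{S_C}$ and $\scph{S_D} < \scph{\hhc{\gamma}} < \scph{S_A}$, where $\scph{\hhc{\gamma}} = \scph{S_A \oplus S_B \oplus S_C \oplus S_D}$.

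The main obstacle is the combinatorial bookkeeping of the triangulation and its adjacencies. Once the surface and arc count have been determined abstractly, one must justify that the three triangles sit in the expected geometric configuration (two normal torus triangles meeting along $A$ and $D$, plus one boundary triangle sharing $B$ and $C$ with them and enclosing the zero on the closed saddle), and one must check that the anticlockwise adjacencies at each zero produce exactly the seven arrows in the stated directions. In particular, the orientation of $a_3$ (from $C$ to $B$ rather than the reverse) is forced by the unique boundary triangle together with the labelling of $B$ and $C$ in the two toral triangles; fixing this sign consistently with the geometric placement of the third zero is where one must be most careful. After the triangulation and all its adjacencies are pinned down, the identification of $\hhc{\gamma}$ and the three stability inequalities follow by direct analogues of the arguments used in the degenerate case.
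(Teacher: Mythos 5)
Your overall strategy (rotate via Proposition~\ref{prop:sep_traj_persist}, read off the triangulation, compute $\hhc{\gamma}$ on the spectral cover, get the phases from Lemma~\ref{lem:geodesics}) is the paper's strategy, but your opening step contains a genuine gap that changes the scope of what you prove. You argue: the quiver in the statement has no loops, hence no monogons, hence $\varphi$ has no simple poles, and then \eqref{eq:rr} with $g = 1$, $l = 3$ forces polar type $\{3\}$, so that $\mbso$ is globally a once-marked, one-boundary torus and $Q$ is the quiver of the \emph{whole} triangulation. This is reasoning backwards from the conclusion, and it is false in general: the hypothesis is only that $\varphi$ is a generic infinite GMN differential possessing a toral non-degenerate ring domain, and by Proposition~\ref{prop:fin_len_traj} the rest of $X$ (outside the torus, the ring domain and the closed saddle) can have arbitrary genus, further zeros, higher-order poles and even simple poles. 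The quiver $Q$ in the statement is only the subquiver on which phase-$1$ semistables are supported. The paper handles this by noting that the domain $H$ adjacent to the far side of the ring domain is a horizontal strip or half-plane, rotating so as to preserve its outer separating trajectories, and cutting $\mbso$ along the corresponding arc or boundary component: the piece cut out is a torus with one boundary component and one marked point, whose (combinatorially unique) triangulation gives exactly the four arcs $A, B, C, D$ and the stated quiver with potential as a full subquiver (Figure~\ref{fig:nrd_ii_triang}), after which one restricts as in Proposition~\ref{prop:dt_subquiv}. Your proof as written only covers the single differential of polar type $\{3\}$ on the torus. (Relatedly, your rotation step asks to preserve ``separating trajectories emerging from all three zeros'', but the two zeros on the type~I saddles bounding the spiral-domain torus emit no separating trajectories; what must be preserved is the separating data of $H$ and of the zero on the closed saddle.)

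The second, smaller gap is the third chain of inequalities. You assert that Lemma~\ref{lem:geodesics} applied at the zero on the closed saddle yields $\scph{S_A \oplus S_B} < \scph{S_A \oplus S_B \oplus S_C \oplus S_D} < \scph{S_C \oplus S_D}$, but that zero is incident to the strips $B$ and $C$, and comparing its two standard saddle connections with the closed boundary saddle is precisely what gives the \emph{first} inequality $\scph{S_B} < \scph{\hhc{\gamma}} < \scph{S_C}$ (the second comes from $\gamma_A$, $\gamma_D$ against the toral boundary). The classes $S_A \oplus S_B$ and $S_C \oplus S_D$ are not standard saddle classes of a single strip, so no application of Lemma~\ref{lem:geodesics} at one zero to two neighbouring strips produces them; the paper instead exhibits explicit saddle connections crossing the ring domain from the zero on the closed saddle to the zeros on the toral boundary, of hat-homology class $\hhc{\gamma}_A + \hhc{\gamma}_B$ (respectively $\hhc{\gamma}_C + \hhc{\gamma}_D$), and compares their phases with that of the boundary of the ring domain (the dotted connection in Figure~\ref{fig:nrd_ii:traj}). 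Without producing these cross-ring connections, the third pair of inequalities — which is exactly what is needed later to isolate the stable objects of classes $(1,1,0,0)$ and $(0,0,1,1)$ — is unproved. The remaining ingredients of your sketch (the arc/triangle count, the two toral triangles sharing $A$ and $D$, the boundary triangle contributing only $a_3\colon C \to B$, and the spectral-cover computation giving $\hhc{\gamma} = (1,1,1,1)$) do match the paper's argument once the local, subquiver framing is put in place.
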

\begin{proof}
Aside from the non-degenerate ring domain $R$ and the adjacent spiral domain $S$, the rest of $X\setminus \mathcal{W}_0(\varphi)$ consists of horizontal strips and half-planes by Proposition~\ref{prop:fin_len_traj}.
Hence, the domain $H$ adjacent to the other side of $R$ from $S$ is either a horizontal strip or a half-plane, analogous to the other non-degenerate ring domain cases in Section~\ref{sect:main:nrd}.

By Proposition~\ref{prop:sep_traj_persist}, we can rotate $\varphi$ to a differential $\varphi^{(\varepsilon)}$ which preserves the separating trajectories on the other boundary of~$H$, as well as the separating trajectory which emerges from the zero which has its own boundary of~$R$.
The part of the corresponding marked bordered surface $\mbso$ cut out by the arc or boundary component corresponding to~$H$ is a torus with one boundary component with a marked point on it.
All triangulations of the torus with one boundary component with one marked point are combinatorially equivalent, and are given as shown in Figure~\ref{fig:nrd_ii_triang}, which then gives the quiver with potential in the statement. 

We illustrate the associated trajectory structures of $\varphi^{(\varepsilon)}$ in Figure~\ref{fig:nrd_ii:traj}, with the connections forming the boundary of the ring domain drawn in blue, and $S$ the region in the centre.
Note that the zero shared by $H$ and $R$ must also be the zero whose separating trajectories enclose the boundary component of the torus.
Indeed, before the rotation, this zero has one trajectory going into the infinite critical point; rotation preserves this separating trajectory, whilst one of the other trajectories emanating from the zero emanates around the other side of the boundary component into the infinite critical point.
This is similar to the situation in Section~\ref{sect:main:nrd:i}.
(Note that Figure~\ref{fig:nrd_ii:traj} should only be viewed as schematic, since the trajectories may spiral into the infinite critical point, rather than go in straight as shown.)

It follows from an argument similar to that of Proposition~\ref{prop:drd_ii:3_vertex} that the hat-homology class of each of the type~I saddle trajectories in the boundary of $R$ is $(1, 1, 1, 1)$ and that the class of a closed trajectory in the ring domain is $(2, 2, 2, 2)$.

The statements on the relative phases of certain modules under the stability condition then also follow from an argument similar to that of Proposition~\ref{prop:drd_ii:3_vertex}.
Indeed, consider the standard saddle connections $\gamma_{B}$ and $\gamma_{C}$ crossing the respective horizontal strips corresponding to $B$ and~$C$.
Comparing the phases of these saddle connections with the closed saddle connection which forms one boundary of $R$, one sees in Figure~\ref{fig:nrd_ii:traj} that $\scph{S_B} < \scph{S_A \oplus S_B \oplus S_C \oplus S_D} < \scph{S_C}$ from considering the standard saddle connections $\gamma_B$ and $\gamma_C$ corresponding to $B$ and $C$ respectively, which are incident to the zero which lies on its own boundary of the non-degenerate ring domain.

Similarly, we compare the phases of the standard saddle connections $\gamma_{A}$ and $\gamma_{D}$ crossing the horizontal strips corresponding to $A$ and~$D$ with the other boundary of the ring domain.
Consulting Figure~\ref{fig:nrd_ii:traj}, we obtain that $\scph{S_D} < \scph{S_A \oplus S_B \oplus S_C \oplus S_D} < \scph{S_A}$.

Finally, note that the ring domain contains many saddle connections from the zero on one boundary to the zeros on the other.
In particular, there is a saddle connection whose hat-homology class is $\hhc{\gamma}_{A} + \hhc{\gamma}_{B}$.
In Figure~\ref{fig:nrd_ii:traj}, this saddle connection is illustrated in red.
By comparing the phase of this saddle connection to the phase of the boundary saddle connection of the ring domain emanating from the bottom left zero, we obtain that $\scph{S_A \oplus S_B} < \scph{S_A \oplus S_B \oplus S_C \oplus S_D}$. Using a similar argument with a saddle connection going to the other zero on the boundary adjoining the spiral domain, we obtain that $\scph{S_A \oplus S_B} < \scph{S_A \oplus S_B \oplus S_C \oplus S_D} < \scph{S_C \oplus S_D}$.
\end{proof}

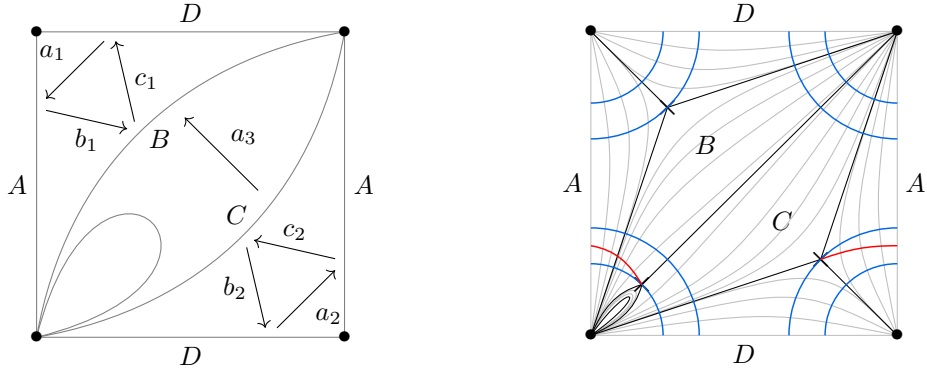
\begin{figure}
\begin{subfigure}[t]{0.45\textwidth}
\[
    \begin{tikzpicture}[scale=4.05]

        \draw[\edgecol] (0,0) -- (0,1) -- (1,1) -- (1,0) -- (0,0);
        \draw[\edgecol] (0,0) .. controls (0.2,0.8) and (0.8,0.2) .. (0,0);

        \draw[\edgecol] (0,0) to [out=10,in=260] (1,1);
        \draw[\edgecol] (0,0) to [out=80,in=190] (1,1);

        \node at (0,0.5) [left] {$A$};
        \node at (1,0.5) [right] {$A$};
        \node at (0.4,0.65) {$B$};
        \node at (0.65,0.4) {$C$};
        \node at (0.5,0) [below] {$D$};
        \node at (0.5,1) [above] {$D$};

        \node (AL) at (0,0.75) {};
        \node (DA) at (0.25,1) {};
        \node (B1) at (0.325,0.675) {};

        \draw[\arrcol,->] (DA) -- (AL) node [midway,above left] {\textcolor{black}{$a_1$}};
        \draw[\arrcol,->] (AL) -- (B1)  node [midway,below] {\textcolor{black}{$b_1$}};
        \draw[\arrcol,->] (B1) -- (DA)  node [midway,right] {\textcolor{black}{$c_1$}};

        \node (AR) at (0.75,0) {};
        \node (DB) at (1,0.25) {};
        \node (C2) at (0.675,0.325) {};

        \draw[\arrcol,->] (AR) -- (DB) node [midway,below right] {\textcolor{black}{$a_2$}};
        \draw[\arrcol,->] (C2) -- (AR) node [midway,left] {\textcolor{black}{$b_2$}};
        \draw[\arrcol,->] (DB) -- (C2) node [midway,above] {\textcolor{black}{$c_2$}};

        \node (BM) at (0.45,0.75) {};
        \node (CM) at (0.75,0.45) {};

        \draw[\arrcol,->] (CM) -- (BM) node [midway,above right] {\textcolor{black}{$a_3$}};

        \node at (0,0) {$\bullet$};
        \node at (1,0) {$\bullet$};
        \node at (0,1) {$\bullet$};
        \node at (1,1) {$\bullet$};
        
    \end{tikzpicture}
    \]
    \subcaption{Triangulation of the torus with one boundary component with one marked point}\label{fig:nrd_ii_triang}
    \end{subfigure}
    \begin{subfigure}[t]{0.45\textwidth}
    \[
\begin{tikzpicture}[scale=0.95]

\draw[\generictraj] (225:3) .. controls ($0.95*(135:1.5)+0.05*(225:2)$) .. (45:3);
\draw[\generictraj] (225:3) .. controls ($0.75*(135:1.5)+0.25*(225:2)$) .. (45:3);
\draw[\generictraj] (225:3) .. controls ($0.5*(135:1.5)+0.5*(225:2)$) .. (45:3);
\draw[\generictraj] (225:3) .. controls (212.5:2) and (220:1) .. (45:3);

\draw[\generictraj] (225:3) .. controls ($0.95*(315:1.5)+0.05*(225:2)$) .. (45:3);
\draw[\generictraj] (225:3) .. controls ($0.75*(315:1.5)+0.25*(225:2)$) .. (45:3);
\draw[\generictraj] (225:3) .. controls ($0.5*(315:1.5)+0.5*(225:2)$) .. (45:3);
\draw[\generictraj] (225:3) .. controls (237.5:2) and (230:1) .. (45:3);

\draw[\generictraj] (225:3) -- (135:3);
\draw[\generictraj] (225:3) .. controls ($0.33*(135:1.5)+0.335*(135:3)+0.335*(225:3)$) .. (135:3);
\draw[\generictraj] (225:3) .. controls ($0.67*(135:1.5)+0.165*(135:3)+0.165*(225:3)$) .. (135:3);
\draw[\generictraj] (225:3) .. controls (135:1.5) .. (135:3);

\draw[\generictraj] (135:3) -- (45:3);
\draw[\generictraj] (135:3) .. controls ($0.33*(135:1.5)+0.335*(135:3)+0.335*(45:3)$) .. (45:3);
\draw[\generictraj] (135:3) .. controls ($0.67*(135:1.5)+0.165*(45:3)+0.165*(135:3)$) .. (45:3);
\draw[\generictraj] (135:3) .. controls (135:1.5) .. (45:3);

\draw[\generictraj] (315:3) -- (45:3);
\draw[\generictraj] (315:3) .. controls ($0.33*(315:1.5)+0.335*(315:3)+0.335*(45:3)$) .. (45:3);
\draw[\generictraj] (315:3) .. controls ($0.67*(315:1.5)+0.165*(45:3)+0.165*(315:3)$) .. (45:3);
\draw[\generictraj] (315:3) .. controls (315:1.5) .. (45:3);

\draw[\generictraj] (315:3) -- (225:3);
\draw[\generictraj] (315:3) .. controls ($0.33*(315:1.5)+0.335*(315:3)+0.335*(225:3)$) .. (225:3);
\draw[\generictraj] (315:3) .. controls ($0.67*(315:1.5)+0.165*(225:3)+0.165*(315:3)$) .. (225:3);
\draw[\generictraj] (315:3) .. controls (315:1.5) .. (225:3);

\draw[\generictraj] (225:3) .. controls (216:1.95) and (234:1.95) .. (225:3);
\draw[\generictraj] (225:3) .. controls (214:1.875) and (236:1.875) .. (225:3);

\draw[\septraj] (135:1.5) -- (135:3);
\draw[\septraj] (135:1.5) -- (45:3);
\draw[\septraj] (135:1.5) -- (225:3);
\node at (135:1.5) {$\bm{\times}$};

\draw[\septraj] (315:1.5) -- (315:3);
\draw[\septraj] (315:1.5) -- (45:3);
\draw[\septraj] (315:1.5) -- (225:3);
\node at (315:1.5) {$\bm{\times}$};

\draw[\septraj] (225:3) .. controls (220:2) and (230:2) .. (225:3);

\draw[\septraj] (225:2) to [out=255,in=25] (225:3);
\draw[\septraj] (225:2) to [out=195,in=65] (225:3);
\draw[\septraj] (225:2) -- (45:3);
\node at (225:2) {$\bm{\times}$};

\node at (45:3) {$\bullet$};
\node at (135:3) {$\bullet$};
\node at (225:3) {$\bullet$};
\node at (315:3) {$\bullet$};

\draw[\conn, line width=0.55pt] (225:2) arc (45:0:1);
\draw[\conn, line width=0.55pt] (225:2) arc (45:90:1);
\draw[\conn, line width=0.55pt] (135:2) arc (315:360:1);
\draw[\conn, line width=0.55pt] (135:2) arc (315:270:1);
\draw[\conn, line width=0.55pt] (45:2) arc (225:270:1);
\draw[\conn, line width=0.55pt] (45:2) arc (225:180:1);
\draw[\conn, line width=0.55pt] (315:2) arc (135:90:1);
\draw[\conn, line width=0.55pt] (315:2) arc (135:180:1);

\draw[\conn, line width=0.55pt] (225:1.5) arc (45:0:1.5);
\draw[\conn, line width=0.55pt] (225:1.5) arc (45:90:1.5);
\draw[\conn, line width=0.55pt] (135:1.5) arc (315:360:1.5);
\draw[\conn, line width=0.55pt] (135:1.5) arc (315:270:1.5);
\draw[\conn, line width=0.55pt] (45:1.5) arc (225:270:1.5);
\draw[\conn, line width=0.55pt] (45:1.5) arc (225:180:1.5);
\draw[\conn, line width=0.55pt] (315:1.5) arc (135:90:1.5);
\draw[\conn, line width=0.55pt] (315:1.5) arc (135:180:1.5);

\draw[\otherconn, line width=0.55pt] (225:2) to [out=120,in=-10] (-2.12,-0.87);
\draw[\otherconn, line width=0.55pt] (2.12,-0.87) to [out=180,in=20] (315:1.5);

\node at (180:2.375) {$A$};
\node at (0:2.375) {$A$};
\node at (135:0.75) {$B$};
\node at (315:0.75) {$C$};
\node at (90:2.375) {$D$};
\node at (270:2.375) {$D$};

\end{tikzpicture}
\]
\subcaption{Trajectory structure of $\varphi^{(\varepsilon)}$, with connections giving boundaries of ring domain indicated}\label{fig:nrd_ii:traj}
    \end{subfigure}
    \caption{Triangulation and trajectory structure for the torus with one boundary component containing one marked point}
\end{figure}

We now describe the stable representations under a non-generic stability condition which is a limiting case of the stability condition from the previous proposition.
Considering this nearby non-generic stability condition will be useful in later describing the category of semistable objects.

\begin{lemma}\label{lem:nrd_ii:stables}
Under a stability condition such that for any module $M$, we have $\scph{M} = \scph{S_A \oplus S_B \oplus S_C \oplus S_D}$ if and only if $\dimu M \in \mathbb{Z}\{(1, 1, 0, 0), (0, 0, 1, 1)\} \subset \mathbb{Z}^{Q_{0}}$, and for which
\begin{align*}
    \scph{S_B} < &\scph{S_A \oplus S_B \oplus S_C \oplus S_D} < \scph{S_C}, \\
    \scph{S_D} < &\scph{S_A \oplus S_B \oplus S_C \oplus S_D} < \scph{S_A}, \\
    \scph{S_A \oplus S_B} = &\scph{S_A \oplus S_B \oplus S_C \oplus S_D} = \scph{S_C \oplus S_D},
\end{align*}
the quiver with potential $(Q,W)$
has three stable representations of phase $\scph{S_{A} \oplus S_{B} \oplus S_{C} \oplus S_{D}}$, namely the string modules $M(b_{1})$, $M(c_{2})$, and $M(b_{2}a_{3}c_{1})$.
\end{lemma}
\begin{proof}
The Jacobian algebra $\jac{Q, W}$ is a gentle algebra, being in fact finite-dimensional, and hence its indecomposable representations are all string or band modules.
The only indecomposable modules of dimension vector $(1, 1, 0, 0)$ and $(0, 0, 1, 1)$ respectively are $M(b_{1})$ and $M(c_{2})$, which are both stable.
Other stable modules in the category of semistable objects must therefore have a dimension vector which is at least one in every coordinate, there being no indecomposable modules of dimension vector $(d, d, 0, 0)$ or $(0, 0, d, d)$ for $d > 1$.

Let $M$ be a stable module with $\dimu M \in \mathbb{Z}\{(1, 1, 0, 0), (0, 0, 1, 1)\} \setminus \{(1, 1, 0, 0), (0, 0, 1, 1)\}$.
We claim that $M$ must come from a string or band containing~$a_{3}^{\pm 1}$.
Indeed, the only other way for a string $w$ to cover all four vertices of the quiver is if it contains either $a_{1}a_{2}^{-1}$, $a_{1}^{-1}a_{2}$ or their inverses.
But if $w$ contains $a_{1}a_{2}^{-1}$ then it has $S_{A}$ as a submodule and if it contains $a_{1}^{-1}a_{2}$ then it has $S_{D}$ as a factor module, both of which contradict stability.

Hence, if we have a string $w$ which gives rise to~$M$, then $w$ must contain~$a_{3}^{\pm 1}$.
We now split into two cases depending on whether $w$ contains one instance of $a_3^{\pm 1}$, or more than one.
We first consider the case where $w$ contains only one instance of~$a_3^{\pm 1}$.
There are no bands that cover all of the vertices of the quiver and contain only one instance of $a_{3}^{\pm 1}$, due to the relations in the Jacobian ideal $\widehat{J}(W)$.
We can then make the following restrictions on the letters that $w$ may begin and end with, due to the fact that $M$ must be stable.
\begin{itemize}
\item $w$ cannot begin or end with $b_{1}^{\pm 1}$, since then $M(b_{1})$ is either a submodule or a factor module.
\item $w$ cannot begin or end with $c_{2}^{\pm 1}$, since then $M(c_{2})$ is either a submodule or a factor module.
\item $w$ cannot begin with $a_{1}$ or $a_{2}$, or end with $a_{1}^{-1}$ or $a_{2}^{-1}$, since then $S_{D}$ is a factor module.
\item $w$ cannot begin with $a_{1}^{-1}$ or $a_{2}^{-1}$, or end with $a_{1}$ or $a_{2}$, since then $S_{A}$ is a submodule.
\item $w$ cannot begin with $b_{2}^{-1}$ or end with $b_{2}$, since then $S_{C}$ is a submodule.
\item $w$ cannot begin with $c_{1}$ or end with $c_{1}^{-1}$, since then $S_{B}$ is a factor module.
\end{itemize}
We conclude that $w$ may begin with $a_{3}^{\pm 1}$, $b_{2}$, or $c_{1}^{-1}$, and end with $a_{3}^{\pm 1}$, $b_{2}^{-1}$, or~$c_{1}$.

If $w$ contains only one instance of $a_{3}^{\pm 1}$, the only strings which cover all of the vertices are \[
a_{3}c_{1}a_{2},\: a_{3}b^{-1}_{1}a_{2}^{-1},\: a_{1}b_{2}a_{3},\: a_{1}^{-1}c_{2}^{-1}a_{3},\: b_{2}a_{3}c_{1}, \text{ and } c_{2}^{-1}a_{3}b_{1}^{-1},
\]
up to inverting the string.
By the restrictions on letters that $w$ may begin or end with, we note that the only possibility is $w = b_{2}a_{3}c_{1}$, which gives our final stable module $M(b_{2}a_{3}c_{1})$.

We now consider the case where $w$ contains more than one instance of $a_3^{\pm 1}$, claiming that this cannot result in a stable module.
Note that, due to the relations in the Jacobian ideal $\widehat{J}(W)$, we have that the only possible successors of $b_{2}$ and $b_{1}$ are $a_{3}$ and~$a_{3}^{-1}$, respectively, whilst the only possible predecessors of $c_{1}$ and $c_{2}$ are $a_{3}$ and~$a_{3}^{-1}$.
It follows that between instances of $a_{3}$ and $a_{3}^{-1}$ in~$w$, there are only four possibilities, up to inverting the string, namely \[
a_{3}c_{1}c_{2}^{-1}a_{3},\: a_{3}c_{1}a_{2}b_{1}a_{3}^{-1},\: a_{3}b_{1}^{-1}b_{2}a_{3}, \text{ and } a_{3}^{-1}c_{2}a_{1}b_{2}a_{3}.
\]
Indeed, recall that $a_{1}a_{2}^{-1}$, $a_{1}^{-1}a_{2}$ or their inverses cannot occur as substrings.
Furthermore, by the restrictions on the first and last letter of $w$, in the case that $w$ is a string: if $w$ does not begin with~$a_{3}^{\pm 1}$, then its second letter must be~$a_{3}^{\pm 1}$, and likewise if $w$ does not end with~$a_{3}^{\pm 1}$.

We claim that, in fact, since we must have $\dimu M \in \mathbb{Z}\{(1, 1, 0, 0), (0, 0, 1, 1)\}$, $w$ cannot contain the substrings $a_{3}c_{1}c_{2}^{-1}a_{3}$ and $a_{3}b_{1}^{-1}b_{2}a_{3}$.
Indeed, if $M$ is a band module, then $w$ cannot contain the substrings $a_{3}c_{1}c_{2}^{-1}a_{3}$ and $a_{3}b_{1}^{-1}b_{2}a_{3}$, since the former does not cover the vertex $A$ and the latter does not cover the vertex~$D$.
The band $w$ must end where it started, and every time $a_{3}^{\pm 1}$ is traversed, both of the vertices $B$ and $C$ are covered.
Hence, if $w$ contains either of $a_{3}c_{1}c_{2}^{-1}a_{3}$ or $a_{3}b_{1}^{-1}b_{2}a_{3}$, the band module $M = M(w, \lambda, m)$ cannot have $\dimu M \in \mathbb{Z}\{(1, 1, 0, 0), (0, 0, 1, 1)\}$, since the dimension at vertex $A$ or $D$ would have to be strictly less than that at $B$ and $C$.

Hence, in order to show that the substrings $a_{3}c_{1}c_{2}^{-1}a_{3}$ and $a_{3}b_{1}^{-1}b_{2}a_{3}$ cannot occur, we may assume that $M$ is a string module.
If we let $n$ be the number of occurrences of $a_{3}^{\pm 1}$ in $w$, then we have that $w$ covers $B$ and $C$ each $n$ times.
Between the first and final instances of $a_{3}^{\pm 1}$, $A$ can therefore only be covered at most $n - 1$ times.
On the other hand, $D$ can only be covered at most $n - 2$ times, since $w$ contains $b_{1}^{-1}b_{2}$, which does not cover~$D$.
Then we know that before and after the first and final instances of $a_{3}^{\pm 1}$, we can only have $b_{2}^{\pm 1}$ or $c_{1}^{\pm 1}$, each of which can only cover $A$ or $D$ one more time.
Hence, we cannot have $\dimu M(w) \in \mathbb{Z}\{(1, 1, 0, 0), (0, 0, 1, 1)\}$.
A similar argument shows that $w$ cannot contain the substring~$c_{1}c_{2}^{-1}$.

Hence, we know that $w$ is formed out of the strings $a_{3}c_{1}a_{2}b_{1}a_{3}^{-1}$ and $a_{3}^{-1}c_{2}a_{1}b_{2}a_{3}$, or their inverses, possibly with $b_{2}^{\pm 1}$ or $c_{1}^{\pm 1}$ added at the ends.
However, if $w$ then contains $b_{1}^{\pm 1}$, we have that $M(b_{1})$ is a submodule of $M$, whether $M$ is a string module or a band module, which contradicts stability of~$M$.
Similarly, if $w$ contains $c_{2}^{\pm 1}$, then $M(c_{2})$ is a factor module of~$M$, which likewise contradicts stability.
We conclude that the only stable modules are those listed.
\end{proof}

As in Section~\ref{sect:spiral:drd}, we denote the category of semistable representations from Proposition~\ref{prop:nrd_ii:4_vertex} by $\mathcal{P}^{\mathrm{gen}}$, with $\mathcal{P}^{\mathrm{gen}}_\infty$ its 3-Calabi--Yau $A_\infty$-enhancement.
We furthermore denote by $\mathcal{P}^{\mathrm{ng}}$ the category of semistable representations from Lemma~\ref{lem:nrd_ii:stables}, with $\mathcal{P}^{\mathrm{ng}}_{\infty}$ its enhancement.
As before, and as used in Proposition~\ref{prop:nrd_ii}, we can now derive a simpler description of~$\mathcal{P}^{\mathrm{gen}}_{\infty}$.

\begin{proposition}\label{prop:nrd_ii:3_vertex}
Let $Q'$ denote the quiver \[
    \begin{tikzcd}
        \overset{1}{\bullet} \ar[rr] \ar[dr] && \overset{2}{\bullet} \\
        & \overset{3}{\bullet}. \ar[ur] &
    \end{tikzcd}
    \]
The $A_\infty$-category $\mathcal{P}^{\mathrm{gen}}_\infty$ is quasi-equivalent to the subcategory of $\mathcal{H}_\infty(Q',0)$ consisting of objects which are semistable of phase $\vartheta = \scph{S_1 \oplus S_2}$ in $\mathcal{H}(Q',0)$, under a stability condition where we have $\scph{S_2} < \scph{S_1}$ and $\scph{S_1 \oplus S_2} = \scph{S_3}$.
Under this equivalence, representations of $Q'$ of dimension vector $(1, 1, 0)$ and $(0, 0, 1)$ both correspond to representations of $Q$ of dimension vector $(1, 1, 1, 1)$.
There is an isomorphism between the canonical choices of orientation data on the stacks of objects in each of these categories.
\end{proposition}
\begin{proof}
We first describe~$\mathcal{P}^{\mathrm{ng}}_\infty$.
Indeed, we know from Lemma~\ref{lem:nrd_ii:stables} that $\mathcal{P}^{\mathrm{ng}}$ contains only three stable modules, namely $M_{1} := M(c_{2})$, $M_{2} := M(b_{1})$, and $M_{3} := M(b_{2}a_{3}c_{1})$.
Note that this remains true even if we are given more information about the phases of other objects.

Consider the full $A_\infty$-subcategory of $\mathcal{P}^{\mathrm{ng}}_\infty$ consisting of $M_1$, $M_2$, and $M_3$.
We claim that this is quasi-equivalent to $\akd{Q'}{0}$ where the vertex $i$ of $Q'$ corresponds to~$M_i$.
The only non-zero extension spaces are $\Ext_{\jac{Q, W}}^{1}(M_{1}, M_{2}) \cong \Ext_{\jac{Q, W}}^{1}(M_{1}, M_{3}) \cong \Ext_{\jac{Q, W}}^{1}(M_{3}, M_{2}) \cong \mathbb{C}$ by Theorem~\ref{thm:cps_a}.
The only degree~$0$ morphisms are the identity morphisms, and then the fact that the degree~$2$ and $3$ morphisms match with those of $\akd{Q'}{0}$ follows from the 3-Calabi--Yau property.
One can check using the definition of the $A_\infty$-operations on $\mathcal{D}_\infty(Q,W)$ that $m_2$ is zero on $\Ext_{\mathcal{D}_\infty(Q,W)}^{1}(M_{1}, M_{2}) \otimes \Ext_{\mathcal{D}_\infty(Q,W)}^{1}(M_{3}, M_{2})$.
Since the category of semistable representations consists of the extension closure of the stable objects $M_1$, $M_2$, and $M_3$, we have that $\mathcal{P}^{\mathrm{ng}}_\infty$ is therefore quasi-equivalent to $\mathcal{H}_\infty(Q',0)$.

The category $\mathcal{P}^{\mathrm{gen}}$ consists of the semistable objects of the category $\mathcal{P}^{\mathrm{ng}}$ of phase $\scph{S_A \oplus S_B \oplus S_C \oplus S_D}$ under the stability condition $\scph{S_A \oplus S_B} < \scph{S_A \oplus S_B \oplus S_C \oplus S_D} < \scph{S_C \oplus S_D}$. Since $\dimu M_1 = (0, 0, 1, 1)$, $\dimu M_2 = (1, 1, 0, 0)$, whilst $\dimu M_3 = (1, 1, 1, 1)$, we have that $\mathcal{P}^{\mathrm{gen}}_\infty$ is quasi-equivalent to the $A_\infty$-subcategory of $\mathcal{H}_\infty(Q',0)$ which are semistable of phase $\vartheta$ under a stability condition such that $\scph{S_{2}} < \scph{S_{1}}$ and $\vartheta = \scph{S_{1} \oplus S_{2}} = \scph{S_{3}}$.

We only show that the orientation data over $M_{3}$ coincides with $\odcha{Q'}$, the cases of $M_{1}$ and $M_{2}$ being left as exercises.
We compute the parity of the orientation data by computing \[
e^{\leqslant 1}(M_3, M_3) + h^{\leqslant 1}(M_3, M_3),
\]
following Proposition~\ref{prop:od_ug}\ref{op:od_ug:calc}.
By Proposition~\ref{prop:od_ug}\ref{op:od_ug:simples->even}, we must show that this is even to show that we have an isomorphism between $\odchoice$ and $\odcha{Q'}$. We have that $e^{\leqslant 1}(M_3, M_3) = 1$, and moreover that
\begin{align*}
    h^0(M_3, M_3) &= h^{0}(S_A \oplus S_B \oplus S_C \oplus S_{D}, S_A \oplus S_B \oplus S_C \oplus S_{D}) = 4
\end{align*}    
and
\begin{align*}
    h^{1}(M_3, M_3)  &= h^1(S_A \oplus S_B \oplus S_C \oplus S_{D}, S_A \oplus S_B \oplus S_C \oplus S_{D}) \\
    &= h^{1}(S_D, S_A) + h^{1}(S_A, S_B) + h^{1}(S_C, S_B) + h^{1}(S_A, S_C) + h^{1}(S_B, S_C) + h^{1}(S_C, S_D) \\
    &= 2 + 5 \times 1.
\end{align*}
Adding these up, we obtain $1 + 4 + 7 = 12$, which is even.
\end{proof}

\section{Donaldson--Thomas invariants for the barbell quiver}\label{sect:db}

In this section we compute the  generating series for the barbell quiver with potential, which arises when considering a type~III non-degenerate ring domain. Recall that this quiver is \[Q^{\db} = 
\begin{tikzcd}
    \overset{1}{\bullet} \ar[loop left,"a"] \ar[r,"c"] & \overset{2}{\bullet} \ar[loop right,"b"]
\end{tikzcd}
\] with potential $W^{\db} = a^3 + b^3$.
Of the quivers with potential and stability condition that arise in Table~\ref{table}, this is the only one where both the potential and the stability condition are non-trivial.

We tackle this case as follows.
We divide the category of semistable representations of dimension vector $(d, d)$ into two categories which have no non-split extensions between them.
If we let $\vartheta = \scph{d, d}$, this produces a factorisation in the motivic Hall algebra $\kstaffqwdb$ of the element $[\sdbpst \hookrightarrow \dbst]$ into the elements given by the stacks of representations in each of these categories.
Consequently, by applying the integration map, the generating series for this category of semistable representations factorises into the generating series for the individual categories.
(Note that all modules over the Jacobian algebra $\jacu{\qwdb}$ are nilpotent, so we do not need to distinguish between $\jac{\qwdb}$ and $\jacu{\qwdb}$ or between $\dbst$ and $\dbstn$.)
We compute the generating series of the first category by comparing it with the category of representations of the quiver \[Q^{\kb} := \begin{tikzcd}\bullet \ar[loop left,"a"] \ar[loop right,"b"]\end{tikzcd},\] with potential $W^{\kb} := a^3 + b^3$.
We then compute the  generating series for the second category by showing that the $A_\infty$-enhancement is quasi-equivalent to degree 0 twisted complexes for the one-loop quiver.

\subsection{Dividing into two subcategories}

First we define the two categories into which we are factorising.
Let $\kbcat$ be the subcategory of $\modules \jac{\qwdb}$ consisting of quiver representations with dimension vector $(d, d)$ such that the linear map $f_c \colon V_1 \to V_2$ is an isomorphism.
Furthermore, let $\olcat = \additive\{M(acb(c^{-1}acb)^n) \st n \geqslant 0\}$.
Here `$\additive$' denotes the closure under direct sums and direct summands.

\begin{lemma}\label{lem:q2_reps}
Given a stability condition on $\modules \jac{Q^{\db}, W^{\db}}$ where $\scph{S_{2}} < \scph{S_{1}}$, the category of semistable modules of dimension vector $(d, d)$ is $\kbcat \oplus \olcat$, the closure of $\kbcat \cup \olcat$ under direct sums.
\end{lemma}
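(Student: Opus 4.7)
The plan is to prove the decomposition via Krull--Schmidt: classify all indecomposable semistable modules of dimension vector $(d,d)$ and show each lies in either $\kbcat$ or $\olcat$. The starting point is that $\jac{\qwdb}$ can be checked directly against Definition~\ref{def:gentle} to be a completed locally gentle algebra (the Jacobian relations are exactly $a^2$ and $b^2$), so by Theorem~\ref{thm:gentle_string_band} every finite-dimensional indecomposable module is a string or band module. Any band of $(Q^{\db},W^{\db})$ must alternate between vertices $1$ and $2$ via equal numbers of $c$ and $c^{-1}$ letters, and a direct inspection of the band module construction shows that $f_c$ is an isomorphism on $M(v,\lambda,m)$ for every such band $v$; hence all band modules lie in $\kbcat$.

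I would next check that both $\kbcat$ and $\olcat$ consist of semistable modules of the relevant phase. For $\kbcat$: if $f_c$ is an isomorphism, then any submodule $N = (N_1,N_2) \subseteq M$ satisfies $\dim N_1 = \dim f_c(N_1) \leq \dim N_2$, giving $\scph{N} \leq \scph{M}$ under the given stability condition. For $\olcat$: using the description of submodule substrings of gentle algebras from Section~\ref{sect:spiral:string_band:ext}, one verifies directly that $\dimu M(acb(c^{-1}acb)^n) = (2(n+1), 2(n+1))$ and that every proper submodule has $\dim_1 \leq \dim_2$. The remaining task is to classify indecomposable semistable string modules $M(w)$ of dimension vector $(d,d)$. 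I would exploit the two consequences of semistability $\ker f_a \cap \ker f_c = 0$ (no $S_1$ submodule) and $\im f_b + \im f_c = V_2$ (no $S_2$ quotient), translating them into combinatorial restrictions on the first and last letters of $w$: for instance, starting with $a^{-1}$ or ending with $a$ is forbidden by the first condition, while starting with $b$ or ending with $b^{-1}$ is forbidden by the second. A case analysis then yields that either $w$ begins and ends with $c^{\pm 1}$ (so $\ker f_c = 0$, forcing $f_c$ to be an isomorphism, hence $M(w) \in \kbcat$), or $w$ begins with $a$ and ends with $b$, at which point the semistability conditions at the interior positions force $w = acb(c^{-1}acb)^n$ and hence $M(w) \in \olcat$.

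Given the classification, any semistable $M$ admits a Krull--Schmidt decomposition whose summands each lie in $\kbcat \cup \olcat$, and grouping by category yields the claimed $M = M' \oplus M''$ with $M' \in \kbcat$ and $M'' \in \olcat$. The main obstacle will be the combinatorial case analysis of the interior of a semistable string: the rigidity argument that, once $w$ begins with $a$ and ends with $b$, the only way to avoid creating a submodule or quotient of slope greater than that of $M$ is to alternate $c^{-1}$ with $acb$ in the prescribed pattern. This will require systematically ruling out branch options via the explicit submodule and factor substring criteria for gentle algebras.
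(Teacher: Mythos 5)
Your proposal is correct and follows essentially the same route as the paper's proof: invoke the string/band classification for the (locally) gentle Jacobian algebra of $(\qwdb)$, observe that band modules and strings with $c^{\pm 1}$ at both ends have $f_c$ an isomorphism and hence lie in $\kbcat$, and use semistability (forbidden $S_1$ submodules, $S_2$ quotients, and intermediate submodule substrings such as $M(ac)$ and $M(acbc^{-1})$) to force the remaining strings of dimension vector $(d,d)$ to be $acb(c^{-1}acb)^n$. The only differences are organizational: the paper argues via $\ker f_c \neq 0$ forcing an end letter $a^{\pm 1}$ rather than your direct end-letter case analysis (which implicitly uses the normalization $M(w)\cong M(w^{-1})$), and you additionally make explicit the check that objects of $\olcat$ are themselves semistable, which the paper leaves implicit.
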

\begin{proof}
First note that any representation in $\kbcat$ must be semistable.
Indeed, if a module $M \in \kbcat$ given as a quiver representation $M = (V_1, V_2, f_a, f_b, f_c)$ has a submodule of dimension vector $(d_1, d_2)$, then we must have $d_1 \leqslant d_2$, since $f_c$ has to send $\mathbb{C}^{d_1}$ injectively into $\mathbb{C}^{d_2}$, as $f_c \colon V_1 \to V_2$ is an isomorphism.
We therefore have $\scph{d_1, d_2} \leqslant \scph{d, d}$, and so $M$ is semistable.

Let $M = (V_1, V_2, f_a, f_b, f_c)$ now be a semistable representation with $f_c$ not an isomorphism.
It suffices to show that if $M$ is indecomposable, then $M \cong M(acb(c^{-1}acb)^n)$ for some $n \geqslant 0$.
We know that such a representation $M$ is a string module or a band module, since these comprise all indecomposable representations of $\jac{\qwdb}$ by Theorem~\ref{thm:gentle_string_band}.
Since $f_c \colon V_1 \to V_2$ is not isomorphism, the kernel of $f_c$ must be non-zero.
By the definition of string and band modules, it can be seen that, in order for the kernel of $f_c$ to be non-zero, there must be at least one occurrence of the vertex $1$ in the string $w$ corresponding to $M$ which it is not incident to $c$ or~$c^{-1}$.
It follows that $M$ is a string module $M = M(w)$ and that the initial or final letter of $w$ is~$a^{\pm 1}$.
By replacing $w$ with $w^{-1}$ if necessary, we can assume that the initial letter of $w$ is~$a^{\pm 1}$.

Due to the rules defining strings, $w$ must be of the form $a^{\pm 1}cb^{\pm 1}c^{-1}a^{\pm 1} \dots$.
The first letter of $w$ cannot then be $a^{-1}$, since then the simple module $S_{1}$ is a submodule, contradicting semistability.
Hence, the first letter of $w$ is~$a$, and so we must have an initial segment $acb^{\pm 1}$.
If we have~$b^{-1}$, then $ac$ forms a submodule subword with $\dimu M(ac) = (2, 1)$, so we again do not have semistability.
Hence, the initial segment of $w$ is $acbc^{-1}a^{\pm 1}$, and we can again argue that we must in fact have $acbc^{-1}a$, on pain of losing semistability.
Since the dimension vector must be of the form $(d, d)$, by continuing in this way, we conclude that $w = acb(c^{-1}acb)^n$ for $n \geqslant 0$, which establishes the claim.
\end{proof}

It will be useful to describe the indecomposables of $\kbcat$ explicitly as string and band modules.

\begin{lemma}\label{lem:first_cat_strings_bands}
The indecomposable modules in $\kbcat$ consist of all band modules and any string module $M(w)$ such that $w = c^{\pm 1}uc^{\pm 1}$ for some string~$u$.
\end{lemma}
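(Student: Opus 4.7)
By Theorem~\ref{thm:gentle_string_band}, every indecomposable module of $\jac{\qwdb}$ is either a string module $M(w)$ or a band module $M(v,\lambda,m)$, so it suffices to determine which string and band modules satisfy $\dim V_1=\dim V_2$ together with $f_c$ being an isomorphism. Note that the subcategory $\kbcat$ is closed under direct summands, since $f_c^{M_1\oplus M_2}=f_c^{M_1}\oplus f_c^{M_2}$ is an isomorphism precisely when each summand's $c$-map is.

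The crux is a local analysis at each position of the (possibly cyclic) underlying walk. The relations $\partial_a W^{\db}=3a^2$ and $\partial_b W^{\db}=3b^2$ generate $\widehat{J}(W^{\db})$, so the substrings $aa$, $a^{-1}a^{-1}$, $bb$, $b^{-1}b^{-1}$ are forbidden, while the walk condition $a_{k+1}^{\pm}\neq a_k^{\mp}$ forbids $(a,a^{-1})$, $(a^{-1},a)$ and similarly for $b$. I would then enumerate, at a vertex-$1$ position, the possible incoming letters ($a$, $a^{-1}$, $c^{-1}$) and outgoing letters ($a$, $a^{-1}$, $c$) and check that every allowed pair contains at least one of $c$ or $c^{-1}$; by the symmetric analysis at vertex-$2$ positions (using $b^2=0$), the same holds there. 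Moreover, a position cannot be covered twice by $c^{\pm 1}$, since an incoming $c^{-1}$ followed by outgoing $c$ violates the walk condition. Hence each $c$ or $c^{-1}$ letter pairs exactly one vertex-$1$ position with exactly one vertex-$2$ position, so as soon as $f_c$ is an isomorphism we automatically have $\dim V_1=\dim V_2$.

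From this I would conclude: at every \emph{interior} position of a string and at every position of a band, the local criterion for $f_c$ to be an isomorphism is automatically satisfied. For band modules there are no endpoints, so $M(v,\lambda,m)\in\kbcat$ always. For string modules only the two endpoints remain, and each endpoint has a single adjacent letter; requiring it to be $c^{\pm 1}$ forces the first and last letters of $w$ to lie in $\{c,c^{-1}\}$, i.e.\ $w=c^{\pm 1}uc^{\pm 1}$, covering the edge case $w=c^{\pm 1}$ (where $u$ is the trivial walk at a vertex). Conversely, any such $w$ gives a module in $\kbcat$ by the interior and endpoint arguments just described.

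The main obstacle is simply carrying out the case analysis of allowed consecutive letter pairs cleanly at each vertex, since $a$ (respectively $b$) is a loop and therefore figures in both the incoming and outgoing enumerations; once the forbidden pairs imposed jointly by the walk axiom and the cubic potential are listed, the conclusion that every remaining pair involves $c^{\pm 1}$ is immediate and the characterization follows.
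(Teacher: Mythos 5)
Your proposal is correct and follows essentially the same route as the paper: reduce to string and band modules via gentleness, observe that the relations $a^2=b^2=0$ together with the walk condition force every interior (and every band) occurrence of a vertex to be incident to exactly one $c^{\pm 1}$, so the only obstruction to $f_c$ being an isomorphism sits at the endpoints of a string, giving $w=c^{\pm1}uc^{\pm1}$. Your version is if anything slightly more explicit than the paper's (the at-most-once coverage making $f_c$ a partial permutation, and the vertex-$2$ endpoint handled symmetrically rather than via the dimension-vector count), and the small unstated points (invertibility of the Jordan block for bands, the degenerate case $w=c^{\pm1}$) are at the same level of informality as the published proof.
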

\begin{proof}
Suppose that $M$ is an indecomposable module in $\mathcal{C}_{1}$, and so a string module or a band module.
We have that, in terms of quiver representations, the map $f_{c}$ is an isomorphism if in the string or band the vertex $1$ is always incident to either $c$ or $c^{-1}$, up to possible rotations of the band.

Consider first the case where $M = M(v, \lambda, m)$ is a band module.
Since bands are required to be cyclic, and the vertex $1$ can only be incident to either $c^{\pm 1}$ and $a^{\pm 1}$, we must have that the vertex $1$ must be incident to both $c^{\pm 1}$ and~$a^{\pm 1}$.
Hence, $f_c$ is an isomorphism in~$M$.
Moreover, it is clear that a band must cover each of $1$ and $2$ equally many times, and so $M(v, \lambda, m)$ must have have dimension vector $(d, d)$ for some $d$.

Hence, in order for $f_c$ to be an isomorphism in a string module $M = M(w)$ with $\dimu M = (d, d)$, it is necessary and sufficient that $w$ does not begin or end with $a^{\pm 1}$.
Consequently, $w$ cannot begin or end with $b^{\pm 1}$ either, since then the other end would have to be $a^{\pm 1}$ in order for the dimension vector to be $(d, d)$.
We conclude that $w = c^{\pm 1}uc^{\pm 1}$, as desired.
\end{proof}

We can then apply this lemma to show the following.

\begin{lemma}\label{lem:db:trivial_ext}
Given $M_1 \in \kbcat$ and~$M_2 \in \olcat$, we have \[\Ext_{\jac{\qwdb}}^{1}(M_1, M_2) = \Ext_{\jac{\qwdb}}^{1}(M_2, M_1) = 0.\]
\end{lemma}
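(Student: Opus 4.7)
The plan is to reduce the question to a combinatorial check on strings using the machinery for gentle algebras. The Jacobian algebra $\jac{\qwdb}$ is in fact the finite-dimensional $8$-dimensional gentle algebra with the monomial relations $a^2 = b^2 = 0$ coming from $\partial_a W^{\db}$ and $\partial_b W^{\db}$. By additivity of $\Ext^1$ it is enough to treat indecomposable summands, so I would take $M_2 = M(w_n)$ with $w_n = acb(c^{-1}acb)^n$ and, by Lemma~\ref{lem:first_cat_strings_bands}, either $M_1 = M(u)$ is a string module whose underlying word $u$ begins and ends with $c^{\pm 1}$, or $M_1 = M(t, \lambda, m)$ is a band module. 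The string--string case is then governed by Theorem~\ref{thm:cps_a} and the string--band case by Theorem~\ref{thm:cps_bc}.

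For the string--string case I would treat arrow and overlap extensions separately. For each of the two orderings $u \alpha^{-1} w_n$ and $w_n \alpha^{-1} u$ and each arrow $\alpha \in \{a, b, c\}$ whose formal inverse is compatible with the incident vertices, a direct case-check on the sign of the boundary $c^{\pm 1}$ of $u$ shows that the concatenation contains an immediate cancellation of the form $c c^{-1}$, $c^{-1} c$, $b b^{-1}$ or $a^{-1} a$, hence is not a string. For overlap extensions, the key observation is a pair of structural rigidities for $w_n$: because the inverse arrows of $w_n$ are exactly the $c^{-1}$'s separating the $acb$-blocks, every factor substring of $w_n$ begins with the direct loop $a$, and every submodule substring of $w_n$ ends with the direct loop $b$. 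Since the only direct arrow ending at vertex $1$ is $a$ (with $a^2 = 0$) and the only direct arrow starting at vertex $2$ is $b$ (with $b^2 = 0$), a factor substring of $w_n$ cannot simultaneously be a submodule substring of $u$, and symmetrically a submodule substring of $w_n$ cannot be a factor substring of $u$: in each direction the would-be overlap either forces $u$ to begin or end with the wrong letter, or creates a forbidden $a^2$ or $b^2$ inside $u$. Theorem~\ref{thm:cps_a} then yields the vanishing.

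For the string--band case, the same rigidity applied to the bi-infinite word $\dinf{t}$ shows that no factor substring of $\dinf{t}$ can end with $b$ (the following letter would have to be a direct arrow beginning at vertex $2$, forcing $b^2$) and no submodule substring of $\dinf{t}$ can begin with $a$ (symmetrically). Combined with the factor-begins-with-$a$ and submodule-ends-with-$b$ observations for $w_n$, this rules out any common substring satisfying the hypothesis of Theorem~\ref{thm:cps_bc} in either direction, and the corresponding $\Ext^1$ groups vanish.

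The only real obstacle is combinatorial bookkeeping: the arrow-extension step splits into several subcases according to the signs of the boundary letters of $u$, and one has to be careful not to miss a configuration. The conceptual content, however, is essentially a single observation---that $a$ and $b$ are the unique direct arrows at their respective vertices, so every $a$ in any string must be preceded by an inverse arrow, and every $b$ must be followed by one. This single rigidity drives both the arrow-extension and overlap-extension analyses, and the extension to band modules via Theorem~\ref{thm:cps_bc} is immediate from it.
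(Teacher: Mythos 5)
Your proof is correct and follows essentially the same route as the paper: reduce to indecomposables, rule out arrow extensions by the boundary-letter analysis, and rule out overlap and string--band extensions via the observation that factor substrings of $acb(c^{-1}acb)^n$ begin with $a$ and submodule substrings end with $b$, while the relations $a^2, b^2$ force $a$ to be preceded and $b$ to be followed only by $c^{-1}$, so Theorems~\ref{thm:cps_a} and~\ref{thm:cps_bc} give the vanishing. Your write-up is, if anything, slightly more explicit than the paper's on the arrow-extension case-check.
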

\begin{proof}
It suffices to show the claim for indecomposable modules.
For this we use the description of extensions between string and band modules from \cite{cps} as described in Section~\ref{sect:spiral:string_band:ext}.

Consider the strings of~$\olcat$.
Since the only inverse letter of $acb(c^{-1}acb)^n$ is~$c^{-1}$, it must precede all factor substrings which are not at the beginning of the string.
All factor substrings must therefore begin with $a$.
However, $a$ must always be preceded by $c^{-1}$ for any string of $(Q^{\db}, J(W^{\db}))$, unless it is at the start of the string.
Hence, no factor substring of $acb(c^{-1}acb)^n$ can be a submodule substring of $\dinf{v}$ for $v$ a band of~$\olcat$.
Since $a$ cannot start a string of $\kbcat$ by Lemma~\ref{lem:first_cat_strings_bands}, we also have that no factor substring of $acb(c^{-1}acb)^n$ can be a submodule substring of a string of~$\olcat$.
One similarly shows that no submodule substring of $acb(c^{-1}acb)^n$ can be a factor substring of a string or band of $\olcat$ by observing that such a substring of $acb(c^{-1}acb)^n$ must be succeeded by~$c^{-1}$.

It follows by Theorem~\ref{thm:cps_bc} from the above that there can be no extensions between the band modules of $\kbcat$ and the modules of $\olcat$ and that there can be no overlap extensions between string modules of $\kbcat$ and those of~$\olcat$.
By Theorem~\ref{thm:cps_a}, all that remains to show is that there can be no arrow extensions between the string modules of $\kbcat$ and $\olcat$.
This then follows from the fact that the strings of $\kbcat$ begin and end $c^{\pm 1}$, while the strings of $\olcat$ begin and end $a$ and $b$.
There is no arrow which can be placed between these to give an arrow extension.
\end{proof}

\subsection{Calculation of generating series}
\subsubsection{Generating series of first category}

The variety of representations in $\kbcat$ of dimension vector $(d, d)$ is given by the image of the injection
\begin{align*}
    \qaasch{\qwkb}{d} \times \gla{d} &\hookrightarrow \qaasch{\qwdb}{(d, d)} \\
    ((f_a, f_b), g) &\mapsto (f_a, g, f_{b}),
\end{align*}
where $f_a$ and $f_b$ are the linear maps at the loops $a$ and $b$ of $Q^{\kb}$, and $(f_a, g, f_b)$ give the linear maps assigned to $a$, $c$, and $b$ respectively for $Q^{\db}$.
This is well-defined since $f_a$ and $f_b$ clearly satisfy the relations of the Jacobian ideal $J(W^{\kb})$ if and only if they satisfy the relations of the Jacobian ideal $J(W^{\db})$.
Hence, the image of the map indeed consists of the representations for which the map assigned to $c$ is an isomorphism, namely the representations lying in $\kbcat$.
We denote the image of this map by $\dbcatadsch$, the stack-theoretic quotient of this by $\gla{(d, d)}$ by $\dbcatadst$, and define $\dbcatast$ as $\coprod_{d = 0}^{\infty}\dbcatadst$.
One can then define the generating series for the category $\kbcat$ as \[
\genser{\kbcat} = \tsp(\intks([\dbcatast \hookrightarrow \dbst])).
\]
We compute this generating series.

\begin{proposition}\label{prop:a1_gen_series}
    The  generating series for $\kbcat$ is \[\genser{\kbcat} = \qdl{t^{(1,1)}}^4\qdl{-q^{1/2}t^{(2,2)}}^{-1}.\]
\end{proposition}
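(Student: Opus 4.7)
The plan is to identify the moduli stack underlying $\dbcatast$ with the moduli stack of all (not necessarily nilpotent) representations of the two-loop quiver $\qwkb$, and then import the computation of $\genser{\qwkb}$ from Proposition~\ref{prop:haiden}. The guiding observation is that once we restrict to $\kbcat$, the arrow $c$ plays a purely ``gauge'' role: it contributes nothing to the potential, and fixing it to an element of $\gla{d}$ reduces the $\gla{(d,d)}$-symmetry of $\qwdb$-representations to the diagonal $\gla{d}$, which is exactly the gauge group of $\qwkb$-representations.

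First I would exhibit the isomorphism of stacks $\dbcatadst \simeq \qaast{\qwkb}{d}$. By construction $\dbcatadsch$ is the locally closed subvariety of $\qaasch{\qwdb}{(d,d)}$ consisting of triples $(f_a, f_c, f_b)$ with $f_a^{2} = f_b^{2} = 0$ and $f_c \in \gla{d}$, and $\gla{(d,d)}$ acts transitively on the $f_c$-factor via $(g_1, g_2) \cdot h = g_2 h g_1^{-1}$, with stabiliser the diagonal $\gla{d}$. Slicing to $f_c = \mathrm{id}$ yields
\[
\dbcatadst \;\simeq\; [\{(f_a, f_b) : f_a^{2} = f_b^{2} = 0\}/\gla{d}] \;=\; \qaast{\qwkb}{d},
\]
the residual diagonal $\gla{d}$-action being simultaneous conjugation of the pair $(f_a, f_b)$, which is precisely the $Q^{\kb}$-gauge action. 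Importantly, this is the full moduli stack of $\qwkb$-representations, not merely its nilpotent locus.

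Next I would transport the vanishing cycle computation. Since every $\jac{\qwdb}$-module is automatically nilpotent, the class $[\dbcatast \hookrightarrow \dbst]$ lies in $\kstaffqwn$, so applying $\tsp \circ \intks$ agrees with applying $\tsp \circ \intbbs$ by the comparison result of Section~\ref{sect:dt:integration_maps}, thus bypassing the need to verify orientation data. On the open subvariety $U_d \subset \qaasch{\qwdb}{(d,d)}$ cut out by $f_c \in \gla{d}$, the trace function $\trst{W^{\db}_{(d,d)}} = \tr{f_a^{3}} + \tr{f_b^{3}}$ is independent of $f_c$, hence is the pullback of $\trst{W^{\kb}_d}$ under the smooth projection $U_d \to \qaasch{\qwkb}{d}$ forgetting $f_c$. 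Compatibility of motivic vanishing cycles with smooth pullback, combined with descent along the $\gla{(d,d)}$-quotient established in the first step, yields
\[
\intbbs([\dbcatadst \hookrightarrow \qaast{\qwdb}{(d,d)}]) \;=\; \intbbs([\qaast{\qwkb}{d} \hookrightarrow \qast{\qwkb}])\Big|_{t^{d} \mapsto t^{(d,d)}}.
\]

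Finally, summing over $d$ and applying $\tsp$ yields $\genser{\kbcat} = \genser{\qwkb}\big|_{t^{d}\mapsto t^{(d,d)}}$. A direct check shows that $\ef{-}{-}$ for $Q^{\db}$ vanishes on pairs of classes of the form $(d,d), (d',d')$, as does the Euler form of $Q^{\kb}$, so the substitution is a ring map in the quantum torus $\qring$. Inserting the non-nilpotent value $\genser{\qwkb} = \qdl{t}^{4}\qdl{-q^{1/2}t^{2}}^{-1}$ from Proposition~\ref{prop:haiden} (the non-nilpotent version is the relevant one, since a $\qwkb$-representation arising from $\kbcat$ need not have $f_a f_b$ nilpotent) produces the claimed formula. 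The main obstacle is the compatibility of the motivic vanishing cycle with the passage through the stacky $\gla{(d,d)}$-quotient; this is essentially standard given the apparatus of Section~\ref{sect:dt:mvc}, but requires care to combine the $\gla{(d,d)}$-equivariance of $\trst{W^{\db}_{(d,d)}}$ with the description of $\dbcatadst$ as a non-trivial quotient of $U_d \cap \qaasch{\qwdb}{(d,d)}$.
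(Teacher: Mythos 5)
Your proposal is correct and follows essentially the same route as the paper: identify $\dbcatadst$ with the full (non-nilpotent) stack $\qaast{\qwkb}{d}$ by quotienting out the $f_c$-direction, observe that the trace/potential functions agree since $c$ does not appear in $W^{\db}$, pass from $\intks$ to $\intbbs$ using nilpotency of all $\jacu{\qwdb}$-modules, and quote the non-nilpotent series $\genser{\qwkb}$ from Proposition~\ref{prop:haiden}. Your extra checks (slicing $f_c=\mathrm{id}$, smooth-pullback compatibility of vanishing cycles, vanishing of the Euler-form twist on the relevant classes) are just more explicit versions of steps the paper treats in one line.
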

\begin{proof}
We have that
\begin{align*}
\dbcatadst = (\qaasch{\qwkb}{d} \times \gla{d})/\gla{(d, d)} \cong \qaasch{\qwkb}{d}/\gla{d} = \qaast{\qwkb}{d}.
\end{align*}
Since the trace function $\tr{W^{\db}}$ coincides with the trace function $\tr{W^{\kb}}$ as the potentials are the same, we therefore obtain that the  generating series of $\kbcat$ is
\begin{align*}
    \tsp(\intks([\dbcatast \hookrightarrow \dbst])) &= \tsp(\intbbs([\dbcatast \hookrightarrow \dbst])) \\
    &= \tsp(\intbbs([\qast{\qwkb} \to \qast{\qwkb}])) \\
    &= \genser{\qwkb} = \qdl{t^{(1,1)}}^4\qdl{-q^{1/2}t^{(2,2)}}^{-1},
\end{align*}
with the final step holding by Proposition~\ref{prop:haiden}.
\end{proof}

Note that here we considered modules over $\jacu{\qwkb}$ rather than modules over $\jac{\qwkb}$, which is why we needed to use the integration map $\intbbs$ rather than~$\intks$.

\subsubsection{Generating series of second category}

We now compute the  generating series for $\olcat$.

Non-zero modules in $\olcat$ have dimension vector $(2d, 2d)$ for $d \geqslant 1$ and there is one indecomposable module up to isomorphism for each $d$, namely $M(acb(c^{-1}acb)^{d - 1})$.
The variety $\dbcatbdsch$ of representations in $\olcat$ of dimension vector $(2d, 2d)$ is given by the union of the $\gla{(2d, 2d)}$-orbit of this indecomposable representation with the $\gla{(2d, 2d)}$-orbits of the decomposable representations in $\olcat$ of dimension vector~$(2d, 2d)$.
We denote the stack-theoretic quotient of this by $\gla{(d, d)}$ by $\dbcatbdst$, and define $\dbcatast$ as $\coprod_{d = 0}^{\infty}\dbcatbdst$.
One can then define the generating series for the category $\olcat$ as we did for~$\kbcat$.

The 3-Calabi--Yau $A_\infty$-enhancement of $\olcat$ is quasi-equivalent to a more familiar $A_{\infty}$-category.

\begin{proposition}\label{prop:second_cat=one_loop}
The 3-Calabi--Yau $A_\infty$-enhancement of $\olcat$ is quasi-equivalent to $\mathcal{H}_\infty(Q^{{\circ}},0)$, where $Q^{{\circ}} = \begin{tikzcd}
    \bullet \ar[loop right]
\end{tikzcd}$.
Moreover, the canonical orientation data on the respective stacks of objects in each of these categories coincides.
\end{proposition}
\begin{proof}
The category $\olcat$ has a unique simple object, namely $M(acb)$.
In the minimal model of the $A_\infty$-enhancement this has a unique degree~$1$ endomorphism given by~$c^\ast$, corresponding to an arrow extension of $M(acb)$ with itself.
However, by the description of $\akddb$, we have that all of the $A_\infty$-operations $m_n$ in $\akddb$ on $c^\ast$ with itself are zero, since $c$ does not occur in the potential~$W^{\db}$.
Hence, the 3-Calabi--Yau $A_\infty$-category which consists of the single object $M(acb)$ is quasi-equivalent to $\akd{Q'}{0}$, and so the 3-Calabi--Yau $A_\infty$-enhancement of $\olcat$ is quasi-equivalent to $\mathcal{H}_{\infty}(Q^{{\circ}}, 0)$.

We now show that the orientation data on $\olcat$ coincides with the canonical choice of orientation data for~$Q^{{\circ}}$.
By Proposition~\ref{prop:od_ug}, it suffices to show that \[
e^{\leqslant 1}(M(acb), M(acb)) + h^{\leqslant 1}(M(acb), M(acb))
\]
is even. 
We have that $e^{0}(M(acb), M(acb)) = 1$ since $M(acb)$ is stable, and that $e^{1}(M(acb), M(acb)) = 1$ since this space is generated by the arrow extension given by~$c$.
We furthermore calculate that
\begin{align*}
h^{0}(M(acb), M(acb)) &=  h^{0}(S_1^{\oplus 2} \oplus S_2^{\oplus 2}, S_1^{\oplus 2} \oplus S_2^{\oplus 2}) = 8,
\end{align*}
and 
\begin{align*}
h^{1}(M(acb), M(acb)) &= h^{1}(S_1^{\oplus 2} \oplus S_2^{\oplus 2}, S_1^{\oplus 2} \oplus S_2^{\oplus 2}) = 12.
\end{align*}
So that \[e^{\leqslant 1}(M(acb), M(acb)) + h^{\leqslant 1}(M(acb), M(acb)) = 22,\] which is even.
\end{proof}

\begin{corollary}\label{cor:a2_gen_series}
The  generating series of $\olcat$ is \[\qdl{-q^{-1/2}t^{(2,2)}}^{-1}.\]
\end{corollary}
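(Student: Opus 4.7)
The plan is to deduce the generating series of $\olcat$ directly from Proposition~\ref{prop:second_cat=one_loop} combined with Proposition~\ref{prop:haiden} and Proposition~\ref{prop:dt_equiv_cats}. Proposition~\ref{prop:second_cat=one_loop} gives an equivalence of the stacks of objects underlying $\olcat$ and $H^{0}(\twz{\akd{Q^{\circlearrowright}}{0}})$, together with an isomorphism of canonical orientation data. All that remains is to translate this equivalence into an equality of Donaldson--Thomas generating series, track the dimension-vector substitution, and invoke the known formula for the one-loop quiver.

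First I would apply Proposition~\ref{prop:dt_equiv_cats} to the quasi-equivalence between the full $A_\infty$-subcategory of $\twz{\akd{Q^{\db}}{W^{\db}}}$ on $\olcat$ and $\twz{\akd{Q^{\circlearrowright}}{0}}$ supplied by Proposition~\ref{prop:second_cat=one_loop}. The hypothesis on orientation data is already verified, and the hypothesis on equivalence of stacks of semistable objects of the relevant phase is immediate: there is a single simple $M(acb) \in \olcat$ corresponding to the unique simple of $\twz{\akd{Q^{\circlearrowright}}{0}}$, and the categories of semistable objects are respectively $\olcat$ and the whole heart, with no stability condition to impose.

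Next I would identify the group homomorphism $\xi \colon \mathbb{Z} \to \mathbb{Z}^2$ required by Proposition~\ref{prop:dt_equiv_cats}. Since $M(acb)$ has dimension vector $(2,2)$ as a $\jac{Q^{\db},W^{\db}}$-module, while the simple at the unique vertex of $Q^{\circlearrowright}$ has dimension vector $1$, the correspondence of dimension vectors is $d \mapsto (2d, 2d)$, so $\xi(d) = (2d, 2d)$. Under Proposition~\ref{prop:dt_equiv_cats} this forces the substitution $t^{d} \mapsto t^{(2d,2d)}$ on generating series, which at the level of formal variables is $t \mapsto t^{(2,2)}$.

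Finally, I would invoke Proposition~\ref{prop:haiden}, which records that the Donaldson--Thomas generating series of $\jac{Q^{\circlearrowright},0}$ is $\gensern{Q^{\circlearrowright},0} = \qdl{-q^{-1/2}t}^{-1}$. Applying the substitution from the previous paragraph gives the claimed formula $\qdl{-q^{-1/2}t^{(2,2)}}^{-1}$, completing the proof. No genuinely difficult step arises here; the only thing to be slightly careful about is the dimension-vector bookkeeping to make sure the variable is $t^{(2,2)}$ and not $t^{(1,1)}$, but this is settled by the observation that $M(acb)$ covers each of the two vertices with multiplicity two.
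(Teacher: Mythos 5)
Your argument is correct and follows the paper's own proof essentially verbatim: both deduce the series by combining Proposition~\ref{prop:second_cat=one_loop} with Proposition~\ref{prop:dt_equiv_cats}, substituting $t \mapsto t^{(2,2)}$ because $M(acb)$ has dimension vector $(2,2)$, and then quoting the nilpotent one-loop series $\qdl{-q^{-1/2}t}^{-1}$ from Proposition~\ref{prop:haiden}. No discrepancies to report.
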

\begin{proof}
By Proposition~\ref{prop:second_cat=one_loop}, we have
\begin{align*}
    \tsp(\intks([\dbcatbst \hookrightarrow \dbst])) &= \tsp(\intks([\qastn{Q^{{\circ}}} \rightarrow \qastn{Q^{{\circ}}}]))|_{t \mapsto t^{(2, 2)}} \\
                    &= \qdl{-q^{-1/2}t}^{-1}|_{t \mapsto t^{(2, 2)}} \\
                    &= \qdl{-q^{-1/2}t^{(2,2)}}^{-1},
\end{align*}
with the penultimate step following from Proposition~\ref{prop:haiden}.
\end{proof}

Putting the two generating series from $\kbcat$ and $\olcat$ together, we finally obtain the generating series for $\sdbpst$, recalling that $\vartheta = \scph{1, 1}$.

\begin{proposition}\label{prop:db_gen_series}
The quiver 
with potential $(Q^{\db},W^{\db})$ has generating series \[\qdl{t^{(1,1)}}^4\qdl{-q^{1/2}t^{(2,2)}}^{-1}\qdl{-q^{-1/2}t^{(2,2)}}^{-1}\] for representations of dimension vector $(d,d)$ given a stability condition where $\scph{S_2} < \scph{S_1}$.
\end{proposition}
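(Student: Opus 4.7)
The plan is to assemble Proposition~\ref{prop:db_gen_series} directly from the preceding lemmas, using a Hall-algebra factorisation argument. By Lemma~\ref{lem:q2_reps}, every semistable $\jac{\qwdb}$-module of dimension vector $(d,d)$ under the given stability condition decomposes as a direct sum of a module in $\kbcat$ and a module in $\olcat$. Moreover, by Lemma~\ref{lem:db:trivial_ext}, the $\mathrm{Ext}^{1}$ groups between objects of $\kbcat$ and objects of $\olcat$ vanish in both directions. These two facts together should give the identity
\[
[\sdbpst \hookrightarrow \dbst] = [\dbcatast \hookrightarrow \dbst] \star [\dbcatbst \hookrightarrow \dbst]
\]
in the motivic Hall algebra $\kstaffqwdb$. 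Indeed, the right-hand side is computed via the stack of short exact sequences $0 \to M_{1} \to M \to M_{2} \to 0$ with $M_{1} \in \kbcat$ and $M_{2} \in \olcat$; the vanishing of $\mathrm{Ext}^{1}(M_{2}, M_{1})$ forces every such sequence to split, while the vanishing of $\mathrm{Ext}^{1}(M_{1}, M_{2})$ together with Lemma~\ref{lem:q2_reps} guarantees that every semistable $M$ of phase~$\vartheta$ arises uniquely in this way.

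Once this Hall-algebra factorisation is established, I apply the Kontsevich--Soibelman integration map $\intks$, which is a $\kstaffc$-algebra homomorphism by Theorem~\ref{thm:ks_int}, and then compose with $\tsp_{Q^{\db}}$, yielding
\[
\genser{\qwdb}|_{(d,d)} = \genser{\kbcat} \cdot \genser{\olcat}.
\]
The two factors are then computed by Proposition~\ref{prop:a1_gen_series} and Corollary~\ref{cor:a2_gen_series}, giving $\qdl{t^{(1,1)}}^{4}\qdl{-q^{1/2}t^{(2,2)}}^{-1}$ and $\qdl{-q^{-1/2}t^{(2,2)}}^{-1}$ respectively, whose product is the claimed expression.

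The main technical point will be to verify carefully the Hall-algebra identity, in particular that the stack $\sdbpst$ really coincides (not merely class-wise but as a relative motive over $\dbst$) with the image of the extension stack under the push-forward along $\pi_{2}$. Both $\kbcat$ and $\olcat$ are closed under direct sums and extensions (trivially, within each class), so the decomposition $M = M_{1} \oplus M_{2}$ of Lemma~\ref{lem:q2_reps} is functorial in families, which should make the scheme-theoretic identity routine once the $\mathrm{Ext}$-vanishing is in hand. One also has to note that the splitting is unique up to the action of $\mathrm{Aut}(M_{1}) \times \mathrm{Aut}(M_{2})$, which is exactly the automorphism group appearing in the Hall-algebra convolution, ensuring no spurious multiplicity. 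A subtlety worth flagging is that Proposition~\ref{prop:a1_gen_series} is phrased via $\intbbs$ rather than $\intks$, since $\kbcat$ contains non-nilpotent modules (the band modules), whereas the factorisation above is stated for~$\intks$; but since the classes involved are supported on semistable loci with vanishing Euler form between the two pieces, the same factorisation and the agreement $\overline{q}\circ\intbbs|_{\kstaffqwn}=\intks$ reconcile the two computations after applying $\tsp$.
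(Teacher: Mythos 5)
Your proposal is correct and takes essentially the same route as the paper's proof: decompose via Lemma~\ref{lem:q2_reps}, use the Ext-vanishing of Lemma~\ref{lem:db:trivial_ext} to get the Hall-algebra factorisation $[\sdbpst \to \dbst] = [\dbcatast \to \dbst] \star [\dbcatbst \to \dbst]$, apply $\intks$ and $\tsp$, and multiply the series from Proposition~\ref{prop:a1_gen_series} and Corollary~\ref{cor:a2_gen_series}. One small correction to your flagged subtlety: every $\jacu{\qwdb}$-module is nilpotent (the only cycles of $Q^{\db}$ are powers of $a$ and $b$, killed by the Jacobian relations), so $\kbcat$ contains no non-nilpotent modules and $\intks$ applies directly to $[\sdbpst \to \dbst]$; the non-nilpotency only arises after transporting the band modules to $\jacu{\qwkb}$-representations, which is exactly why Proposition~\ref{prop:a1_gen_series} routes through $\intbbs$, and that reconciliation is already carried out there rather than needing to be redone here.
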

\begin{proof}
Since $\olcat$ and $\kbcat$ together contain all indecomposable semistable representations of phase $\vartheta$ of the quiver with potential, we obtain that every semistable representation of $(\qwdb)$ of phase $\vartheta$ is an extension --- in fact, a direct sum --- of a representation of $\kbcat$ with a representation from $\olcat$.
Moreover, since by Lemma~\ref{lem:db:trivial_ext} we have no non-split extensions between $\kbcat$ and $\olcat$, every semistable representation of $(\qwdb)$ of phase $\vartheta$ can be expressed \textit{uniquely} as an extension of a representation from $\kbcat$ and a representation from $\olcat$.
Hence, in the motivic Hall algebra $\kstaffqwdb$ we have a factorisation \[
[\sdbpst \to \dbst] = [\dbcatast \to \dbst] \star [\dbcatbst \to \dbst].
\]
Applying the integration map $\intks$, noting that every representation of $(\qwdb)$ is nilpotent and recalling that this is an algebra homomorphism, we obtain that
\begin{align*}
\tsp(\intks([\sdbpst \to \dbst])) &= \tsp(\intks([\dbcatast \to \dbst])) \\
&\quad \times \tsp(\intks([\dbcatbst \to \dbst])) \\
&= \qdl{t^{(1,1)}}^4\qdl{-q^{1/2}t^{(2,2)}}^{-1}\qdl{-q^{-1/2}t^{(2,2)}}^{-1},
\end{align*}
by Proposition~\ref{prop:a1_gen_series} and Corollary~\ref{cor:a2_gen_series}.
\end{proof}

Note the advantage of using the Kontsevich--Soibelman theory in computing the generating series here.
It allows us to consider the category $\olcat$ intrinsically and compare it with nilpotent representations of the one-loop quiver with potential.
The alternative is computing the pullback of the motivic vanishing cycle of $\trst{W^{\db}_{(2d, 2d)}}$ on $\dbdst$ to $\dbcatbdst$.

Because $Q^{\db}$ is not symmetric, we obtain the following quantum dilogarithm identity using the Kontsevich--Soibelman wall-crossing formula. 
A numerical version of this identity appeared in \cite[(2.24)]{gmn_cmp}.%

\begin{corollary}
In the ring $\qringdb$, we have the identity
\begin{align*}
    \qdl{t^{(0,1)}}^2\qdl{t^{(1,0)}}^2 &= \qdl{t^{(1,0)}}^2\qdl{t^{(2,1)}}^2 \dots \qdl{t^{(1,1)}}^4\qdl{-q^{1/2}t^{(2,2)}}^{-1}\qdl{-q^{-1/2}t^{(2,2)}}^{-1} \dots \\ 
    &\quad \dots \qdl{t^{(1,2)}}^2\qdl{t^{(0,1)}}^2.
\end{align*}
\end{corollary}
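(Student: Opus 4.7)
The plan is to derive the identity as an instance of the Kontsevich--Soibelman wall-crossing formula applied to the barbell quiver with potential $(\qwdb)$, comparing two stability regions. The crucial input is that the total generating series $\gensern{\qwdb}$ is independent of the stability condition, and factorises according to the slicing as $\gensern{\qwdb} = \prod^{\searrow}_{\vartheta} \gensernp{\vartheta}{\qwdb}$, with the product ordered by decreasing phase $\vartheta$. The identity in the corollary arises by computing this factorisation in two different stability conditions and equating the results.

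First I would analyse the stability condition $\sigma_+$ where $\scph{S_2} > \scph{S_1}$. Any module $M = (V_1, V_2, f_a, f_b, f_c)$ with both $d_1, d_2 > 0$ admits the subspace $(0, V_2)$ as a submodule of phase $\scph{S_2}$, which strictly exceeds $\scph{M}$, destabilising $M$. Hence under $\sigma_+$ all semistable modules are concentrated at a single vertex, and the full subquiver at each vertex is a single loop with cubic potential. By Proposition~\ref{prop:dt_subquiv} and Proposition~\ref{prop:haiden}, vertex~$1$ contributes the factor $\qdl{t^{(1,0)}}^2$ and vertex~$2$ contributes $\qdl{t^{(0,1)}}^2$; ordered by decreasing phase this yields the left-hand side.

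Next I would turn to the stability condition $\sigma_-$ where $\scph{S_2} < \scph{S_1}$, chosen generically. The isotropic phase $\vartheta_0 = \scph{S_1 \oplus S_2}$ contributes the central factor $\qdl{t^{(1,1)}}^4\qdl{-q^{1/2}t^{(2,2)}}^{-1}\qdl{-q^{-1/2}t^{(2,2)}}^{-1}$ by Proposition~\ref{prop:db_gen_series}. For the remaining phases, I would use the classification of indecomposables as string and band modules (Theorem~\ref{thm:gentle_string_band}) together with an argument parallel to Lemma~\ref{lem:q2_reps} and Lemma~\ref{lem:first_cat_strings_bands}. One shows that the only non-isotropic semistable dimension vectors under $\sigma_-$ are the real-root vectors $(k+1, k)$ and $(k, k+1)$ for $k \geq 0$, each supporting a unique indecomposable semistable module, whose $A_\infty$-enhancement (with canonical orientation data) is quasi-equivalent to the one-loop quiver with cubic potential, as in Proposition~\ref{prop:second_cat=one_loop}. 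By Proposition~\ref{prop:dt_equiv_cats} and Proposition~\ref{prop:haiden}, each such phase contributes $\qdl{t^{(k+1,k)}}^2$ or $\qdl{t^{(k,k+1)}}^2$. Ordering by decreasing phase yields the right-hand side.

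The main obstacle will be the non-isotropic computation: one must pin down the precise combinatorial structure of semistable string and band modules of real-root dimension vectors under $\sigma_-$, exclude extraneous semistables from imaginary-root sectors off the central phase, and verify both the $A_\infty$ quasi-equivalence and matching of canonical orientation data against the one-loop model. This reduces to a systematic string-combinatorial argument, analogous in spirit to Lemma~\ref{lem:nrd_ii:stables}, together with an Ext-vanishing verification between consecutive real-root phases parallel to Lemma~\ref{lem:db:trivial_ext}. Once these local phase computations are in hand, the identity follows immediately from the invariance of $\gensern{\qwdb}$ under the change of stability from $\sigma_+$ to $\sigma_-$.
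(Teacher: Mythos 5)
Your proposal is correct and follows essentially the same route as the paper: apply the Kontsevich--Soibelman wall-crossing formula to $(\qwdb)$, note that for $\scph{S_1}<\scph{S_2}$ only the vertex-simples (with their cubic-loop self-extensions) are semistable, and for $\scph{S_1}>\scph{S_2}$ combine Proposition~\ref{prop:db_gen_series} at the central phase with the identification of each phase $\scph{d,d+1}$, $\scph{d+1,d}$ category with nilpotent representations of the one-loop quiver with potential $a^3$, contributing $\qdl{t^{(d,d+1)}}^2$ and $\qdl{t^{(d+1,d)}}^2$. The only small slip is citing Proposition~\ref{prop:second_cat=one_loop} (the one-loop quiver with \emph{zero} potential) as the model for these off-central phases, whereas the correct local model, which you in fact name, is the one-loop quiver with cubic potential as in Proposition~\ref{prop:ii}.
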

\begin{proof}
We have by the Kontsevich--Soibelman wall-crossing formula \cite[p.27]{ks_stability}, \cite[Proposition~6.23]{dm_hdo} that \[
\genser{\qwdb} = \prod_{\substack{\vartheta \in (0, 1] \\ \text{decreasing}}} \genserp{\vartheta}{\qwdb}
\]
for any stability condition.
For stability conditions where $\scph{S_{1}} < \scph{S_{2}}$, the only stable modules are the simple modules, and so the product on the right is $\qdl{t^{(0,1)}}^2\qdl{t^{(1,0)}}^2$.
For stability conditions where $\scph{S_1} > \scph{S_2}$, we computed $\genserp{\vartheta}{\qwdb}$ where $\vartheta = \scph{1, 1}$ in Corollary~\ref{prop:db_gen_series}.
One can show in a similar way to Lemma~\ref{lem:q2_reps} that the only other stable modules for such a stability condition are of dimension vectors $(d, d + 1)$ and $(d + 1, d)$ for $d \geqslant 0$.
These stable modules are respectively submodules and factor modules of the family of string modules $M(acb(c^{-1}acb)^{n - 1})$.
One can then show that for $\vartheta = \scph{d, d + 1}$, we have $\genserp{\vartheta}{\qwdb} = \qdl{t^{(d, d + 1)}}^{2}$ by describing the category of semistables in terms of the quiver $Q = \begin{tikzcd}\bullet \ar[loop left,"a"]\end{tikzcd}$ with potential $W = a^3$.
The same applies for $(d + 1, d)$, which gives that the product on the right for these stability conditions is \[
\qdl{t^{(1,0)}}^2\qdl{t^{(2,1)}}^2 \dots \qdl{t^{(1,1)}}^4\qdl{-q^{1/2}t^{(2,2)}}^{-1}\qdl{-q^{-1/2}t^{(2,2)}}^{-1} \dots \qdl{t^{(1,2)}}^2\qdl{t^{(0,1)}}^2.
\]
Applying the wall-crossing formula, we have that these two products are both equal to $\genser{\qwdb}$, which gives the identity.
\end{proof}

One can alternatively derive the generating series of the categories of semistable objects of dimension vectors $(nd, n(d + 1))$ and $(n(d + 1), nd)$ using the quadratic differentials perspective.
Indeed, the barbell quiver arises from a quadratic differential on the sphere with a triple pole and two simple poles.
The quadratic differentials corresponding to stability conditions with $\scph{S_2} < \scph{S_1}$ have a type~III non-degenerate ring domain in some rotation.
The stable objects of dimension vector $(d, d + 1)$ and $(d + 1, d)$ correspond to the type~II saddle connections lying inside the ring domain, wrapping around the type~III saddle trajectory.
We then know that the  generating series corresponding to a type~II saddle trajectory $\gamma$ is $\qdl{t^{\hhc{\gamma}}}^2$ by Proposition~\ref{prop:ii}.

\printbibliography

\end{document}